\documentclass[reqno, 11pt]{amsart}
\usepackage{amssymb}
\usepackage{amsmath}
\usepackage{mathtools}
\usepackage{ifpdf}
\usepackage{vmargin}
\usepackage{mathrsfs}  
\usepackage{epstopdf}
\usepackage{setspace}


%



\numberwithin{equation}{section}









\newcommand{\Z}{{\mathbb{Z}}}

\newcommand{\R}{\mathbb{R}}

\theoremstyle{plain}
\newtheorem{theorem}{Theorem}
\newtheorem{proposition}[theorem]{Proposition}
\newtheorem{lemma}[theorem]{Lemma}

\theoremstyle{definition}

\newtheorem{remark}[theorem]{Remark}

\numberwithin{equation}{section}
\numberwithin{theorem}{section}

\numberwithin{equation}{section}


\renewcommand{\Re}{\mathrm{Re}}
\renewcommand{\Im}{\mathrm{Im}}

\newcommand{\bee}{\begin{eqnarray*}}
	\newcommand{\eee}{\end{eqnarray*}}

\numberwithin{equation}{section}
\numberwithin{theorem}{section}


\begin{document}
	\onehalfspacing
	
	\title[Global dynamics above the ground state for the Hartree equation]
	{Global dynamics above the ground state for the energy-critical Hartree equation with radial data}
	
	\author[]{Xuemei Li}
	\address{\hskip-1.15em Xuemei Li
		\hfill\newline Laboratory of Mathematics and Complex Systems,
		\hfill\newline Ministry of Education,
		\hfill\newline School of Mathematical Sciences,
		\hfill\newline Beijing Normal University,
		\hfill\newline Beijing, 100875, People's Republic of China.}
	\email{xuemei\_li@mail.bnu.edu.cn}
	
	\author[]{Chenxi Liu}
	\address{\hskip-1.15em Chenxi Liu
		\hfill\newline Laboratory of Mathematics and Complex Systems,
		\hfill\newline Ministry of Education,
		\hfill\newline School of Mathematical Sciences,
		\hfill\newline Beijing Normal University,
		\hfill\newline Beijing, 100875, People's Republic of China.}
	\email{liuchenxi@mail.bnu.edu.cn}
	
	
	\author[]{Guixiang Xu}
	\address{\hskip-1.15em Guixiang Xu
		\hfill\newline Laboratory of Mathematics and Complex Systems,
		\hfill\newline Ministry of Education,
		\hfill\newline School of Mathematical Sciences,
		\hfill\newline Beijing Normal University,
		\hfill\newline Beijing, 100875, People's Republic of China.}
	\email{guixiang@bnu.edu.cn}

	\subjclass[2010]{Primary: 35Q41, Secondary: 35Q55}
	
	\keywords{Blow-up; Hartree equation; Concentration-compactness-rigidity argument; Induction on energy; Modulation analysis; One-pass lemma; Scattering theory;  Variational method.}

	\begin{abstract} Based on  the concentration-compactness-rigidity argument in \cite{KenM:NLS,KenM:NLW} and the non-degeneracy of the ground state in \cite{LLTX:Nondeg,LLTX:g-Hart,LTX:Nondeg},  long time dynamics for the focusing energy-critical Hartree equation with radial data have been classified when the energy $E(u_0)\leq E(W)$ in \cite{LiMZ:crit Hart,LLTX:g-Hart,MWX:Hart,MXZ:crit Hart:f rad}, where $W$ is the ground state. In this paper, we continue the study on the dynamics of the radial solutions with the energy $E(u_0)$ at most slightly larger than that of the ground states. This is an extension of the results \cite{KriNS:NLW rad, KriNS:NLW non,NakR,NakS:NLKG,NakS:book,NakS:NLS,NakS:NLKG:non,Roy} on NLS, NLW and NLKG, which were pioneered by K. Nakanishi and W. Schlag in \cite{NakS:NLKG, NakS:book} in the study of nonlinear Klein-Gordon equation in the subcritical case. The argument is an adaptation of the works in  \cite{KriNS:NLW rad, KriNS:NLW non,NakR,Roy}, the proof uses an analysis of the hyperbolic dynamics near the ground state and the variational structure far from them.  The key components that allow to classify the solutions are the hyperbolic (ejection) dynamical behavior near the ground state and the one-pass lemma.
	\end{abstract}
	
	\maketitle
	
	
	
	\section{Introduction}
	
	In this paper, we consider the focusing, energy-critical Hartree equation
	\begin{equation}\label{equ:Hart}
	i \partial_t u - \Delta u  - \left( |x|^{-4}*|u|^2\right)u =0,\quad
	(t,x)\in \R \times \R^d,
	\end{equation}
	with initial data $u_0 \in \dot H^1(\R^d)$, $d\geq 5$. Here $\dot H^1$ is the  energy space, i.e., the completion of the Schwartz space with respect to the norm $\Vert f \Vert_{\dot H^1}:=\Vert \nabla f \Vert_{L^2}$.
	
	
	
	The Hartree equation arises in the study of Boson stars and other physical phenomena, please refer to \cite{FrohLen:Hart,Pit}. It can also be considered as a classical limit of a field equation and it describes a quantum mechanical non-relativistic many-boson system interacting through a two body potential $V(x)=|x|^{-4}$ , see \cite{GinV:NLS}. As for the mean-field limit of many-body quantum systems, we can refer to the work by Hepp in  \cite{Hepp}, see also \cite{BarEGMY, BarGM,  FrohGS, FrohLen:Hart, GinV:NLS,  Spoh}. Lieb and Yau use it to develop the theory for stellar collapse in \cite{LiebY}.
	
	The equation\eqref{equ:Hart} enjoys several symmetries: If $u(t,x)$ is a solution, then:
	\begin{enumerate}
		\item[(a)]  by scaling invariance, so is $e^{\frac{d-2}{2}\sigma }u(e^{2\sigma } t,
		e^{\sigma}x)$,  for any $\sigma \in \R $ ;
		\item[(b)] by time translation invariance, so is  $u(t+t_0, x)$ for any $t_0\in \R$ ;
		\item[(c)] by spatial translation invariance, so is  $u(t, x+x_0)$ for any $x_0\in \R^d$ ;
		\item[(d)] by phase rotation invariance, so is $e^{i\theta_0}u(t,x)$, for any $\theta_0\in \R$ ;
		\item[(e)] by time reversal invariance,  so is $\overline{u(-t, x)}$.
	\end{enumerate}
	
	By the Strichartz estimate, the local well-posedness  of equation \eqref{equ:Hart} in the energy space was developed in \cite{Caz:book,MXZ:Cauchy}. Namely, if $u_0 \in \dot H^1(\R^d)$, there exists a unique solution defined on a maximal interval $I=\left(-T_-(u), T_+(u)\right)$ and the energy
	\begin{align}\label{quan:energy}
 E(u(t)):=&\frac12 \int_{\R^d} \big| \nabla u(t,x)\big|^2 dx-\frac{1}{4} \iint_{\R^d\times \R^d} \frac{\big|u(t,x)\big|^2\big|u(t,y)\big|^2}{|x-y|^4} dxdy\\
	=&E(u_0) \nonumber
	\end{align}
	is conserved on $I$.
The equation \eqref{equ:Hart} is called energy critical because of the fact that the scaling 
	\begin{equation*}
	u(t,x)\rightarrow u_\sigma (t,x) =e^{\frac{d-2}{2}\sigma }u(e^{2\sigma } t,
	e^{\sigma}x),\;  \sigma \in \R
	\end{equation*}
	makes \eqref{equ:Hart} and the energy \eqref{quan:energy} invariant. The equation \eqref{equ:Hart} can be rewritten in the Hamiltonian form 
	\begin{align*}
	\partial_t u = iE'(u)
	\end{align*}
	where $\langle E'(u), h\rangle=\partial_\lambda E (u+\lambda \, h) |_{\lambda =0}$ and $\langle\cdot,\cdot\rangle$ denotes the real-valued inner product on $L^2(\R^d)$:
	\begin{equation*}
	\left(f|g\right):=\int_{\R^d} f(x)\bar{g}(x)dx,\;\; \langle f,g\rangle:=\Re\left(f|g\right) .
	\end{equation*}
	The symplectic form $\omega$ associated to the Hamiltonian system is
	$
	\omega(u,v):=\langle iu,v \rangle . 
	$
	
	It turns out that the explicit solution
	\begin{equation*}
	W(x)=c \left(1 +|x|^2\right)^{-\frac{d-2}{2}} \text{with some constant}\;\;c>0
	\end{equation*}
	is the ground state of \eqref{equ:Hart} and plays an important role in the long time dynamics of the solutions of equation \eqref{equ:Hart}. In fact, combining the sharp Sobolev inequality \cite{Aubin, LiebL:book} with the sharp Hardy-Littlewood-Sobolev inequality \cite{Lieb:HLS:sp const, LiebL:book}, we have the  following characterization of the ground state $W$. That is, 
	
	\begin{proposition}[\cite{LiebL:book,MWX:Hart,MXZ:crit Hart:f rad}]\label{prop:sharp const} Let $d\geq 5$ and $ u \in \dot H^1(\R ^d)$, we have:
		\begin{equation*}
		\iint_{\R^d \times \R^d} \frac{|u(x)|^2\, |u(y)|^2}{|x-y|^4} \; dxdy	\leq C_d^4\, \|\nabla u \|^4_{L^2}
		\end{equation*} 	
		for some constant $C_d>0$.  Moreover, there is equality in the above inequality if and only if  $$ u(x)=e^{i\theta_0}\lambda ^{-\frac{d-2}{2}} W\left(\frac{x-x_0}{\lambda} \right) $$ for any $\lambda>0, x_0\in {\R}^d,\theta_0\in [0,2\pi)$. 
	\end{proposition}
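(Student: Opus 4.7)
The strategy is to derive the inequality as a two-step composition of the sharp Hardy--Littlewood--Sobolev (HLS) inequality with the sharp Sobolev embedding, which is the route taken in \cite{LiebL:book}. Write $f(x):=|u(x)|^2$. The left-hand side is
\begin{equation*}
\iint_{\R^d\times\R^d}\frac{f(x)f(y)}{|x-y|^4}\,dx\,dy,
\end{equation*}
which is the HLS bilinear form with parameter $\lambda=4$. For this to fall under HLS we need $\tfrac{2}{p}+\tfrac{\lambda}{d}=2$, i.e.\ $p=\tfrac{d}{d-2}$, so the sharp HLS inequality bounds the double integral by $C_{\mathrm{HLS}}\,\|f\|_{L^{d/(d-2)}}^{2}=C_{\mathrm{HLS}}\,\|u\|_{L^{2d/(d-2)}}^{4}$. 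The appearance of the critical Sobolev exponent $\tfrac{2d}{d-2}$ is exactly what we want.

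Next I would invoke the sharp Sobolev inequality of Aubin/Talenti: $\|u\|_{L^{2d/(d-2)}}\le C_{\mathrm{Sob}}\,\|\nabla u\|_{L^{2}}$. Chaining this with the HLS bound yields
\begin{equation*}
\iint_{\R^d\times\R^d}\frac{|u(x)|^{2}|u(y)|^{2}}{|x-y|^{4}}\,dx\,dy\;\le\;C_{\mathrm{HLS}}\,C_{\mathrm{Sob}}^{4}\,\|\nabla u\|_{L^{2}}^{4},
\end{equation*}
so we may take $C_d^{4}:=C_{\mathrm{HLS}}\,C_{\mathrm{Sob}}^{4}$. The fact that $d\ge 5$ guarantees $\lambda=4<d$, which is needed to apply HLS in the admissible range.

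For the equality characterization the issue is that both inequalities must saturate simultaneously. The sharp Sobolev equality forces $|u(x)|=c\,\bigl(\mu^{2}+|x-x_{0}|^{2}\bigr)^{-(d-2)/2}$ for some $c>0,\ \mu>0,\ x_{0}\in\R^{d}$, so that $f(x)=|u(x)|^{2}=c^{2}\bigl(\mu^{2}+|x-x_{0}|^{2}\bigr)^{-(d-2)}$. The sharp HLS equality (with $\lambda=4$) forces $f$ to be proportional to $\bigl(a+b|x-x'_{0}|^{2}\bigr)^{-(2d-\lambda)/2}=\bigl(a+b|x-x'_{0}|^{2}\bigr)^{-(d-2)}$, which is exactly the same profile. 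Hence the two extremal families are compatible and reduce to a single one-parameter family, namely $|u|=\text{const}\cdot\lambda^{-(d-2)/2}W\bigl((x-x_{0})/\lambda\bigr)$. Since the left-hand side depends only on $|u|$, the phase $e^{i\theta_{0}}$ is free, producing exactly the stated family.

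The only genuinely delicate point is the equality case, since one has to verify that the extremizers of HLS at $\lambda=4$ really coincide, up to the rescaling freedom, with the Aubin--Talenti bubbles, and that no extra degeneracy is introduced by squaring (passing from $u$ to $|u|^{2}$). This is precisely what the Lieb rearrangement argument plus the one-parameter classification of HLS extremizers settles in \cite{Lieb:HLS:sp const,LiebL:book}, and no new work is required beyond invoking those statements. The rest is the straightforward chaining of the two sharp inequalities described above.
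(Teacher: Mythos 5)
Your proposal is correct and follows exactly the route the paper indicates just before the statement: chain the sharp Hardy--Littlewood--Sobolev inequality applied to $f=|u|^2$ with $\lambda=4$ (so $p=d/(d-2)$) with the sharp Sobolev embedding $\dot H^1\hookrightarrow L^{2d/(d-2)}$, and observe that the extremizer profiles $(\mu^2+|x-x_0|^2)^{-(d-2)}$ for the two sharp inequalities coincide. One small imprecision in your equality discussion: the reason only a \emph{constant} phase $e^{i\theta_0}$ survives is not merely that the left-hand side depends only on $|u|$ (that alone would permit an arbitrary local phase), but that equality in the Sobolev step forces $\|\nabla u\|_{L^2}=\|\nabla|u|\|_{L^2}$, and the diamagnetic inequality $|\nabla|u||\le|\nabla u|$ with $W>0$ everywhere then pins the phase to be a.e.\ constant; your final stated family is nevertheless correct.
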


	%
	
	
	As for the solutions of \eqref{equ:Hart} with energy below the threshold $E(W)$, C. Miao, L. Zhao and the third author in \cite{MXZ:crit Hart:f rad} made use of the concentration-compactness-rigidity argument, which is pioneered by C. Kenig and F. Merle in \cite{KenM:NLS,KenM:NLW} to show the following dichotomy results in the radial case.  Later, D. Li and X. Zhang in  \cite{LiMZ:crit Hart} removed the radial assumption by use of the argument by R. Killip and M. Visan in \cite{KillV} on the energy-critical NLS.
	\begin{theorem}(\cite{LiMZ:crit Hart,MXZ:crit Hart:f rad}) \label{thm:below thresh}Let $d\geq 5$, and $u$ be a solution of \eqref{equ:Hart} with
		\begin{equation*}
		u_0 \in \dot{H}^1(\R^d),\; E(u_0)<E(W).
		\end{equation*}
		Then:
		\begin{enumerate}
			\item[\rm(a)] If $\Vert \nabla u_0 \Vert_{L^2}<\Vert \nabla W \Vert_{L^2}$, then $I=\R$ and $\Vert u \Vert_{L_{t}^{6} (\R;L_{x}^{\frac{6d}{3d-8}})} < \infty$. 
			\item[\rm(b)] If $\Vert \nabla u_0 \Vert_{L^2}>\Vert \nabla W \Vert_{L^2}$, and either $u_0 \in  L^2$ is radial or $x\cdot u_0\in L^2$, then $T_{\pm}<\infty$.
		\end{enumerate}	
	\end{theorem}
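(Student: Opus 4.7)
The plan is to run the Kenig--Merle concentration--compactness--rigidity roadmap adapted to the nonlocal Hartree nonlinearity, with variational coercivity below the ground-state energy producing two flow-invariant subsets of $\dot H^1(\R^d)$. First I would combine the conservation of energy \eqref{quan:energy} with the sharp bound of Proposition \ref{prop:sharp const}: studying the polynomial $f(y)=\tfrac12 y-\tfrac14 C_d^4 y^2$, whose only positive maximum is attained at $y=\|\nabla W\|_{L^2}^2$ with value $E(W)$, one sees that if $E(u_0)<E(W)$ and $\|\nabla u_0\|_{L^2}$ lies strictly on one side of $\|\nabla W\|_{L^2}$, then by continuity of $t\mapsto\|\nabla u(t)\|_{L^2}$ the same strict inequality persists on $I$ with a uniform gap. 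Introducing the Nehari-type functional
\begin{equation*}
K(u):=\|\nabla u\|_{L^2}^2-\iint_{\R^d\times\R^d}\frac{|u(x)|^2|u(y)|^2}{|x-y|^4}\,dx\,dy,
\end{equation*}
which vanishes on $W$, I would then extract the quantitative coercivity $K(u(t))\gtrsim\|\nabla u(t)\|_{L^2}^2$ on the subcritical side and $K(u(t))\leq -\delta\,\|\nabla u(t)\|_{L^2}^2$ on the supercritical side.

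For part~(a), small-data scattering in $\dot H^1$ follows from the Strichartz estimate combined with the Hardy--Littlewood--Sobolev inequality applied to $(|x|^{-4}*|u|^2)u$. If global scattering fails there is a critical energy $E_c\leq E(W)$, and I would construct a nonzero \emph{critical element} $u_c$ via a linear $\dot H^1$ profile decomposition \`a la Bahouri--G\'erard together with a perturbation/stability theory tailored to the Hartree flow. The orbit of $u_c$ is precompact in $\dot H^1$ modulo the scaling symmetry, since radiality removes the translation parameter. A localized virial/Morawetz argument in the spirit of \cite{MXZ:crit Hart:f rad,KenM:NLS}, combined with the coercivity $K(u_c(t))\gtrsim\|\nabla u_c(t)\|_{L^2}^2$, then rules out this compact trajectory, forcing $u_c\equiv 0$ and contradicting $E_c>0$.

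For part~(b), I would run the classical virial argument. In the finite-variance case the second derivative of $V(t):=\int_{\R^d}|x|^2|u(t,x)|^2\,dx$ decomposes as a linear combination of $\|\nabla u(t)\|_{L^2}^2$ and the Hartree potential energy, which under the supercritical trapping from the first step is bounded above by $-c\,\|\nabla u(t)\|_{L^2}^2<0$; hence $V(t)$ becomes negative in finite time, forcing $T_\pm<\infty$ by Glassey's argument. In the radial $L^2$ case I would replace $|x|^2$ by a smooth radial cutoff $\phi_R$ with $\phi_R(x)=|x|^2$ for $|x|\leq R$, and absorb the truncation errors using the radial Strauss decay $|u(t,x)|\lesssim|x|^{-(d-1)/2}\|u(t)\|_{\dot H^1}^{1/2}\|u(t)\|_{L^2}^{1/2}$.

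The most delicate point is the rigidity in part (a): because the nonlocal convolution $|x|^{-4}*|u|^2$ couples distinct bubbles across the profile decomposition, the asymptotic Pythagorean expansion of the potential energy requires a careful analysis of scale and space separation based on Lieb's refinement of Hardy--Littlewood--Sobolev, and the accompanying stability theory must be formulated in Strichartz-type norms compatible with the convolution structure. A secondary technical point is the radial truncated virial in part (b), where the Hartree kernel produces new boundary contributions supported on $\{|x|+|y|\gtrsim R\}$ that must be controlled via the radial decay and $L^2$ conservation without spoiling the strict negativity of $K$ coming from the variational trapping.
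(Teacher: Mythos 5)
Your roadmap — variational trapping from Proposition \ref{prop:sharp const}, profile decomposition, critical-element extraction, and virial rigidity — is exactly the Kenig--Merle strategy that \cite{MXZ:crit Hart:f rad} implements for the Hartree equation, so the architecture is sound and matches the cited proof. Part (b) as you describe it (finite-variance Glassey plus a truncated radial virial absorbed by the radial Strauss bound) is the argument of \cite{MXZ:crit Hart:f rad}, and your computation of the stationary point of $f(y)=\tfrac12 y-\tfrac14 C_d^4 y^2$ correctly identifies $y=\|\nabla W\|_{L^2}^2$ with $f=E(W)$, so the trapping is set up correctly.

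The one genuine gap is in part (a). As stated, the theorem requires only $u_0\in\dot H^1(\R^d)$ with no radial hypothesis; this is the content of \cite{LiMZ:crit Hart}, which extended the radial result of \cite{MXZ:crit Hart:f rad}. But your proof of (a) explicitly invokes radiality at the crucial step — \emph{``the orbit of $u_c$ is precompact in $\dot H^1$ modulo the scaling symmetry, since radiality removes the translation parameter''} — and then runs a localized virial/Morawetz rigidity that relies on that reduction. For general $\dot H^1$ data you cannot drop translations: the Bahouri--G\'erard profile decomposition carries a spatial translation sequence $x_{j,n}$ in addition to scales, the critical element's orbit is compact only modulo both scaling and translation $x(t)$, and the rigidity step becomes substantially harder (one must first show $x(t)=o(t)$, then rule out the compact trajectory with a translated/localized virial, following \cite{KillV} as adapted in \cite{LiMZ:crit Hart}). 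As written, your argument proves the radial version of (a), not the version cited. A secondary caution: the supercritical coercivity $K(u(t))\leq -\delta\|\nabla u(t)\|_{L^2}^2$ requires the gradient to be bounded strictly away from $\|\nabla W\|_{L^2}^2$ on the supercritical side, which you should extract from the energy trapping (as in Kenig--Merle) before invoking it; the uniform bound $K(u(t))\leq -c(E(W)-E(u_0))<0$ is what the virial argument actually needs.
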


	As for the threshold solutions of \eqref{equ:Hart}, it is clear that the ground state $W$ is the global radial solution. In fact, there are two other radial threshold solutions $W^{\pm}$, which were constructed by C.  Miao, Y. Wu and the third author in \cite{MWX:Hart} under the non-degeneracy  assumtion of the ground state by use of the approximation argument with higher order and compact argument in \cite{DMerle:NLS:ThreshSol, DMerle:NLW:ThreshSol, LiZh:NLS, LiZh:NLW}. Recently, the authors in \cite{LLTX:Nondeg, LLTX:g-Hart,LTX:Nondeg} combine the spherical harmonic decomposition, the Funk-Hecke formula of the spherical harmonic functions with the Moser iteration method to solve the non-degeneracy of the ground state in the energy space.  That is, we have
	
	\begin{theorem}[\cite{LLTX:g-Hart,MWX:Hart}]\label{thm:thresd sol} Let $d\geq 5$, there exist two radial solutions $W^\pm$ of \eqref{equ:Hart} on the maximal lifespan interval $(-T_{-}(W^{\pm}), T_{+}(W^{\pm}))$ with
		initial data $W^{\pm}_{0} \in \dot H^1(\R^d)$  such that
		\begin{enumerate}
			\item[\rm(a)] $E(W^\pm)=E(W)$, $T_+(W^{\pm})=\infty$ and
			\begin{equation*} \aligned
			\lim_{t\rightarrow +\infty}W^{\pm}(t) =  W \; \text{in} \; \dot H^1(\R^d).
			\endaligned
			\end{equation*}
			
			\item[\rm(b)] $\big\|\nabla W^-_0\big\|_{2} < \big\|\nabla W\big\|_{2}$, $T_-(W^{-})=\infty$ and $W^-$ scatters for the negative time.
			
			\item[\rm(c)] $\big\|\nabla W^+_0\big\|_{2} > \big\|\nabla W\big\|_{2}$ and $T_-(W^{+})<\infty$.
		\end{enumerate}
	\end{theorem}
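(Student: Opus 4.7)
The plan is to adapt the Duyckaerts-Merle construction of threshold solutions for NLS/NLW \cite{DMerle:NLS:ThreshSol, DMerle:NLW:ThreshSol} to the energy-critical Hartree setting, as was carried out in \cite{MWX:Hart} under a non-degeneracy assumption that is now rigorously available thanks to \cite{LLTX:Nondeg, LLTX:g-Hart, LTX:Nondeg}. Writing $u=W+h$ and separating real and imaginary parts, I first derive the linearized system $\partial_t h = \LLL h + N(h)$, where $\LLL = i E''(W)$ is the matrix operator coming from linearizing the Hartree nonlinearity at $W$ and $N$ collects the nonlocal quadratic and cubic remainder. On the radial energy space, the non-degeneracy results give precisely the spectral picture needed: the generalized null space of $\LLL$ is spanned by the symmetry modes $iW$ and the scaling generator $\Lambda W = \tfrac{d-2}{2}W + x\cdot\nabla W$, while $\LLL$ admits a single pair of simple real eigenvalues $\pm e$, $e>0$, outside its essential spectrum $i\R$, with eigenfunctions $\YYY_{\pm}$.

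Next, for each sign choice I construct formal approximate solutions
\begin{equation*}
U^k_{\pm}(t) = W \pm \sum_{j=1}^{k} e^{-jet}\phi_j,
\end{equation*}
with $\phi_1$ a fixed normalization of the stable eigenfunction $\YYY_{-}$, and the higher-order coefficients $\phi_j$, $j\geq 2$, defined inductively by inverting $(\LLL - je)$ against the nonlinear source generated by lower-order terms; this is solvable because $je \notin \sigma(\LLL)$ for $j\geq 2$. For $k$ sufficiently large, the residual $i\partial_t U^k_{\pm} - \LLL U^k_{\pm} - N(U^k_{\pm})$ decays like $e^{-(k+1)et}$ as $t\to+\infty$, and a contraction mapping argument in an exponentially weighted Strichartz-type norm on $[T_0,\infty)$, using the stable/unstable/center decomposition of $\LLL$ together with dispersive estimates for $e^{-it\LLL}$ on the continuous spectral subspace, then produces a true solution $W^{\pm}$ satisfying $\|W^{\pm}(t)-U^k_{\pm}(t)\|_{\dot H^1}\lesssim e^{-(k+1)et}$. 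In particular $W^{\pm}(t)\to W$ in $\dot H^1$ as $t\to+\infty$, and $E(W^{\pm}) = E(W)$ by energy conservation and continuity of $E$ on $\dot H^1$. The gradient-norm inequalities in (b)-(c) then follow by expanding $\|\nabla W^{\pm}_0\|_{2}^2 - \|\nabla W\|_2^2$ to leading order: the first nonvanishing contribution is proportional to $\pm\langle \nabla W, \nabla \YYY_{-}\rangle\, e^{-eT_0}$, which is nonzero by the spectral structure (one checks that $\YYY_{-}$ is not orthogonal to the gradient of $W$), and flips sign with the choice of $\pm$.

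Finally, the negative-time dynamics follow from threshold versions of the Kenig-Merle arguments already developed for this equation. For $W^{-}$, the strict subcritical gradient bound together with $E(W^{-})=E(W)$ places the backward orbit on the boundary of the regime of Theorem \ref{thm:below thresh}(a); a compactness/rigidity argument at threshold energy (in the spirit of \cite{DMerle:NLS:ThreshSol, MWX:Hart, MXZ:crit Hart:f rad}) shows that any non-scattering minimal element must be a modulation of $W$ itself, which is ruled out since $W^{-}(t)\to W$ only at $+\infty$; hence $T_-(W^{-})=\infty$ and $W^{-}$ scatters backward. For $W^{+}$, the supercritical gradient and radial symmetry permit a localized virial/convexity argument in the spirit of Theorem \ref{thm:below thresh}(b) lifted to threshold energy, yielding $T_-(W^{+})<\infty$. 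The principal obstacle I anticipate is the spectral and dispersive analysis of $\LLL$: verifying that $\pm e$ are the only hyperbolic eigenvalues, that the generalized null space is exhausted by the symmetries, and proving Strichartz estimates for $e^{-it\LLL}$ on the center-stable subspace despite the nonlocal Hartree potential $|x|^{-4}*$. All three steps rest essentially on the spherical-harmonic/Funk-Hecke framework of \cite{LLTX:Nondeg, LLTX:g-Hart, LTX:Nondeg}.
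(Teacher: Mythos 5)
This theorem is cited from \cite{MWX:Hart,LLTX:g-Hart}; the present paper does not reprove it, so there is no internal proof to compare against line by line. That said, your sketch does track the route of the cited construction, which is the Duyckaerts--Merle scheme \cite{DMerle:NLS:ThreshSol,DMerle:NLW:ThreshSol} adapted to Hartree: spectral analysis of the linearization resting on the non-degeneracy of $W$ (Proposition~\ref{prop:spect}), a high-order approximate solution built inductively on the stable eigenmode, a fixed point for the error, and then forward/backward classification via compactness-rigidity and a localized virial.

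Two places where the sketch overshoots or undershoots what is actually needed. First, your contraction argument invokes ``dispersive estimates for $e^{-it\LLL}$ on the continuous spectral subspace.'' That is a substantial extra burden --- Strichartz for the nonlocal linearized Hartree operator is not free --- and in the Duyckaerts--Merle style fixed point it is not required: one runs the contraction in an exponentially weighted norm using the \emph{free} Strichartz estimates \eqref{est:St est}, treating the linearized potential $(|x|^{-4}*|W|^2)h + 2(|x|^{-4}*(W h_1))W$ perturbatively, since it is spatially decaying and the iterates are exponentially small. The stable/center decomposition of $\LLL$ is used only to build $U^k_\pm$ and to fix the matching condition, not to propagate the remainder. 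Second, the statement ``a localized virial/convexity argument in the spirit of Theorem \ref{thm:below thresh}(b) lifted to threshold energy'' glosses over the fact that at $E(u)=E(W)$ the variational lower bound on $-K$ degenerates, so the below-threshold argument does not lift directly; the proof in \cite{MWX:Hart} uses the already-established exponential convergence of $W^+(t)$ to $W$ as $t\to+\infty$ to obtain a compactness statement, which in turn feeds into the localized virial. With those two points tightened your outline is consistent with the cited proof.

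Also a small check you left implicit: the sign condition that makes the leading term $\pm\langle\nabla W,\nabla \YYY_-\rangle$ nonvanishing is exactly $\langle W,g_2\rangle\neq0$ (since $L_+W=2\Delta W$ gives $\langle\nabla W,\nabla g_1\rangle=\tfrac{\mu}{2}\langle W,g_2\rangle$), which the paper proves by contradiction inside the proof of Proposition~\ref{prop:spect decom}. It would be worth citing that step rather than just asserting ``one checks that $\YYY_-$ is not orthogonal to $\nabla W$.''
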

	
	As for the classification of the radial solutions of \eqref{equ:Hart} with the threshold energy $E(W)$, we have 
	
	%
	%
	%
	
	\begin{theorem}[\cite{LLTX:g-Hart}] \label{thm:thresh class} Let $d\geq 5$,  and $u$ be a threshold solution of \eqref{equ:Hart} with radial data $u_0\in \dot H^1(\R^d)$, (i. e. $E(u_0)=E(W)$) and $I$ be the maximal interval of existence. Then we have:
		\begin{enumerate}
			\item[\rm(a)] If $\Vert \nabla u_0 \Vert_{L^2}<\Vert \nabla W \Vert_{L^2}$, then $I=\R$. Furthermore, either $u=W^{-}$ up to the symmetries of the equation, or
			\begin{equation*}
			\Vert u \Vert_{L_{t}^{6} (\R;L_{x}^{\frac{6d}{3d-8}})} < \infty.
			\end{equation*}
			\item[\rm(b)] If $\Vert \nabla u_0 \Vert_{L^2}=\Vert \nabla W \Vert_{L^2}$, then $u=W$ up to the symmetries of the equation.
			\item[\rm(c)] If $\Vert \nabla u_0 \Vert_{L^2}>\Vert \nabla W \Vert_{L^2}$, and $u_0 \in  L^2(\R^d)$, then either $u=W^{+}$ up to the symmetries of the equation, or $I$ is finite.
		\end{enumerate}	
	\end{theorem}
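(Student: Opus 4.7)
The plan is to adapt the Duyckaerts-Merle threshold-classification strategy (\cite{DMerle:NLS:ThreshSol,DMerle:NLW:ThreshSol}, developed for the Hartree setting in \cite{MWX:Hart,LLTX:g-Hart}) to the energy-critical Hartree equation, leveraging three ingredients that are now in place: the sub-threshold dichotomy of Theorem~\ref{thm:below thresh}, the existence and qualitative properties of the special solutions $W^{\pm}$ from Theorem~\ref{thm:thresd sol}, and the non-degeneracy of the ground state from \cite{LLTX:Nondeg,LTX:Nondeg}. The overall architecture is: (i)~handle (b) by variational saturation; (ii)~for (a) and (c) establish a dichotomy between a solution that stays uniformly outside a small tube around the ground-state orbit $\{e^{i\theta}\lambda^{(d-2)/2}W(\lambda\,\cdot\,)\}$ and one that is eventually trapped inside such a tube; (iii)~in the ``far'' branch, run the concentration-compactness-rigidity machinery of \cite{MXZ:crit Hart:f rad} together with a radial localized-virial estimate for (c); (iv)~in the ``close'' branch, perform modulation analysis, use the spectrum of the linearized operator to obtain exponential convergence to $W$, and identify $u$ with $W^{\pm}$.

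Case (b) is immediate: if $E(u_0)=E(W)$ and $\|\nabla u_0\|_{L^2}=\|\nabla W\|_{L^2}$, then the Hartree term $\iint|u_0(x)|^2|u_0(y)|^2|x-y|^{-4}\,dxdy$ is forced to take the value it attains at $W$, which saturates the sharp inequality of Proposition~\ref{prop:sharp const}; hence $u_0$ is a ground state and, modulo the symmetries of \eqref{equ:Hart}, $u\equiv W$.

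For (a) and (c), a variational lemma first yields $\eta_0>0$ such that whenever the $\dot H^1$-distance from $u(t)$ to the ground-state orbit is at least $\eta_0$, one has $\bigl|\|\nabla u(t)\|_{L^2}-\|\nabla W\|_{L^2}\bigr|\gtrsim\eta_0$ with sign determined by the initial data. If $u$ stays in this ``far'' region for all positive (or all negative) times, the Kenig-Merle argument of \cite{MXZ:crit Hart:f rad} adapted to the threshold setting yields scattering in case (a), while in case (c) the additional hypothesis $u_0\in L^2$ together with radial symmetry permits a radial localized-virial/convexity argument on $\int\chi_R(x)|u|^2\,dx$ (via the Hartree virial identity as in \cite{MWX:Hart}) to produce $T_\pm<\infty$. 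Otherwise, $u$ is eventually trapped in the $\eta_0$-tube on some half-line $[T,\infty)$ (or $(-\infty,-T]$, after time reversal), and we write
\begin{equation*}
u(t,x)=e^{i\theta(t)}\lambda(t)^{\frac{d-2}{2}}\bigl(W+h(t)\bigr)\bigl(\lambda(t)x\bigr),
\end{equation*}
with $(\theta(t),\lambda(t))$ chosen so that $h(t)$ is symplectically orthogonal to the symmetry generators. Expanding $E$ about $W$ gives $E(u)-E(W)=\tfrac12\langle\LLL h,h\rangle+O(\|h\|_{\dot H^1}^{3})$, where $\LLL$ is the linearization of \eqref{equ:Hart} at $W$; by \cite{LLTX:Nondeg,LTX:Nondeg} its kernel is precisely the symmetry algebra, the generator $i\LLL$ has a simple real-eigenvalue pair $\pm e_0$ with eigenfunctions $\YYY^{\pm}$, and $\langle\LLL\cdot,\cdot\rangle$ is coercive on the orthogonal complement of $\mathrm{span}\{\YYY^+,\YYY^-\}$ plus the symmetry algebra. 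Introducing $\mu_\pm(t):=\omega(h(t),\YYY^{\mp})$ one obtains the Duyckaerts-Merle ODE system
\begin{equation*}
\dot\mu_\pm(t)=\pm e_0\,\mu_\pm(t)+O\bigl(\|h(t)\|_{\dot H^1}^{2}\bigr),\qquad \|h(t)\|_{\dot H^1}^{2}\lesssim |\mu_+(t)\mu_-(t)|+\|h(t)\|_{\dot H^1}^{3},
\end{equation*}
coupled with $E(u)-E(W)\equiv 0$. A standard trapping argument then forces $\|h(t)\|_{\dot H^1}\lesssim e^{-e_0(t-T)}$ as $t\to\infty$, and a fixed-point uniqueness statement in the class of exponentially-decaying perturbations of $W$ (as in \cite{DMerle:NLS:ThreshSol,DMerle:NLW:ThreshSol,MWX:Hart}) identifies $u$, up to symmetries, with $W^-$ in case (a) and with $W^+$ in case (c), according to the sign of $\|\nabla u_0\|_{L^2}-\|\nabla W\|_{L^2}$.

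The main obstacle is that the nonlocal convolution nonlinearity $\bigl(|x|^{-4}*|u|^2\bigr)u$ must be controlled at every step: in the coercivity of $\langle\LLL h,h\rangle$ on the modulation-orthogonal subspace, where the Riesz potential produces off-diagonal terms absent in the local case; in the $\dot H^1$-control of the quadratic and higher-order remainders, where Hardy-Littlewood-Sobolev estimates must replace pointwise product inequalities; and in the localized virial for (c), where the long-range convolution generates error terms that require careful cutoff choices and the radial Strauss decay. The spectral step in particular relies crucially on the non-degeneracy result of \cite{LLTX:Nondeg,LTX:Nondeg}, and is the decisive new input that makes the whole argument go through in the Hartree setting.
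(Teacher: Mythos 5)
The paper does not prove Theorem~\ref{thm:thresh class}; it is quoted from~\cite{LLTX:g-Hart} as a preliminary, so there is no in-paper argument to compare against. Your outline is the Duyckaerts--Merle threshold-classification scheme — saturation of Proposition~\ref{prop:sharp const} for (b); a far/near dichotomy for (a) and (c); concentration-compactness and a radial localized virial in the ``far'' branch; modulation analysis, the spectral gap from the non-degeneracy result, exponential convergence to $W$, and uniqueness in the class of exponentially decaying perturbations in the ``near'' branch — and this is exactly the approach of~\cite{LLTX:g-Hart} and~\cite{MWX:Hart}, so the proposal takes essentially the same route as the cited proof.

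One small imprecision worth flagging: in case (a) the conclusion is a two-sided Strichartz bound (scattering in both directions), so the dichotomy ``far for all $t\in\R$'' vs.\ ``eventually trapped as $t\to+\infty$ or as $t\to-\infty$'' has to be run in both time directions, and in the trapped branch one must additionally invoke Theorem~\ref{thm:thresd sol}(b) to get scattering on the other side; symmetrically, in case (c) the alternative to $u=W^+$ is $T_\pm<\infty$ in \emph{both} directions, which requires the virial argument on each half-line. These are bookkeeping points already handled in~\cite{LLTX:g-Hart}, not gaps in the strategy.
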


	In this paper,  we will make use of the one-pass lemma, which was pionerred by K.~Nakanishi and W.~Schlag in \cite{NakR, NakS:NLKG, NakS:NLS, NakS:NLKG:non} (see also \cite{KriNS:NLKG, KriNS:NLW rad, KriNS:NLW non, NakS:book,Roy}), to give a classification of the radial solutions of \eqref{equ:Hart} with the energy slightly larger than that of the ground states. 
	
	Let $d\geq 5$,  by the invairances of \eqref{equ:Hart} under the rotation and scaling symmetries,  we denote the two dimensional manifold of the stationary solutions by
	\begin{equation*}
	\mathcal{W}:=\left\{W_{\theta,\sigma}:=e^{i\theta}e^{\frac{d-2}{2}\sigma}W(e^{\sigma}x)\, |\; \theta,\sigma\in \R \right\}\subset\dot H^1_{rad}(\R^d),
	\end{equation*} 
	the distance from $\mathcal{W}$ and its $\delta$-neighborhood  by
	\begin{equation*}
	d_{\mathcal{W}} (\varphi):=\inf_{\theta,\sigma \in \R} {\Vert{\varphi-W_{\theta,\sigma}} \Vert}_{\dot H^1} , \;\; 
	B_{\delta}(\mathcal{W}):=\{\varphi \in \dot H^1_{rad} |d_{\mathcal{W}} (\varphi)<\delta\},
	\end{equation*}
	and denote
	\begin{equation*} 
	\begin{aligned}
	\mathcal{H}^{\epsilon}:=&\{\; \varphi\in\dot H^1_{rad}(\R^d) \, |\;E(\varphi)<E(W)+\epsilon^2 \},\\
	\mathcal{S}_{\pm}:=&\{u_0\in\dot H^1_{rad} (\R^d)\, |\text{ the solution $u$ scatters as} \enspace t\rightarrow\pm\infty\}, \\
	\mathcal{B}_{\pm}:=&\{u_0\in\dot H^1_{rad} (\R^d)\, |\text{ the solution $u$ blows up as} \pm t>0 \}.
	\end{aligned}
	\end{equation*}
	\begin{theorem}\label{thm:above thresh} Let $d\geq 5$, there is an absolute constant  $\epsilon_\star \in(0,1)$ such that for each $\epsilon \in \left(0,\epsilon_\star\right]$, there exist a relatively closed set  $\mathcal{X}_\epsilon \subset \mathcal{H}^\epsilon$, and a continuous function $\Theta:\mathcal{H}^\epsilon \setminus\mathcal{X}_\epsilon \rightarrow \{ \pm 1 \} $, with the following properties.
		\begin{enumerate}
			\item[$(1)$] $\mathcal{W} \subset \mathcal{X}_\epsilon \subset B_{C \epsilon} \left( \mathcal{W}\right)$ for some absolute constant $C\in(0,\infty)$. 
			\item[$(2)$] The values of $\Theta$ are independent of $\epsilon$.
			\item[$(3)$] For each $u_0 \in \mathcal{H}^\epsilon$ and the solution of \eqref{equ:Hart},
			\begin{equation*}
			\begin{aligned}
			I_0(u):=\left\{t\in I(u)\; |\;  u(t)\in\mathcal{X}_\epsilon \right\}
			\end{aligned}
			\end{equation*}
			is either empty or an interval. Therefore, $I(u)\setminus I_0(u)$ consists of at most two open intervals. Moreover, let $\sigma\in \{\pm\}$. 
			\begin{enumerate}
				\item[(a)] If $\Theta\left( u(t) \right)=+1$ for t close to $T_\sigma(u)$, then $u_0 \in \mathcal{ S }_{\sigma}$. 
				\item[(b)] If $\Theta\left( u(t) \right)=-1$ for t close to $T_\sigma(u)$ and $u_0\in L^2\left( \mathbb{R}^d \right)$, then  $u_0 \in \mathcal{ B }_{\sigma}$.
			\end{enumerate}
		\end{enumerate}
		
	\end{theorem}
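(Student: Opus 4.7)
\smallskip

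\noindent\textbf{Proof proposal.} The plan is to adapt the Nakanishi--Schlag scheme developed for NLKG and NLS (and carried out for NLW in \cite{KriNS:NLW rad,KriNS:NLW non,Roy}) to the nonlocal Hartree setting, combining a hyperbolic analysis in a neighborhood of $\mathcal{W}$ with the variational dichotomy of Theorem \ref{thm:below thresh} in the complementary region. The first step is to set up modulation parameters $(\theta(t),\sigma(t))$ near $\mathcal{W}$: for $u\in B_\delta(\mathcal{W})$ with $\delta$ sufficiently small, we write $u=W_{\theta,\sigma}+v$ and impose two orthogonality conditions (corresponding to the kernel directions $iW$ and $\Lambda W$ of the linearized operator) chosen so that $d_{\mathcal{W}}(u)\sim\|v\|_{\dot H^1}$. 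Standard modulation then gives smooth $(\theta(t),\sigma(t))$ along a trajectory while $u(t)\in B_\delta(\mathcal{W})$, with error $O(\|v\|_{\dot H^1}^2)$ in the ODEs for $\dot\theta,\dot\sigma$.

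Next I would study the spectrum of the linearized Hamiltonian around $W$. Writing the equation as $\partial_t u=iE'(u)$, the real-linearization $JE''(W)$ has a two-dimensional generalized kernel generated by the rotational and scaling symmetries, and—crucially—exactly one pair of simple real eigenvalues $\pm k$ with $k>0$; all other spectrum is purely imaginary. Non-degeneracy in the energy space, supplied by \cite{LLTX:Nondeg,LLTX:g-Hart,LTX:Nondeg}, is exactly what excludes extra zero modes beyond the symmetries, and for the nonlocal Hartree nonlinearity this is the key input that makes the spectral picture parallel to the NLS/NLW cases. This yields a scalar unstable-mode coordinate $\lambda(t):=\langle v(t),Y_+\rangle$ (with $Y_\pm$ the eigenfunctions associated with $\pm k$) satisfying $\dot\lambda=k\lambda+O(\|v\|_{\dot H^1}^2)$, plus a dispersive complement $\gamma(t)$ controlled in $\dot H^1$ by a modified energy-norm equivalent to $\|v\|_{\dot H^1}^2-k\lambda^2$.

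From this I derive the ejection lemma: if $d_{\mathcal{W}}(u(t_0))$ is of size $\eps$ and the energy excess is $\le\eps^2$, then either the trajectory remains in $B_{C\eps}(\mathcal{W})$ for all forward time, or there is a first exit time $t_\star$ at which $|\lambda|$ dominates, grows exponentially past $B_{\delta}(\mathcal{W})$, and a sign-definite variational functional $K(u(t_\star))$ (Nehari/virial type, adapted to the Hartree nonlinearity and whose sign is forced by the sign of $\lambda(t_\star)$) becomes coercive: $\mathrm{sgn}\,K\equiv\mathrm{sgn}\,\lambda$ once one is $C\eps$ away from $\mathcal{W}$. Outside $B_{C\eps}(\mathcal{W})$, Proposition \ref{prop:sharp const} and the concavity of the energy on scalings show that the sign of $K$ is propagated by the flow, and Theorem \ref{thm:below thresh} (applied to rescaled sub-threshold data) together with a standard Kenig--Merle-type profile decomposition in the radial class produces scattering when $K>0$; for $K<0$ and $u_0\in L^2$, the concavity of $\int|x|^2|u|^2\,dx$ yields blow-up in the spirit of Theorem \ref{thm:below thresh}(b). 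This lets me define $\Theta(u):=\mathrm{sgn}\,K(u)\in\{\pm1\}$ outside $B_{C\eps}(\mathcal{W})$, and more generally on $\mathcal{H}^\eps\setminus\mathcal{X}_\eps$ once $\mathcal{X}_\eps$ is chosen to be the set of data whose trajectory stays in $B_{C\eps}(\mathcal{W})$ for at least one of the two time directions.

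The main obstacle, and the heart of the argument, is the \emph{one-pass lemma}: the trajectory can enter and leave $B_{C\eps}(\mathcal{W})$ at most once, so that $I_0(u)$ is an interval and $\Theta$ is well defined on both ends of $I\setminus I_0(u)$. I would prove it by a localized virial-type identity: introduce a radial cutoff weight $w_R(x)$ adapted to the Hartree convolution $(|x|^{-4}*|u|^2)u$ (so that both the Laplacian and the nonlocal term produce controllable boundary terms under differentiation), and show that along any hypothetical re-entry interval $[t_1,t_2]$ with exit/entry at $d_{\mathcal{W}}=C\eps$, the double-sign $\int_{t_1}^{t_2}K(u)\,dt$ has size $\gtrsim\eps$, while the virial boundary terms are $O(\eps^2)$ by the ejection bound and Strichartz; this forces a contradiction for $\eps_\star$ small enough. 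The delicate point specific to Hartree is the treatment of the nonlocal term under the cutoff: one must exploit radial symmetry plus the Hardy--Littlewood--Sobolev inequality in the sharp form of Proposition \ref{prop:sharp const} to dominate the tails, together with the exponential ejection estimate that gives $|\lambda(t)|\gtrsim\eps$ on most of $[t_1,t_2]$. Once the one-pass lemma is in hand, the set $\mathcal{X}_\eps$ is closed by continuity of the flow in $\dot H^1$, contains $\mathcal{W}$ trivially, lies in $B_{C\eps}(\mathcal{W})$ by construction, and $\Theta$ is continuous and independent of $\eps$ since it is determined by the sign of $K$ on the already-ejected portion of the trajectory; statements (a) and (b) then follow from the scattering/blow-up analysis in the sign-definite region described above.
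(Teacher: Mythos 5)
Your proposal follows the same overall scheme as the paper: modulation near $\mathcal{W}$ with orthogonality conditions, spectral analysis of the linearized operator (made possible by the non-degeneracy results), the ejection lemma, a one-pass/no-return lemma via a localized virial, a sign functional, and a scattering/blow-up dichotomy after ejection. However two of the deepest steps, which carry most of the technical weight in the paper, are substantially under-specified, and one of the heuristics you give is not quite correct.

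First, your sketch of the one-pass lemma would only work in the $\Theta=-1$ branch, and even there the claimed boundary estimate is off. You assert that the re-entry interval forces $\big|\int_{t_1}^{t_2}K(u)\,dt\big|\gtrsim\eps$ while the virial boundary terms are $O(\eps^2)$; in fact the boundary estimate is of the form $|\dot V_R(t_a)|,|\dot V_R(t_b)|\lesssim R\delta\,e^{-\sigma}$, proportional to the cutoff radius $R$ and to the (dynamical) scale $e^{-\sigma}$, not $O(\delta^2)$. Because of the scale invariance of the problem, the cutoff radius $R$ has to be chosen relative to the soliton size $R_H:=\sup_{I_H}e^{-\sigma}$ in the hyperbolic region, and against a concentration radius ($R_V^-$ or $R_V^+$) in the variational region, and one must verify that a single $R$ simultaneously controls the cutoff error over $I_V$ and the hyperbolic contribution over $I_H$. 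In the $\Theta=+1$ case the virial by itself is \emph{not} enough: $\ddot V_R\approx 8K>0$ pushes $\dot V_R$ up, and there is no a priori bound on $\dot V_R$ large that yields a contradiction. In the worst case (when $R_V^+$ is very large) the paper instead derives a decay estimate for $\|u/r\|_{L^2}$ on a variational subinterval, feeds it into Bourgain's energy-induction scheme to split off a sub-threshold solution $\widetilde w$ with $E(\widetilde w)<E(W)$, invokes the below-threshold scattering theory, and uses long-time perturbation to contradict the assumption that the original solution returns to $\widetilde{B}_\delta(\mathcal{W})$. Nothing of this is in your proposal, and it is the core of the argument in the critical case.

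Second, the definition and continuity of $\Theta$ near $\mathcal{W}$ is not resolved by "$\Theta:=\mathrm{sgn}\,K$ outside $B_{C\eps}(\mathcal{W})$ and more generally on $\mathcal{H}^\eps\setminus\mathcal{X}_\eps$". Since $K$ vanishes on $\mathcal{W}$, $\mathrm{sgn}\,K$ is neither continuous nor even well-defined on a full neighborhood of $\mathcal{W}$. The paper's fix is structural: $\mathcal{X}_\eps$ is defined \emph{statically} by $c_D\widetilde d_{\mathcal{W}}(\varphi)\le\eps$, precisely so that its complement in $\mathcal{H}^\eps$ lies in $\check{\mathcal{H}}$, and $\Theta$ is then built (Proposition \ref{prop:sign funct}) by matching $\mathrm{sgn}\,K$ away from $\mathcal{W}$ with $-\mathrm{sgn}\,\lambda_1$ close to $\mathcal{W}$; that these two agree on the overlap annulus is itself a consequence of the ejection lemma (the estimate \eqref{est:dyn Keigen}). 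With your dynamical definition of $\mathcal{X}_\eps$ (trajectories remaining in $B_{C\eps}(\mathcal{W})$ in one time direction), the complement still contains points where $\widetilde d_{\mathcal{W}}$ is tiny and $K$ is near zero; one would have to define $\Theta$ by following the trajectory to the ejection time and then check that forward and backward ejection give the same sign and that the result is continuous in initial data — this is exactly what the paper establishes, and should not be taken for granted. In short: the plan is the right one, but the sign functional near $\mathcal{W}$ and the $\Theta=+1$ one-pass argument (Bourgain's energy induction plus perturbation) need to be carried out in detail, and the virial boundary estimate should be corrected to account for the scale parameter.
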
	
	
	\begin{remark} We give some remarks.
		\begin{enumerate}
			\item 
			In other words, every solution with energy less than $E(W)+\epsilon ^2$ can stay in $\mathcal{X}_\epsilon$ only for an interval of time, though it can be the entire existence time. Once the solution gets out of $\mathcal{X}_\epsilon$, it has to either scatter or blow-up, according to the sign function $\Theta\left( u(t) \right)$, though we need an additional condition $u_0\in L^2\left( \mathbb{R}^d \right)$ to ensure the blow-up. The above result relies upon two key components: the ejection mechanic and the one-pass lemma. 
			
			\item 
			Compared with the subcritical NLS and NLKG in \cite{KriNS:NLKG, NakS:NLKG, NakS:book, NakS:NLS, NakS:NLKG:non}, we have to introduce a scaling parameter in order to describe the dynamics near the manifold $
			\mathcal{W}$ in the critical case, and we have to take it into acount in the proof of the one-pass lemma by comparing the contribution in the variational region and in the hyperbolic region. we can also refer to \cite{KriNS:NLW rad, KriNS:NLW non, NakR,Roy}. 
			
			\item 
			For other dynamics of Hartree equation, please refer to \cite{CaoG,GinV:Hart,KriLR:crit Hart,KriMR:subcrit Hart,LiMZ:crit Hart,LiZh:class Hart,LLTX:g-Hart, MXZ:crit Hart:def rad,MXZ:subcrit Hart,MXZ:m crit Hart,MXZ:m crit Hart:blowup,MXZ:crit Hart:def non, Nak:Hart, Zhou}.  
			
		\end{enumerate}
	\end{remark}
	
	The above properties hold in both the time directions. Concerning the relation between forward and backward dynamics, we have
	
	\begin{theorem}\label{thm:exist} For any $\epsilon>0$, each of the four intersections 
		\begin{equation*}
		\begin{aligned}
		\mathcal{S}_{-} \cap \mathcal{S}_{+}, \enspace \mathcal{B}_{-} \cap \mathcal{S}_{+},\enspace \mathcal{S}_{-} \cap \mathcal{B}_{+},\enspace \mathcal{B}_{-} \cap \mathcal{B}_{+}
		\end{aligned}
		\end{equation*}
		has non-empty interior in $\mathcal{H}^\epsilon \cap L^2\left( \mathbb{R}^d \right)$.
	\end{theorem}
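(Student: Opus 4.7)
The plan is to use the threshold solutions $W^{\pm}$ from Theorem \ref{thm:thresd sol} as reference points and perturb them along the single unstable direction of the flow linearized around the ground state, exploiting the continuity of $\Theta$ established in Theorem \ref{thm:above thresh}. Since $\mathcal{S}_{\pm}$ are open in $\dot H^1_{rad}$ by stability of scattering and $\mathcal{B}_{\pm}\cap L^2$ are open in $\dot H^1_{rad}\cap L^2$ by stability of finite-time blow-up for radial $L^2$ data, it suffices to exhibit one element of $\mathcal{H}^\epsilon\cap L^2$ in each of the four intersections.

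The two diagonal intersections are classical. For $\mathcal{S}_-\cap\mathcal{S}_+$, any sufficiently small radial data in $\dot H^1\cap L^2$ lies in this set by small-data scattering. For $\mathcal{B}_-\cap\mathcal{B}_+$, I pick any radial $u_0\in L^2$ with $E(u_0)<E(W)$ and $\|\nabla u_0\|_{L^2}>\|\nabla W\|_{L^2}$; the two-sided blow-up is given by Theorem \ref{thm:below thresh}(b), and both conditions are open in $\dot H^1\cap L^2$.

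For the mixed intersections, I use the one-parameter family $u_{0,s}^\pm := W_0^\pm + s\rho$ for small real $s$, where $\rho$ is the $L^2$-normalized eigenfunction of the linearization $\mathcal{L}$ around $W$ corresponding to its unique negative eigenvalue $-k^2$ (the spectral picture follows from the non-degeneracy theorems of \cite{LLTX:Nondeg,LLTX:g-Hart,LTX:Nondeg}). Consider $u_{0,s}^+$. Backwards in time, $u^{+,s}$ stays $\dot H^1$-close to $W^+$ on compact subintervals of $(-T_-(W^+),0]$, hence inherits the finite-time blow-up, so $u_{0,s}^+\in\mathcal{B}_-$ for $|s|$ small. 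Forwards in time, $W^+(t)\to W$ in $\dot H^1$ as $t\to+\infty$ while the $\rho$-component of the perturbation is amplified by the hyperbolic linearized dynamics like $e^{kt}$; eventually the unstable coordinate dominates the decaying difference $W^+(t)-W$, so the ejection lemma applies and the sign of $\Theta$ after ejection agrees with $\operatorname{sgn}(s)$. Combined with Theorem \ref{thm:above thresh}, this places $u_{0,s}^+$ in $\mathcal{B}_-\cap\mathcal{S}_+$ when $s>0$ and in $\mathcal{B}_-\cap\mathcal{B}_+$ when $s<0$. The symmetric construction with $u_{0,s}^-$ (whose backward behavior is scattering by stability from $W^-$) populates $\mathcal{S}_-\cap\mathcal{S}_+$ and $\mathcal{S}_-\cap\mathcal{B}_+$.

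The main technical obstacle is the forward ejection step: verifying that the unstable coordinate of $u_{0,s}^\pm$ actually triggers the ejection lemma with the correct sign. I would carry out a modulation analysis in a small $\dot H^1$-neighborhood of $\mathcal{W}$, decomposing $u(t)=W_{\theta(t),\sigma(t)}+\alpha(t)\rho+\gamma(t)$ with $\gamma$ in the spectral complement of $\rho$ and of the generalized null space of $\mathcal{L}$, and derive the expected hyperbolic ODE $\dot\alpha=k\alpha+O(\|u-W_{\theta,\sigma}\|_{\dot H^1}^2)$ together with a dispersive-stable equation for $\gamma$. Because $W^\pm(t)$ approaches $W$ only along the center and stable directions (as built in \cite{MWX:Hart}), the initial unstable coordinate $\alpha(0)\approx s$ grows through the ejection threshold before $\gamma$ and the modulation corrections can absorb it, fixing the sign of $\Theta$ once the solution exits $\mathcal{X}_\epsilon$; Theorem \ref{thm:above thresh} then identifies this sign with scattering or blow-up and the proof concludes.
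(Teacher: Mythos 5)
Your proposal takes a genuinely different route from the paper, but it has a gap that the paper's construction is specifically designed to sidestep.

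The paper does not touch $W^\pm$ at all. Instead it constructs four initial data directly in a small modulation neighborhood of $\mathcal{W}$, in the coordinates of \eqref{coor arou W}, with $\gamma(0)$ a fixed (carefully truncated, to control $L^2$) correction and $\vec{\lambda}(0)=\beta(\pm1,0)$ or $\beta(0,\pm1)$. The two choices $\vec{\lambda}(0)=\pm\beta(1,0)$ give $E(u)-E(W)\sim -\beta^2<0$ and $K(u(0))\sim\mp\beta$, so Theorem~\ref{thm:below thresh} places them in $\mathcal{S}_-\cap\mathcal{S}_+$ and $\mathcal{B}_-\cap\mathcal{B}_+$. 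The two choices $\vec{\lambda}(0)=\pm\beta(0,1)$ have $E(u)-E(W)\sim+\beta^2$, and the explicit hyperbolic ODE for $\lambda_\pm$ gives $\lambda_1(\tau)\approx\pm\beta\sinh(\mu\tau)$, which changes sign at $\tau=0$; since $\Theta=-\operatorname{sign}\lambda_1$ after ejection, the forward and backward ejection signs are opposite, giving the two mixed sets. The key point is that the sign of the unstable coordinate is prescribed \emph{by hand} at $t=0$ in both time directions.

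Your idea of perturbing $W^\pm_0$ is natural, but the crucial step — ``the initial unstable coordinate $\alpha(0)\approx s$ grows through the ejection threshold'' with sign equal to $\operatorname{sgn}(s)$ — is not established and is the genuine content one would need. At $t=0$, $W^+_0$ is at a fixed $\dot H^1$-distance from $W$ (indeed $\|\nabla W^+_0\|>\|\nabla W\|$), so the modulation coordinates $(\lambda_\pm,\gamma)$ are not even defined there. You must first flow to a large time $T$ where $W^+(T)$ enters $B_{\delta_E}(\mathcal{W})$, then decompose $u^{+,s}(T)$. At that point the unstable coordinate is $s\,\omega(\mathcal{U}(T)\rho,g_-)+O(s^2)$ where $\mathcal{U}(T)$ is the linearized flow around $W^+$ (not around $W$); its sign relative to $s$ is not determined without further analysis, and as $s\to0$ the ejection time diverges, so a naive continuity argument doesn't close. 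A connectedness argument (the scattering and blow-up sets of $s$ are both open, $0$ is in neither) also fails: each connected component of $(-s_0,s_0)\setminus\{0\}$ could be entirely scattering, say, without contradiction. There is also a secondary issue: since $W^+_0$ is not a critical point of $E$, one has $E(W^+_0+s\rho)=E(W)+c\,s+O(s^2)$ with $c$ generically nonzero, so one sign of $s$ drops the energy below $E(W)$ and puts you back in the Theorem~\ref{thm:below thresh} regime, where mixed behavior is impossible; you would need to choose the sign of $s$ or add a compensating correction, and again the correlation with the ejection sign must be controlled. The paper avoids all of this by prescribing $\vec{\lambda}(0)$ explicitly.

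To repair your argument you would essentially need to prove a quantitative ``unstable coordinate persists with definite sign'' lemma along the orbit of $W^\pm$ — which is in effect re-deriving the hyperbolic ODE and ejection estimates of Proposition~\ref{prop:eject mod} starting from $W^\pm$ rather than from $W$, i.e., about as much work as the paper's approach but with an extra layer (the nonlinear flow from $W^\pm_0$ to the neighborhood of $W$) to control. The paper's direct construction is shorter for exactly this reason.
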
	
	
	In particular, there are infinitely many solutions which scatter on one side of time and blow up on the other. According to Theorem \ref{thm:above thresh}, such transition can occur only by changing $\Theta(u)$ from $+1$ to $-1$ or vice versa, going through small	neighborhood of the manifold $\mathcal{W}$, but the change is allowed at most once for each solution.
	
	%
	
	The paper is organized as follows. In Section \ref{sect:notation}, we state the spectral properties of the linearized operator $\mathcal{L}$ around the ground state $W$, which rely on the nondegeneracy of the ground state $W$. In Section \ref{sect:main thm}, we mainly state the one-pass lemma and the ejection lemma, which are the key steps of this paper. For the ejection lemma, we can show that the dynamics can be ruled by that of its unstable eigenmode of the linearized operator around $\mathcal{W}$ just as the dynamics of solutions of linear differential equations. We explicitly construct the nonlinear distance functionals $\widetilde{d}_{\mathcal{W}}$ and $\Theta$  in terms of the eigenfunctions of the linearized operator. In Section \ref{sect:one pass}, we give the proof of the one-pass lemma. The process involves a parameter $R$, which is the cut-off radius for the localization. It precludes “almost homoclinic orbits”, i.e., those solutions starting in, the moving away from, and finally returning to, a small neighborhood of the ground states. Indeed, this part of analysis is the main part of this paper. In Section \ref{sect:asymp behr},  we prove Proposition \ref{prop:asyp behar}, which is shown by contradiction. For the case $\Theta(u)=+1$, the existence of the critical element $U_c$ leads to the contradiction. In Section \ref{sect:pf main thm}, we prove Theorem \ref{thm:above thresh}, which implies that the fate of the solutions depends on $\Theta(u(t))$ when $u$ is ejected. In Section \ref{sect:pf exist}, we prove Theorem \ref{thm:exist} which shows the existence of the radial solutions in the four intersections.
	
	\noindent \subsection*{Acknowledgements.}
G. Xu is supported by National Key Research and Development Program of China (No.~2020YFA0712900) and by NSFC (No. ~12371240, No.~12431008).

	%
	%
	%
	%
	\section{Notation}\label{sect:notation}
	
	In this section, we introduce some notation that will be used in the context.
	
	If $x$ is a complex number, then $ x=\mathrm{Re}(x)+i\, \mathrm{Im}(x)=x_1+ix_2$. Here $\mathrm{Re}(x)$ and $x_1$ (resp. $\Im(x)$ and $x_2$) denote the real part (resp. the imaginary part) of $x$. 
	
	Given two real numbers $x$, $y$, $x\lesssim y$ (resp. $\gtrsim$) means that there exists a universal constant $C>0$ such that $x\leq Cy$ (resp. $x\geq Cy$). For any funtion $f$ on $\mathbb{R}$ or $\left[0, \infty \right)$, and for any $m \in \left(0, \infty \right)$, we denote by $f_{m}$ the following rescaled function
	$
	f_{m}(r):=f\left( r/m \right).
	$
	
	Let $S_{a}^{\sigma}$ be the one-parameter group of dilation operators defined as follows
	\begin{equation*}
	S_{a}^{\sigma}f(x):=e^{(d/2+a)\sigma}f\left( e^{\sigma}x\right),
	\end{equation*}
	and let $S_{a}':=\partial_{\sigma}S_{a}^{\sigma}|_{\sigma=0}$ be its generator. It is easy to see that the adjoint is given by $\left( S_{a}^{\sigma}\right)^{*}=S_{-a}^{-\sigma}$, hence by differentiating in $\sigma$, we have
	$
	\left( S_{a}'\right)^*=-S_{-a}'.
	$
	
	Let $d\geq 5$ and  $L_{x}^{p}(\R^d)=L^p(\R^d)$ denote the standard $L^p$ space on $\mathbb{R}^d$. The Fourier transform of $\varphi \in \mathcal{S}^{'}(\mathbb{R}^d) $ is denoted by $\widehat{\varphi}$. Let $\phi:\mathbb{R} \rightarrow \left[0, \infty\right)$ be a smooth cut-off function, which is an even function satisfying  $t\phi'(t)\leq0$,  and 
	\begin{equation}\label{funct:cutf funct}
	\phi(t)=1 \;\text{ if }\;
	|t|\leq 1 , \; \text{  and}\; \; 
	\phi(t)=0
	\; \text{ if } \;  |t|\geq 2   . 
	\end{equation}
	For any $m>0$, the Littlewood-Paley operators $P_{<m}$,\enspace$P_{\geq m}$ and $P_{m}$ are defined by
	\begin{equation*}
	\begin{aligned}
	\widehat{P_{<m}f}(\xi):=\phi_m(|\xi|)\widehat{f}(\xi),\quad P_{\geq m}f :=f-P_{<m}f,\quad P_{m}f:=P_{<m} f -P_{<m/2}f.
	\end{aligned}
	\end{equation*}

	We denote by $S,W,\bar{S}$ and $N$ the following space-time spaces on $\mathbb{R}^{1+d}$
	\begin{equation*}
	\begin{aligned}
	S(I):=L^{6}\left( I;L^{\frac{6d}{3d-8}} (\mathbb{R}^d)\right), \quad W(I):=L^{6}\left( I;L^{\frac{6d}{3d-2}} (\mathbb{R}^d)\right),\\
	\bar{S}(I):=L^{3}\left( I;L^{\frac{6d}{3d-4}} (\mathbb{R}^d)\right),\quad N(I):=L^{\frac{3}{2}}\left( I;L^{\frac{6d}{3d+4}} (\mathbb{R}^d)\right).
	\end{aligned}
	\end{equation*}
	We also use the homogeneous Sobolev spaces defined by completion of the Schwartz space with respect to the norm
	\begin{equation*}
	\Vert v\Vert_{X^1}:=\Vert \nabla v\Vert_{X}\quad \left( X=W,\bar{S},N\right).
	\end{equation*}
	For any interval $J\subset \mathbb{R}$ and any function space $X$ on $\mathbb{R}^{1+d}$, the restriction of $X$ onto $J$ is denoted by $X(J)$. The Sobolev embedding implies
	\begin{equation*}
	\Vert v\Vert_{S(J)} \lesssim \Vert v\Vert_{W^1(J)}.
	\end{equation*}
	We recall the $L^p$ decay and the Strichartz estimates. 
	\begin{proposition}[Dispersive and Strichartz estimates, \cite{Caz:book}]\label{prop:disp ST est}
		For any $p\in\left[2,\infty\right]$, we have
		\begin{equation*}
		\Vert e^{it\Delta}\varphi\Vert_{L^p} \lesssim |t|^{-d(1/2-1/p)}\Vert \varphi \Vert_{L^{p'}},
		\end{equation*}
		where $p'=p/(p-1)$ is the H\"older exponent of $p$, and 
		\begin{equation}\label{est:St est}
		\begin{aligned}
		\Vert e^{-it\Delta}\varphi\Vert_{\left( L_{t}^{\infty} L_{x}^2 \cap \bar{S} \cap W \right)(\R)} \lesssim \Vert \varphi \Vert_{L^2},\\
		\Vert \int_{0}^{t} e^{-i(t-s)\Delta}F(s)ds \Vert_{\left( L_{t}^{\infty} L_{x}^2 \cap \bar{S} \cap W \right)(J)} \lesssim \Vert F\Vert_{N(J)}.
		\end{aligned}
		\end{equation}
	\end{proposition}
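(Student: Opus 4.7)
The plan is to separate the pointwise dispersive bound from the space-time Strichartz estimates, both of which are classical consequences of the explicit integral kernel of the free Schr\"odinger propagator.

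First, for the dispersive inequality, I would start from the explicit representation
\begin{equation*}
e^{it\Delta}\varphi(x) \;=\; (4\pi it)^{-d/2}\int_{\R^d} e^{i|x-y|^2/(4t)}\,\varphi(y)\,dy,
\end{equation*}
which, by placing absolute values inside the integral, yields the endpoint bound $\|e^{it\Delta}\varphi\|_{L^\infty}\lesssim |t|^{-d/2}\|\varphi\|_{L^1}$. At the other endpoint, Plancherel gives the $L^2$-isometry $\|e^{it\Delta}\varphi\|_{L^2}=\|\varphi\|_{L^2}$, since on the Fourier side $e^{it\Delta}$ acts as multiplication by $e^{-it|\xi|^2}$. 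Riesz--Thorin interpolation between these two endpoints then produces the stated bound $\|e^{it\Delta}\varphi\|_{L^p}\lesssim |t|^{-d(1/2-1/p)}\|\varphi\|_{L^{p'}}$ for every $p\in[2,\infty]$.

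Second, for the Strichartz estimates \eqref{est:St est}, I would first check that the Lebesgue exponents appearing in $\bar S$ and $W$ correspond to Schr\"odinger-admissible pairs in dimension $d\ge 5$, i.e.\ they satisfy the scaling identity $\tfrac{2}{q}+\tfrac{d}{r}=\tfrac{d}{2}$ with $2\le q\le\infty$ and $2\le r\le \tfrac{2d}{d-2}$. A direct computation shows that both $(q,r)=(6,\tfrac{6d}{3d-2})$ (the $W$-pair) and $(q,r)=(3,\tfrac{6d}{3d-4})$ (the $\bar S$-pair) satisfy this identity, while $(\infty,2)$ is trivially admissible; furthermore the dual of the $W$-pair is exactly $N=L^{3/2}_t L^{6d/(3d+4)}_x$. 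The Keel--Tao $TT^*$ machinery then applies: feeding the dispersive bound just established into the abstract argument yields the homogeneous estimate $\|e^{-it\Delta}\varphi\|_{L^q_t L^r_x}\lesssim \|\varphi\|_{L^2}$ for each admissible pair, and, by duality and the Christ--Kiselev lemma to pass from the full to the retarded time integral, delivers the inhomogeneous estimate with the $N$-norm on the right. Taking intersections over admissible pairs on the left gives the stated inequalities.

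There is essentially no obstacle to this argument once the dispersive bound is in hand: the only conceivable subtlety in Keel--Tao is the endpoint pair $(2,\tfrac{2d}{d-2})$, but none of the pairs above is this endpoint in any dimension $d\ge 5$, so the issue does not arise. Since every step is entirely standard, the proof can in practice simply be cited from \cite{Caz:book}.
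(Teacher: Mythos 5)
Your argument is the standard one behind the citation to \cite{Caz:book}: the paper itself offers no proof, so there is no ``paper route'' to compare against, and your plan (explicit kernel $\Rightarrow$ $L^1\to L^\infty$ endpoint, Plancherel $\Rightarrow$ $L^2$ isometry, Riesz--Thorin, then Keel--Tao $TT^*$ plus Christ--Kiselev for the retarded Duhamel term) is exactly the expected one. Your admissibility checks for $(6,\tfrac{6d}{3d-2})$ and $(3,\tfrac{6d}{3d-4})$ are correct, and you are right that none of these is the Keel--Tao endpoint $(2,\tfrac{2d}{d-2})$ in dimension $d\ge5$.

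One factual slip: $N=L^{3/2}_tL^{6d/(3d+4)}_x$ is the H\"older dual of the $\bar S$-pair $(3,\tfrac{6d}{3d-4})$, not of the $W$-pair $(6,\tfrac{6d}{3d-2})$; the dual of the $W$-pair is $L^{6/5}_tL^{6d/(3d+2)}_x$. This does not damage the argument, since the (non-endpoint) Strichartz machinery allows you to put any admissible norm on the left of the Duhamel estimate and the dual of any admissible pair on the right, and $N$ is indeed the dual of an admissible pair. You just need to invoke the mixed-exponent form of the inhomogeneous estimate rather than a pure duality with $W$.
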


	In addition to regard the ground state $W$ of \eqref{equ:Hart} as the extremizer of the sharp Hardy-Littlewood-Sobolev inequality and the sharp Sobolev inequality in Proposition \ref{prop:sharp const},  we have the similar constrained variational characterizaion as those in \cite{IbrMN:NLKG, NakS:book}.  
	\begin{proposition}[Variational characterization,  \cite{IbrMN:NLKG, NakS:book}]\label{prop:GS char}For the extremizer $W$ in Proposition \ref{prop:sharp const}, we have
		\begin{equation}\label{char:Neh char}
		\begin{aligned}
		E(W)=&\inf\{E(\varphi)|0\neq\varphi \in\dot H^1(\R^d),\; K(\varphi)=0\} \\
		=&\inf\{G(\varphi)|0\neq\varphi \in\dot H^1(\R^d),\; K(\varphi)\leq0\} \\
		=&\inf\{I(\varphi)|0\neq\varphi \in\dot H^1(\R^d),\; K(\varphi)\leq0\}\ 
		\end{aligned}
		\end{equation}
		where
		\begin{equation}\label{quan:KIG}
		\begin{aligned}
		K(\varphi):=&\Vert \nabla \varphi \Vert_{L^2}^{2} -{\Vert{\left(|x|^{-4}*|\varphi|^2\right)|\varphi|^2}\Vert}_{L^1},\\
		G(\varphi):=&E(\varphi)-K(\varphi)/4=\Vert \nabla \varphi \Vert_{L^2}^{2} /4,\\
		I(\varphi):=&E(\varphi)-K(\varphi)/2={\Vert{\left(|x|^{-4}*|\varphi|^2\right)|\varphi|^2}\Vert}_{L^1}/4.
		\end{aligned}
		\end{equation}
	\end{proposition}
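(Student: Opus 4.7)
The plan is to reduce all three constrained variational problems to the single sharp Sobolev--HLS inequality of Proposition~\ref{prop:sharp const} via the purely algebraic identities $E(\varphi)=G(\varphi)+K(\varphi)/4$ and $E(\varphi)=I(\varphi)+K(\varphi)/2$, together with the trivial consequence $\|\nabla\varphi\|_{L^2}^2 \le \|(|x|^{-4}*|\varphi|^2)|\varphi|^2\|_{L^1}$ whenever $K(\varphi)\le 0$. I would show each of the three functionals is bounded below by the single explicit number $\|\nabla W\|_{L^2}^2/4$ on its respective admissible set, and that equality is realised by $W$ itself.

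First I would compute $E(W)$ explicitly. Since $W$ is the extremizer in Proposition~\ref{prop:sharp const}, the Lagrange-multiplier formulation (after rescaling) gives the Euler--Lagrange equation $-\Delta W=(|x|^{-4}*|W|^2)W$; pairing with $\bar W$ yields the Pohozaev-type identity $K(W)=0$. Inserting the saturation $\|(|x|^{-4}*|W|^2)|W|^2\|_{L^1}=C_d^{4}\|\nabla W\|_{L^2}^{4}$ from Proposition~\ref{prop:sharp const} then pins down $\|\nabla W\|_{L^2}^{2}=C_d^{-4}$, hence $E(W)=G(W)=I(W)=\tfrac{1}{4C_d^{4}}$.

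Next, for any nonzero $\varphi\in\dot H^1(\R^d)$ with $K(\varphi)\le 0$, combining $\|\nabla\varphi\|_{L^2}^2\le\|(|x|^{-4}*|\varphi|^2)|\varphi|^2\|_{L^1}$ with the sharp inequality gives $\|\nabla\varphi\|_{L^2}^{2}\le C_d^{4}\|\nabla\varphi\|_{L^2}^{4}$, yielding the coercivity bound $\|\nabla\varphi\|_{L^2}^{2}\ge C_d^{-4}$. This immediately delivers $G(\varphi)=\tfrac14\|\nabla\varphi\|_{L^2}^{2}\ge E(W)$, and, again invoking $K(\varphi)\le 0$, also $I(\varphi)=\tfrac14\|(|x|^{-4}*|\varphi|^2)|\varphi|^2\|_{L^1}\ge\tfrac14\|\nabla\varphi\|_{L^2}^{2}\ge E(W)$. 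For the first infimum, which is taken over $K(\varphi)=0$, the identity $E(\varphi)=G(\varphi)$ on that set reduces the matter to the $G$-bound just obtained. Admissibility of $W$ in all three constraint sets shows the bounds are attained, completing the three equalities in \eqref{char:Neh char}.

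I do not expect a serious obstacle: the argument is entirely static and uses neither compactness nor scaling arguments beyond those already packaged in Proposition~\ref{prop:sharp const}. The only point requiring mild care is checking that the appeal to Proposition~\ref{prop:sharp const} gives the sharp constant with the correct convention so that the computed value $1/(4C_d^{4})$ really coincides with $E(W)=\tfrac14\|\nabla W\|_{L^2}^{2}$; this is a bookkeeping matter, not a genuine difficulty.
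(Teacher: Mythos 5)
Your argument is correct and complete; the paper itself offers no proof of Proposition~\ref{prop:GS char}, citing \cite{IbrMN:NLKG, NakS:book} instead, so the comparison is with those references rather than with an in-text argument. Your chain---$K(W)=0$ from the stationary equation, $\|\nabla W\|_{L^2}^2 = C_d^{-4}$ from saturation of the sharp inequality, and the coercivity bound $\|\nabla\varphi\|_{L^2}^2 \ge C_d^{-4}$ on $\{K\le 0,\ \varphi\neq 0\}$---is exactly the clean route available in the energy-critical case because $K$, $G$, $I$ and the sharp inequality are all simultaneously scale-invariant, so the whole thing collapses to one application of Proposition~\ref{prop:sharp const}. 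The cited sources treat a family of functionals $K_{a,b}$ in a subcritical (Klein--Gordon) setting where this one-shot argument is not available and one instead runs a scaling/rearrangement minimization argument; your proof is therefore genuinely more elementary than what a literal transcription of \cite{IbrMN:NLKG, NakS:book} would give, and it is the natural specialization here. One small point worth making explicit: the identity $K(W)=0$ does not follow from the scaling $u\mapsto S_{-1}^{\sigma}u$ (which leaves $K$ invariant) but from the paper's normalization of the multiplicative constant $c$ in $W=c(1+|x|^2)^{-(d-2)/2}$ so that $W$ solves $-\Delta W = (|x|^{-4}*|W|^2)W$ with Lagrange multiplier $1$; your phrase ``after rescaling'' should be read as ``after choosing $c$,'' which the paper has already done for you.
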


	Let us denote the linearized operator $i \mathcal{L}$ around the ground state $W$ of \eqref{equ:Hart} by
	\begin{align*}
	\mathcal{L}f:=L_{+}f_1+iL_{-}f_2,
	\end{align*}
	where
	\begin{equation*}
	\begin{aligned}
	L_{+}h_1:=&-\Delta h_1-\left( |x|^{-4}*|W|^2 \right)h_1-2\left( |x|^{-4}*(Wh_1) \right)W,\\
	L_{-}h_2:=&-\Delta h_2-\left( |x|^{-4}*|W|^2 \right)h_2.
	\end{aligned}
	\end{equation*}
	
	We define the linearized energy $\Phi$ by
	\begin{equation*}
	\Phi(h):=\frac12 \int_{\R^d} (L_+ h_1) h_1 \; dx + \frac12
	\int_{\R^d} (L_- h_2) h_2 \; dx =\frac12 \langle \mathcal{L} h,h \rangle.
	\end{equation*}
	
	Recently, the authors in \cite{LLTX:Nondeg, LLTX:g-Hart,LTX:Nondeg} combine the spherical harmonic decomposition, the Funk-Hecke formula of the spherical harmonic functions with the Moser iteration method to solve the non-degeneracy of the ground state in the energy space.  Based on the non-degeneracy of the ground state, we have the following spectral properties of the linearized operator $i\mathcal{L}$ by the standard variational argument in \cite{Wein:Modul stab}, (see also  \cite{DMerle:NLS:ThreshSol, DMerle:NLW:ThreshSol}).
	\begin{proposition}[Spectral property of the linearized operator $\mathcal{L}$, \cite{LLTX:g-Hart, MWX:Hart}]\label{prop:spect} Let $d\geq 5$.
		\begin{enumerate}
			\item[\rm (a)]By the symmetries of $\eqref{equ:Hart}$ under the phase rotation and the scaling, 
			$
			iW $, $\widetilde{W}=\frac{d-2}{2}W+x\cdot \nabla W 
			$
			are two null eigenfunctions in the radial case.
			\item[\rm (b)] It has two simple eigenvalues $\pm\mu$ (with $\mu>0$) and two smooth, exponentially decaying  eigenfunctions $g_\pm=g_1\mp ig_2$ that satisfy $i\mathcal{L}g_\pm=\pm\mu g_\pm.$ In other words, $L_{+}g_1=-\mu g_2$ and $L_{-}g_2=\mu g_1$.
			\item[\rm (c)]  $L_{-}\geq0$ and $Ker(L_{-}) =$span$\{W\}$ on $\dot H_{rad}^1(\R^d)$.
			\item[\rm (d)]  $Ker(L_{+})=$span$\{\widetilde{W}\}$ on $\dot H_{rad}^1(\R^d)$.
			\item[\rm (e)] There exists a constant $c>0$ such that  for all radial function $ h
			\in G_{\bot}$, we have
			\begin{equation*}
			\begin{aligned} 
			\Phi(h)\geq c\big\| h \big\|^2_{\dot H^1},
			\end{aligned}
			\end{equation*}
			where \begin{equation*}
			\label{G orthy} G_{\bot}  =\left\{v\in \dot H^1, (iW, v)_{\dot
				H^1}=(\widetilde{W}, v)_{\dot H^1}  = \omega(g_\pm, v)  =
			0\right\}.
			\end{equation*}
		\end{enumerate}
	\end{proposition}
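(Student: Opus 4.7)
The plan is to derive each of the five items through a Weinstein-type spectral analysis, treating the radial non-degeneracy of the ground state (assertion (d)) as the main deep input, which I would invoke directly from \cite{LLTX:Nondeg,LLTX:g-Hart,LTX:Nondeg}. First, (a) follows by differentiating the two-parameter family $\{W_{\theta,\sigma}\}$ at the origin: the phase derivative produces $iW$, and differentiating the elliptic identity $-\Delta W=(|x|^{-4}\ast|W|^2)W$ in $\theta$ yields $L_-W=0$; the scaling derivative produces $\widetilde{W}$ and similarly gives $L_+\widetilde{W}=0$. For (c), combining the identity $\langle L_-h,h\rangle=\|\nabla h\|_{L^2}^2-\iint|W(x)|^2|h(y)|^2|x-y|^{-4}\,dx\,dy$ with the sharp inequality of Proposition \ref{prop:sharp const} and the extremality of $W$ gives $L_-\geq 0$ on $\dot H^1_{rad}$; since $W>0$ and $L_-W=0$, a Perron--Frobenius argument restricted to the radial sector forces $\ker L_-|_{\dot H^1_{rad}}=\mathrm{span}\{W\}$.

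For (b), once (d) is granted, I would first show that the Morse index of $L_+$ on $\dot H^1_{rad}$ equals exactly one. The lower bound follows from the direct calculation
\begin{equation*}
\langle L_+W,W\rangle=\|\nabla W\|_{L^2}^{2}-3\iint\frac{|W(x)|^2|W(y)|^2}{|x-y|^4}\,dx\,dy=-2\iint\frac{|W(x)|^2|W(y)|^2}{|x-y|^4}\,dx\,dy<0,
\end{equation*}
after using $-\Delta W=(|x|^{-4}\ast|W|^2)W$. The upper bound follows from the variational characterization \eqref{char:Neh char} of Proposition \ref{prop:GS char}: a second linearly independent negative direction would produce a perturbation of $W$ reducing $E$ while preserving $K=0$, contradicting the minimality of $E(W)$ on the Nehari set. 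To produce $g_\pm$, I would reduce the coupled system $L_+g_1=-\mu g_2,\; L_-g_2=\mu g_1$ to the scalar eigenvalue problem $L_-L_+g_1=-\mu^2 g_1$ and diagonalize the self-adjoint operator $L_-^{1/2}L_+L_-^{1/2}$ on $W^{\perp}$, which by a min-max comparison with $L_+$ inherits exactly one negative eigenvalue $-\mu^2$. Smoothness of $g_\pm$ comes from elliptic bootstrap (the convolution $|x|^{-4}\ast(\cdot)$ is smoothing on fast-decaying inputs), and exponential decay from a standard Agmon estimate, since $-\mu^2<0$ sits below the essential spectrum $[0,\infty)$ while the linearized potential decays like $|x|^{-4}$.

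Finally, for (e), the discrete spectrum of $i\mathcal{L}$ on $\dot H^1_{rad}$ is identified as $\{0,\pm\mu\}$ with eigenfunctions $\{iW,\widetilde{W},g_\pm\}$; the four orthogonality conditions defining $G_{\bot}$ (namely $\dot H^1$-orthogonality to $iW$ and $\widetilde{W}$ and symplectic orthogonality to $g_\pm$) exactly annihilate these discrete modes, so $\Phi$ restricted to $G_{\bot}$ is controlled from below by the essential spectrum and satisfies $\Phi(h)\geq c\|h\|_{\dot H^1}^{2}$. The main obstacle is the Morse-index upper bound in (b): one must rule out a second independent negative direction of $L_+$, which requires combining the non-degeneracy (d) (to exclude that such a direction be proportional to $\widetilde{W}$) with a careful second-order expansion of $E$ along a curve preserving the Nehari constraint $K=0$, so as to derive a contradiction with \eqref{char:Neh char}. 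A secondary technical point is that the negative direction of $L_+$ must have non-trivial projection onto $W^{\perp}=\mathrm{Ran}(L_-)$ in order for the $L_-^{1/2}L_+L_-^{1/2}$ construction to capture it, which is again traced back to (d).
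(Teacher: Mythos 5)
The paper does not actually prove Proposition \ref{prop:spect}; it states it as cited from \cite{LLTX:g-Hart,MWX:Hart} and attributes the argument to Weinstein's variational method, so there is no in-paper proof to compare against. Your plan follows the same Weinstein-type outline, which is the right family of arguments. Items (a), (c) and (d) are fine: in (a), differentiating the family $W_{\theta,\sigma}$ correctly yields $L_-W=0$ and $L_+\widetilde W=0$; in (c), your Cauchy--Schwarz/sharp-constant argument does give $L_-\geq 0$ (the bilinear form with kernel $|x-y|^{-4}$ is positive definite, and the sharp constant $C_d^4=1/\|\nabla W\|_{L^2}^2$ fixed by $K(W)=0$ closes the estimate), though the ground-state substitution $h=W\psi$, $\langle L_-h,h\rangle=\int W^2|\nabla\psi|^2$, is cleaner and also delivers the kernel in one stroke; and (d) is legitimately deferred to the cited references.

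The genuine gap is in (b). Knowing that $L_+$ has Morse index $1$ on all of $\dot H^1_{rad}$ does \emph{not} imply that $L_+$ restricted to $W^\perp=\overline{\mathrm{Ran}(L_-^{1/2})}$ still has a negative direction: passing to a codimension-one subspace can reduce the index from $1$ to $0$, and the negative eigenfunction $\psi_0$ of $L_+$ satisfies $\langle\psi_0,W\rangle\neq 0$ in general. So the self-adjoint operator $L_-^{1/2}L_+L_-^{1/2}$ you diagonalize could a priori be nonnegative, in which case no $\mu>0$ is produced. You flag this as a ``secondary technical point,'' but it is in fact the crux of proving linear instability here, and it requires a separate argument (a Vakhitov--Kolokolov-type sign computation, or the scaling-degeneracy analysis specific to the energy-critical setting in \cite{MWX:Hart}) rather than being traceable back to the nondegeneracy (d). Relatedly, your Morse-index upper bound for $L_+$ uses minimality of $E$ on the Nehari manifold, which controls $L_+$ on the tangent space $\{v:\langle\Delta W,v\rangle=0\}$, not on $W^\perp$; these are distinct hyperplanes and the translation between them is exactly the missing step.

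A smaller issue is in (e): the heuristic ``orthogonality to the discrete modes plus essential spectrum $[0,\infty)$'' is the $L^2$-spectral picture, whereas $\Phi$ is a quadratic form on $\dot H^1$ and $\mathcal{L}$ is not being diagonalized in $L^2$ here. The standard (and the cited) proof of the coercivity $\Phi(h)\gtrsim\|h\|_{\dot H^1}^2$ on $G_\perp$ is by a compactness/contradiction argument in the spirit of \cite{Wein:Modul stab}: take a normalized sequence $h_n\in G_\perp$ with $\Phi(h_n)\to 0$, extract a weak limit, and use the orthogonality conditions and the structure of $L_\pm$ to reach a contradiction. Stating it via essential spectrum is at best a heuristic and at worst not applicable in the homogeneous Sobolev framework.
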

	
	\begin{lemma} For the functionals defined in \eqref{quan:energy} and \eqref{quan:KIG}, we have
		\begin{equation}\label{est:EK expa}
		\begin{aligned}
		E(W+v)=&E(W)+\frac{1}{2}\langle \mathcal{L}v,v\rangle-C(v),\\
		K(W+v)=&-2\langle \nabla W,\nabla v\rangle+O(\Vert v\Vert_{\dot H^1}^2),
		\end{aligned}
		\end{equation}
		where 
		\begin{equation*}
		C(v):=\frac{1}{4}\iint \frac{\big|v(t,x)\big|^2\big|v(t,y)\big|^2+4W(x)v_1(t,x)\big|v(t,y)\big|^2}{|x-y|^4} dxdy=O(\Vert v \Vert_{\dot H^1}^3).
		\end{equation*}
	\end{lemma}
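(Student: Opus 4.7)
The plan is to treat the lemma as a second-order Taylor expansion of $E$ around the critical point $W$, together with an explicit Taylor expansion of $K$; all first-order terms will be forced to cancel by the Euler--Lagrange equation $-\Delta W=(|x|^{-4}*W^2)W$, and the rest is bookkeeping of nonlocal quartic terms.

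First I would write $v=v_1+iv_2$ and expand the densities
\begin{equation*}
|W+v|^2=W^2+2Wv_1+|v|^2,\qquad |\nabla(W+v)|^2=|\nabla W|^2+2\nabla W\cdot \nabla v_1+|\nabla v|^2,
\end{equation*}
and then multiply out $|W+v|^2(x)|W+v|^2(y)$ to list the nine terms sorted by the total power of $v$ they carry. Inserting this into $E(W+v)$ produces a sum of integrals that I will group into four blocks: order $0$ (which manifestly equals $E(W)$), order $1$ in $v$, order $2$, and orders $3$ and $4$ in $v$.

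For the order-$1$ piece I would use integration by parts and the ground-state equation to write
\begin{equation*}
\langle \nabla W,\nabla v_1\rangle=-\langle \Delta W,v_1\rangle=\iint \frac{W^2(y)\,W(x)v_1(x)}{|x-y|^4}\,dx\,dy,
\end{equation*}
which exactly cancels the linear contribution $-\frac14\cdot 4\iint W^2(x)W(y)v_1(y)|x-y|^{-4}\,dx\,dy$ coming from the nonlocal term (the factor $4$ comes from the two symmetric pairings). For the quadratic block I would verify term-by-term, using the definitions of $L_+$ and $L_-$ from the paragraph preceding the lemma, that
\begin{equation*}
\tfrac12\|\nabla v\|_{L^2}^2-\tfrac12\iint\tfrac{W^2(y)|v(x)|^2}{|x-y|^4}\,dx\,dy-\iint\tfrac{W(x)v_1(x)W(y)v_1(y)}{|x-y|^4}\,dx\,dy=\tfrac12\langle L_+v_1,v_1\rangle+\tfrac12\langle L_-v_2,v_2\rangle,
\end{equation*}
which is $\tfrac12\langle \mathcal L v,v\rangle$ by the definition of $\Phi$. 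The remaining cubic and quartic nonlocal integrals combine, after using the symmetry $x\leftrightarrow y$ on $\iint W(x)v_1(x)|v(y)|^2|x-y|^{-4}$, into exactly $-C(v)$.

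For $K(W+v)$ I would argue in the same spirit but stop the expansion at second order. The constant term $K(W)$ vanishes because pairing the Euler--Lagrange equation with $W$ gives $\|\nabla W\|_{L^2}^2=\iint W^2(x)W^2(y)|x-y|^{-4}\,dx\,dy$. The linear terms give $2\langle \nabla W,\nabla v_1\rangle-4\iint W^2(x)W(y)v_1(y)|x-y|^{-4}\,dx\,dy$, and a second use of the ground-state equation turns this into $-2\langle \nabla W,\nabla v\rangle$ (note $\langle\nabla W,\nabla v\rangle=\langle\nabla W,\nabla v_1\rangle$ since $W$ is real). Every remaining term has at least two factors of $v$, and the Hardy--Littlewood--Sobolev inequality combined with the Sobolev embedding $\dot H^1\hookrightarrow L^{2d/(d-2)}$ bounds each by $\|v\|_{\dot H^1}^{2}$, yielding the claimed $O(\|v\|_{\dot H^1}^2)$ remainder. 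Finally, applying the same HLS/Sobolev estimate to $C(v)$ gives $|C(v)|\lesssim \|W\|_{\dot H^1}\|v\|_{\dot H^1}^3+\|v\|_{\dot H^1}^4=O(\|v\|_{\dot H^1}^3)$ in the regime where $\|v\|_{\dot H^1}$ stays bounded. There is no real analytic obstacle; the only delicate part is keeping track of combinatorial factors when expanding the quartic convolution density, which I would do by tabulating the nine terms once and then grouping.
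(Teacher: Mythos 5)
Your proposal is correct and follows the same route as the paper's proof: a direct Taylor expansion of $E$ and $K$ in powers of $v$, using the Euler--Lagrange equation $-\Delta W=(|x|^{-4}*W^2)W$ to handle the linear terms (cancellation in $E$, reduction to $-2\langle\nabla W,\nabla v\rangle$ in $K$), matching the quadratic block to $\tfrac12\langle\mathcal Lv,v\rangle$, and collecting the cubic and quartic nonlocal pieces into $C(v)$, which is then bounded by Hardy--Littlewood--Sobolev and Sobolev embedding. You simply spell out the bookkeeping that the paper compresses into a two-line display.
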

	
	\begin{proof}
		\begin{equation*}
		\begin{aligned}
		E(W+v)=&\frac12 \Vert \nabla(W+v)\Vert_{L^2}^2-\frac{1}{4} \iint \frac{\big|(W+v)(x)\big|^2\big|(W+v)(y)\big|^2}{|x-y|^4} dxdy\\  =&E(W)+\frac{1}{2}\langle \mathcal{L}v,v\rangle-\frac{1}{4}\iint \frac{\big|v(t,x)\big|^2\big|v(t,y)\big|^2+4W(x)v_1(t,x)\big|v(t,y)\big|^2}{|x-y|^4} dxdy\\=&E(W)+\frac{1}{2}\langle \mathcal{L}v,v\rangle+O(\Vert v \Vert_{\dot H^1}^3).
		\end{aligned}
		\end{equation*}	
		\begin{equation*}
		\begin{aligned}
		K(W+v)=&\Vert \nabla(W+v)\Vert_{L^2}^2-\iint \frac{\big|(W+v)(x)\big|^2\big|(W+v)(y)\big|^2}{|x-y|^4} dxdy\\  =&\Vert \nabla v\Vert_{L^2}^2 - 2\langle \nabla W,\nabla v\rangle\\-& \iint \frac{2|W(x)|^2|v(y)|^2+|v(x)|^2|v(y)|^2+4W(y)v_1(y)|v(x)|^2+4W(x)W(y)v_1(x)v_1(y)}{|x-y|^4} dxdy\\=&- 2\langle \nabla W,\nabla v\rangle+O(\Vert v \Vert_{\dot H^1}^2).
		\end{aligned}
		\end{equation*}	
	\end{proof}

	%
	%
	%
	%
	\section{Proof of main theorem.}\label{sect:main thm}
	The proof of Theorem \ref{thm:above thresh} relies upon some propositions stated below. 
	\subsection{Orthogonal decomposition.}
	The following proposition gives a decomposition of a vector $\varphi\in \dot H^1$ close to the ground states $\mathcal{W}$, taking account of two parameters (the rotation parameter and the scaling parameter) and a constraint (the so-called orthogonality condition).

	\begin{proposition}(Orthogonal decomposition of $\varphi$)\label{prop:orth decom}. There exist an absolute constant $0<\delta_E\ll1$ and a $C^1$ function $\left(\widetilde{\theta},\widetilde{\sigma} \right):B_{\delta_E}(\mathcal{W}) \rightarrow (\R/2\pi\Z)\times \R$ with the following properties. For any $\varphi\in B_{\delta_E}(\mathcal{W})$, putting 
		\begin{equation}\label{decom:var}
		\varphi=e^{i\widetilde{\theta}(\varphi)}S_{-1}^{\widetilde{\sigma}(\varphi)}(W+v),
		\end{equation}	
		we have 
		\begin{equation*}
		\begin{aligned}
		(v,iW)_{\dot H^1}=(v,\widetilde{W})_{\dot H^1}=0, \; d_{\mathcal{W}}(\varphi)\sim \Vert v\Vert_{\dot H^1}.
		\end{aligned}
		\end{equation*}
		Moreover, $\left(\widetilde{\theta}(\varphi),\widetilde{\sigma} (\varphi)\right) \in  (\R/2\pi\Z)\times \R$ is unique for the above property. 
		
		Furthermore, if $\Vert \varphi-W_{\theta,\sigma}\Vert_{\dot H^1} \ll 1$ for some $(\theta,\sigma)\in\R^2$, then 
		\begin{equation*}
		\big|\left(e^{i\widetilde{\theta}(\varphi)}-e^{i\theta}, \widetilde{\sigma}(\varphi)-\sigma \right)\big|\lesssim \Vert \varphi-W_{\theta,\sigma}\Vert_{\dot H^1}.
		\end{equation*}
	\end{proposition}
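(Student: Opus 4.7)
The plan is a standard implicit function theorem (IFT) argument tailored to the two-parameter action $(\theta,\sigma)\mapsto e^{i\theta}S_{-1}^{\sigma}$ on $\dot H^1_{rad}$, localized near $W$ and then extended to $B_{\delta_E}(\mathcal{W})$ by equivariance. I would introduce the auxiliary map
\begin{equation*}
F(\varphi,\theta,\sigma):=\bigl((e^{-i\theta}S_{-1}^{-\sigma}\varphi-W,\,iW)_{\dot H^1},\;(e^{-i\theta}S_{-1}^{-\sigma}\varphi-W,\,\widetilde W)_{\dot H^1}\bigr),
\end{equation*}
so that $F=0$ encodes exactly the prescribed orthogonality of $v:=e^{-i\widetilde\theta}S_{-1}^{-\widetilde\sigma}\varphi-W$. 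At $(\varphi,\theta,\sigma)=(W,0,0)$ we have $F=0$, and using $\partial_\theta(e^{-i\theta}S_{-1}^{-\sigma}\varphi)|_{(W,0,0)}=-iW$ together with $\partial_\sigma(e^{-i\theta}S_{-1}^{-\sigma}\varphi)|_{(W,0,0)}=-S_{-1}'W=-\widetilde W$, the Jacobian of $F$ in $(\theta,\sigma)$ is diagonal with entries $-\|W\|_{\dot H^1}^2$ and $-\|\widetilde W\|_{\dot H^1}^2$: the off-diagonal entries $(iW,\widetilde W)_{\dot H^1}=(\widetilde W,iW)_{\dot H^1}=0$ vanish because $W$ and $\widetilde W$ are real-valued. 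Since $\widetilde W\not\equiv0$ by the non-degeneracy (Proposition~\ref{prop:spect}), the Jacobian is invertible and the IFT produces a unique $C^1$ map $(\widetilde\theta,\widetilde\sigma)$ on some $\dot H^1$-ball around $W$.

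To extend this local result to $B_{\delta_E}(\mathcal W)$, I would exploit the equivariance $F(e^{i\theta_0}S_{-1}^{\sigma_0}\psi,\theta_0+\theta,\sigma_0+\sigma)=F(\psi,\theta,\sigma)$: given $\varphi\in B_{\delta_E}(\mathcal W)$, pick $(\theta_0,\sigma_0)$ with $\|\varphi-W_{\theta_0,\sigma_0}\|_{\dot H^1}<\delta_E$, note that $\varphi':=e^{-i\theta_0}S_{-1}^{-\sigma_0}\varphi$ lies in a fixed small $\dot H^1$-ball around $W$ (by scale- and phase-invariance of the norm), apply the local IFT to $\varphi'$, and set $(\widetilde\theta(\varphi),\widetilde\sigma(\varphi)):=(\theta_0,\sigma_0)+(\widetilde\theta(\varphi'),\widetilde\sigma(\varphi'))$. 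Local uniqueness of the IFT solution combined with the equivariance makes this definition independent of the trivializing choice of $(\theta_0,\sigma_0)$, yielding a globally well-defined $C^1$ map on $B_{\delta_E}(\mathcal W)$ for $\delta_E$ sufficiently small.

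For the equivalence $d_{\mathcal W}(\varphi)\sim\|v\|_{\dot H^1}$, the upper bound $d_{\mathcal W}(\varphi)\leq\|\varphi-W_{\widetilde\theta,\widetilde\sigma}\|_{\dot H^1}=\|e^{i\widetilde\theta}S_{-1}^{\widetilde\sigma}v\|_{\dot H^1}=\|v\|_{\dot H^1}$ is immediate from scale- and phase-invariance. For the matching lower bound, for an arbitrary trial pair $(\theta,\sigma)$ I would write $\|\varphi-W_{\theta,\sigma}\|_{\dot H^1}^2=\|v+W-e^{i\Delta\theta}S_{-1}^{\Delta\sigma}W\|_{\dot H^1}^2$ with $(\Delta\theta,\Delta\sigma):=(\theta-\widetilde\theta,\sigma-\widetilde\sigma)$, Taylor-expand
\begin{equation*}
W-e^{i\Delta\theta}S_{-1}^{\Delta\sigma}W=-i\Delta\theta\,W-\Delta\sigma\,\widetilde W+O\bigl(|\Delta\theta|^2+|\Delta\sigma|^2\bigr),
\end{equation*}
and use the orthogonality of $v$ to $iW$ and $\widetilde W$ to annihilate the leading cross terms, obtaining $\|\varphi-W_{\theta,\sigma}\|_{\dot H^1}^2\geq\|v\|_{\dot H^1}^2+c(|\Delta\theta|^2+|\Delta\sigma|^2)-(\text{h.o.t.})$; minimizing in $(\theta,\sigma)$ then gives $d_{\mathcal W}(\varphi)\gtrsim\|v\|_{\dot H^1}$. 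The closing Lipschitz estimate likewise follows from the $C^1$ dependence delivered by the IFT, applied to $\varphi':=e^{-i\theta}S_{-1}^{-\sigma}\varphi$, which satisfies $\|\varphi'-W\|_{\dot H^1}=\|\varphi-W_{\theta,\sigma}\|_{\dot H^1}$ and vanishes together with its $(\widetilde\theta,\widetilde\sigma)$-offset when $\varphi=W_{\theta,\sigma}$.

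The main subtlety I anticipate lies not in the local IFT (which is completely standard once the Jacobian is computed) but in certifying that the equivariant extension defines a truly single-valued, continuous map on all of $B_{\delta_E}(\mathcal W)$: different trivializing choices $(\theta_0,\sigma_0)$ for the same $\varphi$ must produce the same $(\widetilde\theta,\widetilde\sigma)$ modulo $2\pi\mathbb Z$ in the first component, which I would verify via the invertibility of the Jacobian (giving local uniqueness) together with a connectedness argument along the orbits of the $\mathcal W$-action.
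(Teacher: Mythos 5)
Your proof is essentially the same as the paper's: both reduce to an implicit function theorem applied to the same pair of orthogonality functionals $\bigl((\cdot,iW)_{\dot H^1},(\cdot,\widetilde W)_{\dot H^1}\bigr)$, compute the same diagonal Jacobian at $(W,0,0)$, and extend from a local chart around $W$ to all of $B_{\delta_E}(\mathcal W)$ by exploiting the phase/scaling equivariance. The one genuine variation is your argument for the lower bound $d_{\mathcal W}(\varphi)\gtrsim\|v\|_{\dot H^1}$: you Taylor-expand $W-e^{i\Delta\theta}S_{-1}^{\Delta\sigma}W$ and invoke the orthogonality of $v$ to $iW$ and $\widetilde W$ to kill the cross terms, whereas the paper obtains it more directly from the IFT bound $\|v\|_{\dot H^1}\lesssim\|\psi\|_{\dot H^1}$ together with the choice of a near-optimal $(\alpha,\beta)$ so that $\|\psi\|_{\dot H^1}\sim d_{\mathcal W}(\varphi)$. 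Your version is correct but adds a small step you glossed over: the Taylor expansion only controls trial pairs $(\theta,\sigma)$ close to $(\widetilde\theta,\widetilde\sigma)$, so you would also need to observe (easy, but worth saying) that any near-minimizing $(\theta,\sigma)$ in the definition of $d_{\mathcal W}(\varphi)$ is forced to be $O(\delta_E)$-close to $(\widetilde\theta,\widetilde\sigma)$ modulo $2\pi\mathbb Z$, since otherwise $\|W_{\widetilde\theta,\widetilde\sigma}-W_{\theta,\sigma}\|_{\dot H^1}\gtrsim 1$.
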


\begin{remark}
	We use the null eigenfunctions $iW, \widetilde{W}$  here. While the authors in \cite{NakR} uses an alternative function $\chi$ instead of  zero resonance functions for the energy-critical NLS in dimension three.
\end{remark}

	\begin{proof}
		$(v,iW)_{\dot H^1}=(v,\widetilde{W})_{\dot H^1}=0$ and the uniqueness of $(\widetilde{\theta}(\varphi),\widetilde{\sigma}(\varphi))$ have been proved in \cite[Lemma 4.1]{MWX:Hart}. For the sake of completeness, we give the details.
		
		Now consider the functional
		\begin{align*}
		J(\theta, \sigma, \psi)=&  \left(J_0(\theta, \sigma, \psi), J_1(\theta, \sigma, \psi)\right)\\ =&  \left( (e^{-i\theta}S_{-1}^{-\sigma}(W+\psi)-W, iW)_{\dot H^1},\;\; (e^{-i\theta}S_{-1}^{-\sigma}(W+\psi)-W, \widetilde{W})_{\dot H^1} \right).
		\end{align*}
		Hence, by simple computations, we have
		\begin{align*}
		J(0,0,0)=0, \;\; \left| \frac{\partial J}{\partial(\theta, \sigma)}
		(0,0,0)\right| = \left| \begin{array}{cc} (-iW,iW)_{\dot H^1} & 0\\
		0 &  (-\widetilde{W},\widetilde{W})_{\dot H^1} \end{array} \right|
		\not = 0.
		\end{align*}
		Thus by the implicit function theorem, there exist $\delta>0$ and a $C^1$ function $\left(\theta,\sigma\right):\;B_\delta(0)\rightarrow\R^2$, such that 
		\begin{equation*}
		J(\theta(\psi), \sigma(\psi), \psi)=0,\;\;\theta(0)=\sigma(0)=0,
		\end{equation*}
		which is unique in $B_\delta(0)$ and a neighborhood of $0\in \R^2$. By modulation decomposition, we know that for any $\varphi\in B_\delta(\mathcal{W})$, there exists $(\alpha,\beta,\psi)\in (\R/2\pi \Z)\times\R\times B_\delta(0)$ such that $\varphi=e^{i\alpha} S_{-1}^{\beta}(W+\psi)$. Then we put
		\begin{equation*}
		\widetilde{\theta}(\varphi):=\theta(\psi)+\alpha\in \R/2\pi \Z,\;\;\widetilde{\sigma}(\varphi):=\sigma(\psi)+\beta\in\R.
		\end{equation*}
		Then defining $v$ by \eqref{decom:var}, we have $v=e^{-i\theta(\psi)}S_{-1}^{-\sigma(\psi)}(W+\psi)-W$ and so
		\begin{align*}
		&0=J(\theta(\psi), \sigma(\psi), \psi)=\left( (v,iW)_{\dot H^1},(v,\widetilde{W})_{\dot H^1} \right),\\
		&\Vert  v\Vert_{\dot H^1}\lesssim |\theta(\psi)|+|\sigma(\psi)|+\Vert  \psi\Vert_{\dot H^1} \lesssim \Vert  \psi\Vert_{\dot H^1}.
		\end{align*}
		This implies
		\begin{equation*}
		\big|\left(e^{i\widetilde{\theta}(\varphi)}-e^{i\theta}, \widetilde{\sigma}(\varphi)-\sigma \right)\big|\lesssim \Vert \varphi-W_{\theta,\sigma}\Vert_{\dot H^1}.
		\end{equation*}
		Choosing ($\alpha$, $\beta$) such  that $d_{\mathcal{W}}(\varphi) \sim \Vert \varphi-e^{i\alpha} S_{-1}^{\beta}W \Vert_{\dot H^1}=\Vert  \psi\Vert_{\dot H^1}$, then $d_{\mathcal{W}}(\varphi)\gtrsim\Vert v\Vert_{\dot H^1}$. Meanwhile,
		\begin{equation*}
		\begin{aligned}
		d_{\mathcal{W}}(\varphi)=&\inf_{\theta,\sigma \in \R} \Vert e^{i\widetilde{\theta}(\varphi)}S_{-1}^{\widetilde{\sigma}(\varphi)}(W+v) -W_{\theta,\sigma}\Vert_{\dot H^1}\\\leq& \Vert W_{\widetilde{\theta}(\varphi),\widetilde{\sigma}(\varphi)} +e^{i\widetilde{\theta}(\varphi)}S_{-1}^{\widetilde{\sigma}(\varphi)}v -W_{\widetilde{\theta}(\varphi),\widetilde{\sigma}(\varphi)}\Vert_{\dot H^1}=\Vert v\Vert_{\dot H^1}.
		\end{aligned}
		\end{equation*}
		This proves $d_{\mathcal{W}}(\varphi)\sim \Vert v\Vert_{\dot H^1}$.
		
	\end{proof}

	\subsection{Evolution around the ground states.} The following proposition describes more precisely the decomposition in Proposition \ref{prop:orth decom}, taking into account the spectral properties of $i \mathcal{L}$. This decomposition does not use the radial symmetry.
	
	\begin{proposition}(Spectral decomposition of $v$)\label{prop:spect decom}. For any $v \in \dot H^1$, there exists a unique decomposition
		\begin{equation}\label{decom:v+}
		v=\lambda_+g_{+} + \lambda_-g_{-}+\gamma,\;\;\lambda_{\pm}\in\R,\;\;\gamma\in  \dot H^1,
		\end{equation}
		such that $\omega(g_{\pm},\gamma)=0$ (This implies that $\gamma\in G_{\bot}$).
		
		After normalizing $g_{\pm}$ (or $g_1$ and $g_2$) such that 
		\begin{equation}\label{con:ome}
		\omega(g_{+},g_{-})=2\langle g_{1},g_{2} \rangle=1,\; \pm\omega(W,g_{\mp})=\langle W,g_2 \rangle>0,
		\end{equation}
		the above decomposition is given by
		\begin{equation*}
		\lambda_{\pm}=\pm \omega(v,g_{\mp}).
		\end{equation*}
		Putting $\lambda_1:=(\lambda_+ +\lambda_-)/2$ and $\lambda_2:=(\lambda_+ -\lambda_-)/2$, it can also be written as 
		\begin{equation}\label{decom:v1}
		v=2\lambda_{1}g_1-2i\lambda_{2}g_2+\gamma,\; \lambda_1=(v_1|g_2),\; \lambda_2=-(v_2|g_1),
		\end{equation}
		with $(\gamma_1|g_2)=(\gamma_2|g_1)=0$.
	\end{proposition}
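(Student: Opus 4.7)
The natural strategy is to view $\mathrm{span}_{\R}\{g_+,g_-\}$ as a symplectic $2$-plane for $\omega$ and extract $\lambda_\pm$ by symplectic projection onto this plane, with $\gamma$ landing in its symplectic complement. Once the normalization \eqref{con:ome} is installed, the rest is the linear algebra used to decompose phase space into stable/unstable modes of a hyperbolic ODE; the only genuinely analytic point is the nondegeneracy of $\omega$ on this $2$-plane.

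To verify that normalization, I first observe that $(f|f)$ is real for every $f\in L^2$, so $\omega(f,f)=-\Im(f|f)=0$ and hence $\omega(g_\pm,g_\pm)=0$. Writing $g_\pm=g_1\mp ig_2$ with real $g_1,g_2$ and expanding $(g_+|g_-)=\int(g_1-ig_2)^2\,dx$ gives $\omega(g_+,g_-)=2\langle g_1,g_2\rangle$. The key point is $\langle g_1,g_2\rangle\neq 0$: the eigenvalue identities $L_+g_1=-\mu g_2$ and $L_-g_2=\mu g_1$ together with Proposition \ref{prop:spect}(c) give $\mu\langle g_1,g_2\rangle=\langle L_-g_2,g_2\rangle\geq 0$, with equality only when $g_2\in\mathrm{Ker}(L_-)=\mathrm{span}\{W\}$; but then $\mu g_1=L_-g_2=0$, contradicting $g_+\ne 0$. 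A positive rescaling of $(g_1,g_2)$ produces $2\langle g_1,g_2\rangle=1$, and since the joint sign flip $(g_1,g_2)\mapsto(-g_1,-g_2)$ preserves the eigenvalue relations and the value of $\langle g_1,g_2\rangle$, I can additionally arrange $\langle W,g_2\rangle>0$; the direct computation $\omega(W,g_-)=-\Im(W|g_-)=(W|g_2)$ (and its counterpart for $g_+$) turns this into the sign condition in \eqref{con:ome}.

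With \eqref{con:ome} in force, I set $\lambda_+:=\omega(v,g_-)$, $\lambda_-:=-\omega(v,g_+)$, and $\gamma:=v-\lambda_+g_+-\lambda_-g_-$. Antisymmetry of $\omega$ combined with $\omega(g_\pm,g_\pm)=0$ and $\omega(g_+,g_-)=1$ gives $\omega(g_\pm,\gamma)=0$ after pairing $\gamma$ against $g_\mp$; conversely, pairing any candidate decomposition with $g_\mp$ forces the same $\lambda_\pm$, hence the same $\gamma$, which yields uniqueness. The equivalent form \eqref{decom:v1} then drops out of the rearrangement $\lambda_+g_++\lambda_-g_-=2\lambda_1g_1-2i\lambda_2g_2$ with $\lambda_1=(\lambda_++\lambda_-)/2$, $\lambda_2=(\lambda_+-\lambda_-)/2$; splitting $v=v_1+iv_2$ and computing $\omega(v,g_\pm)=-\Im(v|g_\pm)$ in real and imaginary parts produces the scalar formulas $\lambda_1=(v_1|g_2)$ and $\lambda_2=-(v_2|g_1)$, while the component orthogonalities $(\gamma_1|g_2)=(\gamma_2|g_1)=0$ are simply the real and imaginary parts of $\omega(g_\pm,\gamma)=0$. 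Thus the only step beyond symplectic linear algebra on a $2$-plane is the positivity $\langle g_1,g_2\rangle>0$, which is where the spectral information from Proposition \ref{prop:spect}(c) genuinely enters.
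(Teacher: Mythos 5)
There is a genuine gap in the normalization step. You correctly identify and prove the nondegeneracy $\langle g_1,g_2\rangle\neq 0$ using $L_-\ge 0$ and $\mathrm{Ker}(L_-)=\mathrm{span}\{W\}$ (with a small hiccup: "then $\mu g_1 = L_- g_2 = 0$, contradicting $g_+\neq 0$" only gives $g_1=0$; you still need to feed this back into $L_+g_1=-\mu g_2$ to conclude $g_2=0$ and hence $g_\pm = 0$). But the second part of \eqref{con:ome}, $\langle W,g_2\rangle>0$, is simply asserted: you say a joint sign flip of $(g_1,g_2)$ lets you \emph{arrange} $\langle W,g_2\rangle>0$, which of course presupposes $\langle W,g_2\rangle\neq 0$. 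That nonvanishing is not a formality; in the paper it is a separate argument that uses the variational characterization of $W$. Concretely, the paper argues by contradiction: if $\langle W,g_2\rangle=0$, then $0=\langle W,-\mu g_2\rangle=\langle L_+W,g_1\rangle=2\langle\Delta W,g_1\rangle$, and then one constructs $v=\alpha W+\delta g_1$ with suitable $\alpha=O(\delta^2)$ so that $K(W+v)=0$ while the $L_+$-expansion of $E$ forces $E(W+v)<E(W)$, contradicting $E(W)=\inf\{E(\varphi): \varphi\neq 0,\,K(\varphi)=0\}$ from Proposition~\ref{prop:GS char}. This is genuinely needed: without it, the normalization in \eqref{con:ome} is not achievable and the sign conventions used downstream (e.g.\ in \eqref{est:K expa} and Proposition~\ref{prop:eject mod}) collapse.

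The rest of your argument — symplectic projection onto $\mathrm{span}\{g_+,g_-\}$, uniqueness from pairing with $g_\mp$, the algebra giving $v=2\lambda_1 g_1-2i\lambda_2 g_2+\gamma$, and the real/imaginary-part computations yielding $\lambda_1=(v_1|g_2)$, $\lambda_2=-(v_2|g_1)$ and $(\gamma_1|g_2)=(\gamma_2|g_1)=0$ — is correct and matches the paper's approach. To close the gap, supply the nonvanishing of $\langle W,g_2\rangle$ as above.
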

	
	\begin{proof}
		We only show that we can normalize $g_+$ and $g_-$ such that  \eqref{con:ome} holds. The remaining can be obtained by immediate computations.
		
		Since $L_-\geq0$ with Ker$(L_-)$=span$\{W\}$, we have
		$0<c:=\langle L_{-}g_2,g_2 \rangle=-\langle L_{+}g_1,g_1 \rangle=\mu\omega(g_+,g_-)/2$. By normalizing $g_+$ and $g_-$, we can get $\omega(g_+,g_-)=1$.
		
		Now we show that $\langle W,g_2 \rangle\neq0$. We prove it by contradiction. Assume that $\langle W,g_2\rangle=0$, then $0=\langle W,-\mu g_2\rangle=\langle L_+W,g_1\rangle=2\langle \Delta W,g_1\rangle$. Let $0<\delta\ll1$ and $v=\alpha W+\delta g_1$ with $\alpha=O(\delta^2)$ to be chosen shortly. By expansion of $E,K$ in \eqref{est:EK expa} and
		\begin{equation*}
		\frac{1}{2}\langle L_+v,v\rangle=\frac{1}{2}\langle \mathcal{L}v,v\rangle-\iint \frac{W(x)W(y)v_2(x)v_2(y)}{|x-y|^4} dxdy=\frac{1}{2}\langle \mathcal{L}v,v\rangle+O(\Vert v\Vert_{\dot H^1}^2),
		\end{equation*}
		we can get 
		\begin{equation*}
		\begin{aligned}
		E(W+v)=E(W)+\frac{1}{2}\langle L_+v,v\rangle-O(\Vert v\Vert_{\dot H^1}^2)+O(\Vert v\Vert_{\dot H^1}^3)<E(W)-\frac{c}{2}\delta^2+O(\delta^3),
		\end{aligned}
		\end{equation*}
		\begin{equation*}
		K(W+v)=-2\langle \nabla W,\nabla v\rangle+O(\Vert v\Vert_{\dot H^1}^2)=-2\alpha \Vert W\Vert_{\dot H^1}^2+O(\delta^2).
		\end{equation*}
		Thus one can find $\alpha=O(\delta^2)$ such that $K(W+v)=0$, which contradicts $\eqref{char:Neh char}$. Hence $\langle W,g_2\rangle\neq0$. 
		
		Thus we prove that we can normalize $g_\pm$ such that $\eqref{con:ome}$ holds. 
	\end{proof}
	
	The following proposition aims at describing the dynamics of the solution near the ground states, using the decomposition in Proposition \ref{prop:orth decom}. Again, this does not use the radial symmetry.
	\begin{proposition}(Linearization and parametrization around $\mathcal{W}$)\label{prop:orth decom:u}. Let $u$ be a solution of \eqref{equ:Hart} on an interval I in the form \eqref{decom:var}, i.e. $(\theta,\sigma,v):I\rightarrow (\R/2\pi\Z)\times \R \times \dot H^1$ is defined by 
		\begin{equation}\label{decom:u}
		u(t)=e^{i\theta(t)}S_{-1}^{\sigma(t)}\left(W+v(t) \right),\; \theta(t):=\widetilde{\theta}(u(t)),\;\sigma(t):=\widetilde{\sigma}(u(t)).
		\end{equation}
		Then, letting $\tau:I\rightarrow\R$ such that $\tau'(t):=e^{2\sigma(t)}$, we have 
		\begin{equation} \label{equ:linez}
		\partial_{\tau}v=i\mathcal{L}v-\left( i\theta_{\tau}+\sigma_{\tau}S_{-1}'\right)(W+v)-iN(v),
		\end{equation}
		where $\theta_{\tau}=\frac{\partial\theta}{\partial\tau}$ etc., and 
		\begin{equation*}
		\begin{aligned}
		N(v):=&(W+v)\left( |x|^{-4}*|W+v|^2 \right)\\-&W\left( |x|^{-4}*|W|^2 \right)-v\left( |x|^{-4}*|W|^2 \right)-2\left( |x|^{-4}*(Wv_1) \right)W.
		\end{aligned}
		\end{equation*}
		
		Furthermore, $\partial_{\tau}(\theta,\sigma)=O(\Vert v\Vert_{\dot H^1})$ and, decomposing v by Proposition 3.2, 
		\begin{equation}\label{est:dyn lam+}
		\partial_{\tau}\lambda_\pm=\pm\mu\lambda_\pm+O(\Vert v\Vert_{\dot H^1}^2),
		\end{equation}
		or equivalently,
		\begin{equation}\label{est:dyn lam1}
		\partial_{\tau}\lambda_1=\mu\lambda_{2}+O(\Vert v\Vert_{\dot H^1}^2),\;\;	\partial_{\tau}\lambda_2=\mu\lambda_{1}+O(\Vert v\Vert_{\dot H^1}^2).
		\end{equation}
	\end{proposition}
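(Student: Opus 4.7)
My plan has three stages: derive \eqref{equ:linez} by direct substitution, then extract $\partial_\tau(\theta,\sigma)=O(\|v\|_{\dot H^1})$ from the orthogonality constraints, and finally obtain the eigenmode dynamics by pairing \eqref{equ:linez} against $g_\mp$ in the symplectic form. For Step 1, I substitute $u=e^{i\theta}S_{-1}^{\sigma}(W+v)$ into \eqref{equ:Hart}; both $-\Delta$ and the Hartree nonlinearity are covariant under the $\dot H^1$-critical dilation $S_{-1}^{\sigma}$, each producing the factor $e^{2\sigma}$ when pulled through. After dividing by $e^{i\theta}S_{-1}^{\sigma}$, the stationary identity $\Delta W+(|x|^{-4}*|W|^2)W=0$ kills the leading $W$-term, the linear-in-$v$ pieces assemble into $-e^{2\sigma}\mathcal{L}v$ by the very definition of $L_\pm$, and the remaining higher-order pieces are exactly $N(v)$. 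The time change $\tau'(t)=e^{2\sigma(t)}$ then absorbs the factor $e^{2\sigma}$ and produces \eqref{equ:linez}.

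\textbf{Step 2 (modulation bounds).} Differentiating $(v,iW)_{\dot H^1}=(v,\widetilde W)_{\dot H^1}=0$ in $\tau$ and substituting \eqref{equ:linez}, I obtain a $2\times 2$ linear system for $(\theta_\tau,\sigma_\tau)$ with coefficient matrix
\begin{equation*}
\begin{pmatrix}\|W\|_{\dot H^1}^2 & 0\\ 0 & \|\widetilde W\|_{\dot H^1}^2\end{pmatrix}+O(\|v\|_{\dot H^1}).
\end{equation*}
The diagonal structure uses $S'_{-1}W=\widetilde W$ together with the splitting $(f,g)_{\dot H^1}=(f_1,g_1)_{\dot H^1}+(f_2,g_2)_{\dot H^1}$, which kills the real/imaginary crossings. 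For the right-hand side, self-adjointness of $L_\pm$ with $L_-W=0$ and $L_+\widetilde W=0$ (equivalently $\mathcal{L}(iW)=\mathcal{L}(\widetilde W)=0$) shows the $i\mathcal{L}v$ contribution is $O(\|v\|_{\dot H^1})$, and Hardy--Littlewood--Sobolev gives $N(v)=O(\|v\|_{\dot H^1}^2)$. Inverting yields $\partial_\tau(\theta,\sigma)=O(\|v\|_{\dot H^1})$.

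\textbf{Step 3 and main obstacle.} From $\lambda_\pm=\pm\omega(v,g_\mp)$ and \eqref{equ:linez},
\begin{equation*}
\partial_\tau\lambda_\pm=\pm\omega(i\mathcal{L}v,g_\mp)\mp\theta_\tau\,\omega(i(W+v),g_\mp)\mp\sigma_\tau\,\omega(S'_{-1}(W+v),g_\mp)\mp\omega(iN(v),g_\mp).
\end{equation*}
Self-adjointness of $\mathcal{L}$ with respect to $\langle\cdot,\cdot\rangle$ yields the infinitesimal invariance $\omega(i\mathcal{L}f,g)=-\omega(f,i\mathcal{L}g)$; combined with $i\mathcal{L}g_\mp=\mp\mu g_\mp$ this gives $\omega(i\mathcal{L}v,g_\mp)=\pm\mu\,\omega(v,g_\mp)$, producing the main contribution $\pm\mu\lambda_\pm$. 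The key obstacle is the apparent $O(\|v\|_{\dot H^1})$ contributions $\theta_\tau\,\omega(iW,g_\mp)$ and $\sigma_\tau\,\omega(\widetilde W,g_\mp)$; I remove them via the symplectic orthogonality $\omega(iW,g_\mp)=\omega(\widetilde W,g_\mp)=0$, obtained by applying the same invariance identity with the null modes $iW,\widetilde W\in\ker(i\mathcal{L})$ (Proposition \ref{prop:spect}(c)(d)) against $g_\mp$, using $\mp\mu\neq 0$. The surviving $v$-pieces are bounded by $|\partial_\tau(\theta,\sigma)|\,\|v\|_{\dot H^1}=O(\|v\|_{\dot H^1}^2)$ via Step 2, and $\omega(iN(v),g_\mp)=O(\|v\|_{\dot H^1}^2)$ by Hardy--Littlewood--Sobolev against the smooth exponentially decaying $g_\mp$. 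This establishes \eqref{est:dyn lam+}, and the change of basis $\lambda_1=(\lambda_++\lambda_-)/2,\;\lambda_2=(\lambda_+-\lambda_-)/2$ converts it to \eqref{est:dyn lam1}.
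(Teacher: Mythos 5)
Your proposal is correct and follows essentially the same route as the paper: substitute the ansatz, use dilation covariance of $-\Delta$ and the Hartree term to extract $e^{2\sigma}\mathcal{L}v$, differentiate the two orthogonality constraints to invert a near-diagonal $2\times 2$ system for $(\theta_\tau,\sigma_\tau)$, and pair \eqref{equ:linez} against $g_\mp$ in $\omega$ to get the eigenmode ODEs, using symmetry of $\langle\mathcal{L}\cdot,\cdot\rangle$ to move $i\mathcal{L}$ onto $g_\mp$ and the kernel relations $i\mathcal{L}(iW)=i\mathcal{L}\widetilde W=0$ to kill $\omega(iW,g_\mp)=\omega(\widetilde W,g_\mp)=0$. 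Your write-up is in fact slightly more explicit than the paper's — you spell out the full $2\times2$ modulation system (the paper only displays the $(v,iW)_{\dot H^1}$ constraint) and you justify the symplectic orthogonality of the null modes to $g_\mp$, which the paper uses implicitly — but the underlying argument is the same.
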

	
	\begin{proof} Injecting the decomposition \eqref{decom:u} into the equation \eqref{equ:Hart}, we obtain
		\begin{equation*}
		v_t=ie^{2\sigma}\mathcal{L}v-\left(i\theta_t+\sigma_{t}S_{-1}' \right)(W+v)-ie^{2\sigma}N(v).	
		\end{equation*}
		Applying the change of variable $t\mapsto\tau$ with $\dot{\tau}=e^{2\sigma}$ to the above yields \eqref{equ:linez}.
		
		Next we consider the equations for the parameters. Using \eqref{equ:linez}, we have 
		\begin{equation*}
		\begin{aligned}
		\partial_{\tau}\lambda_+=\omega(\partial_{\tau}v,g_-)=&\omega(i\mathcal{L}v,g_-)-\omega\left( \left(i\theta_{\tau}+\sigma_{\tau}S_{-1}' \right)W ,g_-\right)\\+&\langle N(v),g_-\rangle-\omega\left( \left(i\theta_{\tau}+\sigma_{\tau}S_{-1}'\right)v ,g_-\right).
		\end{aligned}
		\end{equation*}
		Using  $\mathcal{L}^{*}=\mathcal{L}$ and $\left( S_{a}'\right)^*=-S_{-a}'$, we see that 
		\begin{equation*}
		\begin{aligned}
		\omega(i\mathcal{L}v,g_-)=\omega(i\mathcal{L}g_-,v)=\mu\lambda_+, \;\;  \omega\left( \left(i\theta_{\tau}+\sigma_{\tau}S_{-1}'\right)W ,g_-\right)=0,\\\omega\left( \left(i\theta_{\tau}+\sigma_{\tau}S_{-1}' \right)v ,g_-\right)=-\theta_{\tau}\langle v,g_-\rangle-\sigma_{\tau}\langle iv,S_{1}'g_-\rangle.
		\end{aligned}
		\end{equation*}
		Hence
		\begin{equation}\label{equ:dyn lam+}
		\partial_{\tau}\lambda_+=\mu\lambda_++\theta_{\tau}\langle v,g_-\rangle+\sigma_{\tau}\langle iv,S_{1}'g_-\rangle+\langle N(v),g_-\rangle.
		\end{equation}
		Similarly, we have
		\begin{equation}\label{equ:dyn lam-}
		\partial_{\tau}\lambda_-=-\mu\lambda_--\theta_{\tau}\langle v,g_+\rangle-\sigma_{\tau}\langle iv,S_{1}'g_+\rangle-\langle N(v),g_+\rangle.
		\end{equation}
		We differentiate the orthogonality condition $(v,iW)_{\dot H^1}=0$ with respect to $\tau$ to obtain the parameter estimate. Using \eqref{equ:linez}, we get
		\begin{equation*}
		0=(v_{\tau},iW)_{\dot H^1}=(i\mathcal{L}v,iW)_{\dot H^1}-\theta_{\tau}(iW,iW)_{\dot H^1}-\sigma_{\tau}(S_{-1}'v,iW)_{\dot H^1} -(iN(v),iW)_{\dot H^1}.
		\end{equation*}
		Combined with $(i\mathcal{L}v,iW)_{\dot H^1}=O(\Vert v\Vert_{\dot H^1})$ and $(iN(v),iW)_{\dot H^1}=O(\Vert v\Vert_{\dot H^1}^2)$, we obtain
		\begin{equation}\label{est:para}
		\partial_{\tau}(\theta,\sigma)=O \left(\Vert v\Vert_{\dot H^1}\right).
		\end{equation}	
		Thus, we have $\theta_{\tau}\langle v,g_\pm\rangle=\sigma_{\tau}\langle iv,S_{1}'g_\pm\rangle=\langle N(v),g_\pm\rangle=O(\Vert v\Vert_{\dot H^1}^2)$. Now plugging the above into \eqref{equ:dyn lam+} and \eqref{equ:dyn lam-}, we see that \eqref{est:dyn lam+} holds. This completes the proof.
	\end{proof}

	\subsection{Control by the linearized energy.} The following proposition shows that the orthogonal direction $\gamma$ of $v$ in \eqref{decom:v+} can be controlled by the linearized energy:
	\begin{proposition}(Control of orthogonal direction)\label{prop:orth cntl}. For function $\gamma$ defined in \eqref{decom:v+}, we have
		\begin{equation*}
		\Vert \gamma \Vert_{\dot H^1}^2 \sim \langle \mathcal{L}\gamma,\gamma \rangle.
		\end{equation*}
	\end{proposition}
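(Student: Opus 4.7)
The claim is the two-sided bound $\Vert\gamma\Vert_{\dot H^1}^2 \sim \langle \mathcal{L} \gamma, \gamma\rangle$, and the plan is to establish the two directions separately. The upper bound is soft: expanding
\[
\langle \mathcal{L} \gamma, \gamma\rangle = \Vert\nabla \gamma\Vert_{L^2}^2 - \int_{\R^d}(|x|^{-4}*|W|^2)|\gamma|^2\,dx - 2\int_{\R^d}(|x|^{-4}*(W\gamma_1))\,W\gamma_1\,dx,
\]
I would control each of the two nonlocal terms by the Hardy--Littlewood--Sobolev inequality with exponent $d/(d-2)$, then H\"older, and finally the Sobolev embedding $\dot H^1\hookrightarrow L^{2d/(d-2)}$. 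Since $W$ is a fixed element of $\dot H^1$, this yields $\langle \mathcal{L} \gamma, \gamma\rangle \lesssim \Vert\gamma\Vert_{\dot H^1}^2$ with an absolute constant.

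The substantive direction is the lower bound, and the plan is to reduce it to the coercivity provided by Proposition \ref{prop:spect}(e), which asserts $\Phi(h) \geq c\Vert h\Vert_{\dot H^1}^2$ for every $h \in G_\bot$. What must therefore be checked is that $\gamma \in G_\bot$. The symplectic conditions $\omega(g_\pm, \gamma) = 0$ are built into the decomposition of Proposition \ref{prop:spect decom} by construction. The remaining $\dot H^1$-orthogonality conditions $(iW, \gamma)_{\dot H^1} = (\widetilde{W}, \gamma)_{\dot H^1} = 0$ are inherited from the context in which Proposition \ref{prop:orth cntl} is applied: $\gamma$ arises from a $v$ that is already $\dot H^1$-orthogonal to $iW$ and $\widetilde{W}$ by Proposition \ref{prop:orth decom}, and the component $\lambda_+ g_+ + \lambda_- g_-$ either automatically respects these orthogonalities (a direct calculation using $L_-W = 0$, $L_+\widetilde{W}=0$ and the eigenrelations $L_+ g_1 = -\mu g_2$, $L_- g_2 = \mu g_1$) or can be subtracted off without altering $\langle \mathcal{L}\gamma, \gamma\rangle$, since $iW, \widetilde{W}$ lie in the kernel of $\mathcal{L}$.

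The main obstacle is genuinely outsourced rather than internal: the coercivity of $\Phi$ on $G_\bot$ in Proposition \ref{prop:spect}(e) is the deep analytic input, since it ultimately rests on the non-degeneracy of the ground state $W$ proved in \cite{LLTX:Nondeg, LLTX:g-Hart, LTX:Nondeg}. Accepting that coercivity as a black box, the proof of Proposition \ref{prop:orth cntl} reduces to the soft HLS/Sobolev upper bound together with the orthogonality bookkeeping described above, and no further ingredient is required.
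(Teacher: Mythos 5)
Your argument is essentially the paper's: the upper bound $\langle\mathcal{L}\gamma,\gamma\rangle\lesssim\Vert\gamma\Vert_{\dot H^1}^2$ is soft (the paper says ``by the definition of $\Phi$''; your HLS/H\"older/Sobolev expansion is the standard way to spell it out), and the lower bound is exactly the coercivity of $\Phi$ on $G_\perp$ from Proposition~\ref{prop:spect}(e) combined with $\Phi(\gamma)=\tfrac12\langle\mathcal{L}\gamma,\gamma\rangle$. The one place you hedge is the membership $\gamma\in G_\perp$: the paper simply asserts this parenthetically in Proposition~\ref{prop:spect decom} and cites it, so your reliance on it matches the paper's level of rigor --- but note that the eigenrelations and kernel identities you invoke (via self-adjointness of $L_\pm$) deliver the \emph{$L^2$} orthogonalities $\langle W,g_1\rangle=0$ and $\langle\widetilde W,g_2\rangle=0$, not the $\dot H^1$ pairings $(iW,\cdot)_{\dot H^1}$, $(\widetilde W,\cdot)_{\dot H^1}$ appearing in the definition of $G_\perp$, and the ``subtract off the kernel component'' alternative only controls $\Vert\gamma-P\gamma\Vert_{\dot H^1}^2$ rather than $\Vert\gamma\Vert_{\dot H^1}^2$, so neither branch of your hedge closes as cleanly as suggested; if you wanted to make this airtight you would need to reconcile the two inner products, a detail the paper itself leaves to the cited references.
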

	
	\begin{proof}
		By the defination of $\Phi$, we can easily get $\Phi(\gamma)\lesssim \Vert \gamma \Vert_{\dot H^1}^2$. Since $\gamma\in G_{\bot}$, we have $\Phi(\gamma)\gtrsim \Vert \gamma \Vert_{\dot H^1}^2$ by Proposition \ref{prop:spect}. Combined with the fact $\Phi(\gamma)=\frac12\langle \mathcal{L}\gamma,\gamma \rangle$, this leads to the conclusion.
	\end{proof}
	Hence in the subspace $\{ v\in\dot H^1|\; (v,iW)_{\dot H^1}=(v,\widetilde{W})_{\dot H^1}=0 \}$, we can define an equivalent norm $E$ using the decomposition of Proposition \ref{prop:spect decom}
	\begin{equation}\label{quan:equi nor}
	\Vert v \Vert_{E}^2:=\mu (\lambda_{1}^{2}+\lambda_{2}^{2})+\frac{1}{2}\langle \mathcal{L}\gamma,\gamma \rangle \sim \lambda_{1}^{2}+\lambda_{2}^{2}+\Vert \gamma \Vert_{\dot H^1}^2 \sim \Vert v \Vert_{\dot H^1}^2.
	\end{equation}
	In particular, in the decomposition of Proposition \ref{prop:orth decom}, we have
	\begin{equation*}
	d_{\mathcal{W}}(\varphi) \sim \Vert v \Vert_{\dot H^1} \sim \Vert v \Vert_{E}.
	\end{equation*}
	
	Henceforth, we assume that whenever a solution u of \eqref{equ:Hart} is in $B_{\delta_E}(\mathcal{W})$, the coordinates $\sigma,\theta,v, \lambda_\pm,\lambda_{1},\lambda_{2}$ and $\gamma$ are defined by \eqref{decom:u},\eqref{decom:v+},\eqref{decom:v1}, while $\tau(t)$ is a solution of $\dot{\tau}(t):=e^{2\sigma}$. In short, 
	\begin{equation}\label{coor arou W}
	\begin{aligned}
	e^{-i\theta}S_{-1}^{-\sigma}u-W=v=\lambda_+g_{+} + \lambda_-g_{-}+\gamma=2\lambda_{1}g_1-2i\lambda_{2}g_2+\gamma,\\ 
	0=(v,iW)_{\dot H^1}=(v,\widetilde{W})_{\dot H^1}=\omega(g_{\pm},\gamma)=\langle g_1,\gamma_2 \rangle= \langle g_2,\gamma_1 \rangle,\\
	\delta_E>d_{\mathcal{W}}(u) \sim \Vert v \Vert_{\dot H^1} \sim \Vert v \Vert_{E},\;  (\theta,\sigma)=\left( \widetilde{\theta}(u),\widetilde{\sigma}(u) \right), \;\dot{\tau}=e^{2\sigma}.	
	\end{aligned}	
	\end{equation}

	\subsection{Uniform local existence.}
	The following proposition ensures the existence of a solution $u$ of \eqref{equ:Hart} in a neighborhood of $\mathcal{W}$ as long as the scaling parameter $\sigma$ is bounded from above.
	\begin{proposition}(Uniform local existence in $\tau$)\label{prop:unif lwp}. There exists an absolute constant $\delta_L\in(0,\delta_{E}/2)$ such that for any solution $u$ of \eqref{equ:Hart} with $d_{\mathcal{W}}(u(0))=:\delta\in\left[ 0,2\delta_L \right]$, we have $T_{\pm}(u)>3e^{-2\sigma(0)}=:T_0$, $\pm\left( \tau(\pm T_0)-\tau(0) \right)>2$, and for $|t|\leq T_0$,
		\begin{equation*}
		\delta_{E}>d_{\mathcal{W}}(u(t))\sim\delta,\;\; \sigma(t)=\sigma(0)+O(\delta).
		\end{equation*}
	\end{proposition}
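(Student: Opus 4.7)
\emph{Plan and scaling reduction.} The strategy is to reduce via the scaling symmetry to the unit-scale case $\sigma(0)=0$, invoke Strichartz stability around the stationary solution $W$ on a fixed time window, close a bootstrap for the modulation parameter $\sigma(t)$ using the ODEs of Proposition~\ref{prop:orth decom:u}, and obtain the lower bound $d_{\mathcal{W}}(u(t))\gtrsim\delta$ by time-reversing the upper bound. Setting $\tilde u(t,x):=e^{-(d-2)\sigma(0)/2}u(e^{-2\sigma(0)}t,e^{-\sigma(0)}x)$, symmetry (a) and the scale-invariance of $\mathcal{W}$ and $d_{\mathcal{W}}$ let us assume $\sigma(0)=0$, in which case $T_0=3$ and it suffices to prove the proposition on $|\tilde t|\le 3$.

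\emph{Existence and upper bound on $d_{\mathcal{W}}$.} By Proposition~\ref{prop:orth decom}, $u(0)=e^{i\theta(0)}(W+v(0))$ with $\|v(0)\|_{\dot H^1}\sim\delta$. Since $W$ is the extremizer in Proposition~\ref{prop:sharp const}, its Euler--Lagrange equation together with $K(W)=0$ forces the Lagrange multiplier to vanish, so $-\Delta W=(|x|^{-4}\ast|W|^2)W$, and thus $W$ is a stationary solution of \eqref{equ:Hart} with finite Strichartz norm $\|W\|_{S([-3,3])}<\infty$. Standard Strichartz perturbation theory built on Proposition~\ref{prop:disp ST est} then yields, for $\delta_L$ small enough, existence of $u$ on $[-3,3]$ together with
\begin{equation*}
\sup_{|t|\le 3}\|u(t)-e^{i\theta(0)}W\|_{\dot H^1}\lesssim\delta,
\end{equation*}
so $d_{\mathcal{W}}(u(t))\le C\delta<\delta_E$. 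This gives $T_\pm(u)>3$ and the upper half of $d_{\mathcal{W}}(u(t))\sim\delta$.

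\emph{Modulation bootstrap.} Proposition~\ref{prop:orth decom} applied pointwise produces $C^1$ parameters $(\theta(t),\sigma(t),v(t))$ on $[-3,3]$ with $\|v(t)\|_{\dot H^1}\lesssim\delta$, and Proposition~\ref{prop:orth decom:u} gives the bounds $\partial_\tau\sigma=O(\|v\|_{\dot H^1})=O(\delta)$ and $\tau'(t)=e^{2\sigma(t)}$. Run a continuity bootstrap: as long as $|\sigma(t)|\le C_1\delta$ on a subinterval $[-T,T]\subset[-3,3]$, one has $\tau'(t)\in[1/2,2]$ for $\delta_L$ small, hence $|\tau(t)-\tau(0)|\le 6$, and integrating $|\partial_\tau\sigma|\le C\delta$ over this $\tau$-range yields $|\sigma(t)|\le 6C\delta$, closing the bootstrap with $C_1:=6C+1$. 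Therefore $\sigma(t)=\sigma(0)+O(\delta)$ on $[-T_0,T_0]$, and
\begin{equation*}
\pm\bigl(\tau(\pm T_0)-\tau(0)\bigr)=\int_0^{\pm T_0}e^{2\sigma(s)}\,ds\ge T_0\bigl(1-O(\delta)\bigr)>2.
\end{equation*}

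\emph{Lower bound and main obstacle.} For the lower bound $d_{\mathcal{W}}(u(t))\gtrsim\delta$, suppose $d_{\mathcal{W}}(u(t_1))=:\delta'$ at some $t_1\in[-T_0,T_0]$; applying the already-established upper bound to the solution restarted at time $t_1$ (whose effective window $3e^{-2\sigma(t_1)}=3+O(\delta)$ exceeds $|t_1|$ since $\sigma(t_1)=O(\delta)$, with a tiny boundary neighborhood of $\pm T_0$ handled by continuity of $d_{\mathcal{W}}$) propagates the smallness $\delta'$ back to $t=0$ and gives $\delta=d_{\mathcal{W}}(u(0))\le C\delta'$, i.e.\ $\delta'\gtrsim\delta$. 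The main technical obstacle is closing the modulation bootstrap on a macroscopic $\tau$-window of fixed size $\approx 6$, uniformly in $\delta$: this works precisely because both the Strichartz stability constants around $W$ and the error estimates of Proposition~\ref{prop:orth decom:u} depend only on structural quantities such as $\|W\|_{S([-3,3])}$ and not on $\delta$, so every $\delta$-dependent term can be absorbed by choosing $\delta_L$ sufficiently small.
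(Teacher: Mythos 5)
Your proposal is correct and follows essentially the same route as the paper: reduce to $\theta(0)=\sigma(0)=0$ by symmetry, use Strichartz/stability theory around the static solution $W$ on the fixed window $|t|\le 3$ to get $d_{\mathcal{W}}(u(t))\lesssim\delta$, and then integrate the modulation ODE $\partial_\tau\sigma=O(\|v\|_{\dot H^1})$ together with $\dot\tau=e^{2\sigma}$ to obtain $\sigma(t)=O(\delta)$ and $|\tau(\pm 3)-\tau(0)|>2$. Your explicit continuity bootstrap and time-reversal argument for the lower bound $d_{\mathcal{W}}(u(t))\gtrsim\delta$ make explicit what the paper condenses into the phrase ``stability theory gives $d_\mathcal{W}(u(t))\sim\delta$''; the underlying idea (run the perturbation estimate in both time directions, with absolute constants) is the same.
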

	\begin{proof} Let $u(0)\in B_\delta(\mathcal{W})$ for some constant $\delta\in(0,\delta_{E})$, whose smallness will be required in the following. 
		
		By rotation and scaling, we may reduce to the case $\theta(0)=\sigma(0)=0$. Using the stability theory, we have: If $\Vert u(0)-W \Vert_{\dot H^1}=d_\mathcal{W}(u(0))=:\delta$ small enough, then for $|t|\leq3$, $u$ exists and remains in $O(\delta)$ neighborhood of $W$. This implies that $d_\mathcal{W}(u(t))\sim d_\mathcal{W}(u(0))=\delta$ for $|t|\leq3$. Then by Proposition \ref{prop:orth decom:u}, we have, for $|t|\leq3$,
		\begin{equation*}
		\dot\tau=e^{2\sigma},\;\; \dot\sigma=\dot\tau\sigma_{\tau}=e^{2\sigma}O(\Vert v \Vert_{\dot H^1})=e^{2\sigma}O(d_\mathcal{W}(u(t)))=e^{2\sigma}O(\delta).
		\end{equation*}
		Integrating the second identity and using $\sigma(0)=0$, we have, for $|t|\leq3$,
		\begin{align*}
		e^{-2\sigma}=1+O(\delta),
		\end{align*}
		which leads to 
		\begin{equation*}
		\tau(t)=\tau(0)+\left( 1+O(\delta) \right)t,\;\;\text{for}\;\;|t|\leq3.
		\end{equation*}
		In particular, if $\delta>0$ is small enough, $\tau(t)$ reaches $\tau(0)\pm2$ within $I(u)$.
	\end{proof}
	
	Now we are ready to define the nonlinear distance $\widetilde{d}_{\mathcal{W}}$. Let $\varphi\in B_{\delta_E}(\mathcal{W})$. Consider the decomposition \eqref{decom:var} of $\varphi$. Then we define a local distance $d_0:B_{\delta_E}(\mathcal{W}) \rightarrow \left[0,\infty \right)$ by 
	\begin{equation}\label{quan:d_0}
	d_0(\varphi)^2:=E(\varphi)-E(W)+2\mu\lambda_{1}^2.
	\end{equation}
	
	As observed in \cite{NakS:NLS}, this is close to be convex in $\tau$ when the solution is ejected out of a small neighborhood of $\mathcal{W}$, but it may have small oscillation around minima in $\tau$. This is a difference for the Schrödinger equation from the Klein-Gordon equation, for which $d_{0}^{2}$ is strictly convex (see \cite{NakS:NLKG}). We could treat the possible oscillation as in \cite{NakS:NLS} by waiting for a short time before the exponential instability dominates, which would however bring a certain amount of complication to the statements as well as the proof.
	
	Here instead, we introduce a dynamical mollification of $d_{0}^2$, which yields a strictly convex function in $\tau$. The same argument works in the subcritical setting as in \cite{NakS:NLS}. Let $u$ be the solution of \eqref{equ:Hart} with intial data $u(0):=\varphi\in B_{2\delta_L}(\mathcal{W})$. Then Proposition \ref{prop:unif lwp} ensures that $u$ exists at least for $|\tau-\tau(0)|\leq 2$ in $B_{\delta_E}(\mathcal{W})$. Using the decomposition \eqref{coor arou W} with $|\tau(0)|:=0$, let 
	\begin{equation}\label{quan:d_1}
	d_1(u(t))^2:=\int_{\R}\phi(\tau)d_{0}^{2}(u(t-\tau))d\tau=\phi*d_{0}^{2}(u)(t),
	\end{equation}
	where $\phi$ is the cut-off function in \eqref{funct:cutf funct}. This defines the function $d_1:B_{2\delta_L}(\mathcal{W}) \rightarrow \left[0,\infty \right)$. Then, we define the nonlinear distance function $\widetilde{d}_{\mathcal{W}}:\dot H^1_{rad} \rightarrow \left[0,\infty \right)$ by 
	\begin{equation}\label{quan:d equi}
	\widetilde{d}_{\mathcal{W}}(\varphi):=\phi_{\delta_L}\left( {d}_{\mathcal{W}}(\varphi) \right)d_1(\varphi)+\phi_{\delta_L}^{C}\left( {d}_{\mathcal{W}}(\varphi) \right)d_{\mathcal{W}}(\varphi).
	\end{equation}

	\subsection{Nonlinear distance function.}
	The following proposition gives the main static properties of the distance function.
	\begin{proposition}(Nonlinear distance function)\label{prop:nlr dist}. The function $\widetilde{d}_{\mathcal{W}}$ on $\dot H^1_{rad}$ is invariant for the rotation and scaling, and equivalent to $d_{\mathcal{W}}$. Precisely, there exists an absolute constant $C\in\left(1,\infty\right)$ such that for all $\varphi\in \dot H^1_{rad}$ and $(\alpha,\beta)\in \R^2$,
		\begin{equation*}
		d_{\mathcal{W}}(\varphi)/C \leq \widetilde{d}_{\mathcal{W}}(\varphi)=\widetilde{d}_{\mathcal{W}}(e^{i\alpha}S_{-1}^{\beta}\varphi) \leq C	d_{\mathcal{W}}(\varphi).
		\end{equation*}
Moreover, there  exists an absolute constant $c_D\in\left(0,1\right)$ such that putting
		\begin{equation*}
		\check{\mathcal{H}}:=\{ \varphi\in \dot H^1_{rad}| E(\varphi)<E(W)+(c_D\widetilde{d}_{\mathcal{W}}(\varphi))^2\},
		\end{equation*}
		we have
		\begin{equation}\label{est:dist eigen}
		\varphi\in B_{\delta_L}(\mathcal{W}) \cap \check{\mathcal{H}} \Longrightarrow \widetilde{d}_{\mathcal{W}}(\varphi) \sim |\lambda_{1}|.
		\end{equation}
	\end{proposition}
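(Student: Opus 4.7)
The plan is to prove the three assertions---invariance, global equivalence with $d_{\mathcal{W}}$, and the spectral characterization inside $\check{\mathcal{H}}$---in sequence, relying on the spectral decomposition of Proposition \ref{prop:spect decom}, the energy expansion \eqref{est:EK expa}, and the uniform local existence of Proposition \ref{prop:unif lwp}. Invariance follows quickly: the uniqueness in Proposition \ref{prop:orth decom} forces any rotation/scaling applied to $\varphi$ to be absorbed into $(\widetilde\theta, \widetilde\sigma)$, so $v$, $\lambda_1$ and $d_{\mathcal{W}}$ are invariant; since $E$ is also invariant under rotation and scaling, $d_0$ in \eqref{quan:d_0} is invariant. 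For $d_1$ I would note that although the Hartree flow rescales physical time under scaling, the rescaled time $\tau$ defined by $\dot\tau = e^{2\sigma}$ absorbs that rescaling, so the $\tau$-parametrized orbits through $\varphi$ and $e^{i\alpha} S_{-1}^{\beta}\varphi$ agree; the convolution in \eqref{quan:d_1} is carried out in $\tau$-time, hence $d_1$ is invariant, and $\widetilde{d}_{\mathcal{W}}$ inherits invariance as a cutoff-weighted combination.

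For the equivalence $\widetilde{d}_{\mathcal{W}} \sim d_{\mathcal{W}}$, outside $B_{2\delta_L}(\mathcal{W})$ the cutoff $\phi_{\delta_L}(d_{\mathcal{W}}(\varphi))$ vanishes and $\widetilde{d}_{\mathcal{W}} = d_{\mathcal{W}}$. Inside, substituting the spectral decomposition of $v$ into \eqref{est:EK expa} and using the identity $\Phi(v) = -\mu\lambda_1^2 + \mu\lambda_2^2 + \Phi(\gamma)$, which I would compute from $i\mathcal{L}g_\pm = \pm\mu g_\pm$ and the normalization \eqref{con:ome}, yields
\[
d_0(\varphi)^2 = E(\varphi) - E(W) + 2\mu\lambda_1^2 = \mu\lambda_1^2 + \mu\lambda_2^2 + \Phi(\gamma) + O(\Vert v\Vert_{\dot H^1}^3) = \Vert v\Vert_E^2 + O(\Vert v\Vert_{\dot H^1}^3) \sim d_{\mathcal{W}}(\varphi)^2,
\]
so the $+2\mu\lambda_1^2$ offset in the definition of $d_0$ precisely cancels the negative unstable-mode contribution in $\Phi(v)$. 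Proposition \ref{prop:unif lwp} then guarantees $d_{\mathcal{W}}(u(\cdot)) \sim d_{\mathcal{W}}(\varphi)$ along the $\tau$-trajectory for $|\tau|\leq 2$, so averaging against $\phi$ gives $d_1(\varphi)^2 \sim d_{\mathcal{W}}(\varphi)^2$, and the convex combination defining $\widetilde{d}_{\mathcal{W}}$ inherits the equivalence.

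For the spectral characterization inside $B_{\delta_L}(\mathcal{W})\cap\check{\mathcal{H}}$ one has $\widetilde{d}_{\mathcal{W}}(\varphi) = d_1(\varphi)$; the direction $|\lambda_1(\varphi)| \lesssim \widetilde{d}_{\mathcal{W}}(\varphi)$ is immediate since $\mu\lambda_1^2 \leq \Vert v\Vert_E^2$. For the reverse, set $X := \mu\lambda_1^2(\varphi)$ and $Y := \mu\lambda_2^2(\varphi) + \Phi(\gamma(\varphi))$, so $X + Y = \Vert v\Vert_E^2 \sim d_{\mathcal{W}}^2 \sim \widetilde{d}_{\mathcal{W}}^2$ and the energy expansion reads $E(\varphi) - E(W) = -X + Y + O(d_{\mathcal{W}}^3)$. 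The constraint $\varphi \in \check{\mathcal{H}}$ then forces $-X + Y < c_D^2 \widetilde{d}_{\mathcal{W}}^2 + O(d_{\mathcal{W}}^3) \leq (C c_D^2 + O(\delta_L))(X+Y)$; choosing $c_D$ and $\delta_L$ absolutely small, absorption yields $Y \lesssim X$, hence $\widetilde{d}_{\mathcal{W}}^2 \sim X + Y \lesssim \mu\lambda_1^2(\varphi)$. The main obstacle is precisely this apparent circularity---$\widetilde{d}_{\mathcal{W}}$ appears on both sides of the $\check{\mathcal{H}}$ inequality---together with the bookkeeping needed to pass from the pointwise energy constraint at $\varphi$ to the averaged quantity $d_1$; the latter is handled by energy conservation, which makes the first term in $d_0^2(u(t_\tau))$ constant in $\tau$, together with the estimate $|\lambda_1(u(\cdot)) - \lambda_1(\varphi)| \lesssim d_{\mathcal{W}}(\varphi)$ from \eqref{est:dyn lam1}, keeping the $X$--$Y$ algebra uniform along the whole $\tau$-window once $\delta_L$ and $c_D$ are fixed small in absolute terms.
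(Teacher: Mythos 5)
Your proposal is correct and follows essentially the same route as the paper: invariance via uniqueness of the modulation and $\tau$-reparametrization, the identity $d_0^2 = \Vert v\Vert_E^2 - C(v)$ obtained by cancelling the $-\mu\lambda_+\lambda_-$ term with $2\mu\lambda_1^2$, and the $\check{\mathcal{H}}$-constraint argument for the spectral characterization. One small remark: the ``circularity/averaging bookkeeping'' you flag in the last paragraph is already dispatched once the first part gives $\widetilde{d}_{\mathcal{W}} = d_1 \sim d_0$ pointwise at $\varphi$ (so you can run the $X$--$Y$ algebra directly on $d_0(\varphi)^2$ with no need to propagate $\lambda_1$ over the $\tau$-window, nor any extra appeal to energy conservation).
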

	
	\begin{proof}
		First we proof $\widetilde{d}_{\mathcal{W}} \sim d_{\mathcal{W}}$. If $d_{\mathcal{W}}(\varphi)\geq 2\delta_L$, then $\widetilde{d}_{\mathcal{W}}=d_{\mathcal{W}}$ by the defination of $\widetilde{d}_{\mathcal{W}}(\varphi)$. It remains to consider the case $d_{\mathcal{W}}(\varphi)\leq 2\delta_L$. Since $\varphi\in B_{2\delta_L}(\mathcal{W})\subset B_{\delta_E}(\mathcal{W})$, we can  decompose $\varphi$ by Proposition \ref{prop:orth decom} and \ref{prop:spect decom}. By \eqref{est:EK expa} and the scale invariance of $E$, we have
		\begin{equation}\label{est:E expa}
		E(\varphi)-E(W)=E(W+v)-E(W)=\frac{1}{2}\langle \mathcal{L}v,v \rangle-C(v)=-\mu\lambda_+\lambda_-+\frac{1}{2}\langle \mathcal{L}\gamma,\gamma \rangle-C(v).
		\end{equation}
		Hence, using Proposition \ref{prop:orth cntl} and \eqref{quan:d_0},
		\begin{equation*}
		d_0(\varphi)^2=\Vert v \Vert_{E}^2-C(v)=\Vert v \Vert_{E}^2+O(\Vert v \Vert_{\dot H^1}^3)\sim d_{\mathcal{W}}(\varphi)^2.
		\end{equation*}
		Then by  Proposition \ref{prop:unif lwp}, we have $d_1(\varphi)\sim d_{\mathcal{W}}(\varphi)$, and so $\widetilde{d}_{\mathcal{W}}(\varphi)\sim d_{\mathcal{W}}(\varphi)$.
		
		Next we prove \eqref{est:dist eigen}. On the one hand, since $d_{\mathcal{W}}(\varphi)<\delta_{L}$ and $E(\varphi)-E(W)<(c_D\widetilde{d}_{\mathcal{W}}(\varphi))^2$, we have
		\begin{equation*}
		\widetilde{d}_{\mathcal{W}}(\varphi)^2 \sim d_0(\varphi)^2=E(\varphi)-E(W)+2\mu\lambda_{1}^2,
		\end{equation*}
		and so $\widetilde{d}_{\mathcal{W}}(\varphi)^2 \lesssim \lambda_{1}^2$. On the other hand, we see that $\widetilde{d}_{\mathcal{W}}(\varphi)^2\sim \Vert v \Vert_{E}^2 \gtrsim \lambda_{1}^2$ by \eqref{quan:equi nor}.
		
		Finally, we check the invariance for the rotation and scaling. Let $(\alpha,\beta)\in\R^2$, $\varphi\in B_{\delta_E}(\mathcal{W})$ and let $u$ and $u'$ be the solutions of \eqref{equ:Hart} with the initial data
		\begin{equation*}
		u(0)=\varphi,\;\; u'(0)=e^{i\alpha} S_{-1}^{\beta}\varphi,
		\end{equation*}
		with the decompositions by Proposition \ref{prop:orth decom} and the rescaled time functions
		\begin{align*}
		u=&e^{i\theta} S_{-1}^{\sigma}(W+v),\;\;u'=e^{i\theta'} S_{-1}^{\sigma'}(W+v'),\\ \dot\tau=&e^{2\sigma},\;\;\dot\tau'=e^{2\sigma'},\;\;\tau(0)=0=\tau'(0).
		\end{align*}
		Then 
		\begin{equation*}
		e^{i\theta'(0)} S_{-1}^{\sigma'(0)}(W+v'(0))=e^{i\alpha} S_{-1}^{\beta}\varphi=e^{i\alpha}e^{i\theta(0)}S_{-1}^{\beta} S_{-1}^{\sigma(0)}(W+v(0)),
		\end{equation*}
		that is 
		\begin{equation*}
		\theta'(0)=\theta(0)+\alpha,\;\;\sigma'(0)=\sigma(0)+\beta.
		\end{equation*}
		By the uniqueness of $\left(\widetilde{\theta},\widetilde{\sigma} \right)$ in Proposition \ref{prop:orth decom}, we have $\left(\theta',\sigma' \right)=\left(\theta,\sigma\right)+(\alpha,\beta)$.
		Integrating the following identity
		\begin{equation*}
		\dot\tau'=e^{2\sigma'}=e^{2\sigma+2\beta}=e^{2\beta}\dot\tau,
		\end{equation*}
		we have $\tau'=\tau e^{2\beta}$, while the invariance of the equation \eqref{equ:Hart} implies $u'(t)=e^{i\alpha} S_{-1}^{\beta}u(e^{2\beta}t)$. Hence $v$ is invariant in the rescaled time, namely
		\begin{equation*}
		\tau(t)=\tau'(t') \Longrightarrow v(t)=v'(t'),
		\end{equation*}
		which is inherited by $\lambda_{\pm},\lambda_1,\lambda_2$ and $\gamma$. Therefore $d_0$ and $d_1$ are invariant, so is $\widetilde{d}_{\mathcal{W}}$.
	\end{proof}
	Hence we can use $\widetilde{d}_{\mathcal{W}}(\varphi)$ to measure the distance to $\mathcal{W}$, instead of the standard $d_{\mathcal{W}}(\varphi)$. The $\delta$ neighborhood with respect to this distance function is denoted by
	\begin{equation*}
	\widetilde{B}_{\delta}(\mathcal{W}):=\{ \varphi\in \dot H^1_{rad}\;|\;\widetilde{d}_{\mathcal{W}}(\varphi)<\delta \}.
	\end{equation*}

	\subsection{Dynamics in the ejection mode.}The following proposition describes the dynamics close to the ground states in the ejection mode.
	
	In order to verify that: the dynamics can be ruled by that of its unstable eigenmode of the linearized operator around $\mathcal{W}$ just as the dynamics of solutions of linear differential equations. We have to control the orthogonal component $\gamma$ of the spectral decomposition of the remainder resulting from the linearization around $\mathcal{W}$. We would like to control this component by using the quadratic terms resulting from the Taylor expansion of the energy around $\mathcal{W}$. This can be done if and only if the remainder satisfies two orthogonality conditions: see Proposition \ref{prop:orth cntl}. In order to satisfy these conditions, we have to give two degrees of freedom to the decomposition of the solution around $\mathcal{W}$: a rotation parameter (this was done in \cite{NakS:NLS}) and a scaling parameter: see Propositions \ref{prop:orth decom} and \ref{prop:orth decom:u}. Then, we also have to control the evolution of these two parameters (see \eqref{est:dyn epara}). We prove in Proposition \ref{prop:eject mod} that we can close the argument. More precisely, the dynamic of the solution close to $\mathcal{W}$ and in the exit mode is dominated by the exponential growth of the unstable eigenmode. Moreover, a relevant functional $K$ grows exponentially and its sign eventually becomes opposite to that of the eigenmode( see \eqref{est:dyn Keigen}).

	\begin{proposition}(Dynamics in the ejection mode)\label{prop:eject mod}. There is an absolute constant $\delta_{X}\in(0,1)$ such that $\widetilde{B}_{\delta_{X}}(\mathcal{W})\subset B_{\delta_{L}}(\mathcal{W})$, and that for any solution $u$ of \eqref{equ:Hart} with 
		\begin{equation}\label{eje mode}
		u(t_0)\in \widetilde{B}_{\delta_{X}}(\mathcal{W})\cap \check{\mathcal{H}},\; \partial_{t}\widetilde{d}_{\mathcal{W}}(u(t_0))\geq 0,
		\end{equation}	
		at some $t_0\in I(u)$, we have the following.
		
		$\widetilde{d}_{\mathcal{W}}(u(t))$ is increasing strictly until it reaches $\delta_{X}$ at some $t_{X}\in (t_0,T_{+}(u))$. 
		
		For all $t\in \left[ t_0,t_{X} \right]$, we have
		\begin{equation}\label{est:dyn d}
		\widetilde{d}_{\mathcal{W}}(u(t))\sim |\lambda_{1}(t)|\sim e^{\mu(\tau(t)-\tau(t_0) )}\widetilde{d}_{\mathcal{W}}(u(t_0)),
		\end{equation}
		\begin{equation}\label{est:dyn gamm}
		\Vert \gamma(t) \Vert_{\dot H^1} \lesssim \widetilde{d}_{\mathcal{W}}(u(t_0))+\widetilde{d}_{\mathcal{W}}(u(t))^\frac{3}{2},
		\end{equation}
		\begin{equation}\label{est:dyn epara}
		\Big| \left( e^{i\theta(t)}-e^{i\theta(t_0)},\sigma(t)-\sigma(t_0) \right) \Big| \lesssim \widetilde{d}_{\mathcal{W}}(u(t)),
		\end{equation}
		$sign(\lambda_{1}(t))$ is a constant, and there exists an absolute constant $C_K>0$ such that 
		\begin{equation}\label{est:dyn Keigen}
		-sign(\lambda_{1}(t))K(u(t))\gtrsim \left( e^{\mu(\tau(t)-\tau(t_0) )}-C_K \right)\widetilde{d}_{\mathcal{W}}(u(t_0))
		\end{equation}
	\end{proposition}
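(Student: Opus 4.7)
The plan is to exploit the hyperbolic ODE structure from Proposition \ref{prop:orth decom:u}, namely $\partial_\tau\lambda_\pm=\pm\mu\lambda_\pm+O(\|v\|_{\dot H^1}^2)$, and to close a bootstrap argument using the energy constraint coming from $\check{\mathcal H}$. Because $\widetilde d_{\mathcal W}$ is invariant under rotation and scaling (Proposition \ref{prop:nlr dist}), I may assume $\theta(t_0)=\sigma(t_0)=0$ and $\tau(t_0)=0$; set $\delta_0:=\widetilde d_{\mathcal W}(u(t_0))$. Proposition \ref{prop:unif lwp} guarantees the decomposition \eqref{coor arou W} persists on some nontrivial $\tau$-interval. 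On the maximal such interval on which $u(t)\in B_{\delta_L}(\mathcal W)$ and the bootstrap ansatz $|\lambda_+(\tau)|\le M e^{\mu\tau}\delta_0$, $|\lambda_-(\tau)|\le M(\delta_0+|\lambda_-(0)|e^{-\mu\tau})$ holds (with $M$ a large absolute constant), I will deduce improved versions of these bounds.

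The first ingredient is a coercive control of the orthogonal piece $\gamma$. By conservation of energy and the hypothesis $u(t_0)\in\check{\mathcal H}$, $E(u(t))-E(W)<(c_D\delta_0)^2$. Plugging in \eqref{est:E expa} and the normalization \eqref{con:ome} yields
\begin{equation*}
-\mu\lambda_+\lambda_-+\tfrac12\langle\mathcal L\gamma,\gamma\rangle-C(v)<(c_D\delta_0)^2,
\end{equation*}
and since $\langle\mathcal L\gamma,\gamma\rangle\sim\|\gamma\|_{\dot H^1}^2$ by Proposition \ref{prop:orth cntl}, this gives $\|\gamma\|_{\dot H^1}^2\lesssim|\lambda_+\lambda_-|+\delta_0^2+\|v\|_{\dot H^1}^3$. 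Combined with the bootstrap, this forces $\|\gamma\|_{\dot H^1}\lesssim\delta_0+\widetilde d_{\mathcal W}(u(t))^{3/2}$, giving \eqref{est:dyn gamm}. The second ingredient is Duhamel for $\lambda_\pm$:
\begin{equation*}
\lambda_+(\tau)=e^{\mu\tau}\lambda_+(0)+\int_0^\tau e^{\mu(\tau-s)}O(\|v(s)\|_{\dot H^1}^2)\,ds,\qquad \lambda_-(\tau)=e^{-\mu\tau}\lambda_-(0)+\int_0^\tau e^{-\mu(\tau-s)}O(\|v(s)\|_{\dot H^1}^2)\,ds.
\end{equation*}
Using $\|v\|_{\dot H^1}^2\lesssim \lambda_+^2+\lambda_-^2+\|\gamma\|^2$ and the bootstrap estimates together with the smallness of $\delta_X$, the perturbative integrals close the bootstrap with constants strictly improved, so the assumed bounds hold on the full interval where $u\in B_{\delta_L}(\mathcal W)$.

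Next, I convert this ODE information into the statements of the proposition. The hypothesis $\partial_t\widetilde d_{\mathcal W}(u(t_0))\ge 0$ is translated via the definition \eqref{quan:d_1} into monotonicity of the mollified quantity $d_1$; a direct computation gives $\partial_\tau^2 d_0^2=4\mu^3(\lambda_1^2+\lambda_2^2)+O(\|v\|^3)$, so $d_1^2$ is strictly convex in $\tau$ on $B_{\delta_X}(\mathcal W)$ once $\delta_X$ is small. Together with the bootstrap, convexity prevents $d_1$ from decreasing, which combined with $\widetilde d_{\mathcal W}\sim|\lambda_1|$ from \eqref{est:dist eigen} forces $|\lambda_+(0)|\gtrsim\delta_0$: otherwise $\lambda_1(0)\sim\lambda_-(0)/2$ would decay, contradicting growth. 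Thus $\lambda_+(\tau)\sim\lambda_+(0)e^{\mu\tau}$ dominates $\lambda_-(\tau)=O(\delta_0)$, so $\lambda_1(\tau)$ has constant sign and $\widetilde d_{\mathcal W}(u(t))\sim|\lambda_1(t)|\sim e^{\mu\tau}\delta_0$, yielding \eqref{est:dyn d}. Growth by an exponential factor to $\delta_X$ defines the exit time $t_X$ in finite rescaled time, hence in finite $t$. Integrating $\partial_\tau(\theta,\sigma)=O(\|v\|_{\dot H^1})$ from \eqref{est:para} against the exponential growth of $\|v\|\sim\widetilde d_{\mathcal W}$ gives \eqref{est:dyn epara}, after noting that the integral is dominated by its endpoint value.

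Finally, for \eqref{est:dyn Keigen} I use the expansion \eqref{est:EK expa}: $K(W+v)=-2\langle\nabla W,\nabla v\rangle+O(\|v\|_{\dot H^1}^2)$. Decomposing $v=2\lambda_1 g_1-2i\lambda_2 g_2+\gamma$ and noting that $\langle\nabla W,\nabla(ig_2)\rangle=0$ while $(g_1,W)_{\dot H^1}=\tfrac{\mu}{2}\langle W,g_2\rangle>0$ by the normalization \eqref{con:ome} and $L_+g_1=-\mu g_2$, I obtain
\begin{equation*}
K(u(t))=-4\lambda_1(t)(g_1,W)_{\dot H^1}+O\bigl(\|\gamma\|_{\dot H^1}+\|v\|_{\dot H^1}^2\bigr).
\end{equation*}
Substituting $|\lambda_1|\sim e^{\mu\tau}\delta_0$, $\|\gamma\|\lesssim\delta_0+\widetilde d^{3/2}$ and $\|v\|^2\lesssim\widetilde d^2\lesssim\delta_0^2 e^{2\mu\tau}$ yields $-\mathrm{sign}(\lambda_1)K(u)\gtrsim\delta_0 e^{\mu\tau}-C\delta_0$ on the ejection interval, which is \eqref{est:dyn Keigen}.

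The main obstacle is the interplay between the raw distance $d_{\mathcal W}$, the mollified $\widetilde d_{\mathcal W}$, and the unstable eigenmode $\lambda_+$: the sign hypothesis on $\partial_t\widetilde d_{\mathcal W}$ must be converted into the quantitative lower bound $|\lambda_+(t_0)|\gtrsim\delta_0$ needed to run the hyperbolic analysis, using only the convexity of $d_1^2$ modulo cubic errors; the whole point of introducing the mollification is exactly to bypass the potential small-scale oscillations in $d_0^2$, and verifying that the mollified convexity is strong enough for a clean monotonicity argument is the key delicate point.
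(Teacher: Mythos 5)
Your overall strategy tracks the paper's: exploit the hyperbolic ODE for $\lambda_\pm$ via Duhamel, use the convexity of the mollified $d_1^2$ in $\tau$ to translate the sign hypothesis $\partial_t\widetilde d_{\mathcal W}(u(t_0))\ge 0$ into the lower bound $|\lambda_+(t_0)|\gtrsim\delta_0$, and then run a bootstrap. The $K$-expansion and the parameter estimates are also in line with the paper's. Where you genuinely diverge is in the $\gamma$-estimate, and this is where there is a gap.

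Your $\gamma$-estimate relies on conservation of energy alone: plugging the expansion into $E(u)-E(W)<(c_D\delta_0)^2$ gives $\|\gamma\|^2\lesssim|\lambda_+\lambda_-|+\delta_0^2+\|v\|^3$, and you then assert that ``the bootstrap'' forces $\|\gamma\|\lesssim\delta_0+\widetilde d^{3/2}$. But your stated bootstrap ansatz $|\lambda_-(\tau)|\le M(\delta_0+|\lambda_-(0)|e^{-\mu\tau})\sim M\delta_0$ only gives $|\lambda_+\lambda_-|\lesssim\delta_0\widetilde d$, and $\delta_0\widetilde d$ is \emph{not} bounded by $\delta_0^2+\widetilde d^3$: writing $\delta_0=\widetilde d^\alpha$ with $1<\alpha<2$, the ratio $\delta_0\widetilde d/(\delta_0^2+\widetilde d^3)$ blows up like $\widetilde d^{-\min(\alpha-1,\,2-\alpha)}$ as $\widetilde d\to 0$, so the claimed implication fails. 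There is a second, related problem: the ansatz $|\lambda_-|\lesssim\delta_0$ does not even close, because the Duhamel correction $\int_0^\tau e^{-\mu(\tau-s)}\|v(s)\|^2\,ds\sim\widetilde d^2$ exceeds $\delta_0$ as soon as $\delta_0\ll\widetilde d^2$, which always happens for $\delta_0$ small and $\tau$ large enough within the ejection window.

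The fix within your approach is to tighten the ansatz to $|\lambda_-(\tau)|\le M\bigl(\delta_0 e^{-\mu(\tau-\tau_0)}+\widetilde d(\tau)^2\bigr)$, which the Duhamel formula does support; then $|\lambda_+\lambda_-|\lesssim\widetilde d\bigl(\delta_0 e^{-\mu(\tau-\tau_0)}+\widetilde d^2\bigr)=\delta_0^2+\widetilde d^3$ and your chain of inequalities closes. The paper instead sidesteps the pointwise cross-term bound entirely: it defines $E_{\gamma^\perp}(u):=-\mu\lambda_+\lambda_--C(\lambda_+g_++\lambda_-g_-)$, observes that $E(u)-E(W)-E_{\gamma^\perp}(u)\sim\|\gamma\|^2+O(\lambda_1^2\|\gamma\|)$ contains no $\lambda_+\lambda_-$ term, and controls its $\tau$-derivative by $\lambda_1^2\|\gamma\|+\lambda_1^3$, which integrates to $O(\delta_0^2+\widetilde d^3)$. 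That argument uses only the integrated growth of $\lambda_1$, not the fine decay rate of $\lambda_-$, and is therefore more robust. Either route works, but as written your argument has a real gap that needs the sharper $\lambda_-$ ansatz.
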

	\begin{proof}
		Let $u$ be a solution in the ejection mode \eqref{eje mode} at $t=t_0 \in I(u)$. Choosing $\delta_{X}$ small enough ensures that $ \widetilde{B}_{\delta_{X}}(\mathcal{W})\subset  B_{\delta_{L}}(\mathcal{W})$. Then there are only two cases:
		\begin{enumerate}
			\item[$(1)$] There exists a minimal $\tau_{X}>\tau_{0}$, such that $\widetilde{d}_{\mathcal{W}}(u)=\delta_{X}$ at $\tau=\tau_{X}$. 
			\item[$(2)$] For all $\tau\in(\tau_0,\infty)$, we have $\widetilde{d}_{\mathcal{W}}(u)<\delta_{X}$.
		\end{enumerate}
		Let $\tau_{X}:=\infty$ in the second case, then in the both cases, we have $\widetilde{d}_{\mathcal{W}}(u)< \delta_{X}$ for $\tau\in(\tau_0,\tau_X)$. Then $\widetilde{d}_{\mathcal{W}}(u)=d_1(u)\sim |\lambda_{1}|$ on $\tau\in(\tau_0,\tau_X)$ by \eqref{quan:d equi}. Hence, using \eqref{quan:d_0}, \eqref{quan:d_1} and the equations \eqref{est:dyn lam1} of $\lambda_{j}$,
		\begin{equation*}
		\begin{aligned}
		\partial_{\tau}\widetilde{d}_{\mathcal{W}}(u)^2=&\partial_{\tau}d_1(u)^2=\phi * \partial_{\tau}(d_0(u)^2)=\phi * \partial_{\tau}(2\mu\lambda_{1}^2)=\phi * [4\mu^2 \lambda_{1}\lambda_{2}+O(\Vert v \Vert_{E}^3)],\\ \partial_{\tau}^2\widetilde{d}_{\mathcal{W}}(u)^2=&\phi *[4\mu^3(\lambda_{1}^2+\lambda_{2}^2)+O(\Vert v \Vert_{E}^3)]+\phi'*O(\Vert v \Vert_{E}^3)\sim \lambda_{1}^2\sim \widetilde{d}_{\mathcal{W}}(u)^2,
		\end{aligned}
		\end{equation*}
		where we also used Proposition \ref{prop:unif lwp} to remove the convolution in the last step. Since $\partial_{\tau}\widetilde{d}_{\mathcal{W}}(u(t_0))\geq0$, the last estimate implies that $\widetilde{d}_{\mathcal{W}}(u)$ is strictly increasing for $\tau\in(\tau_0,\tau_X)$. It also implies exponential growth in $\tau$ of $\widetilde{d}_{\mathcal{W}}$, so it is impossible to have the the case $\tau_X=\infty$ above. In other words, there exists $T_X<T_+(u)$ such that $\widetilde{d}_{\mathcal{W}}(u)$ reaches $\delta_{X}$ at $\tau=\tau_X=\tau(T_X)$.
		
		Since $\widetilde{d}_{\mathcal{W}}\sim |\lambda_{1}|$ is positive continuous on $(\tau_0,\tau_X)$, $\lambda_{1}(\tau)$ cannot change the sign. Let $\mathfrak{s}:=sign\lambda_{1}(\tau)\in\{\pm\}$ be its sign.
		
		Next we show the more precise exponential behavior. Since $\partial_{\tau}\widetilde{d}_{\mathcal{W}}(u)^2 \geq0$ at $\tau=\tau_0$, there exists $\tau\in(\tau_0-2,\tau_0+2)$ where $\partial_{\tau}\lambda_{1}^2 \geq0$. Combined with $\Vert v \Vert_{\dot H^1}\sim |\lambda_{1}|$ and \eqref{est:dyn lam1}, we have $\lambda_{1}\lambda_{2}\gtrsim -|\lambda_{1}|^3$. Since $\partial_{\tau}(\lambda_{1}\lambda_{2})\sim \lambda_{1}^2\sim \lambda_{1}(\tau_{0})^2$ for $|\tau-\tau_{0}|<2$, there exists $\tau_{1}\in(\tau_{0},\tau_{0}+2)$ such that $\lambda_{1}(\tau_1)\lambda_{2}(\tau_1)\gtrsim -|\lambda_{1}(\tau_1)|^3$, or equivalently $\mathfrak{s}\lambda_{2}(\tau_1)\gtrsim-\lambda_{1}(\tau_{1})^2$. Then $\mathfrak{s}\lambda_+(\tau_{1})\geq |\lambda_{1}(\tau_{1})|/2$ and $\mathfrak{s}\lambda_-(\tau_{1})\geq 0$. Let $\widetilde{R}:=|\lambda_{1}(\tau_1)|$, and suppose that for some $\tau_{2}\in(\tau_{1},\tau_{X})$
		\begin{equation*}
		\tau_{1}<\tau<\tau_{2} \Longrightarrow\;\;|\lambda_{1}(\tau)|\leq 2\widetilde{R}e^{\mu(\tau-\tau_{1})} \lesssim \delta_{X}.
		\end{equation*}
		Then the equations \eqref{est:dyn lam+} of $\lambda_{\pm}$ together with $\Vert v \Vert_{\dot H^1}\sim |\lambda_{1}|$ imply for $\tau\in(\tau_1,\tau_2)$,
		\begin{equation}\label{equ:dyn elam+}
		\lambda_{+}(\tau)e^{-\mu\tau}-\lambda_{+}(\tau_1)e^{-\mu\tau_{1}}=C\int_{\tau_1}^{\tau}e^{-\mu s}\lambda_1^2(s)ds,
		\end{equation}
		which implies
		\begin{equation*}
		\begin{aligned}
		|\lambda_{+}(\tau)-e^{\mu(\tau-\tau_{1})} \lambda_{+}(\tau_{1})|=&Ce^{\mu\tau} \int_{\tau_1}^{\tau} e^{-\mu s}\lambda_1^2(s)ds\\\lesssim&e^{\mu\tau-2\mu\tau_{1}}\int_{\tau_1}^{\tau} e^{\mu s}\widetilde{R}^2ds \\\lesssim&\widetilde{R}^2 e^{2\mu(\tau-\tau_{1})} \lesssim \delta_{X}\widetilde{R}e^{\mu(\tau-\tau_{1})}.
		\end{aligned}
		\end{equation*}
		Similarly, we have
		\begin{equation*}
		|\lambda_{-}(\tau)-e^{-\mu(\tau-\tau_{1})} \lambda_{-}(\tau_{1})| \lesssim \widetilde{R}^2e^{2\mu(\tau-\tau_{1})} \lesssim \delta_{X}\widetilde{R}e^{\mu(\tau-\tau_{1})}.
		\end{equation*}
		On the one hand, we have 
		\begin{equation*}
		|\lambda_{1}(\tau)|\sim|\lambda_{+}(\tau)|\lesssim \delta_{X}\widetilde{R}e^{\mu(\tau-\tau_{1})}+|\lambda_{+}(\tau_1)|e^{\mu(\tau-\tau_{1})}\\\lesssim\widetilde{R}e^{\mu(\tau-\tau_{1})}(1+\delta_{X}).
		\end{equation*}
		On the other hand, since
		\begin{equation*}
		|\lambda_{+}(\tau)|\geq|\lambda_{+}(\tau_1)|e^{\mu(\tau-\tau_{1})}-C\delta_{X}\widetilde{R}e^{\mu(\tau-\tau_{1})},\;\;|\lambda_{-}(\tau)|\leq e^{\mu(\tau-\tau_{1})}\left(|\lambda_{-}(\tau_1)|+C\widetilde{R}\delta_{X}\right),
		\end{equation*}
		thus we have 
		\begin{equation*}
		|\lambda_{1}(\tau)|\gtrsim|\lambda_{+}(\tau)|-|\lambda_{-}(\tau)|\gtrsim e^{\mu(\tau-\tau_{1})}\left(|\lambda_{+}(\tau_{1})|-C\widetilde{R}\delta_{X}-|\lambda_{-}(\tau_{1})|-C\widetilde{R}\delta_{X}\right)\gtrsim \widetilde{R}e^{\mu(\tau-\tau_{1})}.
		\end{equation*}
		Hence the continuity in $\tau$ allows us to take $\tau_{2}=\tau_{X}$. Moreover the above estimates together with $|\lambda_{1}|\sim \widetilde{R}$ on $(\tau_{0},\tau_{1})$ implies that, with $R:=\widetilde{d}_{\mathcal{W}}(u(\tau_{0}))$,
		\begin{equation*}
		\tau_{0} \leq \tau \leq \tau_{X} \Longrightarrow\;\;\widetilde{d}_{\mathcal{W}} \sim \mathfrak{s}\lambda_{1}\sim Re^{\mu(\tau-\tau_{0})}.
		\end{equation*}
		
		In order to estimate $\gamma$, consider the expansion of the energy \eqref{est:E expa} without the $\gamma$ terms. We denote this expansion by $E_{\gamma^{\bot}}$:
		\begin{equation*}
		E_{\gamma^{\perp}}(u):=-\mu\lambda_+\lambda_{-}-C(\lambda_+g_{+}+\lambda_{-}g_{-}).
		\end{equation*}
		Notice that $C'(f)=N(f)$. By this observation, \eqref{equ:dyn lam+} and \eqref{equ:dyn lam-} we see that
		\begin{equation*}
		\begin{aligned}
		\partial_{\tau}E_{\gamma^{\bot}}(u)=&\langle N(v)-N(\lambda_+g_{+}+\lambda_{-}g_{-}) ,g_{+}\rangle \partial_{\tau}\lambda_+\\& +\langle N(v)-N(\lambda_+g_{+}+\lambda_{-}g_{-}) ,g_{-}\rangle \partial_{\tau}\lambda_-\\& + \theta_{\tau}\left( \langle v,g_{-} \rangle \partial_{\tau}\lambda_- + \langle v,g_{+} \rangle \partial_{\tau}\lambda_+ \right)\\&+ \sigma_{\tau}\left( \langle iv,S_{1}'g_{-} \rangle \partial_{\tau}\lambda_- + \langle iv,S_{1}'g_{+} \rangle \partial_{\tau}\lambda_+ \right).
		\end{aligned}
		\end{equation*}
		Since 
		\begin{equation*}
		\langle N(v)-N(\lambda_+g_{+}+\lambda_{-}g_{-}) ,g_\pm\rangle\lesssim \lambda_{1}\Vert \gamma \Vert_{\dot H^1},\;\;\langle v,g_\pm \rangle=\langle iv,S_{1}'g_{\pm} \rangle=O(\Vert v \Vert_{\dot H^1}),
		\end{equation*}
		this together with $|\lambda_{1}|\sim \Vert v \Vert_{\dot H^1}$, \eqref{est:dyn lam+} and \eqref{est:para} implies that, for $\tau\in \left[ \tau_{0},\tau_{X} \right]$,
		\begin{equation}\label{est:parl uW minu}
		\begin{aligned}
		|\partial_{\tau}\left( E(u)-E(W)-E_{\gamma^{\bot}}(u) \right)|=&|\partial_{\tau}E_{\gamma^{\bot}}(u)|\\\lesssim& \lambda_{1}\Vert \gamma \Vert_{\dot H^1}\left(\partial_{\tau}\lambda_{+}+\partial_{\tau}\lambda_{-}\right) + \Vert v \Vert_{\dot H^1}^2\left(\partial_{\tau}\lambda_{+}+\partial_{\tau}\lambda_{-}\right)\\\lesssim&\lambda_{1}^{2}	\Vert \gamma \Vert_{\dot H^1} +\lambda_{1}^{3}.
		\end{aligned}
		\end{equation}
		Moreover, using Sobolev's inequality and Hölder's inequality, we see that
		\begin{equation*}
		|C(v)-C(\lambda_+g_{+}+\lambda_{-}g_{-})|\lesssim |\lambda_{1}|^{2}	\Vert \gamma \Vert_{\dot H^1}.
		\end{equation*}
		By \eqref{est:EK expa} and  Proposition \ref{prop:orth cntl}, 
		\begin{equation}\label{est:uW minu}
		\begin{aligned}
		E(u)-E(W)-E_{\gamma^{\bot}}(u)=&\frac{1}{2}\langle \mathcal{L}\gamma,\gamma \rangle+C(\lambda_+g_{+}+\lambda_{-}g_{-})-C(v)\\ \sim&\Vert \gamma \Vert_{\dot H^1}^{2}+O(|\lambda_{1}|^{2}	\Vert \gamma \Vert_{\dot H^1}).
		\end{aligned}
		\end{equation}		
		Combined with \eqref{est:dyn d} and \eqref{est:parl uW minu}, we have 
		\begin{equation*}
		\begin{aligned}
		\int_{\tau_0}^{\tau} O(|\lambda_{1}|^{2}\Vert \gamma \Vert_{\dot H^1}+\lambda_{1}
		^3)=&E(u)-E(W)-E_{\gamma^{\bot}}(u)-\left(	E(u(\tau_0))-E(W)-E_{\gamma^{\bot}}(u(\tau_0))\right)\\\sim& \Vert \gamma \Vert_{\dot H^1}^{2}+O(|\lambda_{1}|^{2}	\Vert \gamma \Vert_{\dot H^1})-\Vert \gamma(\tau_0) \Vert_{\dot H^1}^{2}-O(|\lambda_{1}(\tau_0)|^{2}	\Vert \gamma(\tau_0 )\Vert_{\dot H^1}).
		\end{aligned}
		\end{equation*}
		By continuity method, we have
		\begin{equation*}
		\begin{aligned}
		\Vert \gamma \Vert_{\dot H^1}^{2}\lesssim&\lambda_{1}^3+\Vert \gamma(\tau_0) \Vert_{\dot H^1}^{2}-O(|\lambda_{1}|^2\Vert \gamma \Vert_{\dot H^1}) + O(|\lambda_{1}(\tau_0)|^2\Vert \gamma(\tau_0) \Vert_{\dot H^1})\\\lesssim&\lambda_{1}^3+|\lambda_{1}(\tau_0)|^2+|\lambda_{1}(\tau_0)|^3\\\lesssim&\lambda_{1}^3+|\lambda_{1}(\tau_0)|^2.
		\end{aligned}
		\end{equation*}
		Thus we have
		\begin{equation*}
		\Vert \gamma \Vert_{\dot H^1}\lesssim\lambda_{1}^{3/2}+|\lambda_{1}(\tau_0)|\sim\widetilde{d}_{\mathcal{W}}(u(\tau))^{3/2}+ \widetilde{d}_{\mathcal{W}}(u(\tau_0)),
		\end{equation*}
		this proves \eqref{est:dyn gamm}.
		
		Using \eqref{est:para} and $\widetilde{d}_{\mathcal{W}}(u)\sim \Vert v\Vert_{\dot H^1}$, we obtain $\pm\theta_\tau=O(\widetilde{d}_{\mathcal{W}}(u))$. Integrating in $\tau$, we have
		\begin{equation*}
		|\theta(\tau)-\theta(\tau_0)|\sim \int_{\tau_0}^{\tau}Re^{\mu(\tau-\tau_0)}\lesssim Re^{\mu(\tau-\tau_0)}\sim \widetilde{d}_{\mathcal{W}}(u).
		\end{equation*}
		Similarly, we get $|\sigma(\tau)-\sigma(\tau_0)|\lesssim \widetilde{d}_{\mathcal{W}}(u)$. Thus we obtain \eqref{est:dyn epara}.
		
		Next we show \eqref{est:dyn Keigen}. Using $L_{+}W=2\Delta W$, \eqref{est:EK expa} and Proposition \ref{prop:spect decom}, we have 
		\begin{equation}\label{est:K expa}
		\begin{aligned}
		K(W+v)=&-2\langle \nabla W,\nabla v \rangle+O(\Vert v\Vert_{\dot H^1}^2)\\=&-2\mu\lambda_{1} \langle W,g_2 \rangle+\langle 2\Delta W,\gamma \rangle+O(\Vert v\Vert_{\dot H^1}^2).
		\end{aligned}
		\end{equation}
		This together with Proposition \ref{prop:orth decom}, $\langle W,g_2 \rangle>0$ (see \eqref{con:ome}), $|\langle \Delta W,\gamma \rangle|\lesssim \Vert \gamma\Vert_{\dot H^1}$ as well as the above estimates on $\lambda_{1},\gamma$, 
		\begin{equation*}
		\begin{aligned}
		-sign(\lambda_{1}(t)) K(u(t))=&-sign(\lambda_{1}(t)) K(W+v)\\=&-sign(\lambda_{1}(t)) \left( -2\mu\lambda_{1} \langle W,g_2 \rangle+\langle 2\Delta W,\gamma \rangle+O(\Vert v\Vert_{\dot H^1}^2) \right)\\\gtrsim&|\lambda_{1}|-\Vert \gamma\Vert_{\dot H^1}+O(\lambda_{1}^2)\\\gtrsim&|\lambda_{1}|-|\lambda_{1}(\tau_0)|\\\gtrsim& \left( e^{\mu(\tau-\tau_0 )}-C_K \right)\widetilde{d}_{\mathcal{W}}(u(\tau_0)).
		\end{aligned}
		\end{equation*}
		Thus \eqref{est:dyn Keigen} is proved.
	\end{proof}	
	\begin{remark}
		By time-reversal symmetry, a similar result holds in the negative time direction, where the last condition of \eqref{eje mode} is replaced with $\partial_{t}\widetilde{d}_{\mathcal{W}}(u(t))\leq 0$.
	\end{remark}

	The following proposition gives a variational estimate away from the ground states.
	
	\begin{proposition}(Variational estimates)\label{prop:vart est}. There exist two increasing function $\epsilon_V$ and $\kappa$ from $(0,\infty)$ to $(0,1)$, and an absolute constant $c_{V}>0$, such that for any $\varphi\in \dot H^1_{rad}$ satisfying $E(\varphi)<E(W)+\epsilon_V(\widetilde{d}_{\mathcal{W}}(\varphi))^2$, we have
		\begin{equation*}
		K(\varphi)\geq \min \left( \kappa(\widetilde{d}_{\mathcal{W}}(\varphi)),c_{V}\Vert \varphi \Vert_{\dot H^1}^2 \right) \;\text{or} \;\; K(\varphi)\leq -\kappa(\widetilde{d}_{\mathcal{W}}(\varphi)).
		\end{equation*}
	\end{proposition}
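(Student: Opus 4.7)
The plan is to prove the trichotomy by handling three overlapping regimes and patching them into monotone functions $\epsilon_V, \kappa$ and a constant $c_V$. In the \emph{small-mass} regime $\|\varphi\|_{\dot H^1}^2\le c$ for a small absolute constant $c$, the sharp Hardy--Littlewood--Sobolev inequality in Proposition \ref{prop:sharp const} yields $\iint|\varphi(x)|^2|\varphi(y)|^2|x-y|^{-4}\,dx\,dy \le C_d^4\|\nabla\varphi\|_{L^2}^4 \le \tfrac12\|\nabla\varphi\|_{L^2}^2$, so $K(\varphi)\ge \tfrac12\|\varphi\|_{\dot H^1}^2$; this produces the $c_V\|\varphi\|_{\dot H^1}^2$ alternative with no energy hypothesis needed.

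In the \emph{near-$\mathcal{W}$} regime $\widetilde{d}_{\mathcal{W}}(\varphi)\le\delta_L$, decompose $\varphi = e^{i\theta}S_{-1}^{\sigma}(W+v)$ with $v=\lambda_+g_++\lambda_-g_-+\gamma$ via Propositions \ref{prop:orth decom}--\ref{prop:spect decom}, and invoke the expansions \eqref{est:EK expa}, the identity $\lambda_+\lambda_-=\lambda_1^2-\lambda_2^2$, Proposition \ref{prop:orth cntl}, and the norm equivalence \eqref{quan:equi nor}. The hypothesis $E(\varphi)-E(W)<\epsilon_V\widetilde{d}_{\mathcal{W}}(\varphi)^2$ then forces
\begin{equation*}
\mu\lambda_1^2 \ge \mu\lambda_2^2 + \tfrac12\langle\mathcal{L}\gamma,\gamma\rangle - C\epsilon_V\widetilde{d}_{\mathcal{W}}(\varphi)^2 - O\bigl(\widetilde{d}_{\mathcal{W}}(\varphi)^3\bigr),
\end{equation*}
so for $\epsilon_V$ sufficiently small we obtain $|\lambda_1|\sim\widetilde{d}_{\mathcal{W}}(\varphi)$ and $\|\gamma\|_{\dot H^1}+|\lambda_2|\ll|\lambda_1|$. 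Substituting into the $K$-expansion $K(W+v) = -2\mu\lambda_1\langle W,g_2\rangle + \langle 2\Delta W,\gamma\rangle + O(\|v\|_{\dot H^1}^2)$ coming from \eqref{est:EK expa} and Proposition \ref{prop:spect decom}, and using $\langle W,g_2\rangle>0$ from \eqref{con:ome}, we conclude $|K(\varphi)|\gtrsim\widetilde{d}_{\mathcal{W}}(\varphi)$ with the sign of $K$ opposite to that of $\lambda_1$; this gives $\kappa(\widetilde{d}_{\mathcal{W}})\sim \widetilde{d}_{\mathcal{W}}$ in this range.

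The \emph{bulk} regime $\widetilde{d}_{\mathcal{W}}(\varphi)\ge\delta_L$ and $\|\varphi\|_{\dot H^1}^2\ge c$ is handled by contradiction. If the conclusion failed, for some $R\ge\delta_L$ there would exist a sequence $\varphi_n$ with $\widetilde{d}_{\mathcal{W}}(\varphi_n)\in[\delta_L,R]$, $\|\varphi_n\|_{\dot H^1}$ bounded, $E(\varphi_n)\to E(W)$, and $K(\varphi_n)\to 0$. By Proposition \ref{prop:GS char}, $\{\varphi_n\}$ is then a minimizing sequence for the Nehari-type characterization of $E(W)$. Radial symmetry removes translation profiles; vanishing is excluded by $\|\varphi_n\|_{\dot H^1}^2\ge c$ together with $K(\varphi_n)\to 0$ and HLS saturation; the residual scale-concentration is absorbed by the scaling quotient built into $\widetilde{d}_{\mathcal{W}}$. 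A profile decomposition then produces, after rescaling and phase rotation, strong $\dot H^1$-convergence of a subsequence to an extremizer of the sharp inequality, which by Proposition \ref{prop:sharp const} lies in $\mathcal{W}$; this contradicts $\widetilde{d}_{\mathcal{W}}(\varphi_n)\ge\delta_L$. Defining $\kappa(R)$ and $\epsilon_V(R)$ as the appropriate infima over the parameter range yields positive monotone functions; taking the pointwise minimum against the near-$\mathcal{W}$ and small-mass estimates completes the construction.

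The \textbf{main obstacle} is the compactness step in the bulk regime: because \eqref{equ:Hart} carries a nontrivial scaling invariance, one must confirm that the scale-equivariant distance $\widetilde{d}_{\mathcal{W}}$ (which quotients only by phase and scale) absorbs every degeneracy that could arise along a minimizing sequence for the Nehari problem. The required concentration-compactness machinery is analogous to that used in \cite{KenM:NLW,MXZ:crit Hart:f rad}, but here it must be phrased in terms of $\widetilde{d}_{\mathcal{W}}$ rather than the raw $\dot H^1$-distance in order to interface cleanly with the near-$\mathcal{W}$ analysis.
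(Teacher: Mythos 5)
Your small-mass case and your bulk contradiction/compactness case together reproduce the argument the paper silently imports from Nakanishi--Schlag (Lemma 4.3 in \cite{NakS:NLKG}), and that is in fact all that is needed: the contradiction-by-compactness applies verbatim for \emph{every} fixed $\delta>0$, not only $\delta\ge\delta_L$, yielding the functions $\epsilon_V(\delta),\kappa(\delta)\to 0$ as $\delta\to 0$ and then taking increasing envelopes. The near-$\mathcal{W}$ regime you insert is therefore redundant --- and it also contains a genuine gap.

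The gap is in the implication ``\ldots so for $\epsilon_V$ sufficiently small we obtain $\|\gamma\|_{\dot H^1}+|\lambda_2|\ll|\lambda_1|$.'' The energy constraint only yields
\begin{equation*}
\mu\lambda_2^2+\tfrac12\langle\mathcal{L}\gamma,\gamma\rangle
\;<\;\frac{1+O(\epsilon_V)}{1-O(\epsilon_V)}\,\mu\lambda_1^2+O\bigl(\widetilde d_{\mathcal{W}}(\varphi)^3\bigr),
\end{equation*}
and after Proposition~\ref{prop:orth cntl} this gives $\|\gamma\|_{\dot H^1}\lesssim|\lambda_1|$ with a constant governed by the ratio of $\mu$ to the coercivity constant of $\Phi$ --- a fixed spectral quantity that does \emph{not} shrink as $\epsilon_V\to 0$. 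Only $|\lambda_1|\sim\widetilde d_{\mathcal{W}}(\varphi)$ is forced (this is exactly \eqref{est:dist eigen}); $\|\gamma\|_{\dot H^1}+|\lambda_2|\ll|\lambda_1|$ is not. Consequently, in the expansion $K(W+v)=-2\mu\lambda_1\langle W,g_2\rangle+\langle 2\Delta W,\gamma\rangle+O(\|v\|_{\dot H^1}^2)$, the $\gamma$-term is a priori comparable to the $\lambda_1$-term and could cancel it, so a static expansion at a single $\varphi$ cannot deliver $|K(\varphi)|\gtrsim\widetilde d_{\mathcal{W}}(\varphi)$ nor pin down $\mathrm{sign}\,K$. (Indeed the paper never asserts $\mathrm{sign}\,K=-\mathrm{sign}\,\lambda_1$ statically: that relation is obtained in Proposition~\ref{prop:sign funct} only after running the ejection dynamics of Proposition~\ref{prop:eject mod} out to scale $\delta_X$, where the exponential factor $e^{\mu(\tau-\tau_0)}$ overwhelms the $\gamma$-error.) Delete the near-$\mathcal{W}$ step and let the compactness argument cover the full range of $\delta$; the remainder of your outline then matches the paper's intended proof.
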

	\begin{proof}[Sketch of proof]	
		Indeed, we can give a more general conclusion. If $E(\varphi)<E(W)+\epsilon_{V}(\delta)^2$, where $\delta\leq\widetilde{d}_{\mathcal{W}}(\varphi)$, then
		\begin{equation*}
		K(\varphi)\geq \min \left( \kappa(\delta),c_{V}\Vert\varphi\Vert_{\dot H^1}^2 \right) \;\text{or} \;\; K(\varphi)\leq -\kappa(\delta).		
		\end{equation*}	
		This is proved in the same way as \cite[Lemmma 4.3]{NakS:NLKG}.
	\end{proof}

	\subsection{Sign functional.}The following proposition defines a functional $\Theta$ that decides the fate of the solution around $t= T_{\pm}(u)$, as well as at the exit time $t=t_X$ in the above proposition. The continuity of $\Theta$ implies that for any solution $u$ in $\mathcal{H}^{\epsilon}\subset \mathcal{H}^{\epsilon_{S}}$, $\Theta(u)\in\{\pm1 \}$ can change along $t\in I(u)$ only if $u$ goes through the small neighborhood $\mathcal{H}^{\epsilon}\setminus \check{\mathcal{H}} \subset \widetilde{B}_{\epsilon/{c_D}}(\mathcal{W})$. 
	\begin{proposition}(Sign functional)\label{prop:sign funct}. There exist an absolute constant $\epsilon_S \in(0,1)$ and a continuous function $\Theta:\mathcal{H}^{\epsilon_S} \cap \check{\mathcal{H}} \rightarrow \{\pm1 \}$, such that for some $0<\delta_1<\delta_2<\delta_X$ and for any $\varphi \in \mathcal{H}^{\epsilon_S} \cap \check{\mathcal{H}} $, with the convention $sign 0=+1$,
		\begin{equation}\label{quan:sign func}
		\left\{\begin{aligned}
		&\widetilde{d}_{\mathcal{W}}(\varphi)\geq \delta_1 \Longrightarrow \Theta(\varphi)=signK(\varphi) ,\\
		&\widetilde{d}_{\mathcal{W}}(\varphi)\leq \delta_2 \Longrightarrow \Theta(\varphi)=-sign \lambda_{1} ,
		\end{aligned}\right.
		\end{equation}	
		with the following properties.
		\begin{enumerate}
			\item[$(1)$] For all $(\alpha,\beta)\in \R^2$, $\Theta(e^{i\alpha}S_{-1}^{\beta}\varphi)=\Theta(\varphi)$. 
			\item[$(2)$] Moreover, if $E(\varphi)<E(W)$, then $\Theta(\varphi)=signK(\varphi)$.
		\end{enumerate}
	\end{proposition}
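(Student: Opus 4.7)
My plan is to define $\Theta$ by patching together two local sign functions: near $\mathcal{W}$ we set $\Theta:=-\mathrm{sign}\,\lambda_1$, and away from $\mathcal{W}$ we set $\Theta:=\mathrm{sign}\,K$. Invariance and continuity will come essentially for free from the building blocks; the main obstacle is the sign consistency of the two definitions on their overlap.

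First I fix $\delta_2\in(0,\delta_X)$ small enough that $\widetilde{B}_{2\delta_2}(\mathcal{W})\subset B_{\delta_L}(\mathcal{W})$. For $\varphi\in\widetilde{B}_{\delta_2}(\mathcal{W})\cap\check{\mathcal{H}}$, Propositions \ref{prop:orth decom} and \ref{prop:spect decom} yield a continuous coordinate $\lambda_1(\varphi)$; because $E|_{\mathcal{W}}=E(W)$ while $\check{\mathcal{H}}$ is defined by a strict inequality we have $\mathcal{W}\cap\check{\mathcal{H}}=\emptyset$, and \eqref{est:dist eigen} then forces $|\lambda_1|\sim\widetilde{d}_{\mathcal{W}}>0$, so $-\mathrm{sign}\,\lambda_1$ is a $\{\pm1\}$-valued continuous function there. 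Next I pick $\delta_1\in(0,\delta_2)$ and set $\epsilon_S:=\epsilon_V(\delta_1)$; Proposition \ref{prop:vart est} then guarantees $K(\varphi)\geq\min(\kappa(\delta_1),c_V\|\varphi\|_{\dot H^1}^2)>0$ or $K(\varphi)\leq-\kappa(\delta_1)<0$ for every $\varphi\in\mathcal{H}^{\epsilon_S}$ with $\widetilde{d}_{\mathcal{W}}(\varphi)\geq\delta_1$, so $\mathrm{sign}\,K$ is continuous and $\{\pm1\}$-valued on this region.

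The delicate step is to verify that the two definitions agree on the overlap $\delta_1\leq\widetilde{d}_{\mathcal{W}}(\varphi)\leq\delta_2$. Combining \eqref{est:K expa} with $v=2\lambda_1 g_1-2i\lambda_2 g_2+\gamma$ yields
\begin{equation*}
K(\varphi)=-2\mu\langle W,g_2\rangle\,\lambda_1+\langle 2\Delta W,\gamma\rangle+O(\|v\|_{\dot H^1}^2),
\end{equation*}
with $\langle W,g_2\rangle>0$ by \eqref{con:ome}. The condition $E(\varphi)<E(W)+c_D^2\widetilde{d}_{\mathcal{W}}^2$ combined with \eqref{est:E expa} and \eqref{est:dist eigen} confines $|\lambda_2|$ and $\|\gamma\|_{\dot H^1}$ to constant multiples of $|\lambda_1|$, while shrinking $\delta_2$ makes the quadratic remainder negligible against the leading linear term. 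The subtle issue is to keep $|\langle 2\Delta W,\gamma\rangle|$ strictly smaller than $2\mu\langle W,g_2\rangle|\lambda_1|$; this forces the absolute constant $c_D$ of Proposition \ref{prop:nlr dist} to have been chosen sufficiently small at the outset (the proposition only asserts existence, so we are free to shrink it). Granted this, $\mathrm{sign}\,K=-\mathrm{sign}\,\lambda_1$ throughout the overlap, so the two formulas patch to a well-defined continuous function $\Theta$ on $\mathcal{H}^{\epsilon_S}\cap\check{\mathcal{H}}$.

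Property (1) reduces to invariance of $\widetilde{d}_{\mathcal{W}}$, $K$, and $\lambda_1$ under rotation and scaling; the first two are built in, and invariance of $\lambda_1$ follows from the uniqueness clause of Proposition \ref{prop:orth decom}, whereby the modulation parameters $(\widetilde{\theta},\widetilde{\sigma})$ absorb any $(\alpha,\beta)\in\mathbb{R}^2$ and leave $v$ unchanged. For property (2), if $E(\varphi)<E(W)$ and $\widetilde{d}_{\mathcal{W}}(\varphi)\geq\delta_1$ then $\Theta(\varphi)=\mathrm{sign}\,K(\varphi)$ by definition; if instead $\widetilde{d}_{\mathcal{W}}(\varphi)<\delta_1$, then $\varphi\in\check{\mathcal{H}}$ trivially and the overlap computation (which is in fact valid throughout $\{\widetilde{d}_{\mathcal{W}}\leq\delta_2\}$, not only on $[\delta_1,\delta_2]$) still gives $\mathrm{sign}\,K=-\mathrm{sign}\,\lambda_1=\Theta$.
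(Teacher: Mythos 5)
Your overall architecture (patch $-\mathrm{sign}\,\lambda_1$ near $\mathcal{W}$ with $\mathrm{sign}\,K$ far from $\mathcal{W}$, and verify consistency on the overlap) matches the paper's, and your treatment of continuity, $\{\pm1\}$-valuedness, and invariance under rotation/scaling is sound. However, the step you yourself flag as delicate — the sign consistency on the overlap $\delta_1\leq\widetilde{d}_{\mathcal{W}}(\varphi)\leq\delta_2$ — contains a genuine gap, and shrinking $c_D$ does not fix it.

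Here is the problem. From $\varphi\in\check{\mathcal{H}}$, \eqref{est:E expa}, and $\widetilde{d}_{\mathcal{W}}^2\sim\mu\lambda_1^2+\mu\lambda_2^2+\tfrac12\langle\mathcal{L}\gamma,\gamma\rangle$, the best you can extract (even in the limit $c_D\to0$ and $\delta_2\to0$) is
$\mu\lambda_2^2+\tfrac12\langle\mathcal{L}\gamma,\gamma\rangle\leq\mu\lambda_1^2$,
so by Proposition \ref{prop:orth cntl}, $\|\gamma\|_{\dot H^1}\leq\sqrt{\mu/c}\,|\lambda_1|$ where $c$ is the absolute coercivity constant. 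You then need
$|\langle 2\Delta W,\gamma\rangle|\leq 2\|W\|_{\dot H^1}\|\gamma\|_{\dot H^1}\leq 2\|W\|_{\dot H^1}\sqrt{\mu/c}\,|\lambda_1|$
to be strictly smaller than the main term $2\mu\langle W,g_2\rangle|\lambda_1|$, which requires the numerical inequality $\|W\|_{\dot H^1}\sqrt{\mu/c}<\mu\langle W,g_2\rangle$ among absolute constants determined by the linearized operator. None of the parameters you are free to tune ($c_D$, $\delta_1$, $\delta_2$, $\epsilon_S$) enters this inequality, so it cannot be arranged, and the orthogonality constraints on $\gamma$ ($(\gamma_2,W)_{\dot H^1}=(\gamma_1,\widetilde{W})_{\dot H^1}=0$, $\omega(g_\pm,\gamma)=0$) give no control on $(\gamma_1,W)_{\dot H^1}=-\langle\Delta W,\gamma\rangle$. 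In short, the static constraint only gives $\|\gamma\|\lesssim|\lambda_1|$ with an $O(1)$ constant, not $\|\gamma\|\ll|\lambda_1|$.

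The paper escapes this by a \emph{dynamical} argument: starting from $u(0)$ in the overlap, apply the ejection Proposition \ref{prop:eject mod} forward or backward in time until $\widetilde{d}_{\mathcal{W}}(u(t_X))=\delta_X$. During this evolution $\mathrm{sign}\,\lambda_1$ and $\mathrm{sign}\,K$ are locked, and at $t_X$ estimate \eqref{est:dyn gamm} gives $\|\gamma(t_X)\|_{\dot H^1}\lesssim\delta_2+\delta_X^{3/2}$ while $|\lambda_1(t_X)|\sim\delta_X$, so choosing $\delta_2\ll\delta_X\ll1$ makes $\gamma$ genuinely negligible; then \eqref{est:dyn Keigen} gives $-\mathrm{sign}(\lambda_1(t_X))K(u(t_X))\gtrsim\delta_X-C_K\delta_2>0$ with $C_K$ absolute. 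The required smallness of $\gamma$ relative to $\lambda_1$ is produced by the exponential growth of $\lambda_1$, not by the static constraint. Your proof of property (2) inherits the same gap, since it also relies on the static overlap computation; the paper instead proves it by a connectedness argument, scaling $\lambda\mapsto\lambda u$ to reach either $0$ or the negative-energy region.
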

	\begin{proof}
		First, we prove that $\Theta$ is well-defined and continous on $\mathcal{H}^{\epsilon_S} \cap \check{\mathcal{H}}$. We have the following assertions. On the one hand, Proposition \ref{prop:vart est} implies that: if $\epsilon\leq \epsilon_{V}(\delta)$ and $\varphi\in \mathcal{H}^{\epsilon}\setminus \widetilde{B}_{\delta}(\mathcal{W})$, then $signK(\varphi)$ is a constant. On the other hand, Proposition \ref{prop:nlr dist} implies that: if $\varphi\in B_{\delta_L}(\mathcal{W})\cap \check{\mathcal{H}}$, then $sign \lambda_{1}$ is a constant.
		
		Hence, after fixing $\delta_1,\delta_2,\epsilon_{S}$ such that $0<\delta_1<\delta_2\ll \delta_{X}$ and $\epsilon_S\leq \epsilon_{V}(\delta_1)$, the functional $\Theta$ is well-defined  and continuous on $\check{\mathcal{H}}^{\epsilon_S}:=\mathcal{H}^{\epsilon_S}\cap \check{\mathcal{H}}$ by \eqref{quan:sign func}, once we prove that $-sign \lambda_{1}=signK$ on 
		\begin{equation*}
		Y:=\{ \varphi\in \check{\mathcal{H}}^{\epsilon_S}|\;\delta_1\leq \widetilde{d}_{\mathcal{W}}(\varphi)\leq \delta_2 \}.
		\end{equation*}
		To this end, take any solution $u$ of \eqref{equ:Hart} with initial data $u(0)\in Y$. By applying Proposition \ref{prop:eject mod} either forward or backward in time, there exists $t_X\in I(u)$ such that $\widetilde{d}_{\mathcal{W}}(u(t_X))=\delta_X$ and $\widetilde{d}_{\mathcal{W}}(u(t))\in [\widetilde{d}_{\mathcal{W}}(u(0)),\delta_{X}] \subset [\delta_1,\delta_X]$ when $t\in[0,t_X]$. Thus $u(t)\in \check{\mathcal{H}}^{\epsilon_S} \cap B_{\delta_{L}}(\mathcal{W}) \setminus \widetilde{B}_{\delta_1}(\mathcal{W})$ when $t\in[0,t_X]$. Hence $sign\lambda_{1}(u(t))$ and $signK(u(t))$ are unchanged when $t\in[0,t_X]$ using the initial assertion. Combined \eqref{est:dyn Keigen} with $\delta_2\ll \delta_{X}$, we have
		\begin{equation*}
		\begin{aligned}
		-sign(\lambda_{1}(t_X))K(u(t_X))\gtrsim& \left( e^{\mu(\tau(t_X)-\tau(0) )}-C_K \right)\widetilde{d}_{\mathcal{W}}(u(0))\\\gtrsim& \widetilde{d}_{\mathcal{W}}(u(t_X))-C_K\widetilde{d}_{\mathcal{W}}(u(0))\geq\delta_{X}-\delta_2C_K>0.
		\end{aligned}
		\end{equation*}
		Thus $-sign\lambda_{1}(u(t_X))=signK(u(t_X))$. Consequently, $-sign\lambda_{1}=signK$ on $Y$.
		
		Next, we proved that $\Theta=signK$ on $\mathcal{H}^0$. Since $0\in \mathcal{H}^0$, we have $\Theta(0)=signK(0)=+1$. It suffices to consider: if $E(u)<E(W)$ and $u\neq0$, then $\Theta(u)=signK(u)$. By \eqref{char:Neh char}, we have $K(u)\neq0$. By 
		\begin{equation*}
		\lambda\partial_\lambda E(\lambda u)=\lambda^2\Vert u\Vert_{\dot H^1}^2-\lambda^4\iint \frac{\big|u(t,x)\big|^2\big|u(t,y)\big|^2}{|x-y|^4} dxdy=K(\lambda u),
		\end{equation*}
		there is a unique $\lambda_0>0$ such that for $0<\lambda_{-}<\lambda_0<\lambda_+<\infty$
		\begin{equation*}
		K(\lambda_{-}u)> 0=K(\lambda_0u) > K(\lambda_{+}u).
		\end{equation*}
		
		If $K(u)>0$, then $\partial_\lambda E(\lambda u)\geq \lambda\Vert u \Vert_{\dot H^1}^2-\lambda\Vert \left( |x|^{-4}*|u|^2\right)|u|^2\Vert_{L^1}\geq0$ if $0\leq\lambda\leq1$. Thus we have $E(\lambda u)\leq E(u)<E(W)$, which implies that $\{\lambda u \}_{0\leq\lambda\leq1}$ is a $C^0$ curve in $\mathcal{H}^0\subset \check{\mathcal{H}}$ connecting $u$ and 0. Hence by continuity $\Theta(u)=+1=signK(u)$.
		
		If $K(u)<0$, then $\partial_\lambda E(\lambda u)<\lambda\Vert u \Vert_{\dot H^1}^2-\lambda^3\Vert u \Vert_{\dot H^1}^2\leq0$ if $1\leq\lambda<\infty$. Thus we have $E(\lambda u)\leq E(u)<E(W)$, which implies that $\{\lambda u \}_{1\leq\lambda<\infty}$ is a $C^0$ curve in $\mathcal{H}^0\subset \check{\mathcal{H}}$ connecting $u$ with the region $E(u)<0$, where $\mathcal{W}$ is far and so $\Theta=signK=-1$. Hence by continuity $\Theta(u)=-1=signK(u)$.
		
		Finally, the invariance of $\Theta$ for the rotation and scaling follows from that of $K$ and $\lambda_{1}$. The latter being proved in the proof of Proposition \ref{prop:nlr dist}, and the former is obviously easy to prove.
	\end{proof}
	Note that the region $\{ \varphi \in \mathcal{H}^{\epsilon_S} \cap \check{\mathcal{H}}|\;\Theta(\varphi)=+1 \}$ is bounded in $\dot H^1$, because 
	\begin{equation}\label{unif bd Te+}
	\left\{\begin{aligned}
	&K(\varphi)\geq 0 \Longrightarrow \Vert \varphi\Vert_{\dot H^1}^2\leq4E(\varphi)\leq4(E(W)+\epsilon_{S}^2) ,\\
	&\widetilde{d}_{\mathcal{W}}(\varphi)\leq \delta_X \Longrightarrow \Vert \varphi\Vert_{\dot H^1}\leq \Vert W\Vert_{\dot H^1}+C\delta_{X} ,\end{aligned}\right.
	\end{equation}
	but it does not imply global existence for solutions staying in this region, because of the critical nature of \eqref{equ:Hart}.

	\subsection{One-pass lemma.} The one-pass lemma implies that every solution can enter and exit a small neighborhood of the ground states $W$ at most one time, which is the most crucial step in proving Theorem \ref{thm:above thresh}. It precludes “almost homoclinic orbits”, i.e., those solutions starting in, the moving away from, and finally returning to, a small neighborhood of the ground states. Indeed, this part of analysis is the main part of this paper.

	\begin{proposition}(One-pass)\label{prop:one pass}. There exist an absolute constant $\delta_{B}\in(0,\delta_{X})$ and an increasing function $\epsilon_{B}:\left( 0,\delta_{B} \right] \rightarrow \left(0,\epsilon_{B} \right]$ satisfying $\epsilon_{B}(\delta)<c_D(\delta)$ for $\delta\in \left(0, \delta_{B} \right]$, and for any solution $u$ of \eqref{equ:Hart} with $u(t_0)\in \mathcal{H}^{\epsilon_{B}(\delta)}\cap \widetilde{B}_{\delta}(\mathcal{W})$ at some $t_0\in I(u)$,
		\begin{equation}\label{one-pass}
		\exists t_{+}(\delta)\in\left( t_0,T_{+}(u) \right], s.t. \left\{\begin{aligned}
		&t_0\leq t < t_{+}(\delta) \Longrightarrow \widetilde{d}_{\mathcal{W}}(u(t))<\delta ,\\
		&t_{+}(\delta)<t<T_{+}(u) \Longrightarrow \widetilde{d}_{\mathcal{W}}(u(t))>\delta.\end{aligned}\right.
		\end{equation}
	\end{proposition}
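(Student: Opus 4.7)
The plan is to argue by contradiction. Suppose the statement fails, so there exist $t_1 < t_2$ in $\left(t_0, T_{+}(u)\right)$ with $\widetilde{d}_{\mathcal{W}}\!\left(u(t_1)\right) = \widetilde{d}_{\mathcal{W}}\!\left(u(t_2)\right) = \delta$, $\widetilde{d}_{\mathcal{W}}\!\left(u(t)\right) \geq \delta$ on $[t_1, t_2]$, with the solution exiting at $t_1$ and re-entering at $t_2$. First I apply the ejection lemma (Proposition \ref{prop:eject mod}) forward from $t_1$ and, by time reversal, backward from $t_2$: estimate \eqref{est:dyn Keigen} forces $-\mathrm{sign}\!\left(\lambda_1(t_i)\right) K(u(t))$ to be of order $\widetilde{d}_{\mathcal{W}}(u(t))$ on the short ejection sub-arcs, while Proposition \ref{prop:vart est} gives $|K(u(t))| \gtrsim \kappa(\delta)$ on the bulk of the excursion provided $\epsilon_{B}(\delta) \leq \epsilon_{V}(\delta)$. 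Combined with the continuity of $\Theta$ from Proposition \ref{prop:sign funct}, this forces $\mathrm{sign}\!\left(K(u(t))\right) = s \in \{\pm 1\}$ to be constant and $s\, K(u(t)) \gtrsim \kappa(\delta)$ throughout $[t_1, t_2]$.

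The core of the argument is a localized virial identity. I fix a smooth radial cutoff $\chi$ with $\chi(r) = 1$ on $r \leq 1$ and $\chi(r) = 0$ on $r \geq 2$, and for a large radius $R > 0$ set $w_{R}(x) := x\,\chi(|x|/R)$ and
\[
V_{R}(t) := \Im \int_{\R^{d}} \overline{u(t,x)} \, w_{R}(x) \cdot \nabla u(t,x) \, dx.
\]
Using \eqref{equ:Hart} and integration by parts on the Hartree nonlinearity produces the identity
\[
\frac{d}{dt} V_{R}(t) = 2\,K\!\left(u(t)\right) + \mathcal{E}_{R}(t),
\]
with $\mathcal{E}_{R}$ supported in $\{|x| \geq R\}$ up to cutoff commutators. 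The radial Sobolev bound $|u(x)| \lesssim |x|^{-(d-2)/2}\,\|u\|_{\dot H^{1}}$ together with the uniform control \eqref{unif bd Te+} yields $|\mathcal{E}_{R}(t)| \leq \eta(R)$ with $\eta(R) \to 0$ as $R \to \infty$ uniformly over the admissible solutions. Integrating from $t_1$ to $t_2$, choosing $R$ so large that $\eta(R) \leq \tfrac{1}{2}\kappa(\delta)$, and using the sign analysis of the previous paragraph, I obtain
\[
\left|V_{R}(t_{2}) - V_{R}(t_{1})\right| \gtrsim \kappa(\delta)\,(t_{2} - t_{1}).
\]

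On the other hand, Proposition \ref{prop:orth decom} decomposes $u(t_{i}) = e^{i\theta_{i}} S_{-1}^{\sigma_{i}}(W + v_{i})$ with $\|v_{i}\|_{\dot H^{1}} \sim \delta$, and since $V_{R}$ is phase-invariant and vanishes on the real radial profile $W$ (by antisymmetry of $w_{R}\cdot\nabla$ acting on reals), expanding around each ground state yields $|V_{R}(u(t_{i}))| \lesssim R\,\delta$ modulo factors of $e^{-\sigma_{i}}$ from the scaling of the cutoff. Combining these two bounds forces $t_{2} - t_{1} \lesssim R\,\delta/\kappa(\delta)$. The contradiction comes from the forward ejection at $t_{1}$: by \eqref{est:dyn d}, the rescaled time needed for $\widetilde{d}_{\mathcal{W}}(u)$ to leave $[\delta, \delta_X]$ and return is at least $\mu^{-1}\log(\delta_X/\delta)$, and Proposition \ref{prop:unif lwp} converts this into a comparable lower bound on $t_{2} - t_{1}$; if $\epsilon_B(\delta)$ is chosen sufficiently small, this lower bound exceeds the upper bound above, completing the contradiction.

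The main obstacle, as flagged in the introduction, is tracking the scaling parameter $\sigma(t)$: the cutoff radius $R$ must be compatible with the bubble scale $e^{-\sigma(t)}$ at both $t_1$ and $t_2$, yet $\sigma$ may drift across the excursion. I would handle this by upgrading the a priori estimate $\partial_{\tau}\sigma = O\!\left(\|v\|_{\dot H^{1}}\right)$ of Proposition \ref{prop:orth decom:u} to an integrated drift bound on the excursion, or equivalently by replacing the single $V_{R}$ with a piecewise-rescaled version adapted to a partition of $[t_{1}, t_{2}]$ on which $\sigma$ is essentially constant. This comparison between the variational and hyperbolic regions is the main novelty of the critical case relative to the subcritical framework of K. Nakanishi and W. Schlag.
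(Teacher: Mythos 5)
Your overall strategy---argue by contradiction on a returning excursion, combine the ejection lemma with the variational estimate to fix $\mathrm{sign}\,K$, and run a localized virial identity---is in the right spirit and does match the paper's broad plan. However, there are two substantial gaps that the sketch does not close, and each is exactly where the critical case departs from the subcritical framework.

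The first gap is the uniform error bound $|\mathcal{E}_{R}(t)| \leq \eta(R) \to 0$. This is false for \eqref{equ:Hart}: both the cutoff errors and the nonlocal tail terms are scale-covariant, so their size at time $t$ is governed by $R\,e^{\sigma(t)}$, not by $R$ alone. If the soliton scale $e^{-\sigma(t)}$ grows during the excursion, no fixed $R$ makes the error small along all of $[t_{1},t_{2}]$. This is not a technicality; it is precisely why the paper splits $[t_{a},t_{b}]$ into hyperbolic intervals $I_{H}$ (where the ejection dynamics hold and the cutoff error is controlled by $\langle Re^{\sigma}\rangle^{-1/2}$ against the soliton scale $R_{H}$) and variational intervals $I_{V}$ (where instead of absolute smallness, the error is integrated against a dyadic average via \eqref{est:IV int} and absorbed into the Morawetz-type quantity $\mathcal{I}_{V}$). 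Your proposed fix---a piecewise-rescaled $V_{R}$---is not developed, and even heuristically it does not resolve the imbalance: the ejection lemma gives a lower bound on $t_{2}-t_{1}$ scaling like $e^{-2\sigma}$ near $t_{1}$, while your virial upper bound scales like $R\,\delta\,e^{-\sigma_{i}}$ with $R$ forced to be of order $\sup_{t} e^{-\sigma(t)}$; without an a priori bound on the $\sigma$-drift over the excursion, these two quantities involve different powers and different extrema of $\sigma$ and do not produce a contradiction. The paper controls this only through the careful comparison $R > R_{X} \sim R_{H}\,\delta_{M}^{-2}\log^{2}(\delta_{M}/\delta)$ against the hyperbolic gain $R_{H}^{2}\delta_{M}$, which is why both sides end up carrying the same $R_{H}$.

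The second gap is the case $\Theta = +1$. Your virial argument is symmetric in the sign of $K$, but the paper's proof is not, and for good reason. When $K(u) > 0$ on $I_{V}$, the virial gives an upper bound on the excursion time in terms of the concentration radius $R_{V}^{+}$, but this alone does not produce a contradiction because $R_{V}^{+}$ is not a priori bounded. The paper instead shows that $R_{V}^{+}$ must be superexponentially large (via \eqref{RV super}), uses this to derive the decay estimate \eqref{est:IV decay} on a connected component of $I_{V}$, and then runs Bourgain's induction on energy: it splits off a small-dispersive piece $v$, shows the remaining piece $\widetilde{w}$ has energy strictly below $E(W)$ (estimate \eqref{est:E widew}), invokes the below-ground-state scattering theory of \cite{MXZ:crit Hart:f rad}, and concludes via the long-time perturbation Lemma \ref{lem:pert} that $u$ itself is close to a scattering solution at $t_{b}$---contradicting $\widetilde{d}_{\mathcal{W}}(u(t_{b}))=\delta$. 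None of this appears in your proposal. Since the proposition must hold for both signs of $\Theta$ (it is silent on the sign), the missing $\Theta=+1$ argument is a genuine hole, not an optimization.
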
 
	In the case where $\Theta(u(t))=-1$ after ejection, we introduce a threshold $R_{V}^{-}$, which allows us to compare $K$ with the main part of the virial identity. After integrating the virial identity, we can prove that this threshold must be very large; By proving a decay estimate, we can show that this threshold is not so large. This leads to a contradiction.
	
	In the case where $\Theta(u(t))=+1$ after ejection, we introduce a radius of the concentration of the kinetic part of the energy (see definition of $R_{V}^{+}$) and the hyperbolic parameter (see definition of $R_H$). We can estimate $K$ and some error terms (generated by the cut-off) in the hyperbolic region and the variational region. Then we compare these estimates. In the worst case, we prove a decay estimate (see \eqref{est:IV decay}) in the variational region and use this estimate to achieve Bourgain's energy induction method \cite{Bourg}, which can reduce the problem to energy below the ground state. Then the theory below the ground state energy (see \cite{MXZ:crit Hart:f rad}) implies that it is neither close to the ground states nor to the original solution by a perturbation argument, which contradicts the assumption of returning orbit.
	
	This proposition will be proved in Section \ref{sect:one pass}.
	
	\begin{remark}
		By time-reversal symmetry, there  also exists $t_{-}\in \left[-T_{-}(u),t_0 \right)$ such that $\widetilde{d}_{\mathcal{W}}(u(t))<\delta$ for $t_{-}<t<t_0$ and $\widetilde{d}_{\mathcal{W}}(u(t))>\delta$ for $-T_{-}(u)<t<t_{-}$.
	\end{remark}
	\begin{remark}\label{rmk:incln relat}
		$\epsilon_{B}(\delta)\leq \epsilon_{S}$ and $\epsilon_{B}(\delta)<c_D(\delta)$ imply that $\mathcal{H}^{\epsilon_{B}(\delta)}\setminus \widetilde{B}_{\delta}(\mathcal{W})\subset \mathcal{H}^{\epsilon_S} \cap \check{\mathcal{H}}$.
	\end{remark}
	The above proposition tells that if a solution gets out of $\widetilde{B}_{\delta}(\mathcal{W})$, then it can never return there. Moreover, it applies to all $\delta \in \left(0,\delta_{B} \right]$ satisfying $E(u)<E(W)+\epsilon_{B}(\delta)^2$. The solution $u$ stays around $\mathcal{W}$ iff $t_{+}(\delta)=T_{+}(u)$.

	\subsection{Solutions staying around the ground states.} The following proposition gives more precise description of such solutions.
	\begin{proposition}\label{prop:ps des} Under the assumption of Proposition \ref{prop:one pass}, suppose that $t_{+}(\delta)=T_{+}(u)$.
		\begin{enumerate}
			\item[$(1)$] Then there exists $t_{1}\in \left[ t_0,T_{+}(u) \right]$ such that $\widetilde{d}_{\mathcal{W}}(u(t))$ is decreasing on $\left[t_0,t_1\right)$, $u(t)\notin \check{\mathcal{H}}$ for all $t\in \left[ t_1,T_{+}(u) \right)$.
			\item[$(2)$] Moreover, if $t_1=T_{+}(u)$, then $\widetilde{d}_{\mathcal{W}}(u(t))\searrow0$ as $t\nearrow T_{+}(u)$, which implies $E(u)=E(W)$. 
		\end{enumerate}	
		We have similar statements in the case $t_{-}(\delta)=T_{-}(u)$ by the time-reversal symmetry.
	\end{proposition}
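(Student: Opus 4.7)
My plan is to use Proposition \ref{prop:eject mod} contrapositively. Define
\[
t_1 := \sup\bigl\{t \in [t_0, T_+(u)) : u(s) \in \check{\mathcal{H}}\ \text{for all}\ s\in [t_0, t]\bigr\},
\]
with the convention $t_1 = t_0$ when $u(t_0) \notin \check{\mathcal{H}}$. On $[t_0, t_1)$ we have $u(t) \in \widetilde{B}_\delta(\mathcal{W})\cap \check{\mathcal{H}} \subset \widetilde{B}_{\delta_X}(\mathcal{W})\cap \check{\mathcal{H}}$. If $\partial_t \widetilde{d}_{\mathcal{W}}(u(t^*)) \geq 0$ at any such $t^*$, Proposition \ref{prop:eject mod} would force $\widetilde{d}_{\mathcal{W}}$ to grow strictly up to $\delta_X > \delta$, contradicting the hypothesis $t_+(\delta) = T_+(u)$ of Proposition \ref{prop:one pass}. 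Hence $\widetilde{d}_{\mathcal{W}}$ is strictly decreasing on $[t_0, t_1)$. To show $u(t) \notin \check{\mathcal{H}}$ on $[t_1, T_+(u))$, I argue by contradiction: if $u$ reenters $\check{\mathcal{H}}$, then since $\check{\mathcal{H}} = \{E < E(W) + c_D^2\widetilde{d}_{\mathcal{W}}^{\,2}\}$ is open and $E$ is conserved, continuity yields a boundary time $t_a > t_1$ with $\widetilde{d}_{\mathcal{W}}(u(t_a)) = r^* := \sqrt{E(u)-E(W)}/c_D$ and $u(t) \in \check{\mathcal{H}}$ in a right-neighborhood of $t_a$; the same ejection argument then gives $\widetilde{d}_{\mathcal{W}}(u(t)) < r^*$ there, contradicting the defining inequality $\widetilde{d}_{\mathcal{W}}(u(t)) > r^*$ of $\check{\mathcal{H}}$. (If $E(u)\leq E(W)$, then $\check{\mathcal{H}} = \dot H^1_{rad}$ and part (1) holds vacuously with $t_1 = T_+(u)$.)

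\textbf{Part (2), preliminaries.} Now assume $t_1 = T_+(u)$, let $d_\infty := \lim_{t \to T_+(u)} \widetilde{d}_{\mathcal{W}}(u(t)) \geq 0$ and set $\tau_\infty := \lim_{t\to T_+(u)} \tau(t)$. I first verify $\tau_\infty = +\infty$: the bound $\sigma_\tau = O(\widetilde{d}_{\mathcal{W}}) = O(\delta)$ from Proposition \ref{prop:orth decom:u} shows that a finite $\tau_\infty$ would keep $\sigma$ bounded, and combined with $dt/d\tau = e^{-2\sigma}$ this would force $T_+(u) < \infty$; but then Proposition \ref{prop:unif lwp} applied at $t^* \nearrow T_+(u)$ yields $e^{2\sigma(t^*)} \geq 3/(T_+(u)-t^*)$, hence $\tau_\infty = \int_{t_0}^{T_+(u)} e^{2\sigma}\,dt = +\infty$, a contradiction.

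\textbf{Part (2), stable-manifold trapping.} Suppose $d_\infty > 0$; I derive a contradiction after shrinking $\delta$. Inside $\check{\mathcal{H}}$, the estimate \eqref{est:dist eigen} together with the $\Vert\cdot\Vert_E$-equivalence \eqref{quan:equi nor} yields $|\lambda_1| \sim \widetilde{d}_{\mathcal{W}} \sim d_\infty$ and $\lambda_2^2 + \Vert\gamma\Vert_{\dot H^1}^2 \lesssim \lambda_1^2$, hence $\Vert v\Vert_{\dot H^1}^2 \sim d_\infty^2$. Applying variation of constants backward from $\tau = \infty$ to $\partial_\tau \lambda_+ = \mu \lambda_+ + O(\Vert v\Vert_{\dot H^1}^2)$ (justified by the uniform boundedness of $\lambda_+$) gives
\[
|\lambda_+(\tau)| \leq C \int_\tau^\infty e^{\mu(\tau-s)}\, d_\infty^2\,ds \leq C\,d_\infty^2/\mu \ll d_\infty
\]
for $\delta$ small, so $\lambda_- = 2\lambda_1 - \lambda_+$ forces $|\lambda_-| \sim d_\infty$. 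But variation of constants applied forward to $\partial_\tau \lambda_- = -\mu\lambda_- + O(d_\infty^2)$ gives $\limsup_\tau |\lambda_-(\tau)| \lesssim d_\infty^2/\mu$, forcing $d_\infty \gtrsim \mu$ and contradicting $d_\infty \leq \delta \ll \mu$. Hence $d_\infty = 0$. Since $d_{\mathcal{W}} \sim \widetilde{d}_{\mathcal{W}}$, we obtain $\Vert u(t) - W_{\theta(t),\sigma(t)}\Vert_{\dot H^1} \to 0$, and passing to the limit in the conserved energy gives $E(u) = E(W)$. The main obstacle is this last step: the constraint $|\lambda_1| \sim \widetilde{d}_{\mathcal{W}}$ from $\check{\mathcal{H}}$ essentially confines the dynamics to the stable manifold $\lambda_+ \approx 0$, and the forward/backward Duhamel interplay between the two eigenmodes is what extracts the final contradiction.
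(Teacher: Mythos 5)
Your part~(1) follows the same route as the paper: the contrapositive of the ejection lemma (Proposition~\ref{prop:eject mod}) forces $\partial_t\widetilde{d}_{\mathcal{W}}<0$ wherever $u\in\check{\mathcal{H}}$, and the conservation of $E$ together with the characterization $u(t)\in\check{\mathcal{H}}\iff\widetilde{d}_{\mathcal{W}}(u(t))>r^*:=\sqrt{E(u)-E(W)}/c_D$ shows that $u$ cannot re-enter $\check{\mathcal{H}}$ once it exits. You spell out the ``once out, always out'' step more explicitly than the paper does, which is a welcome clarification.

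Part~(2) is where you genuinely diverge. The paper simply applies Proposition~\ref{prop:eject mod} backward in time from any $t\in(t_0,T_+(u))$ (using that $\partial_t\widetilde{d}_{\mathcal{W}}(u(t))<0$, the backward ejection condition, holds), which immediately yields the closed-form exponential estimate $\widetilde{d}_{\mathcal{W}}(u(t_0))\sim e^{\mu(\tau(t)-\tau(t_0))}\widetilde{d}_{\mathcal{W}}(u(t))$; letting $\tau(t)\to\infty$ gives $\widetilde{d}_{\mathcal{W}}(u(t))\to0$ directly. You instead return to the spectral ODEs \eqref{est:dyn lam+} for $(\lambda_+,\lambda_-)$ and run a two-sided Duhamel argument: the boundedness of $\lambda_+$ forces it onto the stable manifold ($|\lambda_+|\lesssim d_\infty^2/\mu$), hence $|\lambda_-|\sim d_\infty$, while the forward integral formula for $\lambda_-$ forces $\limsup|\lambda_-|\lesssim d_\infty^2/\mu$, a contradiction. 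This is a valid (and instructive) ``stable-manifold trapping'' argument, but it essentially re-derives the exponential behavior that Proposition~\ref{prop:eject mod} already packages. One imprecision worth flagging: the relations $|\lambda_1|\sim\widetilde{d}_{\mathcal{W}}\sim d_\infty$ and $\Vert v\Vert_{\dot H^1}^2\lesssim d_\infty^2$ hold only for $\tau\geq\tau_1$ sufficiently large (where $\widetilde{d}_{\mathcal{W}}(u(\tau))\leq 2d_\infty$, say), not uniformly on $[\tau(t_0),\infty)$; the Duhamel integrals must be taken over $[\tau_1,\infty)$, and the harmless boundary term $e^{-\mu(\tau-\tau_1)}\lambda_-(\tau_1)$ should be absorbed. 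Once this is done, the contradiction holds provided $\delta_B$ is small compared to $\mu$ times the absolute implicit constants, which the paper's choice of $\delta_B$ accommodates. Net assessment: correct, but the paper's use of the backward ejection lemma is more economical.
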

	\begin{proof}
		Let $u$ be a solution of \eqref{equ:Hart} with $u(t_0)\in \mathcal{H}^{\epsilon_B(\delta)} \cap \widetilde{B}_{\delta}(\mathcal{W})$, where $\delta\in (0,\delta_{B}], t_0\in I(u)$ and $t_+(\delta)=T_+(u)$. By Proposition \ref{prop:one pass}, we have $\widetilde{d}_{\mathcal{W}}(u(t))<\delta$ for $t\in[t_0,T_+(u))$.
		
		Next, we make the following assertion: for any $t\in[t_0,T_+(u))$, $\partial_t \widetilde{d}_{\mathcal{W}}(u(t))<0$ or $u(t)\notin \check{\mathcal{H}}$. We prove it by contradiction. Indeed, there exists $\bar{t}\in[t_0,T_+(u))$, such that $u(\bar{t})\in \check{\mathcal{H}}$ and $\partial_t \widetilde{d}_{\mathcal{W}}(u(\bar{t}))\geq0$, then Proposition \ref{prop:eject mod} implies that $\widetilde{d}_{\mathcal{W}}(u(t))$ increases strictly until it reaches $\delta_{X}$ at some $t_{X}\in (\bar{t},T_{+}(u))$, which leads to a contradiction.
		
		So by the mean value theorem, there are only two possibilities:
		\begin{enumerate}
			\item[$(1)$] There exists $t_1\in [t_0,T_+(u))$ such that $u(t)\notin \check{\mathcal{H}}$ for all $t\in[t_1,T_+(u))$. 
			\item[$(2)$] For all $t\in[t_0,T_+(u))$, $u(t)\in \check{\mathcal{H}}$ and $\partial_t \widetilde{d}_{\mathcal{W}}(u(t))<0$.
		\end{enumerate}
		In the first case, if we choose the minimal $t_1$, then for $t_0\leq t<t_1$, we have $u(t)\in \check{\mathcal{H}}$, thus $\partial_t \widetilde{d}_{\mathcal{W}}(u(t))<0$ by the assertion. Thus, the above two cases can be summed up as: there exists $t_1\in [t_0, T_+(u)]$, such that $u(t)\notin \check{\mathcal{H}}$ on $[t_1, T_+(u))$, $\widetilde{d}_{\mathcal{W}}(u(t))$ is decreasing  and $u(t)\in \check{\mathcal{H}}$ on $[t_0,t_1)$. This proves the first result.
		
		Since $\widetilde{B}_{\delta}(\mathcal{W})\subset \widetilde{B}_{\delta_X}(\mathcal{W})\subset B_{\delta_L}(\mathcal{W})$, Proposition \ref{prop:unif lwp} implies that $\tau\rightarrow\infty$ as $t\nearrow T_+(u)$. If $t_1=T_{+}(u)$, then $\partial_t\widetilde{d}_{\mathcal{W}}(u(t))<0$ and $u(t)\in \check{\mathcal{H}}$ on $[t_0, T_+(u))$. Using $\widetilde{d}_{\mathcal{W}}(u(t_0))<\delta$ and Proposition \ref{prop:eject mod} backward in time, we have
		\begin{equation*}
		\widetilde{d}_{\mathcal{W}}(u(t_0)) \sim e^{\mu(\tau(t)-\tau(t_0))}\widetilde{d}_{\mathcal{W}}(u(t)),
		\end{equation*}
		for any $t\in(t_0,T_+(u))$, corresponding to $\tau\in(\tau(t_0),\infty)$. Then $\widetilde{d}_{\mathcal{W}}(u(t))\searrow0$ for $t\nearrow T_+(u)$. For $u(t)\in \check{\mathcal{H}}$, we have 
		\begin{equation*}
		0\leq \sqrt{E(u)-E(W)}< c_D\widetilde{d}_{\mathcal{W}}(u(t)).
		\end{equation*}
		Combined with $\widetilde{d}_{\mathcal{W}}(u(t))\rightarrow0$, we have $E(u)=E(W)$.
	\end{proof}

	\subsection{Long-time behavior away from the ground states.} The following proposition describes the asymptotic behavior of solutions which are away from the ground states.
	
	\begin{proposition}(Asymptotic behavior)\label{prop:asyp behar}. There exists an increasing function $\epsilon_{*}:\left(0,\delta_{B}\right]\rightarrow \left(0,\epsilon_{S}\right]$ with $\epsilon_{*}(\delta)< \epsilon_{B}(\delta)$ for $\delta\in (0,\delta_B)$. Suppose that $u$ is a solution of \eqref{equ:Hart} satisfying $u\left( \left[ t_0,T_{+}(u) \right) \right)\subset \mathcal{H}^{\epsilon_{*}(\delta)}\setminus \widetilde{B}_{\delta}(\mathcal{W})$ for some $t_0\in I(u)$. 
		\begin{enumerate}
			\item[$(1)$] If $\Theta(u(t))=+1$ at some $t\in \left[ t_0,T_{+}(u) \right)$, then $T_{+}(u)=\infty$, $u$ scatters as $t\rightarrow\infty$.  
			\item[$(2)$]If $\Theta(u(t))=-1$ and $u(t)\in L_{x}^{2}$ at some $t\in \left[ t_0,T_{+}(u) \right)$, then $T_{+}(u)<\infty$. 
		\end{enumerate}
		By time-reversal symmetry, the same statements hold for the negative time direction $\left(-T_{-}(u),t_0\right]$.
	\end{proposition}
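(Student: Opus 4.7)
The plan is to handle the two conclusions separately, after a common reduction to a quantitative bound on $K$. Since $\Theta$ is continuous on $\mathcal{H}^{\epsilon_S}\cap \check{\mathcal{H}}$ by Proposition \ref{prop:sign funct}, and Remark \ref{rmk:incln relat} gives $\mathcal{H}^{\epsilon_*(\delta)}\setminus \widetilde{B}_\delta(\mathcal{W})\subset \mathcal{H}^{\epsilon_S}\cap \check{\mathcal{H}}$ whenever $\epsilon_*(\delta)<\epsilon_B(\delta)$, the value of $\Theta(u(t))$ is locked throughout $[t_0,T_+(u))$. Combined with $\widetilde{d}_\mathcal{W}(u(t))\ge\delta\ge\delta_1$ and the first case of \eqref{quan:sign func}, this yields $\Theta(u(t))=\mathrm{sign}\,K(u(t))$ on the whole interval, and Proposition \ref{prop:vart est} upgrades it to the uniform bound $K(u(t))\ge \kappa(\delta)$ when $\Theta(u)=+1$, and $K(u(t))\le-\kappa(\delta)$ when $\Theta(u)=-1$. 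This choice fixes $\epsilon_*$.

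For part (2), the condition $u(t_0)\in L^2$ is propagated by mass conservation, and combined with the radial hypothesis I set up a localized virial argument. Taking a smooth radial cutoff $\psi_R$ equal to $|x|^2/2$ on $\{|x|\le R\}$ and constant on $\{|x|\ge 2R\}$, let $V_R(t):=\int \psi_R|u|^2\,dx\ge 0$ and compute
\begin{equation*}
V_R''(t)=c_d\,K(u(t))+\mathrm{Err}_R(t),
\end{equation*}
where the tail $\mathrm{Err}_R$ depends on $\|u\|_{L^2}=\|u(t_0)\|_{L^2}$, the uniformly bounded $\|u(t)\|_{\dot H^1}$, and the radial Strauss decay of $u$ on $\{|x|\gtrsim R\}$. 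Choosing $R$ large in terms of $\kappa(\delta),\|u(t_0)\|_{L^2},E(u_0)$ makes $V_R''(t)\le -c_d\kappa(\delta)/2$ throughout $[t_0,T_+(u))$, after which the nonnegativity of $V_R$ and a Glassey-type concavity argument force $T_+(u)<\infty$.

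For part (1), assume by contradiction that scattering fails and set
\begin{equation*}
E_c:=\inf\bigl\{\,E(u_0)\,\big|\;u\text{ satisfies the hypotheses of (1) and }\|u\|_{S([t_0,T_+(u)))}=\infty\,\bigr\}.
\end{equation*}
The bound \eqref{unif bd Te+} gives $\|u(t)\|_{\dot H^1}^2\le 4(E(W)+\epsilon_*(\delta)^2)$ uniformly, which permits a Keraani-type radial profile decomposition combined with the nonlinear perturbation theory based on the Strichartz estimates of Proposition \ref{prop:disp ST est}, so that one extracts a critical element $U_c$ with $E(U_c)=E_c$ whose forward orbit is precompact in $\dot H^1_{\mathrm{rad}}$ modulo the action of $\{e^{i\theta}S_{-1}^{\sigma}\}_{(\theta,\sigma)\in\R^2}$. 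Lower semicontinuity of $\widetilde{d}_\mathcal{W}$ and of $E$, continuity of $\Theta$, and the uniform positivity of $K$ all pass to $U_c$, so that $U_c(t)\notin \widetilde{B}_\delta(\mathcal{W})$ and $K(U_c(t))\ge \kappa(\delta)$ for all admissible $t$. A localized virial/Morawetz argument in the radial class then produces a linearly growing lower bound $\int_{t_0}^T K(U_c(s))\,ds\gtrsim (T-t_0)\kappa(\delta)$, which precompactness of the orbit excludes.

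I expect the main obstacle to be the rigidity step for $U_c$: because \eqref{equ:Hart} is scale-invariant, the modulation parameter $\sigma(t)$ attached to the compact orbit of $U_c$ can a priori escape to $\pm\infty$ or oscillate, and each scenario must be ruled out separately. I would follow the strategy of \cite{MXZ:crit Hart:f rad} together with the modulation analysis in \cite{KriNS:NLW rad}, using the sharp Hardy-Littlewood-Sobolev bound of Proposition \ref{prop:sharp const} and the spectral information in Proposition \ref{prop:spect}, to show that any such limiting profile would, up to the symmetry group, coincide with $W$, contradicting $U_c(t)\notin \widetilde{B}_\delta(\mathcal{W})$. This closes part (1), and together with part (2) completes the proof.
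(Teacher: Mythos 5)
Your plan follows the paper's overall framework---locking the sign of $\Theta$, extracting a uniform lower bound on $|K|$ via Proposition \ref{prop:vart est}, a localized virial for blow-up, and Kenig-Merle concentration-compactness for scattering---but two steps contain errors. In the blow-up part you assert that the error term in the virial identity is controlled using ``the uniformly bounded $\|u(t)\|_{\dot H^1}$.'' This is false when $\Theta(u)=-1$: the condition $K(u)<0$ together with $E(u)<E(W)+\epsilon^2$ gives \emph{no} upper bound on $\|\nabla u\|_{L^2}$, and the $\dot H^1$ norm is expected to diverge as $t\nearrow T_+(u)$. The paper instead absorbs the $\|\nabla u\|_{L^2}^2$-dependent errors into $-K(u)=\|\nabla u\|_{L^2}^2-4E(u)$ (which grows quadratically in $\|\nabla u\|_{L^2}$) for $R$ large depending only on $\|u_0\|_{L^2}$ and $\kappa(\delta_B)$; once $\ddot V_R\leq -c\,\kappa(\delta_B)$ is established this way, your Glassey concavity conclusion is a valid and in fact slightly more direct finish than the paper's $\partial_t f\gtrsim f^2$ ODE comparison, but the justification needs to be the absorption argument, not a false uniform bound.

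In the scattering rigidity step you correctly anticipate the scaling freedom as the obstruction, but the route you sketch---showing the limiting profile coincides with $W$ up to symmetry---is not what the paper does and is not obviously available. The paper's rigidity for $U_c$ is a three-step scale-escape argument: (Step 1) exclude finite-time blow-up using $L^2$-mass conservation and concentration; (Step 2) show $\inf_t\sigma_c(t)=-\infty$ via a localized virial estimate combined with the uniform bound $K(U_c(t))\geq\widetilde\kappa>0$; (Step 3) extract a rescaled subsequential limit $U_\omega$ whose scale parameter is bounded below by construction, and reapply Step 2 to that element to reach a contradiction. No identification with $W$ enters---the contradiction is purely dynamical. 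Also, a smaller point: the hypothesis only gives $u$ outside $\widetilde B_\delta(\mathcal W)$ for some $\delta\in(0,\delta_B]$, which may be below the threshold $\delta_1$ you need for the first alternative of \eqref{quan:sign func}; the paper first upgrades from $\widetilde B_\delta$ to $\widetilde B_{\delta_B}$ via Propositions \ref{prop:one pass} and \ref{prop:ps des}, whereas your inequality ``$\widetilde d_{\mathcal W}(u(t))\geq\delta\geq\delta_1$'' silently assumes $\delta\geq\delta_1$.
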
 
	
	The fate of the solution depends on $\Theta(u(t))$ when $u$ is ejected. If $\Theta(u(t))=-1$ and $u_0 \in L^2$, then we prove that it blows up in finite time; If $\Theta(u(t))=+1$, then we prove that it is scattering. The scattering is proved by a modification of \cite{MXZ:crit Hart:f rad} and arguments from \cite{NakS:NLS}. Unlike the subcritical case, we have to deal with possible blow-up in finite time, although the $\dot{H}^1$ norm is bounded. Both of the proof are by contradiction. 
	
	For the case $\Theta(u(t))=+1$. If we assume that scattering fails, then we can find a critical energy $E_c(\delta)$. If the energy above $E_c(\delta)$, then scattering does not hold for solution satisfies $\Theta(u(t))=+1$, which is far from the ground states. This means that there exists a sequence $\{u_n\}_{n\geq1}$ that satisfies the properties mentioned above (in fact, the distance can be upgraded from far to very far, by appealing to the ejection lemma). Using the concentration compactness procedure, the energy of $u_n$ is just above that of the ground states. Thus we can construct a critical element $U_c$, such that $E(U_c)=E_c(\delta), \Theta(U_c(t))=+1$ and $U_c$ does not scatter. Moreover, its orbit is precompact up to scaling. By using arguments in \cite{MXZ:crit Hart:f rad}, one sees that $U_c$ does not exist.
	
	Note that by Remark \ref{rmk:incln relat} and $\epsilon_{*}\leq \epsilon_{B}$, $\Theta(u(t))$ is well defined for all $t\in\left[ t_0,T_{+}(u) \right)$ in the above statement.
	
	This proposition will be proved in Section \ref{sect:asymp behr}.
	%
	%
	%
	%

	\section{Proof of Proposition \ref{prop:one pass}}\label{sect:one pass}	
	In this section, we prove Proposition \ref{prop:one pass}.
	\subsection{Setting.} Let $\delta,\epsilon>0$ and let $u$ be a solution satisfying $u(t_0)\in \mathcal{H}^{\epsilon}\cap \widetilde{B}_{\delta}(\mathcal{W})$ at some $t_0\in I(u)$. For the sake of simplicity, we denote
	\begin{equation*}
	\widetilde{d}(t):=\widetilde{d}_{\mathcal{W}}(u(t)).
	\end{equation*}
	We will define the hyperbolic and the variational regions in $\mathcal{H}^{\epsilon}$. In order to distinguish them, we use small parameters $\delta_{V},\delta_{M}\in\left(0,\delta_{X}\right]$, which will be fixed as absolute constants in the end. First we impose the following upper bounds on $\delta$ and $\epsilon$
	\begin{equation}\label{dep small}
	0<\delta\ll \delta_{V}\ll \delta_{M}\ll \delta_{X},\; 0<\epsilon\leq \min(\epsilon_{S},\epsilon_{V}(\delta)),\; \epsilon<c_{D}\delta.
	\end{equation}
	Since $\epsilon\leq \epsilon_{S}$ and $\epsilon<c_{D}\delta$, we have
	\begin{equation*}
	\mathcal{H}^{\epsilon}\setminus \widetilde{B}_{\delta}(\mathcal{W})\subset \mathcal{H}^{\epsilon_S}\cap \check{\mathcal{H}}.
	\end{equation*}
	
	Put $t_{a}:=\sup\{t_{1}\in (t_0,T_{+}(u))| t_0<t\leq t_1 \Longrightarrow \widetilde{d}(t)<\delta \}$. Since $\widetilde{d}(t_0)<\delta$, we have $t_{a}\in (t_0,T_{+}(u)]$.
	\begin{enumerate}
		\item[$(1)$] If $t_{a}=T_{+}(u)$, then \eqref{one-pass} holds with $t_{+}(\delta)=T_{+}(u)$. 
		\item[$(2)$] If $t_{a}<T_{+}(u)$, then $\widetilde{d}(t_{a})=\delta$.
		\begin{enumerate}
			\item[(a)] If $\{t\in (t_a,T_{+}(u))|\widetilde{d}(t)\leq \delta\}=\varnothing$, then \eqref{one-pass} holds with $t_{+}(\delta)=t_a$. 
			\item[(b)]  If $\{t\in (t_a,T_{+}(u))|\widetilde{d}(t)\leq \delta\} \neq \varnothing$, let $t_{b}:=\inf\{t\in (t_a,T_{+}(u))\;|\;\widetilde{d}(t)\leq\delta \}$, then $t_{a}<t_{b}$. Thus in the remaining case, we have
			\begin{equation*}
			t_{0}<\exists t_{a}<\exists t_{b}<T_{+}(u),\; \widetilde{d}(t_{a})=\delta=\widetilde{d}(t_{b})=\min_{t\in \left[t_{a},t_{b}\right]} \widetilde{d}(t),
			\end{equation*}
			from which we will derive a contradiction for small $\delta>0$ and for small $\epsilon>0$ with $\delta$-dependent smallness.
		\end{enumerate}
	\end{enumerate}
	
	The rest of proof concentrates on the interval  $\left[t_{a},t_{b}\right]$, where $u(t)$ stays in $\check{\mathcal{H}}\cap \mathcal{H}^{\epsilon_S}$, and so $\Theta(u(t))\in \{\pm1\}$ is a constant, abbreviated by $\Theta$ in the following. $\widetilde{d}(t_{a})=\delta$ implies $E(u)=\left(1+O(\delta^2)\right)E(W) \gtrsim 1$.

	\subsection{Hyperbolic and the variational regions.} Let $s$ be any local minimizer of the function $\widetilde{d}(t)$ on $\left[t_{a},t_{b}\right]$ such that $\widetilde{d}(s)<\delta_{V}$. Then Proposition \ref{prop:eject mod} from $t=s$, forward and backward in time if $s\notin \{t_{a},t_{b}\}$, forward in time if $s=t_{a}$, and backward in time if $s=t_{b}$, yields a unique subinterval $I[s]\subset \left[t_{a},t_{b}\right]$ such that
	\begin{enumerate}
		\item[(1)] $\widetilde{d}(t)\sim -\Theta\lambda_{1}(t) \sim \widetilde{d}(s)e^{\mu|\tau(t)-\tau(s)|}$ on $I[s]$, 
		\item[(2)] $\widetilde{d}(t)^2$ is strictly convex as a function of $\tau$ on $I[s]$ with minimum $<\delta_{V}$ at $t=s$,
		\item[(3)] $\widetilde{d}(t)=\delta_{M}$ on $\partial I[s]\setminus \{t_{a},t_{b}\}$.	
	\end{enumerate}
	
	Let $\mathscr{L}$ be the set of those local minimum points. Since $\widetilde{d}(t)$ ranges over $[\delta
	_V,\delta_{M}]$ between any pair of points in $\mathscr{L}$, its uniform continuity on $[t_{a},t_{b}]$ implies $\mathscr{L}$ is a finite set. Decompose the interval $[t_{a},t_{b}]$ into the hyperbolic time $I_H$ and the variational time $I_V$ defined by the following 
	\begin{equation*}
	I_{H}:=\bigcup_{s\in\mathscr{L} }I[s],\;\; I_V:=[t_{a},t_{b}]\setminus I_{H}.
	\end{equation*}
	By the definition of $\mathscr{L}$ and $I[s]$, we have
	\begin{equation*}
	t\in I_{H}\Longrightarrow \delta\leq \widetilde{d}(t)\leq \delta_{M} ,\;\;\; t\in I_V \Longrightarrow  \widetilde{d}(t)\geq \delta_{V}.
	\end{equation*}
	In particular, the coordinates $\sigma,\theta,v,\lambda_+,\lambda_{-},\lambda_{1},\lambda_{2},\gamma$ and $\tau$ are defined on $I_{H}$. Since $u$ is fixed, we regard those as functions of $t\in I_{H}$ in the rest of proof.
	
	The soliton size on $I_{H}$ is measured by
	\begin{equation}\label{quan:size IH}
	R_{H}:=\sup_{t\in I_{H}}e^{-\sigma(t)} \sim \max_{s\in \mathscr{L}}e^{-\sigma(s)},
	\end{equation}
	where the equivalence follows from \eqref{est:dyn epara} on each $I[s]$. The hyperbolic dynamics in $\tau$ on $I[s]$ together with the time scaling $\dot{\tau}=e^{2\sigma}$ implies that
	\begin{equation*}
	e^{2\sigma(s)}|I[s]|\sim \log(\delta_{M}/\widetilde{d}(s))\in [\log(\delta_{M}/\delta_{V}),\log(\delta_{M}/\delta)].
	\end{equation*}

	\subsection{Virial identity.} Now we consider a localized virial identity. For $R>0$, put
	\begin{equation} \label{localV}
	V_{R}(t):=\int_{\R^d} R^2\varphi\left(\frac{x}{R} \right)|u(t,x)|^2 dx,
	\end{equation}
	where $\varphi(x)$ be a smooth radial function that satisfies $\varphi(x)=|x|^2$ for $|x|\leq 1$ and $\varphi(x)=0$ for $|x|\geq 2$. 
	\begin{lemma}[\cite{MWX:Hart}]
		Let $u(t,x)$ be a radial solution to \eqref{equ:Hart}, $V_{R}(t)$ be
		defined by \eqref{localV}. Then
		\begin{align*}
		\dot{V}_R(t)=&\; 2\Im \int_{\R^d} \overline{u}\; \nabla u \cdot R \nabla\varphi\left(\frac{x}{R} \right)\; dx, \\
		\ddot{V}_R(t)=&\; 8 \int_{\R^d} \big|\nabla u (t,x)\big|^2 dx-8
		\iint_{\R^d\times\R^d}\frac{\big|u(t,x)\big|^2\big|u(t,y)\big|^2}{|x-y|^4} dxdy+ A_R\big(u(t)\big)\\=&\; 8K(u)+A_R\big(u(t)\big),
		\end{align*}
		where
		\begin{align*} A_R\big(u(t)\big):=&\;\int_{\R^d} \left( 4\varphi''\Big(\frac{|x|}{R}\Big)-8 \right) \big|\nabla u (t,x) \big|^2 dx
		+  \int_{\R^d} \left(-\frac{1}{R^2}\Delta\Delta \varphi\left(\frac{x}{R} \right) \right) \big| u(t,x)\big|^2 dx \\
		& + 8\iint_{\R^d\times\R^d}\left(1-\frac{1}{2} \frac{R}{|x|}\varphi'\left(\frac{x}{R}\right)\right)\frac{x(x-y)}{|x-y|^6}|u(t,x)|^2|u(t,y)|^2 dxdy\\& - 8\iint_{\R^d\times\R^d}\left(1-\frac{1}{2} \frac{R}{|y|}\varphi'\left(\frac{y}{R}\right)\right)\frac{y(x-y)}{|x-y|^6}|u(t,x)|^2|u(t,y)|^2 dxdy.
		\end{align*}
	\end{lemma}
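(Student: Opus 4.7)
This is a Morawetz/virial computation adapted to the Hartree nonlinearity. The plan is to obtain $\dot V_R$ from the local mass conservation, then differentiate once more and symmetrize the resulting nonlocal Hartree double integral to extract $8K(u)$, with the cutoff errors assembled into $A_R$.

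For $\dot V_R$, the reality of the potential $|x|^{-4}*|u|^2$ yields the local conservation law $\partial_t|u|^2 = \pm 2\,\mathrm{div}\bigl(\mathrm{Im}(\overline{u}\nabla u)\bigr)$ directly from the equation. Plugging this into $\dot V_R = \int R^2\varphi(x/R)\,\partial_t|u|^2\,dx$ and integrating by parts, using $\nabla(R^2\varphi(x/R)) = R\nabla\varphi(x/R)$, yields the stated formula for $\dot V_R$ up to the sign convention.

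For $\ddot V_R$, I differentiate the current in time. A direct manipulation using the equation gives
\[
\partial_t\mathrm{Im}(\overline u\,\nabla u) = \mathrm{Re}\bigl(\Delta\overline u\,\nabla u - \overline u\,\nabla\Delta u\bigr) - |u|^2\,\nabla\bigl(|x|^{-4}*|u|^2\bigr),
\]
in which the potential-energy contribution is explicit because the real-valued factor $|x|^{-4}*|u|^2$ cancels against its conjugate. Pairing the linear part with the weight $R\nabla\varphi(x/R)$ and integrating by parts twice against $\psi_R(x):=R^2\varphi(x/R)$ produces a stress-tensor term $4\int\mathrm{Re}(\partial_j\overline u\,\partial_k u)\,\partial_j\partial_k\psi_R$ and a biharmonic remainder $-\int|u|^2\,\Delta\Delta\psi_R$; on $\{|x|\le R\}$ we have $\psi_R(x)=|x|^2$, so $\partial_j\partial_k\psi_R = 2\delta_{jk}$ and $\Delta\Delta\psi_R=0$, recovering exactly $8\int|\nabla u|^2$, while the deviation on the annulus $\{R\le|x|\le 2R\}$ collects into the first two single integrals of $A_R$. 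For the Hartree piece, I use $\nabla_x|x-y|^{-4}=-4(x-y)/|x-y|^6$ to rewrite the nonlinear contribution as
\[
-8\iint R\nabla\varphi(x/R)\cdot(x-y)\,|x-y|^{-6}\,|u(x)|^2|u(y)|^2\,dx\,dy,
\]
and split $R\nabla\varphi(x/R) = 2x - 2x\bigl[1-\tfrac12(R/|x|)\varphi'(x/R)\bigr]$. The ``clean'' piece $2x\cdot(x-y)$, after $x\leftrightarrow y$ symmetrization, becomes $(x-y)\cdot(x-y)=|x-y|^2$ and yields the potential contribution $-8\iint|u|^2|u|^2/|x-y|^4$ to $8K(u)$; the remainder, under the same symmetrization, splits into the two final double integrals of $A_R$.

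The main obstacle is the bookkeeping: tracking all cutoff corrections through two integrations by parts and one $x\leftrightarrow y$ symmetrization so that the clean parts reconstitute $8K(u)$ and the errors assemble into precisely the stated $A_R(u(t))$, with the correct coefficients $1-\tfrac12(R/|x|)\varphi'(x/R)$ and $1-\tfrac12(R/|y|)\varphi'(y/R)$. The nonlocality of the Hartree force makes the symmetrization step nontrivial, but it becomes transparent once one isolates the antisymmetric combination $xf(x)-yf(y)$ with $f(x):=1-\tfrac12(R/|x|)\varphi'(x/R)$.
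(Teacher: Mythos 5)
The paper does not actually prove this lemma — it cites \cite{MWX:Hart} — so there is no internal proof to compare against. Your plan is correct and is precisely the standard virial/Morawetz computation one would find in that reference: the local conservation law $\partial_t|u|^2=2\,\mathrm{div}\,\mathrm{Im}(\overline u\nabla u)$ for $\dot V_R$; the double integration by parts giving $4\int\partial_j\partial_k\psi_R\,\mathrm{Re}(\partial_j\overline u\,\partial_k u)-\int\Delta^2\psi_R\,|u|^2$ for the linear part of $\ddot V_R$ (with radiality of $u$ needed to collapse the Hessian pairing to $\varphi''(|x|/R)|\nabla u|^2$, matching the first two integrals of $A_R$); and for the Hartree term, the identity $\nabla_x|x-y|^{-4}=-4(x-y)/|x-y|^6$, the split $R\nabla\varphi(x/R)=2x-2x\bigl[1-\tfrac12(R/|x|)\varphi'(|x|/R)\bigr]$, and the $x\leftrightarrow y$ symmetrization that turns $(x-y)\cdot x$ into $\tfrac12|x-y|^2$ and produces exactly the two remaining double integrals of $A_R$. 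I verified each of these steps reproduces the stated coefficients (up to the overall sign of $\dot V_R$, which as you noted depends on the chosen orientation of the equation; the $\ddot V_R=8K+A_R$ identity is consistent once that sign is fixed). There are no gaps in the plan.
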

	Combined with the cut-off function \eqref{funct:cutf funct}, we obtain
	\begin{equation}\label{est:V 2deri}
	\begin{aligned}
	\ddot{V}_R(t)=&\; \int_{\R^d}4\varphi''\Big(\frac{|x|}{R}\Big)\big|\nabla u (t,x) \big|^2 dx
	+  \int_{\R^d} \left(-\frac{1}{R^2}\Delta\Delta \varphi\left(\frac{x}{R} \right) \right) \big| u(t,x)\big|^2 dx\\&
	+8\iint_{\R^d\times\R^d}\left(-\frac{1}{2} \frac{R}{|x|}\varphi'\left(\frac{x}{R}\right)\right)\frac{x(x-y)}{|x-y|^6}|u(t,x)|^2|u(t,y)|^2 dxdy\\& + 8\iint_{\R^d\times\R^d}\left(\frac{1}{2} \frac{R}{|y|}\varphi'\left(\frac{y}{R}\right)\right)\frac{y(x-y)}{|x-y|^6}|u(t,x)|^2|u(t,y)|^2 dxdy\\=&\;8K\left(\phi_{R}u\right)+O(E_R).
	\end{aligned}
	\end{equation}
	where
	\begin{equation*}
	\begin{aligned}
	E_R(t):=\Vert u \Vert_{\dot H^1(R\leq|x|\leq 2R)}^2+\Vert \frac{u}{|x|} \Vert_{L^2(R\leq|x|\leq 2R)}^2+\iint_{\R^d\times \R^d \setminus \varOmega_1} \frac{\big|u(t,x)\big|^2\big|u(t,y)\big|^2}{|x-y|^4} dxdy,
	\end{aligned}
	\end{equation*}
	\begin{equation*}
	\varOmega_1:=\{(x,y)\in\R^d\times\R^d|\;|x|\leq R,\;|y|\leq R \}.
	\end{equation*}
	
	If $t\in\{ t_a,t_b \}$, then $\widetilde{d}(t)=\delta<\delta_{X}$, that is $u(t)\in \widetilde{B}_{\delta_{X}}(\mathcal{W})\subset B_{\delta_{L}}(\mathcal{W})\subset B_{\delta_{E}}(\mathcal{W})$. Using Proposition \ref{prop:orth decom}, we have
	\begin{equation}\label{est:V 1deri bd}
	\begin{aligned}
	|\dot{V}_R(t)|=&|2\Im \int e^{(\frac{d}{2}-1)\sigma} (W+\bar{v})(e^{\sigma}x)e^{\frac{d}{2}\sigma}\nabla(W+v)(e^{\sigma}x)R\nabla\varphi(\frac{x}{R})dx|\\=&|\frac{2}{e^{2\sigma}}\Im \int_{|x|\leq2\widetilde{R}}\left( W\nabla v+\bar{v}\nabla W+\bar{v}\nabla v \right)(x)\widetilde{R} \nabla\varphi(\frac{x}{\widetilde{R}})dx|\\ \leq& \frac{4R}{e^{\sigma}} \int_{|x|\leq2\widetilde{R}}|W\nabla v+\bar{v}\nabla W+\bar{v}\nabla v|(x)dx\lesssim \frac{R\delta}{e^{\sigma}}.
	\end{aligned}
	\end{equation}

	In order to control the cut-off error in $I_V$, we introduce
	\begin{equation*}
	\mathcal{I}_V:=\int_{I_V}\left(\iint \frac{\big|u(t,x)\big|^2\big|u(t,y)\big|^2}{|x-y|^4} dxdy+\Vert u \Vert_{\dot H^1}^2+\Vert \frac{u}{|x|} \Vert_{L^2}^2 \right) dt.
	\end{equation*}
	By Hardy's inequality and $E(u)\sim E(W)>0$, we have $\mathcal{I}_V\sim \int_{I_V} \Vert u \Vert_{\dot H^1}^2 dt$. Since
	\begin{equation*}
	\begin{aligned}
	&\int_{0}^{\infty} \int_{I_V} \int_{|x|>R} \frac{R}{r}\left( |\nabla u|^2+|\frac{u}{r}|^2+\left( |x|^{-4}*|u|^2\right)|u|^2 \right)dxdt\frac{dR}{R}\\=&\int_{\R^d}\int_{I_V} \int_{0}^{r}\frac1r \left( |\nabla u|^2+|\frac{u}{r}|^2+\left( |x|^{-4}*|u|^2\right)|u|^2 \right)dRdtdx\\=&\int_{\R^d}\int_{I_V}\left( |\nabla u|^2+|\frac{u}{r}|^2+\left( |x|^{-4}*|u|^2\right)|u|^2 \right)dtdx=\mathcal{I}_V,
	\end{aligned}
	\end{equation*}
	there exists $R\in(R_0,R_1)$ for any $R_{1}>R_{0}>0$ such that 
	\begin{equation}\label{est:IV int}
	\int_{I_V} \int_{|x|>R} \frac{R}{r}\left( |\nabla u|^2+|\frac{u}{r}|^2+\left( |x|^{-4}*|u|^2\right)|u|^2 \right)dxdt \leq \frac{\mathcal{I}_V}{\ln(R_{1}/R_{0})}.
	\end{equation}

	\subsection{Blow-up region.} We start with the simpler case $\Theta(u)=-1$, where the solution will blow up. 
	
	First, we consider $\ddot{V}_R$ on $I_H$. By \eqref{est:V 2deri}, we only need to estimate $K\left(\phi_{R}u\right)$ and $E_R$ on $I_H$ for $R$ to be chosen properly. For $t\in I_H$, we have $\delta\leq \widetilde{d}(t)\leq \delta_{M}\ll\delta_{X}$, thus $u(t)\in B_{\delta_{E}}(\mathcal{W})$.
	
	Firstly, using the scale invariance of $K$, Sobolev's inequality, Proposition \ref{prop:orth decom} and \eqref{est:EK expa}, we have: for $t\in I_H$ and $\widetilde{R}(t):=Re^{\sigma(t)}$,
	\begin{equation*}
	\begin{aligned}
	K\left(\phi_{R}u\right)&=K\left(\phi_{\widetilde{R}}(W+v)\right)=K\left(W+ \left(v-\phi_{\widetilde{R}}^{C}(W+v) \right)\right)\\&=-2\langle \nabla W,\nabla \left( v-\phi_{\widetilde{R}}^{C}(W+v) \right) \rangle+O\left(\Vert \phi_{\widetilde{R}}v-\phi_{\widetilde{R}}^{C}W\Vert_{\dot H^1}^2\right)\\&=-2\mu\lambda_{1} \langle W,g_2\rangle+2\langle \Delta W,\gamma\rangle+2\langle \nabla W,\nabla \left( \phi_{\widetilde{R}}^{C}(W+v) \right) \rangle+O\left(\Vert \phi_{\widetilde{R}}v-\phi_{\widetilde{R}}^{C}W\Vert_{\dot H^1}^2\right)\\&=-2\mu\lambda_{1} \langle W,g_2\rangle+O\left( \Vert \gamma \Vert_{\dot H^1}+\Vert W \Vert_{\dot H^1(|x|>\widetilde{R})}
	+\Vert v \Vert_{\dot H^1}^2+\Vert \phi_{\widetilde{R}}^{C}W \Vert_{\dot H^1}^2 \right).
	\end{aligned}
	\end{equation*}
	Combined with $\Vert W \Vert_{\dot H^1(|x|>\widetilde{R})}^2\lesssim \langle \widetilde{R} \rangle^{-1} $ and $\Vert \phi_{\widetilde{R}}^{C}W \Vert_{\dot H^1}^2 \lesssim \langle \widetilde{R} \rangle^{-1}$, we have
	\begin{equation}\label{est:IH Kcutf}
	t\in I_H\;\;\Longrightarrow\;\; K\left(\phi_{R}u\right)=-2\mu\lambda_{1} \langle W,g_2\rangle+O\left( \widetilde{d}(s)+\langle \widetilde{R} \rangle^{-\frac{1}{2}}+\lambda_{1}^2 \right).
	\end{equation}
	
	Secondly, using the decomposition \eqref{decom:u} and Hardy's inequality, we have: for $t\in I_H$,
	\begin{equation}\label{est:IH remn}
	\begin{aligned}
	&\iint_{\R^d\times \R^d \setminus \varOmega_1} \frac{\big|u(t,x)\big|^2\big|u(t,y)\big|^2}{|x-y|^4} dxdy+\Vert u \Vert_{\dot H^1(|x|\geq R)}^2+\Vert \frac{u}{|x|} \Vert_{L^2(|x|\geq R)}^2\\\lesssim&\Vert W+v\Vert_{\dot H^1(|y|\geq \widetilde{R})}^2+\Vert W+v\Vert_{\dot H^1(|y|\geq \widetilde{R})}^4\lesssim \langle \widetilde{R} \rangle^{-1}+\lambda_{1}^2.
	\end{aligned}
	\end{equation}
	
	Finally, putting \eqref{est:IH Kcutf} and \eqref{est:IH remn} into \eqref{est:V 2deri}, we have
	\begin{equation*}
	t\in I_H\;\;\Longrightarrow\;\;\ddot{V}_R(t)=-16\mu\lambda_{1} \langle W,g_2\rangle+O\left( \widetilde{d}(s)+\langle \widetilde{R} \rangle^{-\frac{1}{2}}+\lambda_{1}^2 \right).
	\end{equation*}
	Then using the hyperbolic dynamics of $\lambda_{1}$ and $\dot{\tau}=e^{2\sigma}\sim e^{2\sigma(s)}$, we obtain, with some absolute constant $C_H\geq 1$,
	\begin{equation*}
	\begin{aligned}
	[-\dot{V}_R(t)]_{\partial I[s]}=&\int_{I[s]} 16\mu\lambda_{1} \langle W,g_2\rangle-O\left( \widetilde{d}(s)+\langle \widetilde{R} \rangle^{-\frac{1}{2}}+\lambda_{1}^2 \right)dt\\\sim&e^{-2\sigma(s)}\int_{I[s]} \lambda_{1}-O\left( \widetilde{d}(s)+\langle \widetilde{R} \rangle^{-\frac{1}{2}}+\lambda_{1}^2 \right)d\tau\\\gtrsim& e^{-2\sigma(s)}\left( \delta_{M}-C_{H}\langle \widetilde{R}(s) \rangle^{-\frac{1}{2}}\ln \left(\delta_{M}/\delta \right) \right)\\\geq& e^{-2\sigma(s)}\left( \delta_{M}-C_{H}\left(R_H/R\right)^{\frac{1}{2}}\ln \left( \delta_{M}/\delta\right) \right).
	\end{aligned}
	\end{equation*}
	Thus if
	\begin{equation*}
	R>R_X :=\frac{4C_{H}^2}{\delta_{M}^2}\left( \ln \left( \frac{\delta_{M}}{\delta}\right) \right)^2 R_H, 
	\end{equation*}
	then for any $s\in \mathscr{L}, t\in I[s]$,
	\begin{equation*}
	\int_{I[s]}-\ddot{V}_R(t)dt=[-\dot{V}_R(t)]_{\partial I[s]}\gtrsim e^{-2\sigma(s)}\delta_{M}/2.
	\end{equation*}
	Then by the defination of \eqref{quan:size IH}, we have 
	\begin{equation}\label{est:IH int blow}
	\int_{I_H} -\ddot{V}_R(t) dt \gtrsim R_H^2 \delta_{M}.
	\end{equation}
	
	Next, we consider $\ddot{V}_R$ on $I_V$. Since $\widetilde{d}(t)\geq\delta_{V}$ on $I_V$ and $u
	(t)\in\mathcal{H}^{\epsilon_S}\cap \check{\mathcal{H}}$, Proposition \ref{prop:sign funct} implies $-1=\Theta(u(t))=signK(u(t))$. Thus $K(u(t))<0$.
	
	In order to estimate $\ddot{V}_R$ with $K(u)$ on $I_V$, the optimal cut-off radius is given by
	\begin{equation*}
	R_{V}^{-}:=\sup\{m>0\;\Big|\;\int_{I_V}\left( \int_{|x|>m}|\nabla u|^{2}dx-\iint_{\R^d\times \R^d \setminus \varOmega'_1} \frac{\big|u(t,x)\big|^2\big|u(t,y)\big|^2}{|x-y|^4} dxdy\right)dt\leq0  \},
	\end{equation*}
	where $\varOmega'_1 :=\{(x,y)\in\R^d\times\R^d\;|\;|x|\leq m,|y|\leq m\}$. $K(u)<0$ on $I_V$ implies $R_{V}^{-}>0$, while the Sobolev inequality implies that $R_{V}^{-}<\infty$. 
	
	In order to bound $R_{V}^{-}$, we use the equation $\partial_{t}|u|^2=2\Im\left( \nabla\left(\nabla u\cdot \bar{u}\right) \right)$.
	
	First, we give the estimate on $\sup_{t\in I_V} \Vert u/r \Vert_{L^2(|x|>R)}$. On the one hand, for any interval $J\subset I_V, R\geq \mathcal{I}_V^{1/2}$, we have
	\begin{equation}\label{est:decay var}
	\begin{aligned}
	[\langle |u/r|^2,\phi_{R/2}^{C} \rangle]_{\partial J}=&\int\int_{J}\frac{\phi_{R/2}^{C}}{r^2} 2\Im\left( \nabla\left(\nabla u\cdot \bar{u}\right) \right)dtdx= -2\Im\int_{|x|>\frac{R}{2}}\int_{J}\left(\nabla u\bar{u}\right)\nabla\left( \frac{\phi_{R/2}^{C}}{r^2} \right)dtdx \\ \lesssim& \int_{J} \int_{|x|>\frac{R}{2}} \frac{|uu_{r}|}{r^3} dxdt \leq \int_{I_V}\int_{|x|>\frac{R}{2}}\frac{R^2}{\mathcal{I}_V r^2} \frac{1}{2}\left( |u_r|^2+|u/r|^2 \right)dxdt\\ \sim& \frac{1}{\mathcal{I}_V}\int_{I_V}\int_{|x|>\frac{R}{2}} \frac{R^2}{r^2}\left( |u_r|^2+|u/r|^2 \right) dxdt .
	\end{aligned}
	\end{equation}
	On the other hand, we have from \eqref{est:IH remn},
	\begin{equation*}
	t\in \partial I_V \Longrightarrow \int_{|x|>R/2} |\frac{u}{r}|^2 dx\lesssim \langle \frac{Re^{\sigma(t)}}{2}\rangle^{-1}+\lambda_{1}^2 \lesssim \frac{R_H}{R}+\delta_{M}^2.
	\end{equation*}
	Hence, by \eqref{est:IV int}, there exists $R\sim \max(R_H,\mathcal{I}_V^{1/2})$ such that
	\begin{equation}\label{ext u/r small}
	\sup_{t\in I_V} \Vert u/r \Vert_{L^2(|x|>R)} \ll 1.
	\end{equation}
	
	Next, it is obvious that
	\begin{equation*}
	\Vert r^{\frac{d-2}{2}}u \Vert_{L^\infty(|x|>R)}^2 \lesssim \Vert \nabla u \Vert_{L^2(|x|>R)} \Vert u/r \Vert_{L^2(|x|>R)}.
	\end{equation*}
	For any $R\geq0$ and $\varphi \in \dot H^1$, we have
	\begin{equation*}
	\int_{|x|>R} |u|^{\frac{2d}{d-2}} dx \lesssim  \Vert u/r \Vert_{L^2(|x|>R)}^2 \Vert r^{\frac{d-2}{2}}u \Vert_{L^\infty(|x|>R)}^{\frac{4}{d-2}}\lesssim \Vert u/r \Vert_{L^2(|x|>R)}^{\frac{2d-2}{d-2}} \Vert \nabla u \Vert_{L^2(|x|>R)}^{\frac{2}{d-2}}.
	\end{equation*}
	Using \eqref{ext u/r small}, we have: for $t\in I_V$,
	\begin{equation*}
	\begin{aligned}
	\iint_{|x|>R} \frac{|u(t,x)|^2|u(t,y)|^2}{|x-y|^4} dxdy\lesssim& \Vert \chi_{\cdot>R} u\Vert_{L^{2d/(d-2)}}^2 \Vert\nabla u \Vert_{L^2(|x|>R)}^2\\  \lesssim& \Vert u/r \Vert_{L^2(|x|>R)}^{\frac{2d-2}{d}} \Vert \nabla u \Vert_{L^2(|x|>R)}^{\frac{2}{d}+2}\ll \Vert \nabla u \Vert_{L^2(|x|>R)}^{\frac{2}{d}+2},
	\end{aligned}
	\end{equation*}
	which implies
	\begin{equation}\label{est:conv remn}
	\int_{|x|>R} |\nabla u|^2 dx-\iint_{\R^d\times \R^d \setminus \varOmega_1} \frac{|u(t,x)|^2|u(t,y)|^2}{|x-y|^4} dxdy \gg 0.
	\end{equation}
	By the definition of $R_{V}^{-}$ ,
	\begin{equation}\label{est:RV- bd}
	R_{V}^{-}\leq R\sim \max(R_H,\mathcal{I}_V^{1/2}).
	\end{equation}
	
	Since $u(t)\in \mathcal{H}^{\epsilon}, \epsilon \leq \epsilon_{V}(\delta_{V})$ and $\widetilde{d}(t) \geq \delta_{V}$ on $I_V$, Proposition \ref{prop:vart est} implies 
	\begin{equation}\label{est:IV lbd Kblow}
	t\in I_V \Longrightarrow -K(u)\geq \kappa(\delta_{V}) \geq c_0(\delta_{V})\Vert u \Vert_{\dot H^1}^2 ,
	\end{equation}
	for some constant $c_0(\delta_{V})>0$.
	For any $R\geq R_{V}^{-}, t\in I_V $, we have from \eqref{est:V 2deri} and \eqref{est:IV lbd Kblow},
	\begin{equation*}
	\begin{aligned}
	-\ddot{V}_R(t)=&-8\int_{|x|\leq R} |\nabla u|^{2}dx+8\iint_{\varOmega_1} \frac{\big|u(t,x)\big|^2\big|u(t,y)\big|^2}{|x-y|^4} dxdy+O(E_R)\\\geq&-8K(u)+O(E_R) \geq8 c_0(\delta_{V})\Vert u \Vert_{\dot H^1}^2+O(E_R).
	\end{aligned}
	\end{equation*}
	
	Since
	\begin{equation*}
	\begin{aligned}
	\int_{I_V} E_R(t)dt=&\int_{I_V}\left( \iint_{\R^d\times \R^d \setminus \varOmega_1} \frac{\big|u(x)\big|^2\big|u(y)\big|^2}{|x-y|^4}dxdy \right)dt \\+&\int_{I_V}\left( \Vert u \Vert_{\dot H^1(R\leq|x|\leq 2R)}^2+\Vert \frac{u}{|x|} \Vert_{L^2(R\leq|x|\leq 2R)}^2  \right)dt=:A+B,
	\end{aligned}
	\end{equation*}
	then by \eqref{est:IV int}, \eqref{est:conv remn} and $\mathcal{I}_V\sim \int_{I_V} \Vert u \Vert_{\dot H^1}^2 dt$, we have
	\begin{equation*}
	\begin{aligned}
	A\ll&\int_{I_V} \Vert u \Vert_{\dot H^1(|x|>R)}^2dt\lesssim \mathcal{I}_V,\\ B\sim&\int_{I_V}\int_{R\leq|x|\leq 2R} \frac{R}{r}\left( |\frac{u}{|x|}|^2+|\nabla u|^2 \right)dxdt\lesssim \frac{\mathcal{I}_V}{\ln (R_1/R_0)}.
	\end{aligned}
	\end{equation*}
	Hence for any $R_{0}>R_{V}^{-}$, there exists $R\in \left(R_0,C_1(\delta_{V})R_0 \right)$ for some constant $C_1(\delta_{V})>1$ such that
	\begin{equation*}
	\int_{I_V} -\ddot{V}_R(t)dt\geq 8 c_0(\delta_{V})\int_{I_V}\Vert u \Vert_{\dot H^1}^2 dt
	+\int_{I_V} O(E_R)dt \gtrsim c_0(\delta_{V})\mathcal{I}_V.
	\end{equation*}
	The minimal $R$ satisfying this and \eqref{est:IH int blow} satisfies
	\begin{equation*}
	R\leq C_1(\delta_{V})\max \left(R_{V}^{-},R_X\right),
	\end{equation*}
	for which \eqref{est:V 1deri bd} with $[t_a,t_b]=I_H\cup I_V$ implies 
	\begin{equation}\label{vir all-}
	R_{H}^2 \delta_{M}+c_0(\delta_{V})\mathcal{I}_V\lesssim[-\dot{V}_R]_{t_a}^{t_b}\lesssim \frac{R \delta}{e^{\sigma(t)}} \leq \frac{\delta}{e^{\sigma(t)}} C_1(\delta_{V}) \left(R_{V}^{-}+R_{X}  \right) .
	\end{equation}
	
	Now we impose an upper bound on $\delta$ by the condition
	\begin{equation}\label{con:blow 1}
	\frac{\delta}{e^{\sigma(t)}} C_1(\delta_{V})R_{X} \ll R_{H}^2 \delta_{M}.
	\end{equation}
	Then the last term in \eqref{vir all-} is absorbed by the first one, hence
	\begin{equation}\label{vir red-}
	R_{H}^2 \delta_{M}+c_0(\delta_{V})\mathcal{I}_V\lesssim \frac{\delta}{e^{\sigma(t)}} C_1(\delta_{V})R_{V}^{-}.
	\end{equation}
	
	Imposing another upper bound on $\delta$ by the condition
	\begin{equation}\label{con:blow 2}
	\frac{\delta}{e^{\sigma(t)}} C_{1}(\delta_{V}) \ll c_0(\delta_{V})\mathcal{I}_V^{\frac12}.
	\end{equation}
	Combined with \eqref{est:RV- bd}, we see that \eqref{con:blow 2} contradicts \eqref{vir red-}.
	
	In conclusion, the smallness conditions on $\delta,\epsilon$ in the case $\Theta=-1$ are \eqref{dep small}, \eqref{con:blow 1} and  \eqref{con:blow 2}, which determine $\delta_{B}$ and $\epsilon_{B}$.

	\subsection{Scattering region.} Now we consider the case $\Theta(u)=+1$, where the solution will scatter. The argument is similar to that in the previous case, but more involved. In particular, we need several smallness conditions on $\delta_{M}$.
	
	It can be observed that there exists an absolute constant $C_E \sim 1$ such that
	\begin{equation}\label{est:H1 bd}
	1/C_E \leq \Vert u(t) \Vert_{\dot H^1}^2 \leq C_E
	\end{equation}
	for all $t\in I(u)$.
	
	Indeed, it is obvious that $E(u)\sim E(W)\sim 1$. On the one hand, by \eqref{quan:KIG}, we have $\Vert u\Vert_{\dot H^1}^2\leq4E(u)\lesssim1$. On the other hand, $E(\varphi)\sim \Vert \varphi \Vert_{\dot H^1}^2$ for small $\varphi \in \dot H^1$, then $\Vert u \Vert_{\dot H^1}^2\geq1/C_E $.
	
	First, we consider $\ddot{V}_R$ on $I_H$. The proof is exactly the same as before, that is, 
	\begin{equation*}
	t\in I_H\;\Longrightarrow \;\ddot{V}_R(t)=-16\mu\lambda_{1} \langle W,g_2\rangle+O\left( \widetilde{d}(s)+\langle \widetilde{R} \rangle^{-\frac{1}{2}}+\lambda_{1}^2 \right).
	\end{equation*}
	Then using the hyperbolic dynamics of $\lambda_{1}$ and $\dot{\tau}=e^{2\sigma}\sim e^{2\sigma(s)}$, we obtain
	
	\begin{equation*}
	\begin{aligned}
	[\dot{V}_R(t)]_{\partial I[s]}=&\int_{I[s]} -16\mu\lambda_{1} \langle W,g_2\rangle+O\left( \widetilde{d}(s)+\langle \widetilde{R} \rangle^{-\frac{1}{2}}+\lambda_{1}^2 \right)dt\\\sim&e^{-2\sigma(s)}\int_{I[s]} -\lambda_{1}+O\left( \widetilde{d}(s)+\langle \widetilde{R} \rangle^{-\frac{1}{2}}+\lambda_{1}^2 \right)d\tau\\\gtrsim& e^{-2\sigma(s)}\left( \delta_{M}-C_{H}\langle \widetilde{R}(s) \rangle^{-\frac{1}{2}}\ln \left(\delta_{M}/\delta \right) \right)\\\geq& e^{-2\sigma(s)}\left( \delta_{M}-C_{H}\left(R_H/R\right)^{\frac{1}{2}}\ln \left( \delta_{M}/\delta\right) \right).
	\end{aligned}
	\end{equation*}
	Thus if $R>R_X$, then for any $s\in \mathscr{L}, t\in I[s]$,
	\begin{equation*}
	\int_{I[s]} \ddot{V}_R(t)dt=[ \dot{V}_R(t)]_{\partial I[s]}\gtrsim e^{-2\sigma(s)}\delta_{M}/2.
	\end{equation*}
	Then by the defination of \eqref{quan:size IH}, we have 
	\begin{equation}\label{est:IH int scat}
	R>R_X  \Longrightarrow \int_{I_H} \ddot{V}_R(t) dt \gtrsim R_H^2 \delta_{M}.
	\end{equation}

	Next, we consider $\ddot{V}_R$ on $I_V$. Since $\widetilde{d}(t)\geq\delta_{V}$ on $I_V$ and $u
	(t)\in\mathcal{H}^{\epsilon_S}\cap \check{\mathcal{H}}$, Proposition \ref{prop:sign funct} implies $+1=\Theta(u(t))=signK(u(t))$. Thus $K(u(t))\geq0$.
	
	In order to estimate $\ddot{V}_R$ with $K\left(\phi_{R}u\right)$ on $I_V$. Since $u(t)\in \mathcal{H}^{\epsilon}, \epsilon \leq \epsilon_{V}(\delta_{V})$ and $\widetilde{d}(t) \geq \delta_{V}$ on $I_V$, Proposition \ref{prop:vart est} implies
	\begin{equation*}
	K(u(t))\geq \min\{\kappa(\widetilde{d}(t)), c_{V}\Vert u\Vert_{\dot H^1}^2 \}\geq \min\{\kappa(\delta_{V}), c_{V}/C_E \}=:\widetilde{\kappa}(\delta_{V}).
	\end{equation*}
	By \eqref{est:H1 bd}, if $t\in I_V, 4\epsilon^2 \leq \widetilde{\kappa}(\delta_{V})$, then 
	\begin{equation*}
	\begin{aligned}
	I\left(\phi_{R}u\right) \leq& I(u)<E(W)+\epsilon^2-K(u)/2\\  \leq& E(W)+\epsilon^2-\widetilde{\kappa}(\delta_{V})/2 \leq I(W)-\widetilde{\kappa}(\delta_{V})/4.
	\end{aligned}
	\end{equation*} 
	By Proposition \ref{prop:sharp const}, we obtain
	\begin{equation}\label{est:IV Kcutf}
	\begin{aligned}
	K(\phi_{R}u)\geq& \Vert \phi_{R}u \Vert_{\dot H^1}^2 \left( 1-\frac{ \Vert \left(|x|^{-4}*|\phi_{R}u|^2 \right)|\phi_{R}u|^2 \Vert_{L^1}^{1/2} \Vert \left(|x|^{-4}*|W|^2 \right)|W|^2 \Vert_{L^1}^{1/2}  } {\Vert W \Vert_{\dot H^1}^2}  \right)\\ \geq& \Vert \phi_{R}u \Vert_{\dot H^1}^2\left(1-\frac{\left( \Vert \left(|x|^{-4}*|W|^2 \right)|W|^2 \Vert_{L^1}-\widetilde{\kappa}(\delta_{V}) \right)^{1/2}}{\Vert W \Vert_{\dot H^1}}  \right)\\\gtrsim& \widetilde{\kappa}(\delta_{V}) \Vert \phi_{R}u \Vert_{\dot H^1}^2
	\geq \widetilde{\kappa}(\delta_{V}) \Vert u \Vert_{\dot H^1(|x|<R
		)}^2.
	\end{aligned}
	\end{equation}
	
	In order to decide the cut-off for $I_V$, we introduce
	\begin{equation*}
	R_{V}^{+}(\delta_{M}):=\inf\{m>0\Big|\int_{I_V}\int_{|x|<m} |\nabla u|^2 dxdt\geq \delta_{M}^3|I_V| \}\in (0,\infty).
	\end{equation*}
	As above,
	\begin{equation*}
	\int_{I_V} E_R(t)dt=A+B,\;\;A\ll \mathcal{I}_V,\;\;B\lesssim\frac{\mathcal{I}_V}{\ln (R_1/R_0)}.
	\end{equation*}
	Using \eqref{est:V 2deri}, \eqref{est:IV int} and \eqref{est:IV Kcutf}, for any $R_0 \geq R_{V}^{+}(\delta_{M})$, there exists $R\in \left(R_0,C_2(\delta_{V})R_0 \right)$ for some constant $C_2(\delta_{V})>1$ such that
	\begin{equation*}
	\begin{aligned}
	\int_{I_V} \ddot{V}_R(t)dt= \int_{I_V}8K(\phi_{R}u)
	+O(E_R)dt \gtrsim& \int_{I_V}\widetilde{\kappa}(\delta_{V}) \Vert u \Vert_{\dot H^1(|x|<R)}^2+E_R\; dt\\\gtrsim& \widetilde{\kappa}(\delta_{V})\int_{I_V} \Vert u \Vert_{\dot H^1(|x|<R)}^2 dt\geq \widetilde{\kappa}(\delta_{V})\delta_{M}^3|I_V|.
	\end{aligned}
	\end{equation*}
	Note that $\delta_{V}$ depends on $\delta_{M}$ through the condition $\delta_{V}\ll \delta_{M}$ in \eqref{dep small}, which allows us to determine $C_2$ in terms of $\delta_{V}$ only. 
	
	The minimal $R>R_0$ satisfying both this and \eqref{est:IH int scat} must satisfy
	\begin{equation*}
	R\leq C_2(\delta_{V})\max\left(R_{V}^{+}(\delta),R_X \right),
	\end{equation*}
	for which \eqref{est:V 1deri bd} implies
	\begin{equation}\label{vir all +}
	R_{H}^{2}\delta_{M}+\widetilde{\kappa}(\delta_{V})\delta_{M}^{3}|I_V|\lesssim [\dot{V}_R(t)]_{t_a}^{t_b} \lesssim \frac{R\delta}{e^{\sigma(t)}}  \leq \frac{\delta}{e^{\sigma(t)}} C_2(\delta_{V}) \left(  R_{V}^{+}(\delta)+R_{X}  \right).
	\end{equation}
	
	Now we impose an upper bound on $\delta$ by the condition
	\begin{equation*}
	\frac{\delta}{e^{\sigma(t)}} C_2(\delta_{V})R_{X} \ll R_{H}^{2}\delta_{M},
	\end{equation*}
	which is equivalent to 
	\begin{equation}\label{con:scat 1}
	\frac{\delta}{e^{\sigma(t)}}C_2(\delta_{V}) \left(C_H \ln\left( \delta_{M}/\delta \right)\right)^2 \ll R_{H}\delta_{M}^3.
	\end{equation}
	Then the last term in \eqref{vir all +} is absorbed by the first one, combined with \eqref{con:scat 1}, we have
	\begin{equation*}
	R_{H}^{2}\delta_{M}+\widetilde{\kappa}(\delta_{V})\delta_{M}^{3}|I_V|\lesssim	\frac{\delta}{e^{\sigma(t)}}C_2(\delta_{V}) R_{V}^{+}(\delta) \ll R_{H}\delta_{M}^3\left( \ln\left( \delta_{M}/\delta \right) \right)^{-2} R_{V}^{+}(\delta
	_M).
	\end{equation*}
	Imposing another upper bound on $\delta$ by 
	\begin{equation}\label{con:scat 2}
	\widetilde{\kappa}(\delta_{V})^{1/2}\ln \left( \delta_{M}/\delta \right) \gg e^{1/\delta_{M}^{3}} ,
	\end{equation}
	yields
	\begin{equation*}
	R_H^2 +|I_V|\leq \frac{R_H^2}{\widetilde{\kappa}(\delta_{V})\delta_{M}^2} +|I_V|\ll \frac{R_HR_{V}^{+}(\delta_M)}{\left( \widetilde{\kappa}(\delta_{V})^{1/2}\ln\left( \delta_{M}/\delta \right) \right)^2}\ll R_H R_{V}^{+}(\delta_M) e^{-2/\delta_{M}^{3}},
	\end{equation*}
	thus
	\begin{equation}\label{RV super}
	R_H +|I_V|\ll  e^{-2/\delta_{M}^{3}} R_{V}^{+}(\delta_M).
	\end{equation}
	
	First, we compare $R_H$ with $|I_V|$. By $\mathcal{I}_V\sim \int_{I_V} \Vert u \Vert_{\dot H^1}^2 dt$, \eqref{est:H1 bd}, \eqref{RV super} and \eqref{est:IV int} with $R_0:=R_H +|I_V|, R_1:= R_{V}^{+}(\delta_M)/2$, then there exists $R\in (R_0,R_1)$ such that
	\begin{equation}\label{def R}
	\int_{I_V} \int_{|x|>R} \frac{R}{r}\left( |\nabla u|^2+|u/r|^2+\left( |x|^{-4}*|u|^2\right)|u|^2 \right)dxdt \leq \frac{\mathcal{I}_V}{\ln(R_1/R_0)}\lesssim \delta_{M}^{3} |I_V|.
	\end{equation}
	Using Hardy's inequality, we have
	\begin{equation*}
	\begin{aligned}
	\Vert u/r \Vert_{L^2(|x|<2R)} \lesssim& \Vert \phi_{R}u \Vert_{\dot H^1}+ \Vert u/r \Vert_{L^2(R<|x|<2R)} \\ \lesssim& \Vert u/r \Vert_{L^2(R<|x|<2R)} + \Vert \nabla u \Vert_{L^2(|x|<2R)}.
	\end{aligned}
	\end{equation*}
	By the defination of $R_{V}^{+}(\delta_{M})$ and $R_{V}^{+}(\delta_{M}
	)>2R$, we have $\int_{I_V} \Vert \nabla u \Vert_{L^2(|x|<2R)}^2 dt<\delta_{M}^3|I_V|$. Using \eqref{def R}, we have
	\begin{equation*}
	\int_{I_V} \Vert u/r \Vert_{L^2(R<|x|<2R)}^2\sim \int_{I_V}\int_{R<|x|<2R}\frac{R}{r}|\frac{u}{r}|^2dxdt\lesssim \delta_{M}^{3} |I_V|.
	\end{equation*}
	Combining the above estimates yieds
	\begin{equation*}
	\int_{I_V} \int_{|x|<2R} \left( |\nabla u|^2+|u/r|^2 \right)dxdt\lesssim \int_{I_V}\Vert\frac{u}{r}\Vert_{L^2(R<|x|<2R)}^2  dt+ \int_{I_V}\Vert\nabla u \Vert_{L^2(|x|<2R)}^2 dt\lesssim \delta_{M}^{3} |I_V|.
	\end{equation*}
	By $E(u)\gtrsim0$, we have 
	\begin{equation*}
	\int_{I_V}\int_{|x|<2R} \left( |x|^{-4}*|u|^2\right)|u|^2dxdt\lesssim \int_{I_V}\Vert \nabla u \Vert_{L^2(|x|<2R)}^2 dt \lesssim \delta_{M}^{3} |I_V|.
	\end{equation*}
	Thus we have
	\begin{equation}\label{est:IV remn}
	\int_{I_V}\int_{|x|<2R} \left( |\nabla u|^2 + |u/r|^2 + \left( |x|^{-4}*|u|^2\right)|u|^2 \right) dxdt \lesssim \delta_{M}^{3} |I_V|.
	\end{equation}
	If $R_{H}^{2}\gg \delta_{M}^{2}|I_V|$, then by \eqref{est:V 2deri}, \eqref{est:IV remn} and $R_H<R$, we have
	\begin{equation*}
	\int_{I_V} |\ddot{V}_{R_H}|dt\lesssim \int_{I_V}\int_{|x|\leq 2R}\left( |\nabla u|^2+|u/r|^2+\left(|x|^{-4}* |u|^2 \right)|u|^2  \right)dxdt\lesssim \delta_{M}^{3} |I_V| \ll R_{H}^{2}\delta_{M}.
	\end{equation*}
	Noting that $R_X \ll R_H$ by \eqref{con:scat 2}, using the above estimate, \eqref{est:V 1deri bd} and \eqref{est:IH int scat}, we have
	\begin{equation*}
	\frac{\delta R_{H}}{e^{\sigma(t)}}\gtrsim [\dot{V}_{R_H}]_{t_a}^{t_b} =\int_{I_H}\ddot{V}_{R_H} dt+\int_{I_V}\ddot{V}_{R_H} dt \gtrsim R_{H}^{2}\delta_{M},
	\end{equation*}
	which contradicts $\delta\ll \delta_{M}$. Therefore $R_{H}^{2} \lesssim  \delta_{M}^{2}|I_V|$.
	
	Next, we estimate $\Vert u/r \Vert_{L^2(I_V \times \R^d)}$. On the one hand, for any interval $J\subset I_V, R>R_0>|I_V|^{1/2}$, using \eqref{def R} and the same argument as \eqref{est:decay var} , we have
	\begin{equation*}
	[\langle |u/r|^2,\phi_{R}^{C} \rangle]_{\partial J}\lesssim \frac{1}{|I_V|}\int_{I_V}\int_{|x|>R} \frac{R^2}{r^2}\left( |u_r|^2+|u/r|^2 \right) dxdt \lesssim \delta_{M}^{3}.
	\end{equation*}
	On the other hand, we have from \eqref{est:IH remn} and  $R_{H}^{2} \lesssim  \delta_{M}^{2}|I_V|$,
	\begin{equation*}
	t\in \partial I_V \Longrightarrow \int_{|x|>R} |u/r|^2 dx \lesssim \frac{R_H}{R}+\delta_{M}^{2} < \frac{R_H}{|I_V|^{1/2}}+\delta_{M}^{2} \lesssim \delta_{M}.
	\end{equation*}
	Combining the above two estimates yields 
	\begin{equation*}
	\sup_{t\in I_V} \Vert u/r \Vert_{L^2(|x|>2R)}^{2} \lesssim \delta_{M}.
	\end{equation*}
	Using above and \eqref{est:IV remn}, we obtain
	\begin{equation*}
	\int_{I_V} \int_{\R^d} |u/r|^2 dxdt=\int_{I_V} \int_{|x|<2R} |u/r|^2 dxdt+\int_{I_V} \int_{|x|>2R} |u/r|^2 dxdt \lesssim \delta_{M}|I_V|.
	\end{equation*}
	Decomposing $I_V$ into its connected components, we obtain an interval $I\subset I_V$ such that $\partial I\subset \partial I_V$ and
	\begin{equation}\label{est:IV decay}
	\int_I \int_{\R^d} |u/r|^2 dxdt \lesssim \delta_{M}|I|.
	\end{equation}
	
	Using \eqref{est:IV decay}, we can achieve Bourgain's energy induction method \cite{Bourg}. It allows us to construct a solution whose energy is smaller than the original one by a nontrivial amount. In particular, it is smaller than that of the ground states (see \eqref{est:E widew}). Although \cite{Bourg,MXZ:crit Hart:def rad} treated the defocusing case, the perturbative argument works as well for the focusing equation \eqref{equ:Hart} under the uniform bound \eqref{est:H1 bd} in $\dot{H}^1$, while the non-perturbative argument with the Morawetz estimate can be replaced with \eqref{est:IV decay}, as is shown below.
	
	In order to apply the argument to the interval $I$, we have the following key observation.
	\begin{lemma} For the interval $I$ defined above, we have
		\begin{equation*}
		\Vert u \Vert_{S(I)} \gtrsim 1.
		\end{equation*}
	\end{lemma}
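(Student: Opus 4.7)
I would argue by contradiction: suppose $\Vert u\Vert_{S(I)}\leq\eta$ for an absolute constant $\eta>0$ to be fixed below. First, by Duhamel's formula, the Strichartz inequalities \eqref{est:St est}, and the standard trilinear estimate for the Hartree nonlinearity in dimension $d\geq 5$, namely $\Vert(|x|^{-4}*|u|^2)u\Vert_{N^1(I)}\lesssim \Vert u\Vert_{S(I)}^{2}\Vert u\Vert_{W^1(I)}$, combined with the uniform $\dot H^1$ bound \eqref{est:H1 bd}, one obtains $\Vert u\Vert_{W^1(I)}\lesssim 1$ after absorbing a factor $C\eta^2$ into the left-hand side. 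A similar multilinear estimate applied to the bilinear Hartree form gives
\begin{equation*}
\int_I\iint\frac{|u(t,x)|^2|u(t,y)|^2}{|x-y|^4}\,dx\,dy\,dt \lesssim \Vert u\Vert_{S(I)}^{\alpha}\Vert u\Vert_{W^1(I)}^{4-\alpha}\lesssim \eta^{\alpha}
\end{equation*}
for a suitable exponent $\alpha>0$.

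Next, Proposition \ref{prop:vart est} applied with the data $u(t)\in\mathcal{H}^{\epsilon_S}\cap\check{\mathcal{H}}$, $\widetilde{d}(t)\geq\delta_V$ and $\Theta(u)=+1$ on $I\subset I_V$ yields the pointwise lower bound $K(u(t))\geq\widetilde\kappa(\delta_V)>0$. Integrating the identity $K(u)=\Vert\nabla u\Vert_{L^2}^2-\Vert(|x|^{-4}*|u|^2)|u|^2\Vert_{L^1}$, together with the uniform kinetic upper bound \eqref{est:H1 bd} and the smallness of the time-integrated Hartree term from the previous paragraph, yields an estimate of the form $|I|\lesssim \eta^{\alpha}/\widetilde\kappa(\delta_V)$.

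To close the contradiction, I combine this with a lower bound $|I|\gtrsim 1$ coming from the geometric structure of the one-pass setup: since $I$ is a maximal connected component of $I_V$, its endpoints lie in $\{t_a,t_b\}\cup \partial I_H$; at the latter, $\widetilde{d}$ transitions between the threshold $\delta_V$ and the region $\delta_V\leq\widetilde{d}\leq\delta_M$, and Propositions \ref{prop:unif lwp}--\ref{prop:eject mod} together guarantee that each such transition takes a time bounded below by a positive constant depending only on $\delta_V$ and $\delta_M$. Thus $|I|\gtrsim 1$, and taking $\eta$ small enough contradicts the previous upper bound, establishing $\Vert u\Vert_{S(I)}\gtrsim 1$.

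The main obstacle is the multilinear Hartree estimate with a \emph{positive} power $\alpha>0$ of $\Vert u\Vert_{S(I)}$: achieving this requires a careful interpolation among the Strichartz norms $S$, $W$, $\bar S$ and exploiting HLS boundedness of the Riesz convolution with $|x|^{-4}$, in the spirit of the scattering analysis in \cite{MXZ:crit Hart:f rad}. A secondary subtlety is verifying that the transitions of $\widetilde d$ across the threshold $\delta_V$ indeed consume a uniform amount of physical time $t$ (not merely rescaled time $\tau$), which follows from the uniform local existence Proposition \ref{prop:unif lwp} together with the scaling parameter control in the ejection lemma.
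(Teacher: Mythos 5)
Your proposal takes a genuinely different route from the paper, but it contains a gap that I do not see how to close. The central step is the claim that the smallness $\Vert u\Vert_{S(I)}\leq\eta$, the pointwise bound $K(u(t))\geq\widetilde\kappa(\delta_V)$, the uniform $\dot H^1$ bound, and the smallness of the time-integrated Hartree potential together force $|I|\lesssim\eta^\alpha/\widetilde\kappa(\delta_V)$. They do not. Writing $K(u)=\Vert\nabla u\Vert_{L^2}^2-\Vert(|x|^{-4}*|u|^2)|u|^2\Vert_{L^1}$ and integrating over $I$ gives
\[
\int_I\Vert\nabla u\Vert_{L^2}^2\,dt=\int_I K(u)\,dt+\int_I\Vert(|x|^{-4}*|u|^2)|u|^2\Vert_{L^1}\,dt,
\]
and the only available bounds are $\int_I K\,dt\geq\widetilde\kappa|I|$, $\int_I K\,dt\leq\int_I\Vert\nabla u\Vert_{L^2}^2\,dt\lesssim|I|$, and (granting your multilinear estimate) $\int_I\Vert(|x|^{-4}*|u|^2)|u|^2\Vert_{L^1}\,dt\lesssim\eta^\alpha$. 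These are mutually consistent for arbitrarily large $|I|$; nothing in them caps $|I|$. Indeed, since the potential term is nonnegative, smallness of its time integral is entirely compatible with $|I|$ being huge and $K(u)\approx\Vert\nabla u\Vert_{L^2}^2\approx 1$ throughout. So the contradiction never materializes. A secondary issue is that the lower bound $|I|\gtrsim1$ is not scale-invariant: the lifespan estimate from Proposition~\ref{prop:unif lwp} is $T_0=3e^{-2\sigma(0)}$ and the physical time spent during ejection scales like $e^{-2\sigma}$, so without normalizing the scaling parameter at the relevant endpoint, $|I|$ can be arbitrarily small.

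For comparison, the paper's proof avoids any contradiction argument: it fixes $t_0=\inf I$ (where $\widetilde{d}(t_0)=\delta_M$), normalizes $\sigma(t_0)=0$ using the scale invariance of the $S$ norm, and uses the ejection dynamics to show the solution stays $O(\delta_M)$-close to $W$ on a unit physical time interval. On that interval the difference $v=u-W$ solves a perturbed equation and a short-time Strichartz estimate gives $\Vert v\Vert_{S(t_0,t_0+1)}\lesssim\delta_M$, while $\Vert W\Vert_{S(t_0,t_0+1)}\gtrsim 1$ since $W$ is time-independent; the triangle inequality then yields the conclusion. The essential mechanism is thus \emph{proximity to $W$} rather than any variational lower bound on $K$, and the argument is genuinely local in time, sidestepping the need to control the size of $|I|$ at all.
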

	\begin{proof}
		Let $t_0:=\inf I\in\partial I\subset\partial I_V$, then by the definition of $I_V$, we have $\widetilde{d}(t_0)=\delta_{M}$. Without loss of generality, we use Proposition \ref{prop:eject mod} forward in time, there exists $t_1\in I_V$ such that $\widetilde{d}(t_1)=\delta_{X}$ and $\partial_t\widetilde{d}(t)>0$ on $(t_0,t_1)\subset I$. By the scaling invariance, we can reduce the case to $\sigma(t_0)=0$. Using \eqref{est:dyn d}, \eqref{est:dyn epara}, $\dot{\tau}=e^{2\sigma}$ and  $\delta_{M}\ll \delta_{X}$, we have $t_1>t_0 +1$. It suffices to show $\Vert u \Vert_{S(t_0,t_0+1)}\gtrsim1$.
		
		Put $v:=u-W$, then 
		\begin{equation*}
		\begin{aligned}
		i\dot{v}-\Delta v=&(W+v)\left( |x|^{-4}*|W+v|^2 \right)-W\left( |x|^{-4}*|W|^2 \right)\\=&-Vv-iR(v),
		\end{aligned}
		\end{equation*}
		where
		\begin{equation*}
		\begin{aligned}
		Vh:=&-\left( |x|^{-4}*|W|^2 \right)h-2\left( |x|^{-4}*(Wh_1) \right)W,\\R(h):=&i\left( |\cdot|^{-4}*|h|^2 \right)(W+h)
		+2i\left( |\cdot|^{-4}*(Wh_1) \right)h.
		\end{aligned}
		\end{equation*}
		Using Lemma 2.10 in \cite{MWX:Hart} and the Strichartz estimate \eqref{est:St est}, we have, for any interval $J=[a,b]\subset(t_0,t_0 +1)$, 
		\begin{equation*}
		\begin{aligned}
		\Vert v \Vert_{\left( W^1 \cap L_{t}^{\infty}\dot H_x^1 \right)(J)} \lesssim&\Vert v(a)\Vert_{\dot H^1}+\Vert Vv+iR(v) \Vert_{N^1(J)}\\\lesssim&\Vert v(a)\Vert_{\dot H^1}+|J|^{1/3}\Vert v\Vert_{l(J)}+|J|^{1/6}\Vert v\Vert_{l(J)}^2+\Vert v\Vert_{l(J)}^3,
		\end{aligned}
		\end{equation*}
		where 
		\begin{align*}
		l(J):=&S(J)\cap L^{3}\left( I;\dot{W}^{1,\frac{6d}{3d-4}} (\mathbb{R}^d)\right),\\\Vert u\Vert_{l(J)}:=&\Vert u\Vert_{S(J)}+\Vert \nabla u\Vert_{\bar{S}(J)}.
		\end{align*}
		Hence if $|J|\ll 1$, then
		\begin{equation*}
		\Vert v \Vert_{\left( W^1 \cap L_{t}^{\infty}\dot H_x^1 \right)(J)} \lesssim \Vert v(a) \Vert_{\dot H^1}.
		\end{equation*}
		Repeating this estimate from $t=t_0$ on consecutive small intervals as well as applying Proposition \ref{prop:orth decom}, we have
		\begin{equation*}
		\Vert v \Vert_{S(t_0,t_0 +1)} \lesssim \Vert v \Vert_{W^1(t_0,t_0 +1)} \lesssim \Vert v(t_0) \Vert_{\dot H^1}\lesssim \delta_{M}.
		\end{equation*}
		So
		\begin{equation*}
		\Vert u \Vert_{S(t_0,t_0 +1)}\geq\Vert W \Vert_{S(t_0,t_0 +1)}-\Vert v \Vert_{S(t_0,t_0 +1)}= \Vert W \Vert_{L_{x}^{6d/(3d-8)} }-O(\delta_{M}) \gtrsim 1.
		\end{equation*}
		This completes the proof.
	\end{proof}	
	
	Hence as in \cite{Bourg,MXZ:crit Hart:def rad}, we can decompose the interval $I$ such that 
	\begin{equation*}
	I=[t_0,t_N],\;\; t_0 <t_1 <\cdots<t_N,\;\; I_j:=[t_j,t_{j+1}],\;\; \Vert u \Vert_{S(I_j)}\in \left[\eta,2\eta \right)
	\end{equation*}
	for a small fixed constant $\eta>0$. In the following, $c$ denotes a small positive constant, and $C(\eta)$ denotes a large positive constant which may depend on $\eta$, both allowed to change from line to line.
	
	By the perturbation argument in \cite[lemma 5.1]{MXZ:crit Hart:def rad}, where the sign of nonlinearity is irrelevant, we have for each $j$, 
	\begin{equation*}
	\Vert u \Vert_{\bar{S}^1(I_j)}\lesssim 1.
	\end{equation*} 
	There exist a subinterval $I'_{j}\subset I_j$ and $R_j \lesssim |I'_{j}|^{1/2}$ such that
	\begin{equation*}
	\inf_{t\in I'_{j}}min\left( \Vert \nabla u(t) \Vert_{L^2\left( |x|<C(\eta)R_j \right)} ,\; \Vert u(t) \Vert_{L^{2d/(d-2)}\left( |x|<C(\eta)R_j \right)} \right) \gtrsim \eta^{a},
	\end{equation*} 
	where $a=a(d)<1$ is a constant.
	Using the radial Sobolev inequality  $|r^{\frac{d-2}{2}}u|\lesssim \Vert u_r \Vert_{L^2}$, we have
	\begin{equation*}
	\int_{|x|<R} |u|^{\frac{2d}{d-2}} dx\leq \Vert u/r \Vert_{L^2(|x|<R)}^2 \Vert r^{\frac{d-2}{2}}u \Vert_{L^{\infty}}^{\frac{4}{d-2}}\lesssim  \Vert u/r \Vert_{L^2(|x|<R)}^2 \Vert u_r \Vert_{L^2}^{\frac{4}{d-2}}.
	\end{equation*}
	Combining the above estimates with \eqref{est:IV decay} yields 
	\begin{equation*}
	\begin{aligned}
	\eta^{a\frac{2d}{d-2}}|I'_{j}|\lesssim \int_{I'_{j}}\Vert u \Vert_{L^{\frac{2d}{d-2} }\left( |x|<C(\eta)R_j \right)}^{\frac{2d}{d-2}}dt \lesssim \int_{I_j} \Vert \frac{u}{r} \Vert_{L^2(|x|<C(\eta)R_j)}^2 \Vert u \Vert_{\dot H^1}^{\frac{4}{d-2}}dt \lesssim \Vert u \Vert_{L_t^{\infty}\dot H_x^1}^{\frac{4}{d-2}} \delta_{M}|I_j|,
	\end{aligned}
	\end{equation*}
	which implies
	\begin{equation*}
	\sum_{j=1}^{N}|I'_{j}|\leq C(\eta)\delta_{M} \sum_{j=1}^{N} |I_j|.
	\end{equation*}
	Hence there exists $j\in \{1,\cdots,N \}$ such that
	\begin{equation}\label{conc Ij}
	R_j^2\lesssim |I'_{j}|\leq C(\eta)\delta_{M}|I_j|.
	\end{equation}
	Fix $s\in I'_{j}$. By the time reversal symmetry, we may assume without loss of generality
	\begin{equation}\label{hyps:s}
	t_{j+1}-s >s-t_j.
	\end{equation}
	By \cite{Bourg,MXZ:crit Hart:def rad}, there exists $R\leq C(\eta)R_j$ such that 
	\begin{equation}\label{est:H1 w}
	\Vert \phi_R^C u(s) \Vert_{\dot{H}^1}^2 < \Vert  u(s) \Vert_{\dot{H}^1}^2-c\eta^{2a}.
	\end{equation}
	Let $v$ be the solution of $iv_t-\Delta v=0$ with initial data $v(s):=\phi_{R}u(s)$, $w:=u-v$. By Proposition \ref{prop:disp ST est}, Hölder's inequality and \eqref{est:H1 bd}, we have
	\begin{equation}\label{est:disp v1}
	\begin{aligned}
	\Vert v(t) \Vert_{L_x^{\frac{2d}{d-2}}} \lesssim& |t-s|^{-1} \Vert v(s)\Vert_{L_x^{\frac{2d}{d+2}}}\leq R^{2}\Vert\phi\Vert_{L_x^{\frac{d}{2}}}\Vert u(s)\Vert_{L_x^{\frac{2d}{d-2}}}|t-s|^{-1}  \lesssim R^{2}|t-s|^{-1},\\ \Vert \nabla v(t) \Vert_{L_x^{\frac{6d}{3d-2}}} \lesssim& |t-s|^{-\frac13} \Vert \nabla v(s) \Vert_{L_x^{\frac{6d}{3d+2}}}\\\leq& R^{\frac13}\left(\Vert\nabla\phi\Vert_{L_x^{\frac{3d}{4}}}\Vert u(s)\Vert_{L_x^{\frac{2d}{d-2}}}+\Vert\phi\Vert_{L_x^{3d}}\Vert\nabla u\Vert_{L_x^2}  \right)|t-s|^{-\frac13} \lesssim R^{\frac13} |t-s|^{-\frac13}.
	\end{aligned}
	\end{equation}
	Hence using \eqref{conc Ij}, \eqref{hyps:s} and $R\leq C(\eta)R_j$, we obtain 
	\begin{equation}\label{vdec}
	\begin{aligned}
	&\Vert v(t_{j+1}) \Vert_{L_x^{2d/(d-2)}} \lesssim R^{2}|t_{j+1}-s|^{-1}\leq C(\eta)\delta_{M},\\ &\Vert v \Vert_{W^1(t_{j+1},\infty)}=\left(\int_{t_{j+1}}^{\infty}\Vert \nabla v\Vert_{L_x^{6d/(3d-2)}}^6 dt \right)^{1/6} \lesssim R^{1/3}|t_{j+1}-s|^{-1/6} \lesssim C(\eta)\delta_{M}^{1/6}.
	\end{aligned}
	\end{equation}
	
	By the equation \eqref{equ:Hart} and Hölder's inequality, we have 
	\begin{equation}\label{est:conv endpo}
	\begin{aligned}
	[ {\Vert{\left(|x|^{-4}*|u|^2\right)|u|^2}\Vert}_{L^1} ]_{s}^{t_{j+1}}=&4\int_{s}^{t_{j+1}}\langle \left(|x|^{-4}*|u|^2\right)u,\dot{u} \rangle dt\\=&4\int_{s}^{t_{j+1}}\langle \nabla\left( \left(|x|^{-4}*|u|^2\right)u \right),i\nabla u \rangle dt\\\lesssim&\int_{s}^{t_{j+1}}\int\left( \left(|x|^{-4}*|u\nabla u|\right)|u\nabla u|+\left(|x|^{-4}*|u|^2\right)|\nabla u|^2 \right)dxdt\\\lesssim&\int_{s}^{t_{j+1}}\Vert u\Vert_{L_{x}^{\frac{6d}{3d-8}}}^2 \Vert\nabla u\Vert_{L_{x}^{\frac{6d}{3d-4}}}^2dt \lesssim \Vert u\Vert_{S(I_j)}^2 \Vert u\Vert_{\bar{S}^1(I_j)}^2\lesssim \eta^2.
	\end{aligned}
	\end{equation}
	By the Strichartz estimate \eqref{est:St est}, we have
	\begin{equation}\label{est:H1 w minus}
	\begin{aligned}
	\Vert w(t_{j+1})\Vert_{\dot{H}^1}-\Vert w(s)\Vert_{\dot{H}^1}\leq& \Vert\int_{s}^{t_{j+1}} e^{-i(t-\tau)\Delta}\left(\left(|x|^{-4}*|u|^2\right)u\right)(\tau)d\tau \Vert_{\dot H^1}\\\lesssim&\Vert\left(|x|^{-4}*|u|^2\right)u \Vert_{N^1(I_j)}\lesssim \Vert u\Vert_{\bar{S}^1(I_j)} \Vert u\Vert_{S(I_j)}^2 \lesssim \eta^2.
	\end{aligned}
	\end{equation} 
	Let $\widetilde{w}$ be the solution of \eqref{equ:Hart} with the initial data $\widetilde{w}(t_{j+1}):=w(t_{j+1})$. We imposed upper bounds on $\delta_{M}$ and $\epsilon$ respectively:
	\begin{equation}\label{con:scat 3} 
	C(\eta)\delta_{M}\ll \eta^2,\;\;\epsilon\ll\eta^a.
	\end{equation} 
	Then by the above estimates together with  \eqref{est:H1 w}, \eqref{est:conv endpo} and \eqref{est:H1 w minus}, we get
	\begin{equation}\label{est:E widew}
	\begin{aligned}
	E(\widetilde{w})\leq& \frac{1}{2}\Vert \nabla w(t_{j+1}) \Vert_{L^2}^2-\frac{1}{4} \Vert{\left(|x|^{-4}*|u(t_{j+1})|^2\right)|u(t_{j+1})|^2}\Vert_{L^1}+C(\eta)\delta_{M}\\=&\frac{1}{2}\Vert \nabla w(t_{j+1}) \Vert_{L^2}^2-\frac{1}{2}\Vert \nabla w(s) \Vert_{L^2}^2\\+&\frac{1}{2}\Vert \nabla w(s) \Vert_{L^2}^2-\frac{1}{4} \Vert{\left(|x|^{-4}*|u(s)|^2\right)|u(s)|^2}\Vert_{L^1}+O(\eta^2)\\=& \frac{1}{2}\Vert \nabla w(s) \Vert_{L^2}^2 - \frac{1}{4} \Vert{\left(|x|^{-4}*|u(s)|^2\right)|u(s)|^2}\Vert_{L^1}+O(\eta^2) \\\leq&\frac{1}{2}\Vert \nabla u(s) \Vert_{L^2}^2-c\eta^{2a}- \frac{1}{4} \Vert{\left(|x|^{-4}*|u(s)|^2\right)|u(s)|^2}\Vert_{L^1}+O(\eta^2) \\=& E(u)-c\eta^{2a} \leq E(W)+\epsilon^2-c\eta^{2a}<E(W)-\frac{c}{2}\eta^{2a}.
	\end{aligned}
	\end{equation} 
	
	Similarly, plugging \eqref{est:H1 w} into \eqref{est:H1 w minus} yields 
	\begin{equation}\label{est:H1 widew}
	\Vert \nabla\widetilde{w}(t_{j+1}) \Vert_{L^2}^2 \leq \Vert \nabla u(s) \Vert_{L^2}^2-c\eta^{2a},
	\end{equation} 
	while $E(u)<E(W)+\epsilon^2$ and \eqref{quan:KIG} together with $K(u(s))\geq 0$ implies 
	\begin{equation*}
	\Vert \nabla u(s) \Vert_{L^2}^2=4E(u(s))-K(u(s))\leq 4E(u)<\Vert \nabla W \Vert_{L^2}^2+4\epsilon^2.
	\end{equation*} 
	Hence $\Vert \nabla\widetilde{w}(t_{j+1}) \Vert_{L^2}^2 < \Vert \nabla W \Vert_{L^2}^2$, thus $G(\widetilde{w}(t_{j+1}))<E(W)$. By Proposition \ref{prop:GS char}, $K(\widetilde{w}(t_{j+1}))>0$.
	
	Hence by the result of C. Miao, the third author and L. Zhao \cite{MXZ:crit Hart:f rad} below the ground state energy, \eqref{est:H1 bd} and \eqref{est:H1 widew}, $\widetilde{w}$ scatters in both time directions with a uniform Strichartz bound:
	\begin{equation}\label{est:W1 widew}
	\Vert \widetilde{w} \Vert_{W^1(\R)}<C(\eta).
	\end{equation} 
	In order to control $u$ by this, we use the long-time perturbation  \cite[Proposition 2.3]{MXZ:crit Hart:f rad}:

	\begin{lemma}(\cite{MXZ:crit Hart:f rad})\label{lem:pert}. Let $u$ be a solution of \eqref{equ:Hart}. Let $I$ be an interval with some $t_0\in I\cap I(u)$. Let $e\in N^1(I)$ and let $\widetilde{u}\in C(I;\dot H^1)$ be a solution of 
		\begin{equation*}
		i\partial_t \widetilde{u} -\Delta \widetilde{u}= \left( |x|^{-4}*|\widetilde{u}|^2\right)\widetilde{u} +e.
		\end{equation*}	
		Assume that for some $B_1,B_2,B_3>0$
		\begin{equation*}
		\Vert \widetilde{u} \Vert_{L_{t}^{\infty} \dot H_x^1(I) }\leq B_1,\; \Vert \widetilde{u} \Vert_{S(I)}\leq B_2,\; \Vert \widetilde{u}(t_0)-u(t_0) \Vert_{\dot H_x^1}\leq B_3 .
		\end{equation*}
		Then there exists $\nu_P=\nu_P(B_1,B_2,B_3)>0$ such that if
		\begin{equation*}
		\Vert e^{-i(t-t_0)\Delta}\left( \widetilde{u}(t_0)-u(t_0) \right) \Vert_{\mathcal{Z}^1(I)} + \Vert e \Vert_{N^1(I)}=:\nu \leq \nu_P ,
		\end{equation*}
		then $I\subset I(u)$ and 
		\begin{equation*}
		\Vert u \Vert_{\mathcal{Z}^1(I)}\lesssim 1,\;\; \Vert \widetilde{u}-u \Vert_{L_{t}^{\infty} \dot H_x^1(I) } \lesssim \nu+B_3,
		\end{equation*}
		where the implicit constants depends on $B_1,B_2,B_3$. For some fixed number $0<\epsilon_0 \ll 1$, define $\mathcal{Z}^1(I)$ by 
		\begin{equation*}
		\Vert u \Vert_{\mathcal{Z}^1(I)}:=\sup_{(q,r)\in \wedge}\Vert u \Vert_{L_{t}^{q}L_{x}^{r}},
		\end{equation*}
		where $\wedge=\{(q,r);\frac{2}{q}=d(\frac{1}{2}-\frac{1}{r})-1,\frac{2d}{d-2}\leq r\leq \frac{2d}{d-4}-\epsilon_0 \}$.
	\end{lemma}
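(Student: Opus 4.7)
The plan is to reduce to a short-time perturbation estimate and then iterate. First, I subdivide the interval $I$ into finitely many consecutive sub-intervals $I_j=[t_j,t_{j+1}]$ on which $\widetilde{u}$ has small Strichartz norm, namely
\begin{equation*}
\Vert\widetilde{u}\Vert_{S(I_j)}\leq\eta,
\end{equation*}
for some small absolute constant $\eta>0$ to be chosen. Since $\Vert\widetilde{u}\Vert_{S(I)}\leq B_2$, the number $N$ of such intervals is bounded by a constant $N(B_2,\eta)$. Setting $w:=u-\widetilde{u}$, the difference satisfies
\begin{equation*}
i\partial_t w-\Delta w=F(\widetilde{u}+w)-F(\widetilde{u})-e,\qquad F(v):=(|x|^{-4}\ast|v|^2)v.
\end{equation*}

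On each $I_j$, starting from a good bound on the linear evolution of $w(t_j)$ in $\mathcal{Z}^1(I_j)$ and on $\Vert e\Vert_{N^1(I_j)}$, I apply the Strichartz estimate \eqref{est:St est} together with the bilinear/trilinear Hartree estimates developed in \cite{MXZ:Cauchy,MXZ:crit Hart:f rad}, which give schematically
\begin{equation*}
\Vert F(\widetilde{u}+w)-F(\widetilde{u})\Vert_{N^1(I_j)}\lesssim\left(\Vert\widetilde{u}\Vert_{S(I_j)}^2+\Vert w\Vert_{S(I_j)}^2\right)\Vert w\Vert_{\mathcal{Z}^1(I_j)}+\Vert\widetilde{u}\Vert_{\mathcal{Z}^1(I_j)}\Vert\widetilde{u}\Vert_{S(I_j)}\Vert w\Vert_{S(I_j)}.
\end{equation*}
A continuity/bootstrap argument on $I_j$, using that $\eta$ is small compared to an absolute constant, yields
\begin{equation*}
\Vert w\Vert_{\mathcal{Z}^1(I_j)}\leq C\left(\Vert e^{-i(t-t_j)\Delta}w(t_j)\Vert_{\mathcal{Z}^1(I_j)}+\Vert e\Vert_{N^1(I_j)}\right),
\end{equation*}
where $C$ depends only on $B_1$ (through $\Vert\widetilde{u}\Vert_{L_t^\infty\dot H^1_x}$) and on $\eta$. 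The Strichartz estimate applied to $w$ on $[t_j,t_{j+1}]$ then upgrades this into a control of $\Vert w(t_{j+1})\Vert_{\dot H^1}$ and of the linear evolution $\Vert e^{-i(t-t_{j+1})\Delta}w(t_{j+1})\Vert_{\mathcal{Z}^1}$ by the same right-hand side plus $\Vert w(t_j)\Vert_{\dot H^1}$.

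I then iterate this step across the $N$ intervals. At each step the linear-flow error is multiplied by a bounded constant $C=C(B_1,\eta)$, so after $N$ iterations I obtain
\begin{equation*}
\Vert e^{-i(t-t_0)\Delta}w(t_0)\Vert_{\mathcal{Z}^1(I)}+\Vert e\Vert_{N^1(I)}\leq\nu\quad\Longrightarrow\quad\Vert w\Vert_{\mathcal{Z}^1(I)}\lesssim C^{N}\nu,
\end{equation*}
together with the pointwise-in-$t$ bound $\Vert w\Vert_{L_t^\infty\dot H^1_x(I)}\lesssim\nu+B_3$. Choosing $\nu_P=\nu_P(B_1,B_2,B_3)$ so that $C^N\nu_P$ stays below the threshold for the bootstrap on every $I_j$ closes the argument, and the openness of the set of times on which the estimates hold, combined with the blow-up criterion for \eqref{equ:Hart}, extends $u$ to all of $I$. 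The main obstacle is the bookkeeping of the Hartree trilinear estimate in the Strichartz space $\mathcal{Z}^1$ (which involves a family of admissible pairs rather than a single one) and the verification that $\eta$ can be chosen uniformly in $B_1$ so that the bootstrap closes at each step; this is where one uses the refined nonlinear estimates of \cite{MXZ:Cauchy,MXZ:crit Hart:f rad}, especially the $\dot{W}^{1,r}$-type estimates for the convolution term $|x|^{-4}\ast|u|^2$.
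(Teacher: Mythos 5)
The paper does not prove Lemma \ref{lem:pert}; it cites it directly from \cite{MXZ:crit Hart:f rad} (Proposition 2.3 there). Your proposal reproduces the standard long-time perturbation argument that is used in that reference: subdivide $I$ into $N=N(B_2,\eta)$ intervals on which $\widetilde u$ has small $S$-norm, prove a short-time perturbation estimate on each $I_j$ via Strichartz and the Hartree trilinear bound, then iterate and absorb the exponential loss $C^N$ into the choice of $\nu_P(B_1,B_2,B_3)$. This is correct and is essentially the same route as the cited proof, so the two approaches are not meaningfully different. One small remark: the family $\mathcal{Z}^1(I)$ is a supremum over $\dot H^1$-admissible pairs, so in the short-time step you must run the Strichartz estimate for each pair in $\wedge$ simultaneously; this is exactly why the range of $r$ is truncated at $\tfrac{2d}{d-4}-\epsilon_0$ (to stay away from the endpoint), and your bootstrap, as sketched, is compatible with this.
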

	
	Apply the above lemma to $u$ and $\widetilde{u}:=\widetilde{w}+v$ with $I=\left[t_{j+1},\infty \right)$ and initial data at $t=t_{j+1}$. From the bounds on $\widetilde{w}$ and $v$, we have
	\begin{equation*}
	\Vert \widetilde{u} \Vert_{L_{t}^{\infty} \dot H_x^1(I)}\lesssim 1, \;\; \Vert \widetilde{u} \Vert_{S(I)}\leq C
	(\eta),\;  \widetilde{u}(t_{j+1})-u(t_{j+1}) =0.
	\end{equation*} 
	Using \eqref{vdec}, \eqref{est:W1 widew} and $W^1\subset S$, there exists a large positive constant $C_*(\eta)$ such that 
	\begin{equation*}
	\begin{aligned}
	\Vert e \Vert_{N^1(I)}=\Vert \left(|x|^{-4}*|\widetilde{w}|^2\right)\widetilde{w} - \left(|x|^{-4}*|\widetilde{u}|^2\right)\widetilde{u}  \Vert_{N^1(I)}\lesssim C_*(\eta)\delta_{M}^{1/6} .
	\end{aligned}
	\end{equation*} 
	Imposing another smallness condition on $\delta_{M}$:
	\begin{equation}\label{con:scat 4}
	C_*(\eta)\delta_{M}^{1/6} \ll \nu_P \left( C(\eta),C(\eta),0 \right).
	\end{equation}
	By the above estimates, we can apply the above lemma. Hence there exists another large positive constant $C_{**}(\eta)$ such that
	\begin{equation}\label{appro wideu}
	\Vert \widetilde{u}-u \Vert_{L_{t}^{\infty} \dot{H}_{x}^{1}(t_{j+1},\infty)} \leq C_{**}(\eta)\delta_{M}^{1/6}.
	\end{equation}
	
	By \eqref{est:E widew}, we have $E(\widetilde{w})<E(W)$. Using Proposition \ref{prop:GS char} and Proposition \ref{prop:sign funct}, we have $K(\widetilde{w})>0$. Using \eqref{est:E widew}, we obtain
	\begin{equation}\label{est:conv}
	\Vert{\left(|x|^{-4}*|\widetilde{w}|^2\right)|\widetilde{w}|^2}\Vert_{L^1}=4E(\widetilde{w})-2K(\widetilde{w})<4E(\widetilde{w}) <\Vert W \Vert_{\dot H^1}^{2}-2c\eta^{2a}.
	\end{equation}
	Taking $\delta_{M}$ smaller if necessary, we have 
	\begin{equation}\label{con:scat 5}
	C_{**}(\eta)\delta_{M}^{1/6} \ll \eta^{a/2},
	\end{equation}
	which implies $\delta_{M} \ll \eta^{a/2}$. By \eqref{est:disp v1}, \eqref{vdec}, \eqref{con:scat 3}, \eqref{appro wideu}, \eqref{est:conv} and \eqref{con:scat 5}, we obtain 
	\begin{equation*}
	\begin{aligned}
	\Vert{\left(|x|^{-4}*|u(t)|^2\right)|u(t)|^2}\Vert_{L^1}\lesssim&\Vert{\left(|x|^{-4}*|u-\widetilde{u}|^2\right)|u-\widetilde{u}|^2}\Vert_{L^1}+\Vert{\left(|x|^{-4}*|\widetilde{u}|^2\right)|\widetilde{u}|^2}\Vert_{L^1}\\\lesssim&\Vert u-\widetilde{u}\Vert_{\dot H^1}^4+\Vert{\left(|x|^{-4}*|\widetilde{w}|^2\right)|\widetilde{w}|^2}\Vert_{L^1}+\Vert v \Vert_{L^{2d/(d-2)}}^4\\ \ll& \Vert W \Vert_{\dot H^1}^{2}-c\eta^{2a}.
	\end{aligned}
	\end{equation*}
	On the one hand, using the above estimate, 
	\begin{equation*}
	\begin{aligned}
	\Vert{\left(|x|^{-4}*|u(t_b)-W_{\theta,\sigma}|^2\right)|u(t_b)-W_{\theta,\sigma}|^2}\Vert_{L^1}\gtrsim&\Vert W\Vert_{\dot H^1}^2-\Vert{\left(|x|^{-4}*|u(t_b)|^2\right)|u(t_b)|^2}\Vert_{L^1}\\\geq&c\eta^{2a}.
	\end{aligned}
	\end{equation*}
	On the other hand, using Proposition \ref{prop:orth decom}, we have $\Vert u(t_b)-W_{\theta,\sigma}\Vert_{\dot H^1}=\Vert v(t_b)\Vert_{\dot H^1}\sim\widetilde{d}_{\mathcal{W}}(u(t_b))=\delta $, thus
	\begin{equation*}
	\begin{aligned}
	\Vert{\left(|x|^{-4}*|u(t_b)-W_{\theta,\sigma}|^2\right)|u(t_b)-W_{\theta,\sigma}|^2}\Vert_{L^1}\lesssim \Vert u(t_b)-W_{\theta,\sigma}\Vert_{\dot H^1}^4\sim \delta^4\ll \eta^{2a},
	\end{aligned}
	\end{equation*}
	which leads to a contradiction. 
	
	In conclusion, after fixing the constant $\delta_{M} >0$ such that  \eqref{con:scat 3}, \eqref{con:scat 4} and \eqref{con:scat 5} hold, the smallness conditions on $\delta,\epsilon$ in the case $\Theta=+1$ are \eqref{dep small}, \eqref{con:scat 1}, \eqref{con:scat 2} and \eqref{con:scat 3}, which determine $\delta_{B}$ and $\epsilon_{B}$.

	%
	%
	%
	%
	
	\section{Proof of Proposition \ref{prop:asyp behar}}\label{sect:asymp behr}	
	In this section, we prove Proposition \ref{prop:asyp behar}. Let $u$ be a solution of \eqref{equ:Hart} satisfying $u(\left[t_0,T_+(u)\right)\subset \mathcal{H}^{\epsilon}\setminus \widetilde{B}_{\delta}(\mathcal{W})$ for some $\epsilon\in (0,\epsilon_{B}(\delta))$. By Remark \ref{rmk:incln relat}, $u$ staying in $\check{\mathcal{H}}\cap\mathcal{H}^{\epsilon_S}$ on $[t_0,T_+(u))$, so $\Theta(u)\in\{\pm1\}$ is a constant. Moreover, Proposition \ref{prop:ps des} implies that $t_+(\delta')<T_+(u)$ for all $\delta'\in\left[\delta,\delta_{B}\right]$, so Proposition \ref{prop:one pass} yields $t_1\in I(u)$ such that 
	\begin{equation*}
	u(\left[t_1,T_+(u)  \right)) \subset  \mathcal{H}^{\epsilon}\setminus \widetilde{B}_{\delta_{B}}(\mathcal{W}).
	\end{equation*}
	Without loss of generality, we may assume $t_1=0$ by time translation.

	\subsection{Blow-up after ejection.} In the case of $\Theta(u)=-1$ and $u_0\in H_{rad}^{1}$, we prove that $T_+(u)<\infty$. Using Proposition \ref{prop:sign funct}, we have $-1=\Theta(u(t))=signK(u(t))$ on $[t_0,T_+(u))$, thus $K(u(t))<0$. By Proposition \ref{prop:vart est}, we have $-K(u(t))\geq \kappa(\delta_{B})$. Let $R\gg1$. We rewrite \eqref{est:V 2deri} in the following way
	
	\begin{equation}\label{est:V 2deri rad}
	\begin{aligned}
	\ddot{V}_R(t)=&8K(u)-4\langle|u_r|^2,f_{0,R}\rangle + \langle|u/R^{3}|^2,f_{1,R}\rangle \\+& 8\iint \left( 1-\frac{1}{2}\frac{R}{|x|} \varphi' \left( \frac{|x|}{R} \right) \right) \frac{x(x-y)}{|x-y|^6} |u(t,x)|^2|u(t,y)|^2 dxdy\\-&8\iint \left( 1-\frac{1}{2}\frac{R}{|y|} \varphi' \left( \frac{|y|}{R} \right) \right) \frac{y(x-y)}{|x-y|^6} |u(t,x)|^2|u(t,y)|^2 dxdy
	\end{aligned}
	\end{equation}
	with
	\begin{equation*}
	f_0:=2-\varphi''(r), \;\; f_1:=-\Delta\Delta\varphi.
	\end{equation*}
	By the property of $\varphi$, we have $suppf_{1,R}\subset \{R\leq |x|\leq 2R\}$ and $0\leq f_{0,R}$. Hence using the mass conservation, we obtain $
	\langle|u/R^{3}|^2,f_{1,R}\rangle \lesssim R^{-6}\Vert u_0\Vert_{L^2}^2
	$. As proved in \cite[lemma 5.3]{MWX:Hart}, we have
	\begin{equation*}
	\begin{aligned}
	&8\iint \left( 1-\frac{1}{2}\frac{R}{|x|} \varphi' \left( \frac{|x|}{R} \right) \right) \frac{x(x-y)}{|x-y|^6} |u(t,x)|^2|u(t,y)|^2 dxdy\\-&8\iint \left( 1-\frac{1}{2}\frac{R}{|y|} \varphi' \left( \frac{|y|}{R} \right) \right) \frac{y(x-y)}{|x-y|^6} |u(t,x)|^2|u(t,y)|^2 dxdy\\ &\lesssim \frac{1}{R^{\frac{4d-4}{d}}} \Vert u(t)\Vert_{L^2}^{\frac{4d-4}{d}} \Vert u(t)\Vert_{\dot H^1}^{\frac{4}{d}} + \frac{1}{R^2} \Vert u(t)\Vert_{L^2}^{2} \Vert u(t)\Vert_{\dot H^1}^{2}.
	\end{aligned}
	\end{equation*}
	Hence for $R\gg \Vert u_0\Vert_{L^2}/\kappa(\delta_{B})$ and $0<t<T_+(u)$, we have 
	\begin{equation}\label{est:V 2deri bd}
	-\ddot{V}_R(t)\geq -K(u(t)) \geq \kappa(\delta_{B})>0.
	\end{equation}
	
	Now assume for contradiction that $u$ exists for all time $t>0$, namely $T_+(u)=\infty$. Choosing $R\gg \Vert u_0\Vert_{L^2}/\kappa(\delta_{B})$, we have from \eqref{est:V 2deri bd}
	\begin{equation}\label{est:nablau}
	R\Vert u_r(t)\Vert_{L^2}\Vert u_0\Vert_{L^2} \gtrsim|2R\Im\int_{|x|\leq 2R}\bar{u}\nabla u\nabla\varphi(\frac{x}{R})dx|\geq -\dot{V}_R(t)\rightarrow \infty,
	\end{equation}
	as $t\rightarrow \infty$, hence 
	\begin{equation*}
	-K(u(t))=-4E(u)+ \Vert u_r(t)\Vert_{L^2}^2\rightarrow \infty.
	\end{equation*}
	So one can choose $T_1>0$ such that $-K(u(t))\sim \Vert u_r(t)\Vert_{L^2}^2$ for $t\geq T_1$. Using \eqref{est:V 2deri bd} and \eqref{est:nablau}, for $T_2\geq T_1$ and some absolute constant $c$, we have
	\begin{equation*}
	\begin{aligned}
	c_1R\Vert u_0\Vert_{L^2} \left(\Vert u_r(T_1)\Vert_{L^2}+\Vert u_r(T_2)\Vert_{L^2}\right)\geq& \dot{V}_R(T_1)-\dot{V}_R(T_2)=\int_{T_1}^{T_2}-\ddot{V}_R(t)dt\\\geq& \int_{T_1}^{T_2}-K(u(t))dt=c_2\int_{T_1}^{T_2}\Vert u_r(t)\Vert_{L^2}^2dt,
	\end{aligned}
	\end{equation*}
	which implies that
	\begin{equation*}
	R\Vert u_r(T_2)\Vert_{L^2} \geq -R\Vert u_r(T_1)\Vert_{L^2}+\frac{c}{\Vert u_0\Vert_{L^2}} \int_{T_1}^{T_2} \Vert u_r(t)\Vert_{L^2}^2 dt.
	\end{equation*}
	Therefore, defining
	\begin{equation*}
	f(t):= -R\Vert u_r(T_1)\Vert_{L^2}+\frac{c}{\Vert u_0\Vert_{L^2}} \int_{T_1}^{t} \Vert u_r(s)\Vert_{L^2}^2 ds,
	\end{equation*}
	we see that for large $t>T_1$, $f(t)$ is positive and $\partial_{t}f(t)\gtrsim f(t)^2$. Integrating this differential inequality yields $f(t)<0$, which leads to a contradiction. Therefore $T_+(u)<\infty$.

	\subsection{Scattering after ejection.} In the case $\Theta(u)=+1$, the proof of scattering uses arguments from \cite{MXZ:crit Hart:f rad,NakS:NLS}. Unlike the subcritical case, we have to take account of the scaling parameter and the fact that the maximal time interval of existence might be finite, even though the $\dot H^1$ norm is bounded by \eqref{unif bd Te+}. 
	
	Before stating and proving Lemma \ref{lem:crit exis}, we introduce the profile decomposition lemma in the spirit of the results of Keraani \cite{Ker}.
	\begin{lemma}(Profile decomposition).\label{lem:profile} Let $\{v_{0,n}\}_{n\geq 1}$ be a radial uniformly bounded sequence in $\dot H^1$. Then, there exists a subsequence of $v_{0,n}$, also denoted $v_{0,n}$, and
		\begin{enumerate}
			\item[$(1)$] For each $j\geq0$, there exists a radial profile $V_j$ in $\dot H^1$.
			\item[$(2)$] For each $j\geq0$, there exists a sequence of $(\sigma_{j,n},t_{j,n})$ with
			\begin{equation}\label{orth cond}
			|\sigma_{j,n}-\sigma_{j',n}|+|e^{-2\sigma_{j,n}}(t_{j,n}-t_{j',n})|\rightarrow\infty\;\;\text{as}\;\; n \rightarrow \infty \;\;\text{for}\;\; j\neq j'.
			\end{equation}
			\item[$(3)$] For each $k$, there exists a sequence of radial remainder $\gamma_{k,n}$ in $\dot H^1$, such that
			\begin{equation*}
			\begin{aligned}
			e^{-it\Delta}v_{0,n}=&\sum_{j=0}^k e^{-i(t+t_{j,n})\Delta}S_{-1}^{-\sigma_{j,n}}V_j +\gamma_{k,n}\\=& \sum_{j=0}^k e^{-it\Delta}S_{-1}^{-\sigma_{j,n}}e^{-is_{j,n}\Delta}V_j +e^{-it\Delta}\gamma_{k,n}(0)
			\end{aligned}
			\end{equation*}
			with
			\begin{equation}\label{asym}
			\lim_{k \to \infty}\limsup_{n \to \infty} \Vert \gamma_{k,n} \Vert_{S}=0, \;\;s_{j,n}:=t_{j,n}e^{-2\sigma_{j,n}},
			\end{equation}
			\begin{equation}\label{pathak}
			\Vert v_{0,n} \Vert_{\dot H^1}^2=\sum_{j=0}^k \Vert V_j \Vert_{\dot H^1}^2 +\Vert \gamma_{k,n}(0) \Vert_{\dot H^1}^2 +o_n(1),
			\end{equation}
			\begin{equation*}
			E(v_{0,n})=\sum_{j=0}^k E(e^{-is_{j,n}\Delta}V_j)+E(\gamma_{k,n}(0))+o_n(1).
			\end{equation*}
		\end{enumerate}
	\end{lemma}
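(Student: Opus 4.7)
The plan is to follow the classical inductive bubble-extraction strategy of Keraani and Bahouri--G\'erard, adapted to the radial energy-critical Hartree setting: at each stage apply an inverse Strichartz lemma to the remaining free evolution, peel off one profile, and iterate. Because we restrict to radial data, the only concentration parameters are the scale $\sigma_{j,n}$ and a time translation $t_{j,n}$, which matches the form stated in the lemma.

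The first step is to establish a radial inverse Strichartz lemma: if $w_n$ is bounded in $\dot H^1_{rad}$ with $\liminf_n \Vert e^{-it\Delta} w_n\Vert_{S(\R)} \geq \eta > 0$, then along a subsequence there exist $(\sigma_n, t_n)\in\R^2$ and a nonzero $V\in \dot H^1_{rad}$ such that $S_{-1}^{\sigma_n} e^{-it_n\Delta} w_n \rightharpoonup V$ weakly in $\dot H^1$, with $\Vert V\Vert_{\dot H^1} \gtrsim \eta^{\alpha}$ for some $\alpha>0$ depending only on $d$. I would prove this by refining the Sobolev embedding into $S$ through a Littlewood--Paley decomposition, pigeonholing to select a dyadic scale $e^{\sigma_n}$, and then using the dispersive bounds of Proposition \ref{prop:disp ST est} together with the radial Sobolev inequality to locate a time $t_n$ at which the mass at that scale is captured; radial symmetry absorbs the spatial translation that appears in the non-radial version.

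Second, I would iterate. Apply the inverse Strichartz to $v_{0,n}$ to produce $V_0$ and $(\sigma_{0,n}, t_{0,n})$, and set
\begin{equation*}
\gamma_{0,n}(t) := e^{-it\Delta} v_{0,n} - e^{-i(t+t_{0,n})\Delta} S_{-1}^{-\sigma_{0,n}} V_0.
\end{equation*}
The unitarity of $S_{-1}^{\sigma}$ and $e^{-it\Delta}$ on $\dot H^1$ together with weak convergence yields the Pythagorean identity $\Vert v_{0,n}\Vert_{\dot H^1}^2 = \Vert V_0\Vert_{\dot H^1}^2 + \Vert \gamma_{0,n}(0)\Vert_{\dot H^1}^2 + o_n(1)$. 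Iterate on $\gamma_{k,n}(0)$. The key point is that the newly extracted parameters $(\sigma_{k+1,n}, t_{k+1,n})$ are automatically orthogonal to the previous ones in the sense of \eqref{orth cond}: if they failed to be, a subsequence along which $(\sigma_{j,n}-\sigma_{k+1,n}, e^{-2\sigma_{k+1,n}}(t_{j,n}-t_{k+1,n}))$ stays bounded would force $V_{k+1}=0$, since $\gamma_{k,n}$ has already been orthogonalized against $V_j$. Summing the Pythagorean identities produces \eqref{pathak}, and the uniform $\dot H^1$ bound forces $\sum_j \Vert V_j\Vert_{\dot H^1}^2 < \infty$; coupled with the quantitative lower bound $\Vert V\Vert_{\dot H^1}\gtrsim \eta^{\alpha}$ from the inverse Strichartz, the Strichartz gain per extracted bubble decays to $0$, giving \eqref{asym}.

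The main obstacle is the energy decomposition. The kinetic part is immediate from \eqref{pathak}; the substance is to show that the Hartree potential term decouples. Writing $U_{j,n} := e^{-it_{j,n}\Delta}S_{-1}^{-\sigma_{j,n}} V_j$ and expanding the quartic form $\iint |x-y|^{-4}|u(x)|^2|u(y)|^2 \, dx\, dy$ on $\sum_{j=0}^k U_{j,n} + \gamma_{k,n}(0)$, I reduce to showing that every mixed term (profile--profile with $j\neq j'$, and profile--remainder) vanishes as $n\to\infty$. For distinct profiles, the orthogonality \eqref{orth cond} allows one to rescale and time-translate to the parameters of one profile, after which the other profile either disperses to $0$ in $L^{2d/(d-2)}$ (when $|s_{j,n}-s_{j',n}|\to\infty$, via Proposition \ref{prop:disp ST est}) or concentrates at a different spatial scale (when $|\sigma_{j,n}-\sigma_{j',n}|\to\infty$), and the Hardy--Littlewood--Sobolev inequality bounds the mixed $|x-y|^{-4}$-integrals by products of $L^{2d/(d-2)}$ norms, which then vanish in the limit. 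The profile--remainder cross terms vanish from the weak convergence $S_{-1}^{\sigma_{j,n}}e^{-it_{j,n}\Delta}\gamma_{k,n}(0)\rightharpoonup 0$, after a density argument approximating $V_j$ by a compactly supported smooth function that upgrades weak $\dot H^1$ convergence to strong local convergence. This orthogonality bookkeeping for the nonlocal quartic is the most technical part of the argument.
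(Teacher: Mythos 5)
The paper does not prove this lemma --- it simply cites it ``in the spirit of the results of Keraani \cite{Ker}'' and relies on the Hartree adaptation from \cite{MXZ:crit Hart:f rad} --- and your sketch faithfully reconstructs the standard Bahouri--G\'erard/Keraani bubble-extraction argument in the radial energy-critical Hartree setting: a radial inverse Strichartz lemma (pigeonholing a dyadic scale and using the radial Sobolev inequality in place of spatial translation), iterative peeling with the weak limit yielding the Pythagorean kinetic identity \eqref{pathak}, the quantitative lower bound on $\Vert V_j \Vert_{\dot H^1}$ forcing \eqref{asym}, and asymptotic orthogonality of the nonlocal quartic via Hardy--Littlewood--Sobolev combined with the dispersive estimate. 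One minor simplification worth noting: the profile--remainder cross terms in the Hartree potential energy are controlled more directly by applying Hardy--Littlewood--Sobolev and bounding the remainder factor by its vanishing $S$-norm from \eqref{asym}, which avoids the density-plus-local-compactness argument you sketch and sidesteps the fact that $\dot H^1 \hookrightarrow L^{2d/(d-2)}$ is not a compact embedding even locally.
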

	For any $A<E(W)+\epsilon_{S}^2$ and any $\delta\in \left( 0,\delta_{B} \right]$, let $\mathscr{S}(A,\delta)$ be the collection of solutions of \eqref{equ:Hart} such that
	\begin{equation*}
	E(u)\leq A,\; u(\left[0,T_+(u)  \right) )\subset \check{\mathcal{H}}\setminus \widetilde{B}_{\delta}(\mathcal{W}),\;\Theta(u(0))=+1.
	\end{equation*}
	Note that if $u\in\mathscr{S}(A,\delta)$, then $\Theta(u(t))=+1$.
	
	Define the minimal energy where uniform Strichartz bound fails.
	\begin{equation*}
	\begin{aligned}
	S(A,\delta):=&\sup_{u\in \mathscr{S}(A,\delta)} \Vert u \Vert_{S(0,T_{+}(u))},\\
	E_c(\delta):=&\sup\{A<E(W)+ \epsilon_{S}^2\;|\;S(A,\delta)<\infty \}.
	\end{aligned}
	\end{equation*}
	It is well known that $u\in S(0,\infty)$ implies the scattering as $t\to \infty$, see \cite{Caz:book}. For $0<A\ll 1$, $S(A,\delta)\lesssim A^{1/2}$ holds by the small data scattering (see \cite{Caz:book} for example). The result of \cite{MXZ:crit Hart:f rad} implies $E_c(\delta)\geq E(W)$. If $E_c(\delta)<E(W)+\epsilon_{S}^2$, then there exists a sequence of solutions $u_n \in \mathscr{S}(A_n,\delta)$ for some sequence of numbers $A_n \to E_c(\delta)$ such that $\Vert u_n \Vert_{S(0,T_{+}(u_n))} \to \infty$.
	
	Next we use the above sequence $u_n$ to prove the existence of the $\dot H^1$ radial solution $U_c$ to \eqref{equ:Hart} with $U_c \in \mathscr{S}(E_c(\delta),\delta_{B}), E(U_c)=E_c(\delta)$ and $\Vert U_c \Vert_{S(0,T_{+}(u_n))} \to \infty$. Moreover, we will show that this critical solution has a compactness property up to symmetries of this equation in Lemma \ref{lem:crit comp}.
	\begin{lemma}(Existence of a critical solution).\label{lem:crit exis} Let $\delta\in(0,\delta_{B})$. Suppose that $E_c(\delta)\leq E(W)+\epsilon^2$ for some $\epsilon$ such that
		\begin{equation*}	
		0<\epsilon<\min(\epsilon_{V}(\delta),\epsilon_{B}(\delta)),\;\epsilon\ll \min (\epsilon_{S},\delta,\sqrt{\kappa(\delta)}).
		\end{equation*}	
		Let $A_n \to E_c(\delta)$ and $u_n \in \mathscr{S}(A_n,\delta)$ satisfying $\Vert u_n \Vert_{S(0,T_{+}(u_n))} \to \infty$. Then 
		\begin{enumerate}
			\item[$(1)$] there exist $U_c \in \mathscr{S}(E_c(\delta),\delta_{B})$ satisfying $E(U_c)=E_c(\delta)$ and $\Vert U_c \Vert_{S(0,T_{+}(U_c))}=\infty $,
			\item[$(2)$] there exist $(\sigma_{n},s_n)\in \R^2$ such that $e^{is_n \Delta}S_{-1}^{\sigma_{n}}u_n(0)$ is strongly convergent in $\dot H^1_{rad}$.
		\end{enumerate}
	\end{lemma}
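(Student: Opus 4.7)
The plan is to carry out the standard Kenig–Merle concentration--compactness procedure, adapted to the energy-critical Hartree setting with the scaling degree of freedom that is already accounted for in Lemma~\ref{lem:profile}. Apply Lemma~\ref{lem:profile} to the sequence $v_{0,n}:=u_n(0)$, which is bounded in $\dot H^1_{rad}$ by \eqref{unif bd Te+} and $A_n\to E_c(\delta)$. This yields profiles $V_j$, parameters $(\sigma_{j,n},t_{j,n})$ with $s_{j,n}=t_{j,n}e^{-2\sigma_{j,n}}$, and radial remainders $\gamma_{k,n}$ satisfying the orthogonality \eqref{orth cond}, the smallness \eqref{asym}, and the Pythagorean identities \eqref{pathak} for $\dot H^1$ and for the energy.

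For each $j$ associate a nonlinear profile $U_j$ as follows: if $s_{j,n}$ (along a subsequence) converges to some finite $s_j^\infty$, let $U_j$ be the maximal-lifespan solution of \eqref{equ:Hart} with initial data $e^{-is_j^\infty\Delta}V_j$; if $s_{j,n}\to\pm\infty$, take $U_j$ to be the solution with scattering data $V_j$ at $\pm\infty$, which exists by standard wave-operator constructions using Proposition~\ref{prop:disp ST est}. Set
\begin{equation*}
\widetilde u_n^{(k)}(t,x):=\sum_{j=0}^{k}S_{-1}^{-\sigma_{j,n}}U_j\!\left(e^{-2\sigma_{j,n}}t-s_{j,n}\right)+e^{-it\Delta}\gamma_{k,n}(0).
\end{equation*}
The orthogonality \eqref{orth cond} forces the cross terms between different profiles to vanish in all relevant Strichartz norms as $n\to\infty$, exactly as in \cite{KenM:NLS,MXZ:crit Hart:f rad}.

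Now argue by contradiction, assuming that more than one profile is nontrivial, or that a single profile has energy strictly less than $E_c(\delta)$. Energy decoupling gives $\sum_j E(U_j)\le E_c(\delta)+o_n(1)$, and by Proposition~\ref{prop:GS char} together with the hypothesis $\epsilon\ll\sqrt{\kappa(\delta)}$, every profile then has energy at most $E_c(\delta)-\eta_0$ for some $\eta_0>0$, or below $E(W)$. In either case each $U_j$ has a uniform Strichartz bound: for profiles with energy below $E(W)$, use Theorem~\ref{thm:below thresh}; for profiles with $\Theta=+1$ that remain away from $\widetilde B_{\delta_B}(\mathcal W)$, use the definition of $E_c(\delta)$ applied at the strictly smaller energy level. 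These uniform bounds, combined with \eqref{asym}, let Lemma~\ref{lem:pert} upgrade $\widetilde u_n^{(k)}$ to a good approximation of $u_n$ on $[0,T_+(u_n))$, yielding $\limsup_n\|u_n\|_{S(0,T_+(u_n))}<\infty$, which contradicts the hypothesis. Consequently exactly one profile, say $V_0$, is nontrivial, $E(U_0)=E_c(\delta)$, and $\|\gamma_{k,n}(0)\|_{\dot H^1}\to 0$. Moreover $s_{0,n}$ must stay bounded, for otherwise $e^{-is_{0,n}\Delta}V_0$ scatters and the perturbation argument again gives finite Strichartz norm; absorbing the convergent shift into $V_0$ we may take $s_{0,n}=0$, so that (after applying $S^{\sigma_{0,n}}_{-1}$) $u_n(0)$ converges strongly in $\dot H^1_{rad}$ to $V_0$. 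Define $U_c$ to be the maximal radial solution with $U_c(0)=V_0$; this gives (2) and the energy statement.

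It remains to verify $U_c\in \mathscr S(E_c(\delta),\delta_B)$ and $\|U_c\|_{S(0,T_+(U_c))}=\infty$. The latter follows from the perturbation lemma: if $U_c$ had finite Strichartz norm on its forward lifespan, so would $u_n$ for $n$ large, contradicting $\|u_n\|_S\to\infty$. For the former, note that after the rescaling, $u_n(0)\to V_0$ strongly in $\dot H^1$, so by continuity of $\widetilde d_{\mathcal W}$, of $E$, and of $\Theta$ on $\check{\mathcal H}^{\epsilon_S}$ (Proposition~\ref{prop:sign funct}) the properties $\Theta(U_c(t))=+1$ and $U_c(t)\notin \widetilde B_{\delta}(\mathcal W)$ transfer from the approximating $u_n$ to $U_c$ on any fixed compact subset of $[0,T_+(U_c))$; the one-pass lemma (Proposition~\ref{prop:one pass}), applied to $U_c$ with threshold $\delta_B>\delta$, then upgrades ``staying out of $\widetilde B_{\delta}$'' to ``staying out of $\widetilde B_{\delta_B}$'' on the entire forward lifespan, since the alternative would force $U_c$ into the $\widetilde{d}$-decreasing regime described by Proposition~\ref{prop:ps des}, which is incompatible with $\Theta=+1$ and the ejection lemma.

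The main obstacle I anticipate is the handling of the scaling parameter $\sigma_{j,n}$ when transferring the dynamical information (the value of $\Theta$, the quantitative separation $\widetilde d_{\mathcal W}\ge \delta$, and the one-pass constraint) from the prelimit sequence $u_n$ to the critical element $U_c$: unlike the subcritical NLS/NLKG arguments of \cite{NakS:NLKG,NakS:NLS}, a single profile can live at an arbitrarily small or large spatial scale, so the uniform-in-$n$ control of the distance functional $\widetilde d_{\mathcal W}$ must use its rotation–scaling invariance (Proposition~\ref{prop:nlr dist}) and the scale-invariance of $\Theta$ and $K$ (Proposition~\ref{prop:sign funct}) in an essential way. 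Once these invariances are used to reduce to $\sigma_{0,n}=0$, the remaining scattering-profile arguments proceed as in \cite{KenM:NLS,MXZ:crit Hart:f rad,NakS:NLS}.
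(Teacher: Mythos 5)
Your proposal follows the same broad Kenig--Merle skeleton as the paper (profile decomposition, nonlinear profiles, perturbation, extraction of a critical element), but it deviates from the paper's argument in a way that introduces a genuine gap at the central step. To conclude that all profiles but one contribute negligibly, you propose to invoke the minimality of $E_c(\delta)$ inductively, applied to any profile with $\Theta=+1$ that ``remains away from $\widetilde B_{\delta_B}(\mathcal W)$.'' This is not available: a nonlinear profile $U_j$ produced by Lemma \ref{lem:profile} is merely a solution of \eqref{equ:Hart} with a certain energy; nothing forces it to satisfy the dynamical membership requirements of $\mathscr S(A,\delta)$, namely that $U_j([0,T_+(U_j)))\subset\check{\mathcal H}\setminus\widetilde B_\delta(\mathcal W)$ and $\Theta(U_j(0))=+1$. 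The paper avoids this entirely by a short variational step you do not carry out: since $K(u_n(t))\gtrsim\kappa(\delta)\gg\epsilon^2$, one has $G(u_n(0))<E(W)-\epsilon^2$, and the Pythagorean expansion gives $G(V_j)<E(W)$ for every $j$ and $G(\gamma_{k,n})<E(W)$, hence by Proposition \ref{prop:GS char} one gets $K(e^{-it\Delta}V_j)>0$ and $K(\gamma_{k,n}(t))>0$ for all $t$, so $E(U_j)\ge 0$, $E(\gamma_{k,n})\ge 0$. The energy decoupling then shows that at most one profile can have $E\ge E(W)$, and all the others have $E<E(W)$, $K\ge 0$, hence $\|\nabla U_j\|_2<\|\nabla W\|_2$, and \emph{all} of them scatter by Theorem \ref{thm:below thresh}. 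No induction on $E_c$ is needed, and the dynamical conditions never have to be checked for profiles.

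Two further points are imprecise. First, your claim that ``$s_{0,n}$ must stay bounded'' is false: the paper only excludes $s_{0,\infty}=+\infty$, and explicitly keeps the case $s_{0,\infty}=-\infty$ (handled via $\|U_0\|_{S(-\infty,t_0)}<\infty$ and the blow-up criterion). This is why part $(2)$ of the lemma asserts convergence of $e^{is_n\Delta}S_{-1}^{\sigma_n}u_n(0)$ with a possibly unbounded $s_n$, rather than convergence of $S_{-1}^{\sigma_n}u_n(0)$. Second, your verification that $U_c$ stays outside $\widetilde B_{\delta_B}(\mathcal W)$ relies on ``the $\widetilde d$-decreasing regime being incompatible with $\Theta=+1$ and the ejection lemma.'' That is not the mechanism: Proposition \ref{prop:ps des} yields the dichotomy between eventual exit from $\widetilde B_{\delta_B}$ and $\limsup_{t\nearrow T_+}\widetilde d_{\mathcal W}(U_0(t))\le\epsilon/c_D$, and the paper excludes the second alternative not by any incompatibility with $\Theta$, but by the approximation $\Vert u_n(t_n)-U_{0,n}(t_n)\Vert_{\dot H^1}\to 0$, which would force $\widetilde d_{\mathcal W}(u_n(t_n))\lesssim\epsilon\ll\delta$ at some $t_n>0$ and contradict $u_n\in\mathscr S(A_n,\delta)$. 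You mention the transfer of $\widetilde d_{\mathcal W}$ via the approximation on compact time sets, but you do not use it to close the argument; instead you appeal to an incompatibility that does not actually hold.
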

	\begin{remark}
		Note that once we have $U_c \in \mathscr{S}(E_c(\delta),\delta)$ with the other properties, then a time translation can yield another minimal element in $\mathscr{S}(E_c(\delta),\delta_{B})$ using the ejection and the one-pass lemmas. Please refer to the proof below for more details. 
	\end{remark}
	
	\begin{proof}
		Using the small data scattering in \cite[Remark 2.1]{MXZ:crit Hart:f rad}, we have 
		\begin{equation*}
		\Vert u_n\Vert_{\dot H^1}\gtrsim1 \;\;\text{or}\;\;\Vert u_n\Vert_{S(0,\infty)}\;\;\text{are uniformly small}.
		\end{equation*}	
		Using $u_n \in \mathscr{S}(A_n,\delta)$ and \eqref{unif bd Te+}, we have, for $t\in [0,T_+(u_n))$,
		\begin{equation*}
		\Vert u_n(t)\Vert_{\dot H^1}\lesssim 1.
		\end{equation*}
		Combined with $\Vert u_n \Vert_{S(0,T_{+}(u_n))} \to \infty$, we have $\Vert u_n(t)\Vert_{\dot H^1}\sim 1$.
		
		Apply Lemma \ref{lem:profile} to $u_n(0)$, we have
		\begin{equation*}
		\begin{aligned}
		u_n(0)=&\sum_{j=0}^k S_{-1}^{-\sigma_{j,n}}e^{-is_{j,n}\Delta}V_j +\gamma_{k,n}(0),\\ \Vert u_n(0)\Vert_{\dot H^1}^2=&\sum_{j=0}^k \Vert V_j\Vert_{\dot H^1}^2+\Vert \gamma_{k,n}(0)\Vert_{\dot H^1}^2+o_n(1),\\ E(u_n(0))=&\sum_{j=0}^k E(e^{-is_{j,n}\Delta}V_j)+E(\gamma_{k,n}(0))+o_n(1).
		\end{aligned}
		\end{equation*}
		We divide the proof into two steps:
		
		\noindent{\bf Step 1: There exists one profile that does not scatter.}
		After passing to a subsequence, we assume that $s_{j,n}\rightarrow s_{j,\infty}\in[-\infty,\infty]$. Let $U_j$ be the nonlinear profile associated with $(V_j,\{s_{j,m}\}_{m\geq1})$, then by \cite[Defination 2.2]{MXZ:crit Hart:f rad}, there exists an interval $I$, with $s_{j,\infty}\in I$ such that $U_j$ is the unique solution of \eqref{equ:Hart} in $I$ satisfying
		\begin{equation}\label{prof appr}
		\lim_{m \to \infty}\Vert U_j(s_{j,m})-e^{-is_{j,m}\Delta}V_j \Vert_{\dot H^1}=0.
		\end{equation}
		By \eqref{prof appr}, we have 
		\begin{equation}\label{eorth seq}
		E(u_n)=\sum_{j=0}^{k} E(U_j)+E(\gamma_{k,n}(0))+o_n(1).
		\end{equation}
		We also define $U_{j,n}(t):=S_{-1}^{-\sigma_{j,n}}U_j((t+t_{j,n})e^{-2\sigma_{j,n}})$. 
		
		Since $u_n \in \mathscr{S}(A_n,\delta)$, we have $+1=\Theta(u_n(t))=signK(u_n(t))$ by Proposition \ref{prop:sign funct}. Thus $K(u_n(t))\geq0$. By $\epsilon<\epsilon_{V}(\delta)$, $u_n \in \mathscr{S}(A_n,\delta)$ and Proposition \ref{prop:vart est}, we have, for $t\geq0$,
		\begin{equation*}
		K(u_n(t))\gtrsim\kappa(\delta)\gg\epsilon^2.
		\end{equation*}
		Combining the above estimates and the conservation of $G$ for the free equation, we have
		\begin{equation*}
		E(W)-\epsilon^2>E(u_n)-K(u_n(0))/4+\epsilon^2=G(u_n(0))+\epsilon^2\geq \sum_{j=0}^{k}G(V_j)+G(\gamma_{k,n}).
		\end{equation*}
		This implies
		\begin{equation*}
		G(\gamma_{k,n})<E(W),\;\;G(e^{-it\Delta}V_j)<E(W).
		\end{equation*}
		Using \eqref{char:Neh char}, we have $K(\gamma_{k,n}(t))>0$ and $K(e^{-it\Delta}V_j)>0$ for all $t\in\R$. Thus we have $E(\gamma_{k,n}(t))>0$, $K(e^{-it\Delta}V_j)\rightarrow \Vert V_j\Vert_{\dot H^1}^2$ as $t\rightarrow \pm\infty$ and both can be zero only if $V_j=0$. Hence by \eqref{prof appr}, for $t\in I$,
		\begin{equation}\label{noneg KE}
		K(U_j(t))\geq0 ,\;\; E(U_j(t))\geq0.
		\end{equation}
		Combined the above estimates with \eqref{eorth seq}, we have, for all $j\geq0$,
		\begin{equation*}
		E(U_j)\leq E(u_n)\leq E_c(\delta).
		\end{equation*}
		
		We have the following assertion:
		\begin{equation*}
		\text{If}\;\; E(U_j)<E(W),\;\;\text{then}\;\;\Vert U_j\Vert_{S\cap W^1\cap L_{t}^{\infty}\dot H^1}<\infty.
		\end{equation*}
		Indeed, using \eqref{char:Neh char} as well as \eqref{noneg KE}, we have $E(W)>G(U_j)$. That is, $\Vert U_j\Vert_{\dot H^1}^2<\Vert W\Vert_{\dot H^1}^2$. Then \cite[Theorem 1.1]{MXZ:crit Hart:f rad} implies that $U_j$ exists globally in time and scatters with $\Vert U_j\Vert_{S\cap W^1\cap L_{t}^{\infty}\dot H^1}<\infty$.
		
		Next, we prove there exists at least one $j\in\{0,1,\cdots,k\}$ such that $\Vert U_j\Vert_S=\infty$. We prove it by contradiction. If $\Vert U_j\Vert_S<\infty$ for all $j$, we want to apply Lemma \ref{lem:pert} to 
		\begin{equation*}
		\widetilde{u}:=\sum_{j=0}^{k} U_{j,n}+\gamma_{k,n},\;\;u:=u_n,\;\;t_0:=0,\;\;I:=\R.
		\end{equation*}
		\begin{enumerate}
			\item[$(1)$] $\widetilde{u}$ is bounded in $L_{t}^{\infty}\dot H_x^1$ as $n\rightarrow \infty$ uniformly in $k$ by \eqref{quan:KIG} and \eqref{eorth seq}. 
			\item[$(2)$] $\widetilde{u}$ is bounded in $S(\R)$ as $n\rightarrow \infty$ uniformly in $k$ by \eqref{orth cond}, \eqref{pathak} and a similar argument in the proof of Proposition 4.2 in \cite{MXZ:crit Hart:f rad}.
			\item[$(3)$] By \eqref{prof appr} and
			\begin{equation*}
			\begin{aligned}
			\widetilde{u}(0)=&\sum_{j=0}^{k} U_{j,n}(0)+\gamma_{k,n}(0)=\sum_{j=0}^k S_{-1}^{-\sigma_{j,n}}U_j(s_{j,n}) +\gamma_{k,n}(0),\\u(0)=&u_n(0)=\sum_{j=0}^k S_{-1}^{-\sigma_{j,n}}e^{-is_{j,n}\Delta}V_j +\gamma_{k,n}(0),
			\end{aligned}
			\end{equation*}
			we have $\Vert \widetilde{u}(0)-u_n(0)\Vert_{\dot H_x^1}\rightarrow0$ as $n\rightarrow \infty$. 
			\item[$(4)$] We only remain to prove
			\begin{equation*}
			i\widetilde{u}_{t}- \Delta\widetilde{u}-\left( |x|^{-4}*|\widetilde{u}|^2\right)\widetilde{u}=\sum_{j=0}^{k}\left( \left( |x|^{-4}*|U_{j,n}|^2\right)U_{j,n} \right)-\left( |x|^{-4}*|\widetilde{u}|^2\right)\widetilde{u}
			\end{equation*}
			is small in $N^1(\R)$ for large $n$. Indeed, using $\Vert U_j\Vert_{S}<\infty$, \eqref{orth cond} and \eqref{asym}, we obtain
			\begin{equation*}
			\lim_{k \to \infty}\limsup_{n \to \infty}\Vert i\widetilde{u}_{t}- \Delta\widetilde{u}-\left( |x|^{-4}*|\widetilde{u}|^2\right)\widetilde{u} \Vert_{N^1(\R)}=0.
			\end{equation*}
			For the proof, please refer to \cite[Proposition 4.2]{MXZ:crit Hart:f rad} and \cite{Ker}. 
		\end{enumerate}
		Thus, for large $k$ and large $n$, Lemma \ref{lem:pert} yields 
		\begin{equation*}
		\Vert u_n\Vert_{S(\R)}\lesssim1,
		\end{equation*}
		contradicting $\Vert u_n\Vert_{S(0,T_+(u_n))}\rightarrow\infty$.
		
		\noindent{\bf Step 2: Construction of the critical element $U_c$.} We assume $\Vert U_0\Vert_{S(I(U_0))}=\infty$ without loss of generality. Using the above assertion, we have $E(W)\leq E(U_0)$. Combined with \eqref{eorth seq}, \eqref{noneg KE} and $E(\gamma_{k,n}(0))>0$, we see that for all $j\in\{1,\cdots,k\}$,
		\begin{equation*}
		E(U_j)<\epsilon^2,\;\;E(\gamma_{k,n}(0))<\epsilon^2.
		\end{equation*}
		Using the assertion, $K(\gamma_{k,n}(0))>0$ and \eqref{noneg KE}, we have $T_{\pm}(U_j)=\infty$, $U_j$ scatters and
		\begin{equation}\label{est:pro re H1}
		\sum_{j=1}^{k}\Vert U_j\Vert_{ L_{t}^{\infty}\dot H^1}^2+\Vert \gamma_{k,n}\Vert_{\dot H^1}^2\lesssim\epsilon^2\ll 1.
		\end{equation}
		We apply Lemma \ref{lem:pert} to $\widetilde{u}$ and $u=u_n$ on $I=I_n:=e^{2\sigma_{0,n}}I_0 -t_{0,n}$, where $I_0$ is any subinterval of $I(U_0)$ satisfying $\Vert U_0\Vert_{S(I_0)}<\infty$. Since $B_3+\nu \rightarrow0$, we have
		\begin{equation*}
		\limsup_{n \to \infty} \Vert u_n\Vert_{S(I_n)}<\infty,\;\; \lim_{k \to \infty}\limsup_{n \to \infty} \Vert \widetilde{u}-u_n\Vert_{L_{t}^{\infty}\dot H_x^1(I_n)}=0.
		\end{equation*}
		By $ \Vert U_j \Vert_{L_{t}^{\infty}\dot H_x^1}\geq \Vert U_j(\frac{t+t_{j,n}}{e^{2\sigma_{j,n}}}) \Vert_{\dot H_x^1}= \Vert U_{j,n} \Vert_{\dot H_x^1}$ and \eqref{est:pro re H1}, we have
		\begin{equation*}
		\sum_{j=1}^{k} \Vert U_{j,n} \Vert_{\dot H_x^1}^2+\Vert \gamma_{k,n}(0) \Vert_{\dot H_x^1}^2\ll 1.
		\end{equation*}
		Using the orthogonality, we have
		\begin{equation*}
		\Vert \widetilde{u} \Vert_{\dot H_x^1}^2=\sum_{j=0}^{k} \Vert U_{j,n} \Vert_{\dot H_x^1}^2+\Vert \gamma_{k,n}(0) \Vert_{\dot H_x^1}^2+o_n(1).
		\end{equation*}
		Combining the above estimates, we see that 
		\begin{equation*}
		\Vert \widetilde{u} \Vert_{\dot H_x^1}^2= \Vert U_{0,n} \Vert_{\dot H_x^1}^2+o_n(1),
		\end{equation*}
		which implies
		\begin{equation}\label{seq prof app}
		\limsup_{n \to \infty} \Vert u_n-U_{0,n}\Vert_{L_{t}^{\infty}\dot H^1(I_n)}\lesssim\epsilon.
		\end{equation}
		
		Next, we prove $s_{0,\infty}<+\infty$ and $\Vert U_0 \Vert_{S(s_{0,\infty},T_+(U_0))}=\infty$. 
		
		Firstly, if $s_{0,\infty}=+\infty$, then $U_0$ scatters in a neighborhood of $+\infty$ by \eqref{prof appr}. That is, there exists $T_0\in I(U_0)$ such that $\Vert U_0 \Vert_{S([T_0,+\infty))}<\infty$. Hence, choosing $I_0=[T_0,+\infty)$ in the above argument yields a uniform bound on $\Vert u_n\Vert_{S(I_n)}$, but $I_n\supset[0,\infty)$ for large $n$, contradicting $\Vert u_n\Vert_{S(0,T_+(u_n))}\rightarrow\infty$. This implies $s_{0,\infty}<+\infty$. Now, there only remain two cases: $s_{0,\infty}=-\infty$ and $s_{0,\infty}\in\R$. 
		
		For the case $s_{0,\infty}=-\infty$: there exists $t_0\in I(U_0)$ such that $\Vert U_0 \Vert_{S(-\infty,t_0)}<\infty$ by \eqref{prof appr}. By $\Vert U_0\Vert_{S(I(U_0))}=\infty$, we have $\Vert U_0 \Vert_{S(t_0,T_+(U_0))}=\infty$, which implies $\Vert U_0 \Vert_{S(-\infty,T_+(U_0))}=\infty$.
		
		For the case $s_{0,\infty}\in\R$: if $\Vert U_0 \Vert_{S(s_{0,\infty},T_+(U_0))}<\infty$, then $T_+(U_0)=\infty$ by the blow-up criterion (\cite[Proposition 2.2]{MXZ:crit Hart:f rad}). Hence, choosing $I_0=(s_-,\infty)$ for some $s_-\in(-T_-(U_0),s_{0,\infty})$, we see that $I_n\supset[0,\infty)$ for large $n$, leading to a contradiction with $\Vert u_n\Vert_{S(0,\infty)}\rightarrow\infty$ as in the case $s_{0,\infty}=+\infty$ above. Therefore $\Vert U_0 \Vert_{S(s_{0,\infty},T_+(U_0))}=\infty$.
		
		There exists $t_0\in (s_{0,\infty},T_+(U_0))$, such that $\Vert U_0 \Vert_{S(t_0,T_+(U_0))}=\infty$. Since $E(U_0)\leq E_c(\delta)\leq E(W)+\epsilon^2$ and $\epsilon<\epsilon_{B}(\delta_{B})$, Propositions \ref{prop:one pass} and \ref{prop:ps des} imply that there are only two options for $U_0$:
		\begin{enumerate}
			\item[$(1)$] There exists $t_+\in I(U_0)$ such that $\widetilde{d}_{\mathcal{W}}(U_0(t))\geq\delta_{B}$ for all $t_+\leq t<T_+(U_0)$.
			\item[$(2)$] $\limsup_{t\nearrow T_+(U_0)} \widetilde{d}_{\mathcal{W}}(U_0(t))\leq \epsilon/c_D$.
		\end{enumerate}
		For the second case, choosing $I_0=(t_0,t_c)\subset I(U_0)$ such that $\widetilde{d}_{\mathcal{W}}(U_0(t_c))<2\epsilon/c_D$. From Proposition \ref{prop:nlr dist} and \eqref{seq prof app}, we have
		\begin{equation*}
		\widetilde{d}_{\mathcal{W}}(u_n(t_n))\lesssim\widetilde{d}_{\mathcal{W}}(U_0(t_c))+\Vert u_n(t_n)-U_{0,n}(t_n)\Vert_{\dot H^1}\lesssim\epsilon\ll \delta,
		\end{equation*}
		where $t_n:=e^{2\sigma_{0,n}}t_c - t_{0,n}>0$ for large $n$, since $t_c
		>s_{0,\infty}$. This contradicts $\widetilde{d}_{\mathcal{W}}(u_n(t))\geq\delta$ on $[0,T_+(u_n))$. Therefore the second case is excluded, that is $\widetilde{d}_{\mathcal{W}}(U_0(t))\geq\delta_{B}$ for $t_+\leq t<T_+(U_0)$. Combined with $\epsilon_{B}(\delta_{B})\leq \epsilon_{S}$ and $\epsilon_{B}(\delta_{B})<c_D\delta_{B}$, we have $U_0(t)\in \check{\mathcal{H}}\cap\mathcal{H}^{\epsilon_S}$ on $[t_+,T_+(U_0))$. Applying Proposition \ref{prop:sign funct}, we have $\Theta(U_0(t))$ is constant on $[t_+,T_+(U_0))$. Let $t_n:=e^{2\sigma_{0,n}}t_+ - t_{0,n}$, using the scaling invariance of $\Theta$, we have 
		\begin{equation*}
		\Theta(U_{0,n}(t_n))=\Theta(U_0(\frac{t_n+t_{0,n}}{e^{2\sigma_{0,n}}}))=\Theta(U_0(t_+)).
		\end{equation*}
		Since $\epsilon< \min(\epsilon_{S},c_D\delta)$, we have $u_n(t_n)\in\check{\mathcal{H}}\cap\mathcal{H}^{\epsilon_S}$. \eqref{seq prof app} implies that $u_n(t_n)$ is in an $O(\epsilon)$ ball around $U_{0,n}(t_n)$ for large $n$. Hence
		\begin{equation*}
		+1=\Theta(u_n(t_n))=\Theta(U_{0,n}(t_n))=\Theta(U_0(t_+)).
		\end{equation*}
		Therefore, $U_0\in\mathscr{S}(E_c(\delta),\delta)$. Putting $U_c(t):=U_0\left(t+\widetilde{t}\right)$ with $\widetilde{t}:=\max(t_0,t_+)$, we obtain
		\begin{equation*}
		U_c\in\mathscr{S}(E_c(\delta),\delta_{B}),\;\;\Vert U_c \Vert_{S(0,T_{+}(U_c))}=\infty.
		\end{equation*}
		By the defination of $E_c(\delta)$, we have $E(U_c)\geq E_c(\delta)$, thus $E(U_c)=E_c(\delta)$. $U_c$ satisfies all the properties in the lemma.
		
		Since $E(u_n)\rightarrow E_c(\delta)=E(U_0)$, by \eqref{eorth seq}, \eqref{noneg KE} and $E(\gamma_{k,n}(0))>0$, we have, for all $j\in\{1,\cdots,n\}$,
		\begin{equation*}
		E(U_j)=0,\;\;\lim_{n \to \infty}E(\gamma_{k,n}(0))=0.
		\end{equation*}
		Using \eqref{quan:KIG}, \eqref{noneg KE} and $K(\gamma_{k,n}(0))>0$, we have, for all $j\in\{1,\cdots,n\}$,
		\begin{equation*}
		U_j=0,\;\;\gamma_{k,n}(0)\rightarrow0\;\;\text{in}\;\;\dot H^1.
		\end{equation*}
		Combining above estimates with \eqref{prof appr}, we have
		\begin{equation*}
		u_n(0)=\sum_{j=0}^k S_{-1}^{-\sigma_{j,n}}U_j(s_{j,n}) +o_n(1)  =S_{-1}^{-\sigma_{0,n}}e^{-is_{0,n}\Delta}V_0 +o_n(1).
		\end{equation*}
		Hence $e^{is_n\Delta}S_{-1}^{\sigma_n}u_n(0)\rightarrow V_0$ with $\sigma_n:=\sigma_{0,n}$ and $s_n:=s_{0,n}$ in $\dot H^1$.
	\end{proof}
	
	\begin{lemma}(Pre-compactness of the flow of the critical solution).\label{lem:crit comp} Let $U_c$ be as in Lemma \ref{lem:crit exis}, then there exists $\sigma_{c}:\left[0,T_+(U_c) \right)\to \R$, such that $\mathscr{K}$ is precompact in $\dot H_{rad}^1$ where
		\begin{equation*}
		\mathscr{K}:=\{v(t,x)\;|\;v(t,x)=S_{-1}^{-\sigma_{c}(t)}U_c(t),\;0\leq t<T_+(U_c) \}.
		\end{equation*}
	\end{lemma}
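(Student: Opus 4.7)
The approach is to argue by contradiction, exploiting the minimality of $U_c$ encoded in Lemma~\ref{lem:crit exis}. Suppose no scaling function $\sigma_c:[0,T_+(U_c))\to\R$ renders $\mathscr{K}$ precompact in $\dot H^1_{rad}$. Then there exists a sequence $\{t_n\}\subset[0,T_+(U_c))$ such that the orbit $\{U_c(t_n)\}_n$ modulo the scaling group $\{S_{-1}^\sigma\}_{\sigma\in\R}$ admits no subsequence converging in $\dot H^1_{rad}$.

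I first set $u_n(s):=U_c(s+t_n)$; each $u_n$ is a radial solution of \eqref{equ:Hart} with $E(u_n)=E_c(\delta)$. Since $U_c\in\mathscr{S}(E_c(\delta),\delta_B)$ and $\widetilde{d}_{\mathcal{W}}(U_c(t))\geq\delta_B$ throughout $[0,T_+(U_c))$, the time-translated $u_n$ again lies in $\mathscr{S}(E_c(\delta),\delta_B)$. Moreover, $\Vert u_n\Vert_{S(0,T_+(u_n))}=\Vert U_c\Vert_{S(t_n,T_+(U_c))}=\infty$; otherwise, finite tail $S$-norm combined with the local theory on $[0,t_n]$ would force $\Vert U_c\Vert_{S(0,T_+(U_c))}<\infty$, contradicting the defining property of $U_c$. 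Thus Lemma~\ref{lem:crit exis} applies to $\{u_n\}$ with $A_n\equiv E_c(\delta)$, furnishing $(\sigma_n,s_n)\in\R^2$ and $V_0\in\dot H^1_{rad}$ with, writing $w_n:=S_{-1}^{\sigma_n}U_c(t_n)$,
\begin{equation*}
w_n=e^{-is_n\Delta}V_0+o_n(1)\quad\text{in}\quad\dot H^1_{rad}.
\end{equation*}

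The key step, and the main obstacle, is to show that $\{s_n\}$ is bounded. For $s_n\to+\infty$, the absolute continuity of the full-line Strichartz norm yields $\Vert e^{-it\Delta}V_0\Vert_{S(s_n,\infty)}\to 0$, so that
\begin{equation*}
\Vert e^{-it\Delta}w_n\Vert_{S(0,\infty)}=\Vert e^{-i\tau\Delta}V_0\Vert_{S(s_n,\infty)}+o_n(1)\longrightarrow 0.
\end{equation*}
The long-time perturbation Lemma~\ref{lem:pert} applied with trivial approximating solution $\widetilde u\equiv 0$ then delivers $\Vert w_n\Vert_{S(0,T_+(w_n))}<\infty$ for large $n$; by the scale invariance of the $S$-norm this forces $\Vert u_n\Vert_{S(0,T_+(u_n))}<\infty$, contradicting the previous step. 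The case $s_n\to-\infty$ is the principal technical difficulty and will be reduced to the forward case by re-centering: one replaces the reference time $t_n$ by $t_n+\tau_n$ for a suitable $\tau_n\in(0,T_+(U_c)-t_n)$ so that reapplying Lemma~\ref{lem:crit exis} to the shifted sequence returns a non-negative (hence, by the above, bounded) shift parameter; the existence of such $\tau_n$ follows from the continuous dependence of the profile decomposition parameters on the reference time, together with the exclusion of divergence to $+\infty$ just established.

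Having established boundedness of $\{s_n\}$, I pass to a subsequence so that $s_n\to s_\infty\in\R$, obtaining $w_n\to e^{-is_\infty\Delta}V_0$ in $\dot H^1_{rad}$. This shows that every sequence $\{U_c(t_n)\}$ has a subsequence convergent in $\dot H^1_{rad}$ modulo the scaling action $\{S_{-1}^\sigma\}_{\sigma\in\R}$, which by a standard measurable selection of $\sigma_c(t):=-\sigma_n$ along $t=t_n$ (extended consistently by the same construction along any other sequence) provides a $\sigma_c:[0,T_+(U_c))\to\R$ for which $\mathscr{K}$ is precompact. This contradicts the hypothesis on $\{t_n\}$ and completes the proof.
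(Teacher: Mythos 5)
Your overall strategy is the right one: apply the minimality machinery of Lemma~\ref{lem:crit exis} to the time-translated sequence $u_n(s):=U_c(s+t_n)$, extract the single surviving profile, and argue that the free-evolution parameter $s_n$ must stay bounded. The $s_n\to+\infty$ branch is also handled correctly (vanishing forward free Strichartz norm plus Lemma~\ref{lem:pert} with $\widetilde u\equiv 0$ forces $\Vert U_c\Vert_{S(t_n,T_+(U_c))}<\infty$, a contradiction). However, there is a genuine gap in the $s_n\to-\infty$ branch. The ``re-centering'' argument you sketch relies on a claimed continuous dependence of the profile-decomposition parameters $(\sigma_n,s_n)$ on the reference time. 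No such continuity exists: the parameters are extracted via a diagonal limiting process from the whole sequence and are not canonically attached to a single time slice, so there is no intermediate-value-type mechanism that would let you choose $\tau_n$ to make the shift parameter nonnegative. As written, this step cannot be justified.

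The correct treatment of $s_n\to-\infty$ is more elementary and does not require re-centering. First observe that one may assume $t_n\to T_+(U_c)$, since if $\{t_n\}$ has a bounded subsequence then $U_c(t_n)\to U_c(t_\infty)$ strongly by continuity of the flow and precompactness (with $\sigma_c\equiv 0$ along that subsequence) is immediate. Now if $s_n\to-\infty$, then $\Vert e^{-i\tau\Delta}V_0\Vert_{S(-\infty,s_n)}\to 0$, so the free evolution of $w_n=S_{-1}^{\sigma_n}U_c(t_n)$ has vanishing $S$-norm on $(-\infty,0]$. Applying Lemma~\ref{lem:pert} backward in time with $\widetilde u\equiv 0$ (the backward interval $(-t_n,0]$ is contained in $I(u_n)$ since $T_-(U_c)>0$) gives a uniform bound $\Vert u_n\Vert_{S(-t_n,0)}\lesssim 1$, i.e. $\Vert U_c\Vert_{S(0,t_n)}\lesssim 1$. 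But $t_n\to T_+(U_c)$ and $\Vert U_c\Vert_{S(0,T_+(U_c))}=\infty$ force $\Vert U_c\Vert_{S(0,t_n)}\to\infty$, a contradiction. Once $s_n$ is bounded, the rest of your argument (passing to a subsequence $s_n\to s_\infty$, strong convergence of $e^{-is_n\Delta}V_0$, extraction of $\sigma_c$ by a standard diagonal/selection argument) is fine.
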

	\begin{proof}[Sketch of proof]
		We can refer to Proposition 4.2 in \cite{MXZ:crit Hart:f rad} for more details.
	\end{proof}
	Armed with the above lemmas, we are ready for the final step of the proof of Proposition \ref{prop:asyp behar}.
	\begin{lemma} $U_c$ does not exist.
	\end{lemma}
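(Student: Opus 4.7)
The plan is to derive a contradiction assuming the critical element $U_c$ exists, by combining the pre-compactness of $\mathscr{K}$ from Lemma \ref{lem:crit comp} with the localized virial identity \eqref{est:V 2deri}. Since $U_c\in\mathscr{S}(E_c(\delta),\delta_B)$, we have $\Theta(U_c(t))=+1$ and $\widetilde{d}_{\mathcal{W}}(U_c(t))\geq\delta_B$ throughout $[0,T_+(U_c))$. Together with $E(U_c)\leq E(W)+\epsilon^2$ and the smallness condition $\epsilon^2\ll\kappa(\delta_B)$ already imposed in Lemma \ref{lem:crit exis}, Proposition \ref{prop:vart est} gives the uniform coercivity $K(U_c(t))\geq\kappa(\delta_B)>0$. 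I would then split into the cases $T_+(U_c)=\infty$ and $T_+(U_c)<\infty$.

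For the global case $T_+(U_c)=\infty$, the pre-compactness of $\{S_{-1}^{-\sigma_c(t)}U_c(t)\}$ combined with the uniform $\dot H^1$ bound \eqref{unif bd Te+} and the bound $K(U_c)\geq\kappa(\delta_B)$ allows us (by a routine reduction as in \cite[\S 4]{MXZ:crit Hart:f rad}) to choose $\sigma_c$ so that $\lambda_c(t):=e^{-\sigma_c(t)}$ stays bounded from below by some $\lambda_0>0$. Pre-compactness then yields, for any $\eta>0$, a radius $R_0(\eta)$ such that outside $|x|\leq R_0/\lambda_0$ the kinetic and potential energy densities contribute less than $\eta$. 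Inserting this into \eqref{est:V 2deri} with $R\gg R_0/\lambda_0$ and $\eta\ll\kappa(\delta_B)$ yields $\ddot V_R(t)\geq 4\kappa(\delta_B)$ for all $t\geq 0$. Integration from $0$ to $T$ produces $\dot V_R(T)\gtrsim \kappa(\delta_B)T$, which contradicts the uniform bound $|\dot V_R(t)|\lesssim R\|U_c\|_{\dot H^1}^{1/2}\|\nabla U_c\|_{L^2}^{1/2}\lesssim R$ coming from Cauchy--Schwarz and \eqref{unif bd Te+}.

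For the finite-time case $T_+(U_c)<\infty$, the pre-compactness combined with the uniform local existence Proposition \ref{prop:unif lwp} forces $\lambda_c(t)\to 0$ (equivalently $\sigma_c(t)\to\infty$) as $t\nearrow T_+(U_c)$, so the profile concentrates at the origin. I would then use the negative-regularity argument (double Duhamel in the radial setting, as in \cite{MXZ:crit Hart:f rad}) to show that the pre-compact orbit gains additional regularity, in particular $U_c(t)\in L^2$ with $\|U_c(t)\|_{L^2}$ uniformly bounded. The mass conservation of $U_c$ on $[0,T_+(U_c))$ together with the concentration $\lambda_c(t)\to 0$ would then be incompatible with the radial Sobolev control $\|r^{(d-2)/2}U_c\|_{L^\infty}^{2}\lesssim \|\nabla U_c\|_{L^2}\|U_c/r\|_{L^2}$ and the lower bound $\|U_c(t)\|_{L^2}^2\geq c\lambda_c(t)^2\|W\|_{L^2\text{-local}}^2$ that concentration forces.

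The main obstacle is the finite-time case: extracting $L^2$ regularity for the pre-compact orbit via the double Duhamel / negative-regularity technique adapted to the Hartree convolution nonlinearity $|x|^{-4}*|u|^2$ is the most delicate step, since standard Strichartz / frequency-cascade arguments for local nonlinearities must be replaced by bilinear estimates on the nonlocal interaction. The global case, by contrast, is a routine application of the virial template, and the coercivity $K(U_c)\geq\kappa(\delta_B)$ provided by Proposition \ref{prop:vart est} allows one to absorb the cut-off errors $E_R$ without further struggle.
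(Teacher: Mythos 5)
Your overall plan --- precompactness plus the localized virial, splitting into finite-time and global-time --- is the right shape, and the global-time virial computation agrees in spirit with the paper's Step~2. But there are two genuine gaps.

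In the global case you assert that by a ``routine reduction'' one can arrange the scale to be controlled, and then the virial gives a contradiction. Controlling the scale is precisely the crux, and the paper does not treat it as routine. First a direction issue: with $U_c(t)=S_{-1}^{\sigma_c(t)}v(t)$ and $v$ ranging in a precompact set, the concentration radius of $U_c$ is of order $R_0\,e^{-\sigma_c(t)}$, so the virial requires $\sigma_c$ bounded \emph{below} (equivalently $e^{-\sigma_c}$ bounded \emph{above}, i.e.\ no spreading); your condition $\lambda_c=e^{-\sigma_c}\ge\lambda_0>0$ is a bound in the wrong direction. More importantly, the lower bound on $\sigma_c$ cannot simply be built into the choice of parametrization. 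The paper's Step~2 shows that \emph{if} $\inf_t\sigma_c(t)>-\infty$, then the virial identity gives $\dot V_R(T)\gtrsim\widetilde{\kappa}\,T$, contradicting the uniform bound on $\dot V_R$; hence necessarily $\inf_t\sigma_c(t)=-\infty$. This does not finish the proof --- it only shows the critical element must spread. A further Step~3 is required: choose $t_n\to\infty$ along which $\sigma_c$ achieves running minima $\min_{0\le s\le t_n}\sigma_c(s)=\sigma_c(t_n)\to-\infty$, extract a strong $\dot H^1$ limit $U_\omega(0)=\lim_n S_{-1}^{-\sigma_c(t_n)}U_c(t_n)$, verify that $U_\omega$ inherits precompactness modulo scaling, the exclusion $\widetilde{d}_{\mathcal W}\ge\delta_B$, and infinite backward Strichartz norm, and observe that by construction $\sigma_\omega\ge 0$; Step~2 applied to $U_\omega$ then gives the contradiction. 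This bootstrap is not contained in the below-threshold compactness theory of \cite{MXZ:crit Hart:f rad}, because the critical element here carries the extra constraints defining $\mathscr{S}(E_c(\delta),\delta_B)$ and the energy lies above the ground state.

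In the finite-time case your route (double Duhamel / negative regularity to place the precompact orbit in $L^2$) is much harder than needed, and as you acknowledge you have not completed it. The paper's argument is elementary: set $y_m(t):=\langle|U_c(t)|^2,\phi_m\rangle$; writing $y_m=\langle|S_{-1}^{-\sigma_c}U_c|^2,\,e^{-2\sigma_c}\phi_{me^{\sigma_c}}\rangle$, precompactness of $|S_{-1}^{-\sigma_c}U_c|^2$ in $L^{d/(d-2)}$ together with the weak convergence of $e^{-2\sigma_c}\phi_{me^{\sigma_c}}$ to $0$ in $L^{d/2}$ (as $\sigma_c\to\infty$) gives $y_m(t)\to 0$ as $t\nearrow T_+(U_c)$; the flux identity and Hardy's inequality give $|\dot y_m(t)|\lesssim\|U_c\|_{\dot H^1}^2\lesssim 1$ uniformly in $m$; hence $y_m(t)\lesssim T_+(U_c)-t$, and letting $m\to\infty$ yields $\|U_c(t)\|_{L^2}^2\lesssim T_+(U_c)-t$. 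Thus $U_c(0)\in L^2$ automatically, and $L^2$ conservation forces $U_c\equiv 0$, a contradiction. No regularity gain is needed. Finally, your closing claim in that paragraph --- that concentration forces a \emph{lower} bound on the mass incompatible with conservation --- has the logic reversed: concentration drives $\|U_c(t)\|_{L^2}$ to $0$, and the contradiction is that the conserved mass must then vanish identically.
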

	\begin{proof}
		For all $t\in\left[0,T_+(U_c) \right)$, we have $\widetilde{d}_{\mathcal{W}}(U_c(t))\geq \delta_{B}$ and $\Vert U_c(t) \Vert_{\dot H^1}\gtrsim 1$ by the small data scattering. Using Proposition \ref{prop:sign funct}, we have 
		\begin{equation*}
		+1=\Theta(U_c(t))=signK(U_c(t)),
		\end{equation*}
		thus $K(U_c(t))\geq0$. Since $\epsilon<\epsilon_{V}(\delta_{B})$ and $E(U_c)=E_c(\delta)$, using Proposition \ref{prop:vart est}, we have $K(U_c(t))>0$. Thus
		\begin{equation*}
		\widetilde{\kappa}:=\inf_{t\geq 0}K(U_c(t))>0.
		\end{equation*}
		We divide the proof into three steps:
		
		\noindent{\bf Step 1: $U_c$ exists for all time $t>0$.} We prove it by contradiction. Using the local well-posedness theory and Lemma \ref{lem:crit comp}, we obtain that blow-up is possible only by concentration $\sigma_{c}(t)\to \infty$ as $t\nearrow T_+(U_c)<\infty$, see \cite{MXZ:crit Hart:f rad} for a proof. For any $m>0$ and $t\in I(U_c)$, put 
		\begin{equation*}
		y_m(t):=\langle |U_c(t)|^2,\phi_m \rangle.
		\end{equation*}
		Since $|S_{-1}^{-\sigma_{c}}U_c|^2$ is precompact in $L_{x}^{d/(d-2)}$, $e^{-2\sigma_{c}}\phi_{me^{\sigma_{c}}} \to 0$ weakly in $L_{x}^{d/2}$, we have
		\begin{equation*}
		y_m(t)=\langle |S_{-1}^{-\sigma_{c}}U_c|^2,e^{-2\sigma_{c}}\phi_{me^{\sigma_{c}}} \rangle \to 0,\;\;\text{as}\;\;t\nearrow T_+(U_c).
		\end{equation*}
		Using $\partial_{t}|u|^2=2\Im\left( \nabla\left(\nabla u\cdot \bar{u}\right) \right)$, we have
		\begin{equation*}
		\begin{aligned}
		\dot{y}_m (t)=&\int \partial_{t} |U_c(t)|^2\phi_m dx=2\Im\int \phi_m \nabla\left(\nabla U_c\cdot \bar{U_c}\right) dx\\=&-2\Im\int \nabla \phi_m\nabla U_c\bar{U_c} dx=-2\langle \nabla U_c,iU_c\nabla\phi_m \rangle.
		\end{aligned}
		\end{equation*}
		By Hardy's inequality, we have 
		\begin{equation*}
		-\dot{y}_m (t)\leq|\dot{y}_m (t)|\leq 2\int |\nabla U_c||U_c||\nabla \phi_m|dx\lesssim \Vert U_c \Vert_{\dot H^1}^2 \lesssim1.
		\end{equation*}
		Integrating it on $t<T_+(U_c)$ and using $y_m(t)\rightarrow0$ as $t\nearrow T_+(U_c)$, we have
		\begin{equation*}
		\int |U_c(t)|^2 \phi_mdx=y_m(t)\lesssim|T_+(U_c)-t|.
		\end{equation*}
		Sending $m\to \infty$, we obtain
		\begin{equation*}
		\Vert U_c(t) \Vert_{L^2}^2 \lesssim |T_+(U_c)-t|,
		\end{equation*}
		which implies that $U_c(0)\in L_{x}^2$. By the $L^2$ conservation, we get
		\begin{equation*}
		\Vert U_c(0) \Vert_{L^2}=\Vert U_c(t) \Vert_{L^2}\to 0\;\;\text{as}\;\;t\nearrow T_+(U_c).
		\end{equation*}
		So $U_c=0$, which contradicts with $T_+(U_c)<\infty$.
		
		\noindent{\bf Step 2: The scale bound $\sigma_{c}$ satisfies $\inf_{0\leq t<\infty}\sigma_{c}(t)=-\infty$.} We prove it by contradiction. Define $A:=\inf_{0\leq t<\infty}\sigma_{c}(t)$, then if $A>-\infty$, using Hardy's inequality and Lemma \ref{lem:crit comp}, for all $t\in \left[0,\infty \right)$, there exists $R$ such that 
		\begin{equation}\label{est:crie viri}
		\int_{R\leq|x|}|\nabla U_c|^2 + | U_c/r^3|^2dx\lesssim \Vert U_c\Vert_{\dot H^1(R\leq|x|)}^2+\frac{1}{R^4}\Vert U_c\Vert_{\dot H^1(R\leq|x|)}^2   \ll \widetilde{\kappa}.
		\end{equation}
		Applying \eqref{est:V 2deri rad} to $U_c$, integrating it on $[0,T]$, we have
		\begin{equation*}
		[\dot{V}_R(t)]_0^T\geq 8T\widetilde{\kappa}-4\int_{0}^{T} \langle|\partial_{r}(U_c)|^2,f_{0,R}\rangle dt+\int_{0}^{T} \langle|U_c/R^{3}|^2,f_{1,R}\rangle dt \\-  \int_{0}^{T} |H|dt,
		\end{equation*}
		where
		\begin{equation*}
		\begin{aligned}
		H:=&8\iint \left( 1-\frac{1}{2}\frac{R}{|x|} \varphi' \left( \frac{|x|}{R} \right) \right) \frac{x(x-y)}{|x-y|^6} |U_c(x)|^2|U_c(y)|^2 dxdy\\-&8\iint \left( 1-\frac{1}{2}\frac{R}{|y|} \varphi' \left( \frac{|y|}{R} \right) \right) \frac{y(x-y)}{|x-y|^6} |U_c(x)|^2|U_c(y)|^2 dxdy.
		\end{aligned}
		\end{equation*}
		Combined with \eqref{est:crie viri}, we have
		\begin{equation*}
		\begin{aligned}
		&-\int_{0}^{T} \langle|\frac{U_c}{R^3}|^2,f_{1,R}\rangle dt\leq \bigg|\int_{0}^{T}\int_{R\leq|x|\leq 2R}|\frac{U_c}{R^3}|^2f_{1,R}dxdt\bigg|\lesssim \int_{0}^{T}\int_{R\leq|x|\leq 2R}\frac{|U_c|^2}{r^6}dxdt\ll \widetilde{\kappa}T,\\ &4\int_{0}^{T} \langle|\partial_{r}(U_c)|^2,f_{0,R}\rangle dt\leq\big|4\int_{0}^{T}\int_{R\leq|x|}|\partial_{r}(U_c)|^2 f_{0,R}dxdt\big|\lesssim\int_{0}^{T}\int_{R\leq|x|}|\nabla U_c|^2dxdt\ll \widetilde{\kappa}T,\\&|H|\lesssim \frac{1}{R^{\frac{4d-4}{d}}} \Vert U_c\Vert_{L^2}^{\frac{4d-4}{d}} \Vert U_c\Vert_{\dot H^1}^{\frac{4}{d}} + \frac{1}{R^2} \Vert U_c\Vert_{L^2}^{2} \Vert U_c\Vert_{\dot H^1}^{2}.
		\end{aligned}
		\end{equation*}
		Thus
		\begin{equation*}
		[\dot{V}_R(t)]_0^T\geq \widetilde{\kappa}T.
		\end{equation*}
		Since $\dot{V}_R(t)$ is bounded for $t\to \infty$, the above estimate leads to  a contradiction as $T\rightarrow\infty$.
		
		\noindent{\bf Step 3: The critical element does not exist.} 
		By the continuity of $U_c(t)$, we deduce that there exists sequence $t_n$, $\sigma_{c}(t_n)\to -\infty$ as $t_n\to \infty$ satisfying
		\begin{equation}\label{scalbd appr}
		\min_{0\leq s\leq t_n}\sigma_{c}(s)=\sigma_{c}(t_n).
		\end{equation}
		By Lemma \ref{lem:crit comp}, we may assume that $S_{-1}^{-\sigma_{c}(t_n)}U_c(t_n)$ converges strongly in $\dot{H}_{rad}^{1}$. Let $U_n$ and $U_{\omega}$ be the solution of \eqref{equ:Hart} with the initial data
		\begin{equation*}
		U_n(0)=S_{-1}^{-\sigma_{c}(t_n)}U_c(t_n),\;U_{\omega}(0)=\lim_{n \to \infty}U_n(0).
		\end{equation*}
		The local well-posedness theory in \cite[Proposition 2.1]{MXZ:crit Hart:f rad} implies that for any compact $J\subset I(U_{\omega})$,
		\begin{equation*}
		U_n\to U_{\omega}\;\;\text{in}\;\;C(J;\dot{H}_{x}^{1})\cap S(J)\;\;\text{as}\;\;n\to \infty.
		\end{equation*}
		Since 
		\begin{equation}\label{conve in S}
		\Vert U_n \Vert_{S(-t_n e^{2\sigma_{c}(t_n)},0)}=\Vert U_c \Vert_{S(0,t_n)} \to \Vert U_c \Vert_{S(0,\infty)}=\infty,
		\end{equation}
		which implies that for each $t\in \left(-T_-(U_{\omega}),0\right]$ and large $n$, we have
		\begin{equation*}
		|t|<t_n e^{2\sigma_{c}(t_n)}.
		\end{equation*}
		Putting $s_n:=t_n-|t|e^{-2\sigma_{c}(t_n)}\in \left(0,t_n \right]$, by the scale invariance, Lemma \ref{lem:crit comp} and $\widetilde{d}_{\mathcal{W}}(U_c(t))\geq \delta_{B}$ for $t\in[0,\infty)$, we have
		\begin{equation*}
		S_{-1}^{\sigma_{c}(t_n)-\sigma_{c}(s_n)}U_n(t)=S_{-1}^{-\sigma_{c}(s_n)}U_c(s_n)\in \mathscr{K}\setminus \widetilde{B}_{\delta_{B}}(\mathcal{W}).
		\end{equation*}
		Since $U_n(t)\to U_{\omega}(t)$ in $\dot{H}^1$, $\mathscr{K}$ is precompact and \eqref{scalbd appr}, after passing to a subsequence, we assume that there exists  $\sigma_{\omega}(t)\in \left[0,\infty \right)$ such that
		\begin{equation*}
		\sigma_{c}(s_n)-\sigma_{c}(t_n)\rightarrow \sigma_{\omega}(t).
		\end{equation*}
		Then it is obvious that
		\begin{equation*}
		\{ S_{-1}^{-\sigma_{\omega}(t)}U_{\omega}(t) \}_{t\in \left(-T_-(U_{\omega}),0\right]}\subset \bar{\mathscr{K}},
		\end{equation*}
		which implies $\{ S_{-1}^{-\sigma_{\omega}(t)}U_{\omega}(t) \}_{t\in \left(-T_-(U_{\omega}),0\right]}$ is precompact. For all $t\in \left(-T_-(U_{\omega}),0 \right]$, $\widetilde{d}_{\mathcal{W}}(U_{\omega}(t))\geq \delta_{B}$.
		
		Moreover $\Vert U_{\omega} \Vert_{S(-T_-(U_{\omega}),0)}=\infty$. Indeed, if $\Vert U_{\omega} \Vert_{S(-T_-(U_{\omega}),0)}<\infty$, then the blow-up criterion yields $T_-(U_{\omega})=\infty$. Apply the Lemma \ref{lem:pert} to $u=U_n$, $I=(-\infty,0]$ yields
		\begin{equation*}
		\Vert U_n \Vert_{S(-\infty,0)} \lesssim1,
		\end{equation*}
		which contradicts with \eqref{conve in S}.
		
		Now, we have proved that $U_{\omega}$ is a critical element with the scale bound $\sigma_{\omega}\geq 0$. Define $\widetilde{A}:=\inf_{0\leq t<\infty}\sigma_{\omega}(t)\geq 0$, while we can obtain $\widetilde{A}=-\infty$ using the above argument in step 2. This leads to a contradiction, which proves that critical element does not exist.
	\end{proof}
	This proves Proposition \ref{prop:asyp behar} in the case $\Theta(u)=+1$.
	

	\section{Proof of Theorem \ref{thm:above thresh}}\label{sect:pf main thm}
	Armed with the above propositions, we are now ready to prove Theorem \ref{thm:above thresh}.
	\begin{proof}[Proof of Theorem \ref{thm:above thresh}]
		Using $c_D$, $\delta_{B}$, $\epsilon_{*}$ in Proposition \ref{prop:nlr dist}, \ref{prop:one pass} and \ref{prop:asyp behar}, define 
		\begin{equation*}
		\epsilon_\star:=\epsilon_{*}(\delta_{B})\in(0,1),\; \mathcal{X}_{\epsilon}:=\{\varphi\in \mathcal{H}^{\epsilon}\;|\;c_D\widetilde{d}_{\mathcal{W}}(\varphi)\leq\epsilon  \}.
		\end{equation*}
		Then $\mathcal{X}_{\epsilon}$ is relatively closed in $\mathcal{H}^{\epsilon}$, $\mathcal{W} \subset \mathcal{X}_\epsilon \subset \widetilde{B}_{2\epsilon/ c_D}(\mathcal{W})$. Since $\epsilon_\star \leq \epsilon_{S}$, for all $\epsilon\in \left(0,\epsilon_\star  \right]$,
		\begin{equation*}
		\varphi\in \mathcal{H}^{\epsilon}\setminus \mathcal{X}_{\epsilon} \Longrightarrow E(\varphi)-E(W)<\epsilon^2<\left(c_D \widetilde{d}_{\mathcal{W}}(\varphi) \right)^2\Longrightarrow\varphi \in \check{\mathcal{H}}\cap \mathcal{H}^{\epsilon_{S}}.
		\end{equation*}		
		By Proposition \ref{prop:sign funct}, the functional $\Theta$ can be defined on $\mathcal{H}^{\epsilon}\setminus \mathcal{X}_{\epsilon}$ for all $\epsilon\in \left(0,\epsilon_\star  \right]$.
		
		Next we consider the dynamics. Let $\epsilon\in \left(0,\epsilon_\star  \right]$ and let $u\in \mathcal{H}^{\epsilon}$ be a solution of \eqref{equ:Hart}. Let $I_0(u):=\left\{t\in I(u)\; |\;  u(t)\in\mathcal{X}_\epsilon \right\}$ and let $I_C(u):=I(u)\setminus I_0(u)$. Take any $\delta\in \left(0,\delta_{B} \right]$ satisfying $\epsilon \leq \epsilon_{*}(\delta)$.
		
		\noindent{\bf Case1:} Suppose that there exists $t_0\in I_C(u)$ such that $\widetilde{d}_{\mathcal{W}}\left(u(t_0)\right)<\delta$.
		\begin{enumerate}
			\item[(1)] If $\partial_{t}\widetilde{d}_{\mathcal{W}}\left(u(t_0)\right)\geq 0$.
			
			Since $\delta\leq\delta_{B}<\delta_{X}$, $u(t_0)\in \mathcal{H}^{\epsilon}$ and Proposition \ref{prop:eject mod}, $\widetilde{d}_{\mathcal{W}}\left(u(t)\right)$ is increasing until it reaches $\delta_{X}$ at some $t_X \in \left(t_0,T_+(u)\right)$. Since $\widetilde{d}_{\mathcal{W}}\left(u(t_0)\right)<\delta<\delta_X$, there exists $t'\in \left(t_0,t_X \right)$ such that $\widetilde{d}_{\mathcal{W}}\left(u(t')\right)=\delta$. Then Proposition \ref{prop:one pass} implies that $\widetilde{d}_{\mathcal{W}}\left(u(t)\right)>\delta$ for all $t\in \left(t',T_+(u) \right)$. Hence $\widetilde{d}_{\mathcal{W}}\left(u(t)\right)\geq \widetilde{d}_{\mathcal{W}}\left(u(t_0)\right)$ for all $t\in \left[t_0,T_+(u)\right)$, which implies $\left[t_0,T_+(u)\right)\subset I_C(u)$ by $\widetilde{d}_{\mathcal{W}}\left(u(t_0)\right)> \epsilon/c_D$. This implies $I_0(u)$ is either empty or an interval. By Proposition \ref{prop:asyp behar}, 
			\begin{equation*}
			\left\{\begin{aligned}
			&\text{If}\;\;\Theta(u(t))=+1\;\;\text{as}\;\;t\rightarrow T_+(u)\Longrightarrow u_0\in S_+\\
			&\text{If}\;\;\Theta(u(t))=-1\;\;\text{and}\;\;u_0\in L_x^2(\R^d)\;\;\text{as}\;\;t\rightarrow T_+(u)\Longrightarrow u_0\in B_+
			\end{aligned}\right.
			\end{equation*}	
			
			\item[(2)] If $\partial_{t}\widetilde{d}_{\mathcal{W}}\left(u(t_0)\right)\leq 0$.
			
			Then the time reversed version of the above argument implies that $\left(-T_-(u),t_0 \right)\subset I_C(u)$. Therefore, $I_0(u)$ is either empty or an interval. By Proposition \ref{prop:asyp behar}, 
			\begin{equation*}
			\left\{\begin{aligned}
			&\text{If}\;\;\Theta(u(t))=+1\;\;\text{as}\;\;t\rightarrow T_-(u)\Longrightarrow u_0\in S_-\\
			&\text{If}\;\;\Theta(u(t))=-1\;\;\text{and}\;\;u_0\in L_x^2(\R^d)\;\;\text{as}\;\;t\rightarrow T_-(u)\Longrightarrow u_0\in B_-
			\end{aligned}\right.
			\end{equation*}	
		\end{enumerate}	
		
		\noindent{\bf Case2:} Suppose that there exists $t_1\in I_0(u)$ and $t_2\in I_C(u)$. 
		\begin{enumerate}
			\item[(1)] If $t_1 <t_2$.
			
			Since $\widetilde{d}_{\mathcal{W}}(u)\leq \epsilon/c_D<\delta$ on $I_0(u)$, we may assume $\widetilde{d}_{\mathcal{W}}\left(u(t_2)\right)<\delta$ by decreasing $t_2$ if necessary. Then the above argument works with $t_0:=t_2$, we can prove $\partial_{t}\widetilde{d}_{\mathcal{W}}\left(u(t_2)\right)\geq 0$. Indeed, if $\partial_{t}\widetilde{d}_{\mathcal{W}}\left(u(t_2)\right)<0$, then by (2) of case 1, we have $\left(-T_-(u),t_2 \right)\subset I_C(u)$, which implies $I_0(u)\subset[t_2, T_+(u))$. Since $t_1\in I_0(u)$, we have $t_1\geq t_2$, which leads to a contradiction. Therefore,  $\left[t_2,T_+(u)\right)\subset I_C(u)$, which implies $I_0(u)$ is either empty or an interval. Then by Proposition \ref{prop:asyp behar}, $\Theta(u(t))$ on $\left[t_2,T_+(u)\right)\subset I_C(u)$ decides the behavior of $u$ towards $T_+(u)$.
			\item[(2)] If $t_1 >t_2$.
			
			In the same way, we deduce that $\partial_{t}\widetilde{d}_{\mathcal{W}}\left(u(t_2)\right)\leq 0$ and $(-T_-(u),t_2)\subset I_C(u)$. Therefore, $I_0(u)$ is either empty or an interval. By Proposition \ref{prop:asyp behar}, $\Theta(u(t))$ on $(-T_-(u),t_2)\subset I_C(u)$ decides the behavior of $u$ towards $-T_-(u)$.
		\end{enumerate}	
		
		\noindent{\bf Case3:} It only remains to consider the following case: $I_0(u)=\varnothing$ and for any $t_0\in I_C(u)(=I_0(u))$, $\widetilde{d}_{\mathcal{W}}\left(u(t_0)\right)\geq\delta$.	
		
		By Proposition \ref{prop:asyp behar}, we complete the investigation around $T_+(u)$, and the behavior towards $-T_-(u)$ is treated in the same way. 
		
		This proves Theorem \ref{thm:above thresh}.  	
	\end{proof}

	\section{Proof of Theorem \ref{thm:exist}}\label{sect:pf exist}
	
	In this section, we prove Theorem \ref{thm:exist}.
	\begin{proof}[Proof of Theorem \ref{thm:exist}]
		Let $0<\beta\ll \epsilon$ and $R>0$ be such that $\Vert \phi_{R}^{C}W \Vert_{\dot H^1}^2 \leq \beta^4$. We consider four solutions $u$ around $\mathcal{W}$ with the following initial data at $t=0$ in the coordinate \eqref{coor arou W} with $\vec{\lambda}=(\lambda_{1},\lambda_{2})$,
		\begin{equation*}
		\begin{aligned}
		\gamma(0):=&-P_{B}^{\perp}\phi_{R}^{C}W+\omega\left(\phi_{R}^{C}W,g_-
		\right)g_{+} -  \omega\left(\phi_{R}^{C}W,g_+\right)g_{-},\\
		\vec{\lambda}(0)=&\beta(\pm1,0),\;\; \beta(0,\pm1),
		\end{aligned}
		\end{equation*}
		where $B$ satisfies $\omega(g_{\pm}, B)=0$, $P_{B}^{\perp}$ denotes the orthogonal projection onto $\{B\}^{\perp}$. Note that $\omega(g_+, g_-)=1$, we have
		\begin{equation*}
		\omega(\gamma(0), g_{\pm})=\omega(-P_{B}^{\perp}\phi_{R}^{C}W, g_{\pm})+\omega(\phi_{R}^{C}W, g_{\pm}).
		\end{equation*}
		Since $\omega(\phi_{R}^{C}W, g_{\pm})=\omega(P_{B}^{\perp}\phi_{R}^{C}W, g_{\pm})$, we have $\omega(\gamma(0), g_{\pm})=0$. Thus $\gamma(0)$ is the symplectic projection of $-P_{B}^{\perp}\phi_{R}^{C}W$ to the subspace that is perpendicular (with respect to $\omega$) to $span\{g_-,g_+\}$. This ensures $u(0)\in L_{x}^{2}$ so that we can apply the above blow-up result.
		
		Let $I_E(u)\subset I(u)$ be the maximal interval satisfying $u(t)\in B_{\delta_{E}}(\mathcal{W})$. Thus we can use coordinate \eqref{coor arou W} on $I_E(u)$. For brevity, put $\widetilde{d}(t):=\widetilde{d}_{\mathcal{W}}(u(t))$ on $I_E(u)$.  By Proposition \ref{prop:nlr dist} and the same argument as in \eqref{est:parl uW minu}-\eqref{est:uW minu}, we get
		\begin{equation*}
		\begin{aligned}
		\Vert \gamma(t) \Vert_{\dot H^1}^2+O(|\lambda_{1}|^2\Vert\gamma \Vert_{\dot H^{1}})\sim& E(u(0))-E(W)-E_{\gamma^{\bot}}(u(0))+\int_{0}^{\tau}\partial_{\tau}\left(E(u)-E(W)-E_{\gamma^{\bot}}(u)\right)\\\lesssim& E(u(0))-E(W)-E_{\gamma^{\bot}}(u(0))+\int_{0}^{\tau}\lambda_{1}^2\Vert\gamma \Vert_{\dot H^{1}}+\lambda_{1}^3\\\lesssim&\beta^4+\int_{0}^{\tau} \widetilde{d}^2\Vert \gamma \Vert_{\dot H^{1}}+\widetilde{d}^3,
		\end{aligned}
		\end{equation*}
		which implies 
		\begin{equation*}
		\Vert \gamma(t) \Vert_{\dot H^1}^2+O(\widetilde{d}(t)^4)\lesssim \beta^4+\int_{0}^{\tau} \widetilde{d}^2\Vert \gamma \Vert_{\dot H^{1}}+\widetilde{d}^3  
		\end{equation*}
		within $I_E(u)$. Hence on any interval $J\subset I_E(u)$ where $|\tau|\leq \delta_{E}^{-1/2}$, we have 
		\begin{equation}\label{est:H1 gam}
		\begin{aligned}
		\Vert \gamma \Vert_{L_{t}^{\infty}(J;\dot H^1)}^2\lesssim&\beta^4+\left( \Vert\widetilde{d}\Vert_{L_{t}^{\infty}(J)}^2\Vert\gamma\Vert_{L_{t}^{\infty}(J;\dot H^1)}+\Vert\widetilde{d}\Vert_{L_{t}^{\infty}(J)}^3\right)\delta_{E}^{-1/2}\\\lesssim&\Vert\vec{\lambda}\Vert_{L_{t}^{\infty}(J)}^3+\left( \Vert\gamma\Vert_{L_{t}^{\infty}(J;\dot H^1)}^2+\Vert\widetilde{d}\Vert_{L_{t}^{\infty}(J)}^3\right)\delta_{E}^{-1/2}\\\lesssim&\Vert\vec{\lambda}\Vert_{L_{t}^{\infty}(J)}^3\left(1+\delta_{E}^{-1/2}\right)+\Vert\gamma\Vert_{L_{t}^{\infty}(J;\dot H^1)}^2\delta_{E}^{-1/2}\\\lesssim&\Vert\vec{\lambda}\Vert_{L_{t}^{\infty}(J)}^3\left(1+\delta_{E}^{-1}\right)\lesssim\Vert\vec{\lambda}\Vert_{L_{t}^{\infty}(J)}^3.
		\end{aligned}
		\end{equation}
		For
		\begin{equation}\label{hyper valid}
		|\tau|\leq \delta_{E}^{-1/2},\;\; \beta e^{\mu|\tau|}\sim \widetilde{d}(t)<\delta_{E},
		\end{equation}
		by \eqref{est:dyn lam+},\eqref{equ:dyn elam+} and a continuity argument, we see that,
		\begin{align*}
		&\lambda_{+}(\tau)=\lambda_{+}(0)e^{\mu\tau}+Ce^{\mu\tau}\int_{0}^{\tau}e^{-\mu s}\lambda_{1}^2(s)ds,\\&\lambda_{-}(\tau)=\lambda_{-}(0)e^{-\mu\tau}+Ce^{-\mu\tau}\int_{0}^{\tau}e^{\mu s}\lambda_{1}^2(s)ds.
		\end{align*}
		For $\vec{\lambda}(0)=\beta(\pm1,0)$, we have
		\begin{align*}
		&\lambda_{1}(\tau)=\pm\beta \cosh(\mu\tau)+C\left( e^{\mu\tau}\int_{0}^{\tau}e^{-\mu s}\lambda_{1}^2(s)ds+e^{-\mu\tau}\int_{0}^{\tau}e^{\mu s}\lambda_{1}^2(s)ds \right),\\&\lambda_{2}(\tau)=\pm\beta \sinh(\mu\tau)+C\left( e^{\mu\tau}\int_{0}^{\tau}e^{-\mu s}\lambda_{1}^2(s)ds-e^{-\mu\tau}\int_{0}^{\tau}e^{\mu s}\lambda_{1}^2(s)ds \right),
		\end{align*}
		which leads to 
		\begin{equation*}
		\lambda_{1}(\tau)=\pm\beta \cosh(\mu\tau)\left( 1+O(\beta^\frac12) \right),\;\;\lambda_{2}(\tau)=\pm\beta \sinh(\mu\tau)\left( 1+O(\beta^\frac12) \right).
		\end{equation*}
		Similarly, for $\vec{\lambda}(0)=\beta(0,\pm1)$, we have
		\begin{equation*}
		\lambda_{1}(\tau)=\pm\beta \sinh(\mu\tau)\left( 1+O(\beta^\frac12) \right),\;\;\lambda_{2}(\tau)=\pm\beta \cosh(\mu\tau)\left( 1+O(\beta^\frac12) \right).
		\end{equation*}
		In conclusion, we have
		\begin{equation}\label{hyperbolic}
		\left\{\begin{aligned}
		&\vec{\lambda}(0)=\beta(\pm1,0) \Longrightarrow \vec{\lambda}=\pm \beta\left(\cosh(\mu\tau),\sinh(\mu\tau) \right)\left(1+O(\beta^{1/2}) \right),\\
		&\vec{\lambda}(0)=\beta(0,\pm1) \Longrightarrow \vec{\lambda}=\pm \beta\left(\sinh(\mu\tau),\cosh(\mu\tau) \right)\left(1+O(\beta^{1/2}) \right). \end{aligned}\right.
		\end{equation}
		
		By Proposition \ref{prop:orth decom}, we have $u(t)=e^{i\theta(t)}S_{-1}^{\sigma(t)}\left(W+v(t) \right)$. Using \eqref{est:E expa} and \eqref{est:K expa}, we have
		\begin{align*}
		&E(u)-E(W)=-\mu\lambda_+\lambda_-+\frac{1}{2}\langle \mathcal{L}\gamma,\gamma \rangle-C(v),\\&K(u)=K(W+v)=-2\mu\lambda_{1} \langle W,g_2 \rangle+\langle 2\Delta W,\gamma \rangle+O(\Vert v\Vert_{\dot H^1}^2),
		\end{align*}
		where
		\begin{equation*}
		|2\langle\Delta W, \gamma\rangle|\lesssim\Vert \gamma\Vert_{\dot H^1},\;\;\langle\mathcal{L}\gamma,\gamma  \rangle\sim\Vert \gamma\Vert_{\dot H^1}^2,\;\;C(v)=O(\Vert v\Vert_{\dot H^1}^3)=O(\beta^3).
		\end{equation*}
		
		If $\vec{\lambda}(0)=\pm\beta(1,0)$, then we have
		\begin{equation*}
		E(u)-E(W)\sim -\beta^2<0,\;\;K(u(0))\sim \mp\beta.
		\end{equation*}
		Hence by \cite{MXZ:crit Hart:f rad},
		\begin{equation*}
		\left\{\begin{aligned}
		&\vec{\lambda}(0)=\beta(1,0) \Longrightarrow u(0)\in \mathcal{B}_{-}\cap \mathcal{B}_{+},\\
		&\vec{\lambda}(0)=-\beta(1,0) \Longrightarrow u(0)\in \mathcal{S}_{-}\cap \mathcal{S}_{+} \end{aligned}\right.
		\end{equation*}
		
		If $\vec{\lambda}(0):=\pm\beta(0,1)$, then we have
		\begin{equation*}
		0<E(u)-E(W)\sim \beta^2 \sim \widetilde{d}(0)^2\ll \epsilon^2.
		\end{equation*}
		Near the boundary of the interval \eqref{hyper valid}, we have
		\begin{equation*}
		\widetilde{d}(t)\sim \min\left(\delta_{E},\beta e^{\mu \delta_{E}^{-1/2}} \right)\gg \beta.
		\end{equation*}
		Applying Proposition \ref{prop:eject mod} to $u$ at some $t_{+}>0$ in the forward direction and at some $t_{-}<0$ in the backward direction, where $t_{\pm}$ belongs to the interval \eqref{hyper valid}. Using \eqref{est:H1 gam} and \eqref{hyperbolic}, we have
		\begin{equation*}
		\Vert \gamma(t_{\pm}) \Vert_{\dot H^1} \ll |\lambda_{1}(t_{\pm})| \sim |\lambda_{2}(t_{\pm})|.
		\end{equation*}
		Combined the above estimate with \eqref{est:K expa}, we have
		\begin{equation*}
		sign K(u(t_{\pm}))=-sign \lambda_{1}(t_{\pm})=\Theta(u(t_{\pm})).
		\end{equation*}
		By Proposition \ref{prop:asyp behar} and \eqref{hyperbolic}, we have
		\begin{equation*}
		\left\{\begin{aligned}
		&\vec{\lambda}(0)=\beta(0,1) \Longrightarrow \Theta(u(t_{\pm}))=\mp 1, \Longrightarrow u(0)\in \mathcal{S}_{-}\cap \mathcal{B}_{+},\\
		&\vec{\lambda}(0)=-\beta(0,1) \Longrightarrow \Theta(u(t_{\pm}))=\pm 1, \Longrightarrow u(0)\in \mathcal{S}_{+}\cap \mathcal{B}_{-}. \end{aligned}\right.
		\end{equation*}
		
		It is obvious that the above argument is stable for adding small perturbation in $\R^2$ to $\vec{\lambda}(0)$ and small perturbation in $H^1$ (in the orthogonal subspace) to $\gamma(0)$. Hence we obtain a small open set in $H^1$ around each of the four solutions.
	\end{proof}


	\def\cprime{$'$}

\end{document}